\documentclass[reqno, oneside, 12pt]{amsart}
\usepackage[letterpaper]{geometry}
\geometry{tmargin=1in,bmargin=1in,lmargin=1in,rmargin=1in}
\usepackage{amsmath,amssymb}
\usepackage{amsthm}
\usepackage{thmtools}
\usepackage{mathtools}
\usepackage{dsfont}
\usepackage{color}
\usepackage{enumitem}
\usepackage{appendix}
\usepackage{comment}
\usepackage[normalem]{ulem}
\usepackage{cases}
\usepackage{multirow}
\usepackage{hhline}
\usepackage{mathrsfs}

\usepackage[hypertexnames=false]{hyperref}

\newcommand{\Z}{{\mathbb{Z}}}

\newcommand{\R}{{\mathbb{R}}}
\newcommand{\C}{{\mathbb{C}}}

\newcommand{\mR}{{\mathcal{R}}}
\newcommand{\e}{{\mathfrak{e}}}

\newcommand{\HH}{{\mathbb{H}}}
\newcommand{\Mform}{{\mathcal{M}}}

\newcommand{\Lform}{{\mathcal{L}}}
\newcommand{\LEigenform}{{\tilde{\mathcal{L}}}}

\DeclareMathOperator{\re}{Re}
\DeclareMathOperator{\im}{Im}
\DeclareMathOperator*{\Res}{Res}
\DeclareMathOperator{\SL}{SL}
\DeclareMathOperator{\GL}{GL}
\DeclareMathOperator{\rank}{rank}
\newcommand{\be}{\begin{equation}}
\newcommand{\ee}{\end{equation}}

\newcommand{\ep}{\varepsilon}
\newcommand{\mF}{{\mathbf{F}}}
\newcommand{\ma}{{\mathfrak{a}}}
\newcommand{\mb}{{\mathfrak{b}}}
\newcommand{\mm}{{\mathbf{m}}}

\newcommand{\defeq}{\vcentcolon=}
\newcommand{\floor}[1]{\left\lfloor #1 \right\rfloor}

\newcommand{\ceil}[1]{\left\lceil #1 \right\rceil}

\renewcommand{\(}{\left(}
\renewcommand{\)}{\right)}
\newcommand{\la}{\left|}
\newcommand{\ra}{\right|}

\newcommand{\condr}{\vartriangleright r \vartriangleleft}
\newcommand{\condar}{\vartriangleright a,r \vartriangleleft}

\newcommand{\Mod}[1]{\ (\mathrm{mod}\ #1)}
\DeclareMathOperator{\sgn}{sgn}

\DeclareMathOperator{\diag}{diag}

\newtheorem{theorem}{Theorem}

\newtheorem{lemma}[theorem]{Lemma}
\newtheorem{corollary}[theorem]{Corollary}
\newtheorem{proposition}[theorem]{Proposition}

\newtheorem{definition}[theorem]{Definition}

\newtheorem{notation}[theorem]{Notation}

\theoremstyle{remark}
\newtheorem*{remark}{Remark}

\numberwithin{equation}{section}
\numberwithin{theorem}{section}
\numberwithin{lemma}{section}
\numberwithin{proposition}{section} 
\numberwithin{example}{section}
\numberwithin{definition}{section}
\numberwithin{corollary}{section}
\numberwithin{condition}{section}
\numberwithin{notation}{section}
\numberwithin{claim}{section}

\numberwithin{table}{section}

\author{Qihang Sun}
\address{Department of Mathematics, University of Illinois Urbana-Champaign, Urbana, IL, USA}

\title{Exact formulae for ranks of partitions}

\email{qihangsun98@outlook.com}

\date{\today}

\begin{document}

\begin{abstract}
	In 2009, Bringmann \cite{BringmannTAMS} used the circle method to prove an asymptotic formula for the Fourier coefficients of rank generating functions. In this paper, we prove that Bringmann's formula, when summing up to infinity and in the case of prime modulus, gives a Rademacher-type exact formula involving sums of vector-valued Kloosterman sums. As a corollary, in another paper \cite{QihangVanishingKLsums2025}, we will provide a new proof of Dyson's conjectures by showing that the certain Kloosterman sums vanish. 
\end{abstract}

\maketitle

\section{Introduction}

Let $p(n)$ be the integer partition function, which denotes the number of ways to write the natural number $n$ as a sum of a non-increasing sequence of positive integers. For example, we have $p(4)=5$ and $p(100)=190\,569\,292$. In 1918, Hardy and Ramanujan \cite{HardyRamanujan1918Asymp} proved the asymptotic for $p(n)$:
\[p(n)\sim \frac1{4n\sqrt 3}\exp\(\pi \sqrt{\frac{2n}3}\). \] 
Later, in 1938, Rademacher \cite{Rademacher1937pn} proved an exact formula for $p(n)$. If we define
\begin{equation}\label{A_cn}
	A_c(n)\defeq \frac12\sqrt{\frac c{12}}\sum_{\substack{x\Mod{24c}\\x^2\equiv -24n+1\Mod{24c}}} \chi_{12}(x)e\(\frac x{12c}\),
\end{equation}
where $\chi_{12}$ is the Dirichlet character $(\frac{12}\cdot)$ and $e(z)\defeq e^{2\pi i z}$, then Rademacher's exact formula \cite[(1.8)]{Rademacher1937pn} can be written as
\begin{align}\label{RademacherExactFormula}
	\begin{split}
		p(n)=\frac1{\pi\sqrt2}\sum_{c=1}^\infty A_c(n)\sqrt c\,\frac{d}{dn}\Bigg(\frac{\sinh\(\frac\pi c\sqrt{\frac23(n-\frac1{24})}\)}{\sqrt{n-\frac 1{24}}}\Bigg). 
	\end{split}
\end{align}

Ramanujan also obtained the famous congruence properties of $p(n)$:
\begin{equation}\label{Ramanujan congruences}
	p(5n+4)\equiv 0\Mod 5,\quad p(7n+5)\equiv 0\Mod 7,\quad p(11n+6)\equiv 0\Mod{11}. 
\end{equation}  
In 1944, Dyson \cite{Dyson} defined the rank of a partition to strikingly interpret the above congruences. Suppose $\Lambda=\{\Lambda_1\geq \Lambda_2\geq \cdots\geq \Lambda_\kappa\}$ is a partition of $n$, i.e. $\sum_{j=1}^\kappa \Lambda_j=n$ and every $\Lambda_j>0$. Let 
\[\rank(\Lambda)\defeq\Lambda_1-\kappa\]
define the rank of this partition, and let the quantities $N(m,n)$ and $N(a,b;n)$ be defined by
\begin{equation}
	N(m,n)\defeq \#\{\Lambda \text{ is a partition of }n: \rank \Lambda=m\}
\end{equation} 
and 
\begin{equation}
	N(a,b;n)\defeq \#\{\Lambda \text{ is a partition of }n: \rank \Lambda\equiv a\Mod b\}.
\end{equation}
Let $q=\exp(2\pi i z)=e(z)$ for $z\in \HH$ and $w$ be a root of unity. It is well known (e.g. \cite[(1.4)]{BrmOno2010}) that the generating function of $N(m,n)$ can be written as
\begin{equation}\label{rankGeneratingFunction}
	\mR(w;q)\defeq 1+\sum_{n=1}^\infty\sum_{m=-\infty}^\infty N(m,n) w^mq^n=1+\sum_{n=1}^\infty\frac{q^{n^2}}{(wq;q)_n(w^{-1}q;q)_n},
\end{equation}
where $(a;q)_n\defeq\prod_{j=0}^{n-1}(1-aq^j)$. 
For example, $\mR(1;q)=1+\sum_{n=1}^\infty p(n)q^n$ is the generating function for partitions. For integers $b>a>0$, let $A(\frac ab;n)$ denote the Fourier coefficient of $\mR(\zeta_b^a;q)$: 
\[\mR(\zeta_b^a;q)=:1+\sum_{n=1}^\infty A\(\frac ab;n\)q^n\]
where $\zeta_b=\exp(\frac{2\pi i}b)$ is a $b$-th root of unity. The following identity is easy to get but helpful in understanding the relation between $A(\frac ab;n)$ and $N(a,b;n)$:
\begin{equation}\label{Relation between rank mod and A()}
	bN(a,b;n)=p(n)+\sum_{j=1}^{b-1}\zeta_b^{-aj}A\(\frac jb;n\). 
\end{equation}
It is not hard to show that $A(\frac jb;n)\in \R$ and $A(\frac jb;n)=A(1-\frac jb;n)$ for $1\leq j\leq b-1$, because $N(a,b;n)=N(-a,b;n)$ and $\zeta_b^{-aj}+\zeta_b^{-a(b-j)}=2\cos (\frac{\pi a j}b)$.   

The function $\mR(w;q)$ has many beautiful connections and properties. When $w=-1$, it is known that $N(0,2;n)-N(1,2;n)=A(\frac12;n)$ is the Fourier coefficient of Ramanujan's third order mock theta function $f(q)$ (see e.g. \cite[p. 127-131]{RamanujanLostNotebook}).  Dragonette \cite{Dragonette1952} and Andrews \cite{Andrews1966} improved the asymptotic formula of $A(\frac12;n)$ which was conjectured by Ramanujan. The exact formula of $A(\frac12;n)$ was later proven by Bringmann and Ono:
\begin{theorem}[{\cite[Theorem~1.1]{BrmOno2006ivt}}]\label{exactFmlThmMod2}
	The Andrews-Dragonette conjecture is true: 
	\begin{equation}\label{exactFmlMod2}
		A\(\frac 12;n\)=\frac{\pi}{(24n-1)^{\frac 14}}\sum_{c=1}^\infty \frac{(-1)^{\floor{\frac{c+1}2}}A_{2c}(n-\frac{c(1+(-1)^c)}4)}c I_{\frac12}\(\frac{\pi\sqrt{24n-1}}{12c}\). 
	\end{equation}
\end{theorem}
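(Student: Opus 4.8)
The plan is to exhibit $q^{-1/24}\mR(-1;q)$ as the holomorphic part of a harmonic Maass form of weight $\frac12$ built by hand as a Poincaré series, and then to read off the Fourier coefficients of that series. Writing $q=e(z)$ for $z\in\HH$, recall from \eqref{rankGeneratingFunction} that $\mR(-1;q)=\sum_{n\geq0}q^{n^2}/(-q;q)_n^2$ is Ramanujan's third order mock theta function $f$, and that $A(\frac12;n)$ is its $n$-th coefficient. It is classical (going back to Zwegers) that $M(z)\defeq q^{-1/24}\mR(-1;q)=q^{-1/24}+\sum_{n\geq1}A(\frac12;n)\,q^{n-1/24}$ is a mock modular form of weight $\frac12$ carrying the multiplier system attached to $f$; the fractional exponents $n-\frac1{24}$ are cleared by the customary rescaling $z\mapsto 24z$, landing on a congruence subgroup $\Gamma$ with a character. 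It admits a canonical non-holomorphic completion $\widehat M$, obtained by adding a period integral of a weight-$\frac32$ unary theta series (its shadow), and $\widehat M$ is a harmonic Maass form whose only principal part is the single pole $q^{-1/24}$ at the cusp $\infty$.

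First I would construct a Maass--Poincaré series $\mathcal P(z)$ of weight $\frac12$ with exactly the same multiplier system as $M$, summing over $\Gamma_\infty\backslash\Gamma$ a seed built from the $M$-Whittaker function and the single Fourier exponent $-\frac1{24}$, normalized so that its prescribed principal part at $\infty$ is $q^{-1/24}$. Because the weight $\frac12$ lies below the range of absolute convergence, I would define $\mathcal P$ first for $\re(s)$ large in the spectral parameter and analytically continue to the harmonic point, and then verify that the resulting $\mathcal P$ is genuinely a harmonic Maass form of weight $\frac12$, annihilated by the weight-$\frac12$ hyperbolic Laplacian.

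Next I would compute the Fourier expansion of $\mathcal P$ by the standard unfolding of the sum over cosets. Organizing the group elements by their lower-left entry $c$ yields, for each $c$, an exponential sum over residues weighted by the multiplier; carefully identifying the $f$-multiplier converts this sum into $A_{2c}(n-\frac{c(1+(-1)^c)}4)$ with the prefactor $(-1)^{\floor{(c+1)/2}}$, the modulus doubling to $2c$ and the parity of $c$ reflecting the two cusps of $\Gamma$ — equivalently, the fact that Watson's transformation of $f$ under $z\mapsto -1/z$ mixes in the companion third order mock theta function $\omega$. The remaining transverse integral along each Farey arc evaluates to a Bessel function of order $1-\frac12=\frac12$, namely $I_{\frac12}$, with argument $\frac{4\pi}{2c}\sqrt{\frac1{24}\cdot\frac{24n-1}{24}}=\frac{\pi\sqrt{24n-1}}{12c}$, while the normalization contributes the factor $\pi(24n-1)^{-1/4}$. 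Assembling these pieces reproduces exactly the right-hand side of \eqref{exactFmlMod2} as the $n$-th holomorphic coefficient of $\mathcal P$.

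Finally I would identify $\mathcal P$ with $\widehat M$, which promotes the coefficient formula for $\mathcal P$ into one for $A(\frac12;n)$. Both are harmonic Maass forms of weight $\frac12$ for the same group and multiplier, with the identical principal part $q^{-1/24}$ at $\infty$ and holomorphic behavior at the remaining cusps, so the difference $g\defeq\widehat M-\mathcal P$ is holomorphic and has vanishing principal parts; moreover $\mathcal P$ is constructed so that the antilinear operator $\xi_{1/2}$ sends it to the same weight-$\frac32$ theta shadow as $\xi_{1/2}\widehat M$, whence $\xi_{1/2}g=0$ and $g$ is a genuine holomorphic modular form of weight $\frac12$. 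Since the relevant space of such forms is trivial, $g=0$, and comparing $n$-th coefficients yields \eqref{exactFmlMod2}. I expect the main obstacle to be the multiplier bookkeeping in the third step: pinning down the precise twist that turns the raw Kloosterman-type sums into $A_{2c}(\cdot)$ with the correct shift and sign, together with controlling $\mathcal P$ and $\widehat M$ at every inequivalent cusp so that the vanishing argument for $g$ is valid.
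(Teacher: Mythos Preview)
The paper does not give its own proof of this statement: Theorem~\ref{exactFmlThmMod2} is quoted verbatim from \cite{BrmOno2006ivt} as background, and no argument for it appears anywhere in the paper. So there is no ``paper's own proof'' to compare against.

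That said, your outline is essentially the Bringmann--Ono strategy from \cite{BrmOno2006ivt}, and it is also the template the present paper follows for its own main result (Theorem~\ref{main theorem}) in the case of odd primes $p\geq 5$: build a Maass--Poincar\'e series with prescribed principal part, compute its Fourier expansion by unfolding to produce the Kloosterman/Bessel sum, and then show that the difference with the completed mock modular form lies in a space of weight-$\tfrac12$ holomorphic forms that is forced to be trivial. Two small points are worth tightening. First, the order of the vanishing argument is slightly off: you do not need to check that $\xi_{1/2}\mathcal P$ and $\xi_{1/2}\widehat M$ have the same shadow in advance. The correct logic (as in Lemma~\ref{xiOperatorMapToCuspForms, 2} and the paper's proof in \S\ref{Subsection: Proof of main theorem}) is that $g=\widehat M-\mathcal P$ is a harmonic Maass form with vanishing principal parts at all cusps, whence $\xi_{1/2}g=0$ by the contrapositive of that lemma, and only then is $g$ holomorphic. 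Second, ``analytically continue to the harmonic point'' hides a genuine convergence issue at $s=\tfrac34$; both \cite{BrmOno2006ivt} and this paper (Section~\ref{Section: sums of v-val KL sums}) secure this via nontrivial estimates on sums of Kloosterman sums, and you should flag that as a real step rather than a formality.
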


The author \cite{QihangFirstAsympt} provided a detailed proof of the exact formula of $A(\frac 13;n)=A(\frac 23;n)$, which is the Fourier coefficient of $\mR(\zeta_3;q)=\mR(\zeta_3^2;q)$. Let $S(m,n,c,\nu)$ denote the Kloosterman sum defined in \eqref{eq:kloos_def} with $m,n,c\in \Z$ and multiplier system $\nu$ (see Definition~\ref{multiplier system def}). Let $\nu_\eta$ denote the weight $\frac 12$ multiplier of the Dedekind $\eta$ function defined in \eqref{etaMultiplier}. 
\begin{theorem}[{\cite[Theorem~2.2]{QihangFirstAsympt}}]\label{exactFmlThmMod3}We have
	\begin{equation}\label{exactFmlMod3}
		A\(\frac 13;n\)=A\(\frac 23;n\)=\frac{2\pi\,e(-\frac18)}{(24n-1)^{\frac14}}\sum_{3|c>0}\frac{S(0,n,c,\,(\frac \cdot3)\overline{\nu_\eta})}{c}I_{\frac12}\(\frac{\pi\sqrt{24n-1}}{6c}\).
	\end{equation}
\end{theorem}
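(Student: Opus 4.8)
The plan is to realize $q^{-1/24}\mR(\zeta_3;q)$ as the holomorphic part of a weight-$\frac12$ harmonic Maass form and to match it against an explicitly constructed Maass--Poincar\'e series, exactly as Bringmann and Ono did for the mock theta case recorded in \eqref{exactFmlMod2}. Writing $q=e(z)$ with $z\in\HH$, the normalization by $q^{-1/24}$ shifts the exponents of $\mR(\zeta_3;q)$ from $n$ to $n-\frac1{24}$, which is the source of the factor $24n-1$ throughout \eqref{exactFmlMod3}. By the theory of Zwegers and Bringmann, $\mR(\zeta_3;q)$ is mock modular: its modular completion $\widehat{\mR}(\zeta_3;z)$ is a harmonic Maass form of weight $\frac12$ whose multiplier on the relevant congruence group is precisely $(\frac\cdot3)\overline{\nu_\eta}$, the twist appearing in the Kloosterman sum, and whose shadow is a weight-$\frac32$ unary theta series of $\sum_n(\frac n3)\,n\,q^{n^2/24}$-type. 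The divisibility condition $3\mid c$ in the summation reflects the level forced by this multiplier.

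First I would construct a Maass--Poincar\'e series $P(z)$ of weight $\frac12$ and multiplier $(\frac\cdot3)\overline{\nu_\eta}$, built from a Whittaker seed $\varphi_s(z)=\mathcal{M}_s(4\pi|m|y)\,e(mx)$ attached to the cusp $i\infty$ with index $m=0$, whose effective exponent $0-\frac1{24}$ reproduces the $q^{-1/24}$ singularity of $q^{-1/24}\mR(\zeta_3;q)$. Next I would compute the Fourier expansion of $P$ by unfolding over the double cosets $\Gamma_\infty\backslash\Gamma/\Gamma_\infty$: the sum over lower-left entries $c$ with $3\mid c$ collects the multiplier and the additive characters into the Kloosterman sum $S(0,n,c,(\frac\cdot3)\overline{\nu_\eta})$, while the residual integral over $\R$ evaluates to an $I$-Bessel factor. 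Specializing to the harmonic value $s=\frac34$ for weight $\frac12$ collapses the Whittaker functions to elementary ones and produces exactly $\tfrac{2\pi e(-1/8)}{(24n-1)^{1/4}}\sum_{3\mid c>0} c^{-1} S(0,n,c,(\frac\cdot3)\overline{\nu_\eta})\, I_{\frac12}(\frac{\pi\sqrt{24n-1}}{6c})$ as the $n$-th coefficient of the holomorphic part of $P$; one checks consistency of the Bessel argument via $\tfrac{4\pi}{c}\sqrt{\tfrac1{24}\cdot\tfrac{24n-1}{24}}=\tfrac{\pi\sqrt{24n-1}}{6c}$.

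It then remains to prove $P=\widehat{\mR}(\zeta_3;\cdot)$. Both are harmonic Maass forms of weight $\frac12$ with the same multiplier, so I would match their principal parts cusp by cusp: the only exponential growth occurs at the cusp equivalent to $i\infty$, where each reduces to $q^{-1/24}+O(1)$, and at the remaining cusps one verifies boundedness. The difference $P-\widehat{\mR}(\zeta_3;\cdot)$ is therefore a genuinely holomorphic modular form of weight $\frac12$ with the given multiplier and no principal part anywhere; by the Serre--Stark classification the corresponding space is at most spanned by unary theta series, and comparing the known first few coefficients of $\mR(\zeta_3;q)$ from \eqref{rankGeneratingFunction} (or a support argument on the multiplier) forces the difference to vanish. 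Taking holomorphic parts of $P=\widehat{\mR}(\zeta_3;\cdot)$ yields \eqref{exactFmlMod3}, and the equality $A(\frac13;n)=A(\frac23;n)$ is immediate from the symmetry $A(\frac jb;n)=A(1-\frac jb;n)$ noted after \eqref{Relation between rank mod and A()}.

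The main obstacle is the identification $P=\widehat{\mR}(\zeta_3;\cdot)$, and within it the exact determination of the multiplier $(\frac\cdot3)\overline{\nu_\eta}$ together with the principal parts of $\widehat{\mR}(\zeta_3;\cdot)$ at every cusp: an error in the multiplier or a missed singularity at a non-infinite cusp would corrupt both the Kloosterman sum and the summation set $\{c:3\mid c\}$. Equivalently, in a direct circle-method derivation in the spirit of Bringmann's asymptotic analysis the same difficulty resurfaces as the need to control the non-holomorphic Mordell-integral remainder in the transformation of $\mR(\zeta_3;q)$ near each rational $h/k$ and to show that the Farey error terms vanish as the dissection order tends to infinity, so that the asymptotic expansion condenses into an exact identity.
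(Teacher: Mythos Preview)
This theorem is not proved in the present paper; it is quoted from \cite[Theorem~2.2]{QihangFirstAsympt} as background for the main result. There is therefore no proof here to compare against directly.

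That said, your outline is exactly the strategy the paper carries out for its own main theorem (Theorem~\ref{main theorem}, the case of primes $p\geq 5$): one builds the harmonic Maass form $\mathbf{G}_1$ from the completed rank generating function, constructs Maass--Poincar\'e series at the cusps with matching principal parts, computes their Fourier expansions to extract the Kloosterman sums and $I_{1/2}$-Bessel factors, and then shows the difference is a holomorphic weight-$\tfrac12$ form which Serre--Stark (Lemma~\ref{Theta function space only eta pz} and Lemma~\ref{mu p space is empty zero}) forces to vanish. Your sketch specializes this correctly to $p=3$: because $\tfrac13,\tfrac23\notin(0,x_0)\cup(1-x_0,1)=(0,\tfrac16)\cup(\tfrac56,1)$, there is no principal part at the cusp $0$, which is why \eqref{exactFmlMod3} has only a single sum over $3\mid c$ and no second sum of the type appearing in \eqref{main theorem formula}. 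The one structural difference is that for $p=3$ the multiplier $(\tfrac{\cdot}{3})\overline{\nu_\eta}$ is scalar-valued rather than the $(p-1)$-dimensional $\mu_p$ used for $p\geq 5$, so the argument in \cite{QihangFirstAsympt} does not need the vector-valued machinery of Section~\ref{Section: V-val things}; otherwise your plan and the paper's method coincide.
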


In 2009, Bringmann \cite{BringmannTAMS} used the circle method to find the asymptotics of $A(\frac \ell u;n)$ for general odd $u\geq 3$. Let $s(d,c)$ be the Dedekind sum defined in \eqref{etaMultiplier} and 
\begin{equation}\label{omega dc}
	\omega_{d,c}\defeq \exp(\pi i s(d,c)). 
\end{equation} 
When $(d,c)=1$, define $d'_{c}$ by $dd'_{c}\equiv -1\Mod c$ if $c$ is odd and $dd'_{c}\equiv -1\Mod{2c}$ if $c$ is even. 
Denote 
\[g_{(h)}\defeq \frac{g}{\gcd(g,h)}\]
for non-zero integers $g$ and $h$. If $u|c$, define
\begin{equation}\label{B lpc sum def}
	B_{\ell ,u,c}(n,m)\defeq (-1)^{\ell c+1}\sum_{d\Mod c^*}\frac{\sin(\tfrac {\pi \ell }u)\;\omega_{d,c}}{\sin(\frac{\pi \ell d'_{c}}u)\exp(\frac{3\pi i c_{(u)}d'_{c}\ell^2}{u})}e\(\frac{md'_{c}+nd}c\). 
\end{equation}
When $u\nmid a$ and $1\leq \ell<u_{(a)}$, let $0\leq [a_{(u)}\ell]<u_{(a)}$ be defined by $[a_{(u)}\ell]\equiv a_{(u)}\ell \Mod {u_{(a)}}$. Define
\begin{equation}\label{D lpa sum def}
	D_{\ell,u,a}(n,m)\defeq (-1)^{a\ell+[a_{(u)}\ell]}\sum_{b\Mod a^*}\omega_{b,a}e\(\frac{mb'_{a}+nb}a\). 
\end{equation}
When $u\nmid a$, define 
\begin{equation}\label{delta l u a r def}
	\delta_{\ell,u,a,r}\defeq \left\{\begin{array}{ll}
		-(\frac12+r)\frac {[a_{(u)}\ell ]}{u_{(a)}}+\frac32\(\frac {[a_{(u)}\ell]}{u_{(a)}}\)^2+\frac1{24},&\ \text{if }0<\frac{[a_{(u)}\ell]}{u_{(a)}}<\frac 16,\vspace{0.3em}\\
		-\frac{5[a_{(u)}\ell ]}{2u_{(a)}}+\frac32\(\frac {[a_{(u)}\ell]}{u_{(a)}}\)^2+\frac{25}{24}-r+\frac{r[a_{(u)}\ell]}{u_{(a)}},&\ \text{if }\frac 56<\frac{[a_{(u)}\ell]}{u_{(a)}}<1,\vspace{0.3em}\\
		0&\ \text{otherwise}, 
	\end{array}
	\right. 
\end{equation}
and when $0<\frac{[a_{(u)}\ell]}{u_{(a)}}<\frac 16$ or $\frac 56<\frac{[a_{(u)}\ell]}{u_{(a)}}<1$, define
\begin{align}\label{m l p a r def}
	\begin{split}
		&m_{\ell,u,a,r}\defeq \frac1{2u_{(a)}^2}\\
		&\cdot \left\{\begin{array}{ll}
			\Big(-3\(a_{(u)}\ell-[a_{(u)}\ell]\)^2-u_{(a)}(1+2r)\(a_{(u)}\ell-[a_{(u)}\ell]\)\Big),&0<\frac{[a_{(u)}\ell]}{u_{(a)}}<\frac 16,\vspace{0.3em}\\
			\Big(-3\(a_{(u)}\ell-[a_{(u)}\ell]\)^2+u_{(a)}(2r-5)\(a_{(u)}\ell-[a_{(u)}\ell]\)+2u_{(a)}^2(r-1)\Big),&\frac 56<\frac{[a_{(u)}\ell]}{u_{(a)}}<1.
		\end{array}
		\right. 
	\end{split}
\end{align}
By \cite[bottom of p. 3485]{BringmannTAMS}, or directly by $u_{(a)}\Big|\(a_{(u)}\ell-[a_{(u)}\ell]\)$, we can see that $m_{\ell,u,a,r}\in \Z$ always. 

Bringmann proved: 
\begin{theorem}[{\cite[Theorem~1.1]{BringmannTAMS}}]If $1\leq \ell<u$ are coprime integers and $u$ is odd, then for positive integers $n$ we have
	\begin{align}\label{Bringmann formula}
		\begin{split}
			&A\(\frac \ell u;n\)=\frac{4\sqrt 3\, i}{\sqrt{24n-1}}\sum_{c:\;u|c\leq \sqrt n}\frac{B_{\ell,u,c}(-n,0)}{\sqrt c} \sinh\(\frac{\pi\sqrt{24n-1}}{6c}\)\\
			&+\frac{8\sqrt{3}\sin(\frac{\pi \ell}u)}{\sqrt{24n-1}}\sum_{r\geq 0}\sum_{\substack{a\leq\sqrt n:\\u\nmid a,\\ \delta_{\ell,u,a,r}>0}}\frac{D_{\ell,u,a}(-n,m_{\ell,u,a,r})}{\sqrt a}\sinh\(\frac{\pi\sqrt{2\delta_{\ell,u,a,r}(24n-1)}}{a\sqrt{3}}\){+O_{u,\ep}(n^\ep)}. 
		\end{split}
	\end{align}
\end{theorem}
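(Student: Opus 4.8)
The plan is to apply the Hardy--Ramanujan--Rademacher circle method directly to the rank generating function $\mR(\zeta_u^\ell;q)$. By Cauchy's theorem,
\[A\(\frac\ell u;n\)=\frac1{2\pi i}\int_C\frac{\mR(\zeta_u^\ell;q)}{q^{n+1}}\,dq,\]
where $C$ is a circle of radius $e^{-2\pi/N^2}$ with $N\approx\sqrt n$. Writing $q=e^{2\pi i(\phi+i/N^2)}$ and performing a Farey dissection of $[0,1)$ into arcs centred at the fractions $h/k$ with $\gcd(h,k)=1$ and $k\le N$, I reduce everything to the radial behaviour of $\mR(\zeta_u^\ell;q)$ as $q\to e^{2\pi i h/k}$. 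Because each admissible denominator satisfies $k\le N\approx\sqrt n$, the truncations $u\mid c\le\sqrt n$ and $a\le\sqrt n$ in the statement are exactly the Farey order, and summing the arc contributions with a fixed denominator over the numerators $h$ (equivalently over the inverse $d'_c$ modulo $c$, resp.\ $b'_a$ modulo $a$) will assemble the reduced-residue sums defining $B_{\ell,u,c}$ and $D_{\ell,u,a}$, with the phase $e\(\frac{md'_c+nd}c\)$ produced by the factor $q^{-n}$ together with any shift $m$ coming from a principal part.

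The crucial input is the modular transformation of $\mR(\zeta_u^\ell;q)$ at each cusp $h/k$. In the local variable $z\to0^+$ with $q=e^{2\pi i(h+iz)/k}$, the behaviour splits according to whether $u\mid k$. When $u\mid k$, the root of unity interacts with the cusp so that $\mR$ inherits the full exponential singularity $\exp\(\frac{\pi}{12kz}\)$ of $1/\eta$; the $\eta$-transformation contributes the Dedekind-sum data $\omega_{d,c}=\exp\(\pi i\,s(d,c)\)$, the inverse $d'_c$, and the sign $(-1)^{\ell c+1}$, which together become the twisted Kloosterman-type sum $B_{\ell,u,c}(-n,0)$. When $u\nmid k$, there is no such singularity, but the finite product $(\zeta_u^\ell q;q)_n(\zeta_u^{-\ell}q;q)_n$ has zeros accumulating near the arc, so $\mR$ acquires a principal part whose poles must be extracted. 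Their locations are governed by the reduced residue $[a_{(u)}\ell]$, and only the residues nearest the two ends of the fundamental interval---precisely the ranges $0<[a_{(u)}\ell]/u_{(a)}<\tfrac16$ and $\tfrac56<[a_{(u)}\ell]/u_{(a)}<1$---yield weaker but still growing exponentials; these generate $D_{\ell,u,a}(-n,m_{\ell,u,a,r})$ together with the rescaled exponent $\delta_{\ell,u,a,r}$ and the integer shift $m_{\ell,u,a,r}$.

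With the transformations in hand, each leading arc integral has the shape $\int\exp\(\frac{A}{z}+Bz\)\,dz$, with $A$ governed by the cusp singularity and $B\propto 24n-1$; the standard evaluation via $I_{\frac12}(x)=\sqrt{\tfrac2{\pi x}}\,\sinh x$ turns these into exactly the factors $\sinh\(\frac{\pi\sqrt{24n-1}}{6c}\)$ and $\sinh\(\frac{\pi\sqrt{2\delta_{\ell,u,a,r}(24n-1)}}{a\sqrt3}\)$ in the statement. Collecting the leading terms over all admissible $c$ (resp.\ over $a$ and $r\ge0$ with $\delta_{\ell,u,a,r}>0$) yields the two displayed sums; here $\delta_{\ell,u,a,r}$ plays the role of an effective $\tfrac1{24}$, so that the threshold $\delta_{\ell,u,a,r}>0$ is exactly the condition for the corresponding saddle to produce a growing $\sinh$. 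Everything else---the tails beyond the main Bessel approximation, the error from truncating the Farey arcs at $N$, the genuinely non-modular correction in the transformation, and the subleading poles with $\delta_{\ell,u,a,r}\le0$---is estimated to contribute only $O_{u,\ep}(n^\ep)$.

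The main obstacle I anticipate is the analysis of the cusps with $u\nmid k$. One must determine exactly which poles of the summand lie close enough to the contour to contribute, compute their residues through the modular transformation, and verify that after the change of variables the surviving exponents are controlled by the single arithmetic quantity $\delta_{\ell,u,a,r}$, with its two-interval case distinction and its positivity threshold $\delta_{\ell,u,a,r}>0$ separating genuine contributions from error. Checking that the residue shift is the integer $m_{\ell,u,a,r}$, and that the positivity condition correctly isolates the growing terms, is the technical heart of the argument; by contrast, the $u\mid k$ case is a close analogue of Rademacher's classical treatment of $p(n)$.
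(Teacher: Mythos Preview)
The theorem you are attempting to prove is not proved in this paper; it is quoted as \cite[Theorem~1.1]{BringmannTAMS} and serves as background for the paper's own main result (Theorem~\ref{main theorem}). The present paper gives no proof of this asymptotic formula---it instead establishes, by a different method (Maass--Poincar\'e series and vector-valued harmonic Maass forms), that for prime modulus the formula summed to infinity is exact.

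That said, your outline is an accurate high-level description of Bringmann's original circle-method argument: Farey dissection at order $N\approx\sqrt n$, the case split according to whether $u\mid k$, the $\eta$-type transformation producing $B_{\ell,u,c}$ at cusps with $u\mid k$, and the extraction of a finite principal part governed by $\delta_{\ell,u,a,r}$ at the remaining cusps yielding $D_{\ell,u,a}$. The point you flag as the main obstacle---controlling the mock/non-modular correction term and verifying that only the residues with $\delta_{\ell,u,a,r}>0$ contribute beyond $O_{u,\ep}(n^\ep)$---is indeed where the bulk of the work in \cite{BringmannTAMS} lies, and your sketch does not supply those estimates. As a plan it is sound, but it is a plan for reproving Bringmann's theorem, not for anything done in the paper under review.
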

Note that the sum of $r\geq 0$ in the second line is a finite sum because when $u$ is fixed and $r$ is large enough, $\delta_{\ell,u,a,r}$ will be always negative. Here we have modified the notation in Bringmann's paper for convenience. 
Bringmann and Ono \cite{BringmannOno2012} claimed that the above sum, when summing up to infinity, should be the exact formula for $A(\frac \ell u;n)$.

When $u=p$ is a prime number, we prove their statement in this paper.  We also explain $B_{\ell,p,c}(-n,0)$ and $D_{\ell,p,a}(-n,m_{\ell,p,a,r})$ as components of vector-valued Kloosterman sums. Note that when $u=p$, we have $c_{(p)}=\frac cp$, $a_{(p)}=a$ and $p_{(a)}=p$ for $p|c$ and $p\nmid a$, hence the formulae from \eqref{B lpc sum def} to \eqref{m l p a r def} become simpler. Specifically, we rewrite $\delta_{\ell,p,a,r}$ at \eqref{delta l p a r def}. 

Let $\mu_p:\Gamma_0(p)\rightarrow \GL_{p-1}(\C)$ be defined as in \eqref{mu matrix define}. For every integer $r\geq 0$, let the vector $\mathbf{X}_r$ be defined as in \eqref{X r define} and let $\mathbf{S}_{\infty\infty}(m,n,c,\mu_p)$ and $\mathbf{S}_{0\infty}(\mathbf{X}_r,n,a,\mu_p;r)$ be the vector-valued Kloosterman sums defined in \eqref{S infty infty def} and \eqref{S 0 infty def, using X r}, with $S_{\infty\infty}^{(\ell)}(m,n,c,\mu_p)$ and $S_{0\infty}^{(\ell)}(X_r^{([a\ell])},n,a,\mu_p;r)$ as the scalar values at their $\ell$-th entry, respectively. Let $x_r$ be defined at \eqref{x r def}. 

For a prime $p$, we define $[a\ell]$ by $0\leq [a\ell]<p$ and $[a\ell]\equiv a\ell\Mod p$. 
Then we have the following theorem. 
\begin{theorem}\label{main theorem}
	For every prime $p\geq 5$, integer $1\leq \ell\leq p-1$ and positive integer $n$, with the Kloosterman sums defined in \eqref{S infty infty ell def} and \eqref{S 0 infty ell def, using X r}, we have 
	\begin{align}\label{main theorem formula}
		\begin{split}
			&A\(\frac \ell p;n\)=\frac{2\pi e(-\frac 18)\sin(\frac{\pi\ell}p)}{(24n-1)^{\frac14}} \sum_{c>0:\,p|c}\frac{S_{\infty\infty}^{(\ell)}(0,n,c,\mu_p)}{c} I_{\frac12}\(\frac{4\pi \sqrt{24n-1}}{24c}\)\\
			&+\frac{4\pi \sin(\frac{\pi\ell}p)}{(n-\frac1{24})^{\frac14}}\sum_{\substack{r\geq 0\\x_r^{-1}<p}}\!\!\!\sum_{\substack{a>0:\,p\nmid a,\\  \frac{[a\ell]}p\in (0,x_r)\\ {\mathrm{or\ }}\frac{[a\ell]}p\in(1-x_r,1)}}\!\!\!\!\!\!\!\!\!\frac{S_{0\infty}^{(\ell)}\(\ceil{-p\delta_{\ell,p,a,r}},n,a,\mu_p;r\)}{a\cdot \delta_{\ell,p,a,r}^{-\frac14}}I_{\frac12}\(\frac{4\pi \sqrt{\delta_{\ell,p,a,r}(n-\frac1{24})}}{a}\),
		\end{split}
	\end{align}
	where $\ceil{x}$ is the smallest integer $\geq x$ and $\floor{x}$ is the largest integer $\leq x$. 
\end{theorem}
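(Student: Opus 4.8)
The plan is to realize the relevant rank generating functions as components of a single vector-valued harmonic Maass form of weight $\frac12$ and to extract its Fourier coefficients from a Maass--Poincar\'e series, the analytic object whose expansion is engineered precisely to produce Kloosterman sums weighted by $I_{\frac12}$-Bessel functions. First I would assemble the $p-1$ functions $\mR(\zeta_p^\ell;q)$, $1\le\ell\le p-1$, into a vector, multiply by a suitable rational power of $q$ to absorb the shift $n\mapsto n-\frac1{24}$ visible in $(24n-1)$, and adjoin the non-holomorphic Eichler integral of an associated weight-$\frac32$ theta series that completes the mock modular form. After this normalization the vector transforms under $\SL_2(\Z)$ through the multiplier $\mu_p$ of \eqref{mu matrix define}, so the problem becomes the computation of the holomorphic part of a vector-valued harmonic Maass form on $\Gamma_0(p)$, whose cusps are $\infty$ and $0$.

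Next I would read off the principal part of each component at the two cusps, since these data determine the form up to a weight-$\frac12$ cusp form, which one argues does not contribute to the coefficients $A(\frac\ell p;n)$. At $\infty$ the singularity is the $q^{-1/24}$-type pole of the partition factor, contributing index $0$ and hence the sums $S_{\infty\infty}^{(\ell)}(0,n,c,\mu_p)$ of \eqref{S infty infty ell def} over moduli $c$ with $p\mid c$. At the cusp $0$ the modular transformation of $\mR(\zeta_p^\ell;q)$ develops genuine exponential singularities whose orders are cataloged by the integer parameter $r$: the exponents are exactly the quantities $\delta_{\ell,p,a,r}$ of \eqref{delta l p a r def}, the associated Fourier indices are the $m_{\ell,p,a,r}$ recorded in the principal-part vector $\mathbf{X}_r$ of \eqref{X r define}, and the relevant range of $r$ is delimited by $x_r$ of \eqref{x r def}; this yields the second family of terms $S_{0\infty}^{(\ell)}(\ceil{-p\delta_{\ell,p,a,r}},n,a,\mu_p;r)$ of \eqref{S 0 infty ell def, using X r} over moduli $a$ with $p\nmid a$. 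I would then form the vector-valued Maass--Poincar\'e series carrying precisely these principal parts, specialized to the harmonic spectral value $s=\frac34$ appropriate to weight $\frac12$, and identify it with the completed form above.

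With the form written as a Poincar\'e series, extracting its holomorphic part is classical: the $n$-th holomorphic coefficient is, for each cusp, a sum over the modulus of a vector-valued Kloosterman sum times an $I_{\frac12}$-Bessel function whose argument is $\frac{4\pi}{(\text{modulus})}\sqrt{(\text{principal exponent})(n-\frac1{24})}$. The $\infty$-contribution has principal exponent $\frac1{24}$, producing the argument $\frac{4\pi\sqrt{24n-1}}{24c}$, while the $0$-contribution has principal exponent $\delta_{\ell,p,a,r}$, producing $\frac{4\pi\sqrt{\delta_{\ell,p,a,r}(n-\frac1{24})}}{a}$; matching the elementary constants and the entries of $\mu_p$ then yields \eqref{main theorem formula} verbatim. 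As a consistency check I would verify, using $I_{\frac12}(x)=\sqrt{2/(\pi x)}\,\sinh x$, that truncating these infinite sums at modulus $\sqrt n$ reproduces Bringmann's asymptotic \eqref{Bringmann formula}, with the scalar sums $B_{\ell,p,c}(-n,0)$ and $D_{\ell,p,a}(-n,m_{\ell,p,a,r})$ recovered as the $\ell$-th entries of $\mathbf{S}_{\infty\infty}$ and $\mathbf{S}_{0\infty}$ up to the explicit phase $e(-\frac18)$; this is what makes precise the assertion that Bringmann's formula, summed to infinity, is exact.

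I expect two steps to be the main obstacles. The first is convergence: because the weight is $\frac12$ the relevant Bessel function is $I_{\frac12}$ rather than $I_{\frac32}$, so the series fails to converge absolutely (the generic term is of size $c^{-1+\ep}$ after Weil bounds), and the Poincar\'e series sits exactly at the edge of the half-plane of absolute convergence of its defining sum in $s$. Establishing that the infinite sums converge and genuinely represent the coefficient therefore requires Hecke's analytic-continuation trick in $s$ together with cancellation in the vector-valued Kloosterman sums $\mathbf{S}_{\infty\infty}$ and $\mathbf{S}_{0\infty}$; this is the crux that upgrades Bringmann's asymptotic into an exact identity. The second, more bookkeeping-intensive difficulty is the exact determination of the principal parts at the cusp $0$---deriving the casework for $\delta_{\ell,p,a,r}$ and $m_{\ell,p,a,r}$ in \eqref{delta l u a r def}--\eqref{m l p a r def} and the cutoffs $x_r$ directly from the $\eta$-multiplier $\nu_\eta$ and the Dedekind-sum phase $\omega_{d,c}$ in the transformation law---which is the source of the fractional-part conditions on $\frac{[a\ell]}{p}$ governing the second sum.
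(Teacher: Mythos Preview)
Your proposal is correct and follows essentially the same architecture as the paper: complete the vector $(\mathcal{G}_1(\tfrac{\ell}{p};z))_\ell$ to a vector-valued harmonic Maass form for $(\Gamma_0(p),\mu_p)$, compute its principal parts at $\infty$ and $0$, match them with Maass--Poincar\'e series $\mathbf{P}_\infty$ and $\mathbf{P}_0$ at $s=\tfrac34$, and secure convergence there by a Goldfeld--Sarnak style bound on sums of the vector-valued Kloosterman sums. The one step you pass over---that the residual weight-$\tfrac12$ holomorphic form actually vanishes rather than merely ``does not contribute''---is handled in the paper by pushing each component into $M_{1/2}(\Gamma_1(576p^2),\nu_\theta)$ via $z\mapsto 24z$, invoking the Serre--Stark basis theorem to force it to be a multiple of $\eta(pz)$, and then checking that $\eta(pz)$ is incompatible with the multiplier $\mu_p$ (Lemma~\ref{mu p space is empty zero}); you should be aware that this is where primality of $p$ and the specific shape of $\mu_p$ are genuinely used.
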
 

\begin{remark}We have several remarks regarding the our main theorem above. 
	\begin{itemize}
		\item[(1)]
	This theorem also proves that Bringmann's formula \eqref{Bringmann formula}, when summing up $c$ and $a$ to infinity, is the exact formula. Indeed, for all prime $p\geq 5$, $1\leq \ell\leq p-1$, $r\geq 0$, positive integers $a,c$ such that $p|c$, $p\nmid a$, and $\delta_{\ell,p,a,r}>0$, we have the following relations: 
	\begin{align}
		\label{KL sums match with Bringmann, cusp infty}
		\overline{i\cdot B_{\ell,p,c}(-n,0)}&=e(-\tfrac 18)\sin(\tfrac{\pi\ell}p)S_{\infty\infty}^{(\ell)}(0,n,c,\mu_p),\\
		\label{KL sums match with Bringmann, cusp 0}
		\overline{D_{\ell,p,a,r}(-n,m_{\ell,p,a,r})}&=S_{0\infty}^{(\ell)}\(\ceil{-p\delta_{\ell,p,a,r}},n,a,\mu_p;r\), \\
		I_{\frac 12}(z) &= (\tfrac 2{\pi z})^{\frac 12}\sinh(z) \quad (\text{\cite[(10.39.1)]{dlmf}}), \text{ and }\\
		\label{delta>0 iff alp in xr intervals}
		\delta_{\ell,p,a,r}>0 \quad &\text{if and only if}\quad  \tfrac{[a\ell]}p\in (0,x_r)\cup (1-x_r,1). 
	\end{align}
	The last relation \eqref{delta>0 iff alp in xr intervals} is clear from the definition. 
	Since $A(\frac \ell p;n)$, $\delta_{\ell,p,a,r}$ and $I_{\frac 12}(y)$ (for $y\in \R$) are all real (see \eqref{Relation between rank mod and A()} for $A$), we are safe to take the complex conjugation of \eqref{Bringmann formula}. 
	We include the proof of \eqref{KL sums match with Bringmann, cusp infty} and  \eqref{KL sums match with Bringmann, cusp 0} in \S\ref{Subsection: Proof of main theorem}. 

\item[(2)] Bringmann's formula \eqref{Bringmann formula} truncates the sum over $c$ and $a$ at $c\leq \sqrt n$ and $a\leq \sqrt n$, with the error term $O_{u,\ep}(n^\ep)$. This estimate is due to the individual Weil-type bound of the form $O(c^{1/2+\ep})$ and $O(a^{1/2+\ep})$ of Kloosterman-type sums in \cite[Lemma~3.2]{BringmannTAMS}. This kind of estimate can be traced back to \cite[Lemma~4.1]{Andrews1966} in the case $u=2$, where Andrews discussed the relation with generalized Kloosterman sums. Such estimates are insufficient to prove the convergence of these sums over $c$ and $a$ when we test the absolute value of the summands and apply the approximation $\sinh(x) = x + O(x^3)$ as $x \to 0$.

Bringmann and Ono achieved the exact formula for $u=2$ by introducing the weak Maass forms, and this issue is overcome by proving the convergence of sums of Kloosterman sums in \cite[\S4]{BrmOno2006ivt}. For the case $u=3$ in \eqref{Bringmann formula}, the author overcame this issue in \cite[\S10]{QihangFirstAsympt} with a systematic method estimating sums of generalized Kloosterman sums. Therefore, by explaining $B_{\ell,p,c}(-n,0)$ and $D_{\ell,p,a,r}(-n,m_{\ell,p,a,r})$ as Kloosterman sums and estimating their sums, we shall be able to generalize the method in \cite{BrmOno2006ivt} to exact formulae in other cases. This partially forms the motivation of our main theorem above.

\item[(3)] Bringmann also proved the Andrews-Lewis conjecture for partition biases of rank modulo 3 in \cite[Theorem~1.3]{BringmannTAMS}. Our exact formulae here may provide properties of partition biases in other cases. 
\end{itemize}
	
\end{remark}

\section{Dyson's conjectures: corollary of Theorem~\ref{main theorem}}
Dyson made the following conjectures which were proved by Atkin and Swinnerton-Dyer in 1953. 
\begin{theorem}[{\cite{AtkinSDrank}}]
	\label{Dyson's conjectures in ASD}
	For all $n\geq 0$, we have the following identities: 
	\begin{align*}
		N(1,5;5n+1)&=N(2,5;5n+1);			\tag{5-1} 		\\
		N(0,5;5n+2)&=N(2,5;5n+2);			\tag{5-2}		 \\
		N(0,5;5n+4)=N(1,5;5n+4)&=N(2,5;5n+4); 	\tag{5-4}\\
		N(2,7;7n)&=N(3,7;7n); 					\tag{7-0}\\
		N(1,7;7n+1)=N(2,7;7n+1)&=N(3,7;7n+1); 	\tag{7-1}\\
		N(0,7;7n+2)&=N(3,7;7n+2); 				\tag{7-2}\\
		N(0,7;7n+3)=N(2,7;7n+3), &\quad \ N(1,7;7n+3)=N(3,7;7n+3); \tag{7-3}\\
		N(0,7;7n+4)=N(1,7;7n+4)&=N(3,7;7n+4); 				\tag{7-4}\\
		N(0,7;7n+5)=N(1,7;7n+5)&=N(2,7;7n+5)=N(3,7;7n+5); 	\tag{7-5}\\
		N(0,7;7n+6)+N(1,7;7n+6)&=N(2,7;7n+6)+N(3,7;7n+6).  	\tag{7-6}
	\end{align*}
	
\end{theorem}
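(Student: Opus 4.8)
The plan is to derive every identity as a consequence of the exact formula in Theorem~\ref{main theorem}, using the elementary relation \eqref{Relation between rank mod and A()}. The first observation is that each listed identity equates two sums with the same number of rank counts, so that, written as $\sum_a\lambda_a N(a,p;n)=0$, the coefficients satisfy $\sum_a\lambda_a=0$ (the multi-term equalities such as $N(0,5;5n+4)=N(1,5;5n+4)=N(2,5;5n+4)$ splitting into several such relations, and $(7\text{-}6)$ having the shape $+1,+1,-1,-1$). Substituting \eqref{Relation between rank mod and A()} and summing against the $\lambda_a$, the coefficient of $p(n)$ equals $\sum_a\lambda_a=0$, so the partition term drops out and the identity becomes, equivalently, a homogeneous linear relation $\sum_{j=1}^{p-1}c_jA(\frac jp;n)=0$ with $c_j=\sum_a\lambda_a\zeta_p^{-aj}$. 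Folding by the symmetry $A(\frac jp;n)=A(1-\frac jp;n)$ replaces $\zeta_p^{-aj}+\zeta_p^{-a(p-j)}$ by $2\cos(\frac{2\pi aj}p)$, so the relation involves only the $\frac{p-1}2$ independent coefficients $A(\frac1p;n),\dots,A(\frac{p-1}2;n)$ for $n$ confined to a single residue class modulo $p$. Thus for $p=5,7$ I would first rewrite each conjecture as one or more such $A$-relations; since each side is an equality of real numbers, the $A$-relation is genuinely equivalent to the original count identity.

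Next I would insert the exact formula \eqref{main theorem formula} for each $A(\frac\ell p;n)$ occurring in these relations. The principal sum, over moduli $c$ with $p\mid c$, carries the $\ell$-independent Bessel factor $I_{\frac12}(\frac{4\pi\sqrt{24n-1}}{24c})$; hence, grouping by $c$, its contribution to $\sum_\ell c_\ell A(\frac\ell p;n)$ vanishes for all $n$ in the given class precisely when $\sum_\ell c_\ell\sin(\frac{\pi\ell}p)\,S_{\infty\infty}^{(\ell)}(0,n,c,\mu_p)=0$ for every admissible $c$. The secondary sum over $(a,r)$ is more delicate, since its summation range, the shift $\lceil-p\delta_{\ell,p,a,r}\rceil$, and the Bessel argument all depend on $\ell$ through $[a\ell]$ and $\delta_{\ell,p,a,r}$; here I would first reorganize the terms by the common value of $\delta_{\ell,p,a,r}$, so that terms sharing a Bessel factor are matched, after which the relation again collapses to the vanishing of the designated combination of components $S_{0\infty}^{(\ell)}(\lceil-p\delta_{\ell,p,a,r}\rceil,n,a,\mu_p;r)$. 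In both pieces the problem is thereby reduced to the vanishing of explicit combinations of vector-valued Kloosterman sums whenever $n$ lies in the prescribed class modulo $p$.

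The vanishing of these Kloosterman combinations is the crux and the expected main obstacle. Through the identifications \eqref{KL sums match with Bringmann, cusp infty}--\eqref{KL sums match with Bringmann, cusp 0} it amounts to cancellation among the sums $B_{\ell,p,c}(-n,0)$ and $D_{\ell,p,a,r}(-n,m_{\ell,p,a,r})$, whose dependence on $\ell$ enters through explicit trigonometric weights, signs, and the quadratic phase $\exp(\frac{3\pi i c_{(p)}d'_c\ell^2}p)$ of \eqref{B lpc sum def}. I would attack this by splitting the sum over $d\bmod c^*$ according to $d'_c\bmod p$: the congruence fixing $n$ modulo $p$ controls $e(\frac{nd}c)$ on each residue class, and the required cancellation should follow from orthogonality of the $\ell$-weights against the coefficients $c_\ell$ from the reduction. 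Establishing this uniformly across all the listed cases for $p=5$ and $p=7$ is the delicate arithmetic step; in the present paper I would record only the reduction above together with the explicit target combinations, deferring the proof that the relevant Kloosterman sums vanish to the companion paper announced in the abstract.
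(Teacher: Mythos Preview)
Your proposal matches the paper's approach exactly. Note that the paper does not actually prove Theorem~\ref{Dyson's conjectures in ASD}: it is cited from Atkin--Swinnerton-Dyer, and the new proof is only announced. The reduction you describe---rewriting each rank identity via \eqref{Relation between rank mod and A()} as a linear relation among the $A(\tfrac\ell p;n)$, inserting the exact formula of Theorem~\ref{main theorem}, and grouping terms sharing a common Bessel argument---is precisely the content of Corollary~\ref{main theorem corollary} and the statement of Theorem~\ref{Kloosterman sums vanish}, whose proof is deferred to the companion paper (as you also do).

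One small sharpening worth noting: for $p=7$ you say the secondary sum is ``more delicate'' and propose to reorganize by the value of $\delta_{\ell,p,a,r}$. The paper makes this concrete in Corollary~\ref{main theorem corollary}: only $r=0$ survives, $\delta_{\ell,7,a,0}=(7^2\cdot24)^{-1}$ whenever $[a\ell]\in\{1,6\}$, and the resulting Bessel argument $\tfrac{4\pi\sqrt{24n-1}}{24\cdot 7a}$ coincides with that of the principal sum at $c=7a$. This is why Theorem~\ref{Kloosterman sums vanish} packages the two Kloosterman sums into the single combination $S_7^{(\ell)}(n,c)$ indexed by $c=7a$, rather than treating the $0\infty$ piece separately.
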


\begin{remark}
	By \eqref{Relation between rank mod and A()} and $N(a,b;n)=N(b-a,b;n)$, the identity (5-4) implies \[N(\ell,5;5n+4)=\frac 15 p(5n+4)\quad \text{ for all }\ell,\]
	which implies the Ramanujan congruence $p(5n+4)\equiv 0\Mod 5$. The identity (7-5) implies 
	\[N(\ell,7;7n+5)=\frac 17 p(7n+5)\quad \text{ for all }\ell,\] 
	which implies the Ramanujan congruence $p(7n+5)\equiv 0\Mod 7$. 
\end{remark}

By \eqref{Relation between rank mod and A()}, our formula \eqref{main theorem formula} can also show some relations of $N(\ell,p;n)$ for $1\leq \ell\leq p-1$ by properties of $A(\frac{\ell}p;n)$. When $p=5$ or $7$, Theorem~\ref{main theorem} reduces to the following corollary. 
\begin{corollary}\label{main theorem corollary}
	For every positive integer $n$, when $p=5$ and $1\leq \ell\leq 4$, we have
	\begin{align}\label{main theorem formula, mod 5}
		\begin{split}
			A\(\frac \ell 5;n\)=\frac{2\pi e(-\frac 18)\sin(\frac{\pi\ell}5)}{(24n-1)^{\frac14}} \sum_{c>0:\,5|c}\frac{S_{\infty\infty}^{(\ell)}(0,n,c,\mu_5)}{c} I_{\frac12}\(\frac{4\pi \sqrt{24n-1}}{24c}\); 
		\end{split}
	\end{align}
	when $p=7$ and $1\leq \ell \leq 6$, we have
	\begin{align}\label{main theorem formula, mod 7}
		\begin{split}
			A\(\frac \ell 7;n\)&=\frac{2\pi e(-\frac 18)\sin(\frac{\pi\ell}7)}{(24n-1)^{\frac14}} \sum_{c>0:\,7|c}\frac{S_{\infty\infty}^{(\ell)}(0,n,c,\mu_7)}{c} I_{\frac12}\(\frac{4\pi \sqrt{24n-1}}{24c}\)\\
			&+\frac{4\pi \sin(\frac{\pi\ell}7)}{(24n-1)^{\frac14}}\sum_{\substack{a>0:\,7\nmid a,\\ [a\ell]=1\text{ or }6}}\!\!\!\!\!\frac{S_{0\infty}^{(\ell)}\(0,n,a,\mu_7;0\)}{\sqrt 7\,a }I_{\frac12}\(\frac{4\pi \sqrt{24n-1}}{24\times 7a}\).
		\end{split}
	\end{align}
\end{corollary}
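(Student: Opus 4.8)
The plan is to derive Corollary~\ref{main theorem corollary} as a direct specialization of Theorem~\ref{main theorem} by analyzing exactly when the second (cusp-$0$) sum contributes. The main theorem's formula consists of a cusp-$\infty$ term indexed by $c$ with $p\mid c$, plus a cusp-$0$ term indexed by $r\geq 0$ and $a$ with $p\nmid a$ subject to the constraint $\frac{[a\ell]}p\in(0,x_r)\cup(1-x_r,1)$ and the summation condition $x_r^{-1}<p$. The cusp-$\infty$ term is formally identical for all $p$, so the entire content of the corollary is to show that for $p=5$ the cusp-$0$ sum is empty, and that for $p=7$ it collapses to the single stated sum over $a$ with $[a\ell]\in\{1,6\}$, with $r=0$ and $\delta_{\ell,7,a,0}=\frac1{24}$.

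First I would unpack the condition $x_r^{-1}<p$, i.e. $x_r>\frac1p$, which restricts the outer sum over $r$. I expect the sequence $x_r$ (defined at \eqref{x r def}, which I would quote) to be decreasing in $r$, with $x_0=\frac16$ being the largest value; indeed the piecewise definition of $\delta_{\ell,p,a,r}$ in \eqref{delta l u a r def} is governed by the thresholds $\frac16$ and $\frac56=1-\frac16$, so $x_0=\frac16$. Thus for a given prime $p$ only finitely many $r$ satisfy $x_r>\frac1p$. For $p=5$ the requirement becomes $x_r>\frac15$, and since $x_0=\frac16<\frac15$, \emph{no} value of $r$ qualifies: the outer sum is empty and the cusp-$0$ term vanishes identically. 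This immediately yields \eqref{main theorem formula, mod 5}. For $p=7$ the requirement is $x_r>\frac17$, and since $x_0=\frac16>\frac17$ while (I expect) $x_1<\frac17$, only $r=0$ survives.

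With $r=0$ fixed and $p=7$, the remaining task is to identify the admissible $a$. The constraint $\frac{[a\ell]}7\in(0,x_0)\cup(1-x_0,1)=(0,\frac16)\cup(\frac56,1)$, together with $[a\ell]\in\{0,1,\dots,6\}$ and $7\nmid a$ (so $[a\ell]\neq0$), forces $\frac{[a\ell]}7<\frac16$ or $\frac{[a\ell]}7>\frac56$, i.e. $[a\ell]<\frac76$ or $[a\ell]>\frac{35}6$. Since $[a\ell]$ is an integer, this gives precisely $[a\ell]=1$ or $[a\ell]=6$. This matches the summation condition in \eqref{main theorem formula, mod 7}. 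It then remains to evaluate $\delta_{\ell,7,a,0}$ at these two values: plugging $\frac{[a\ell]}7$ into the first branch of \eqref{delta l p a r def} (the specialization of \eqref{delta l u a r def}) for $[a\ell]=1$, and into the second branch for $[a\ell]=6$, I expect both to reduce to $\delta_{\ell,7,a,0}=\frac1{24}$, which is what makes the argument of $I_{\frac12}$ equal to $\frac{4\pi\sqrt{24n-1}}{24\times7a}$ and the prefactor $\delta^{-1/4}=24^{1/4}$ combine with $(n-\frac1{24})^{-1/4}$ to produce the displayed $(24n-1)^{-1/4}$ and the factor $\frac1{\sqrt7}$. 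I would also check that $\ceil{-7\delta_{\ell,7,a,0}}=\ceil{-7/24}=0$, giving the first argument $0$ of $S_{0\infty}^{(\ell)}$ in the corollary.

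The main obstacle I anticipate is not conceptual but the bookkeeping of constants: one must verify that substituting $\delta=\frac1{24}$ and $r=0$ into the general prefactor $\frac{4\pi\sin(\pi\ell/p)}{(n-\frac1{24})^{1/4}}\cdot\frac{1}{a\,\delta^{-1/4}}$ and into the Bessel argument $\frac{4\pi\sqrt{\delta(n-\frac1{24})}}a$ reproduces exactly the coefficients in \eqref{main theorem formula, mod 7}, including the conversion $(n-\frac1{24})^{-1/4}=24^{1/4}(24n-1)^{-1/4}$ and its interaction with $\delta^{1/4}=24^{-1/4}$. Care is also needed to confirm that $\delta_{\ell,7,a,0}=\frac1{24}$ holds for \emph{both} boundary cases $[a\ell]=1$ and $[a\ell]=6$, since these use different branches of the piecewise formula; I would carry out each substitution explicitly to confirm the two branches agree on the common value $\frac1{24}$. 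Everything else is a direct reading-off from Theorem~\ref{main theorem}.
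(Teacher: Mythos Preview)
Your approach is essentially identical to the paper's: identify which $r$ satisfy $x_r>1/p$ (none for $p=5$, only $r=0$ for $p=7$), determine the admissible $[a\ell]$, and then substitute into the general formula. The logical structure is correct.

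However, your anticipated value $\delta_{\ell,7,a,0}=\tfrac1{24}$ is wrong. Plugging $\frac{[a\ell]}7=\frac17$ and $r=0$ into the first branch of \eqref{delta l p a r def} gives
\[
\delta_{\ell,7,a,0}=-\tfrac12\cdot\tfrac17+\tfrac32\cdot\tfrac1{49}+\tfrac1{24}=-\tfrac{2}{49}+\tfrac1{24}=\tfrac1{49\cdot 24}=(7^2\times 24)^{-1},
\]
and the second branch with $\frac{[a\ell]}7=\frac67$ gives the same value by the symmetry $\frac{[a\ell]}p\mapsto 1-\frac{[a\ell]}p$. With your value $\delta=\tfrac1{24}$ the Bessel argument would come out as $\frac{4\pi\sqrt{24n-1}}{24a}$ rather than $\frac{4\pi\sqrt{24n-1}}{24\cdot 7a}$, and the prefactor would lack the $\frac1{\sqrt7}$; it is precisely the extra factor of $49$ in $\delta$ that supplies both the $7$ in the Bessel argument and the $\sqrt7$ in the denominator (via $\delta^{1/4}=7^{-1/2}\cdot 24^{-1/4}$). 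Once you correct this, the bookkeeping you describe goes through and matches \eqref{main theorem formula, mod 7} exactly.
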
 
\begin{proof}
	Note that $x_0=\frac 16$ is the only solution of \eqref{x r def} when $r=0$. 
Hence we only have the first sum for $5|c$ in \eqref{main theorem formula} when $p=5$. When $p=7$, only $r=0$ is allowed for the second sum in  \eqref{main theorem formula}. Recall $\delta_{\ell,p,a,r}$ at \eqref{delta l p a r def}. Since $[a\ell]=1$ or $6$ if and only if $\frac{[a\ell]}p\in (0,x_0)\cup (1-x_0,1)$, we have 
	\[\delta_{\ell,7,a,0}=\left\{
	\begin{array}{ll}
		(7^2\times 24)^{-1},&\text{ if }[a\ell]=1\text{ or }6,\\
		0,&\text{ otherwise,}
	\end{array}
	\right. 
	\]
	and $\ceil{-7\delta_{\ell,7,a,0}}=0$ always. 
\end{proof}

In another paper \cite{QihangVanishingKLsums2025}, the author proves the following specific vanishing properties of the Kloosterman sums $S_{\infty\infty}^{(\ell)}(0,5n+4,c,\mu_5)$,  $S_{\infty\infty}^{(\ell)}(0,7n+5,c,\mu_7)$, and $S_{0\infty}^{(\ell)}(0,7n+5,a,\mu_7;0)$. These properties provide a new proof of Dyson's conjectures (Theorem~\ref{Dyson's conjectures in ASD}). 

\begin{theorem}[{\cite[Theorem~1.3]{QihangVanishingKLsums2025}}]
	\label{Kloosterman sums vanish}
	(i) For all integers $n\geq 0$ and $1\leq\ell\leq p-1$ for $p=5,7$ (denoted by $p|c$ below), we have the following vanishing conditions for the Kloosterman sums appeared in Corollary~\ref{main theorem corollary}: 
	\begin{enumerate}
		\item[(5-4)] If $5|c$, we have $S_{\infty\infty}^{(\ell)}(0,5n+4,c,\mu_5)=0$.
		\item[(7-5)] If $7|c$, $\frac c7\cdot \ell\not\equiv 1 \Mod 7$, and $\frac c7\cdot\ell\not \equiv -1\Mod 7$, then $S_{\infty\infty}^{(\ell)}(0,7n+5,c,\mu_7)=0$. \\
		If $7|c$, $7\nmid a$, $a\ell\equiv \pm1 \Mod 7$, and $c=7a$, we have 
		\[ e(-\tfrac 18)S_{\infty\infty}^{(\ell)}(0,7n+5,c,\mu_7)+2{\sqrt 7}\,S_{0\infty}^{(\ell)}(0,7n+5,a,\mu_7;0)=0.\]
	\end{enumerate}
	
	(ii) Furthermore, we denote $C_p^{a,b}\defeq \cos(\frac{a\pi }p)-\cos(\frac{b\pi}p)$ and
	\begin{align*}
		S_7^{(\ell)}(n,c)\defeq \sin(\tfrac {\pi\ell}7) \(e(-\tfrac 18){S_{\infty\infty}^{(\ell)}(0,n,c,\mu_7)}+\mathbf{1}_{\substack{ a\defeq c/7 \\ {[a\ell]=1,6}}}\cdot
		2\sqrt 7\,S_{0\infty}^{(\ell)}(0,7n+5,a,\mu_7;0)\)
	\end{align*}
	for simplicity, where $\mathbf{1}_{\textrm{condition}}$ equals 1 if the condition is met and equals 0 otherwise. We also have the following vanishing conditions for all $c\in \Z$ with $p|c$, where $p=5$ or $7$ is denoted at the subscript of $C_p^{a,b}$: 
	\begin{align*}
		C_5^{2,4}\sin(\tfrac{\pi}5)S_{\infty\infty}^{(1)}(0,5n+1,c,\mu_5)+C_5^{4,2}\sin(\tfrac{2\pi}5)S_{\infty\infty}^{(2)}(0,5n+1,c,\mu_5)&=0,\tag{5-1}\\
		C_5^{0,4}\sin(\tfrac{\pi}5)S_{\infty\infty}^{(1)}(0,5n+2,c,\mu_5)+C_5^{0,2}\sin(\tfrac{2\pi}5)S_{\infty\infty}^{(2)}(0,5n+2,c,\mu_5)&=0,\tag{5-2}\\
		C_7^{4,6}S_7^{(1)}(7n,c)+C_7^{6,2}S_7^{(2)}(7n,c)+C_7^{2,4}S_7^{(3)}(7n,c)&=0,\tag{7-0}\\
		C_7^{2,4}S_7^{(1)}(7n+1,c)+C_7^{4,6}S_7^{(2)}(7n+1,c)+C_7^{6,2}S_7^{(3)}(7n+1,c)&=0,\tag{7-1,1}\\
		C_7^{4,6}S_7^{(1)}(7n+1,c)+C_7^{6,2}S_7^{(2)}(7n+1,c)+C_7^{2,4}S_7^{(3)}(7n+1,c)&=0,\tag{7-1,2}\\
		C_7^{0,6}S_7^{(1)}(7n+2,c)+C_7^{0,2}S_7^{(2)}(7n+2,c)+C_7^{0,4}S_7^{(3)}(7n+2,c)&=0,\tag{7-2}\\
		C_7^{0,4}S_7^{(1)}(7n+3,c)+C_7^{0,6}S_7^{(2)}(7n+3,c)+C_7^{0,2}S_7^{(3)}(7n+3,c)&=0,\tag{7-3,1}\\
		C_7^{2,6}S_7^{(1)}(7n+3,c)+C_7^{4,2}S_7^{(2)}(7n+3,c)+C_7^{6,4}S_7^{(3)}(7n+3,c)&=0,\tag{7-3,2}\\
		C_7^{0,2}S_7^{(1)}(7n+4,c)+C_7^{0,4}S_7^{(2)}(7n+4,c)+C_7^{0,6}S_7^{(3)}(7n+4,c)&=0,\tag{7-4,1}\\
		C_7^{2,6}S_7^{(1)}(7n+4,c)+C_7^{4,2}S_7^{(2)}(7n+4,c)+C_7^{6,4}S_7^{(3)}(7n+4,c)&=0,\tag{7-4,2}\\
		\(C_7^{0,4}+C_7^{2,6}\)S_7^{(1)}(7n+6,c)+\(C_7^{0,6}+C_7^{4,2}\)S_7^{(2)}(7n+6,c)&\\
		+\(C_7^{0,2}+C_7^{6,4}\)S_7^{(3)}(7n+6,c)&=0.\tag{7-6}
	\end{align*}

\end{theorem}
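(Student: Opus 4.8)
The plan is to reduce every assertion to an identity among the explicit finite exponential sums defining Bringmann's coefficients, and then to prove vanishing through a Chinese Remainder factorization that isolates a local factor at $p$. By the matching relations \eqref{KL sums match with Bringmann, cusp infty} and \eqref{KL sums match with Bringmann, cusp 0}, each Kloosterman sum in the statement is a nonzero constant times the complex conjugate of one of $B_{\ell,p,c}(-n,0)$ from \eqref{B lpc sum def} or $D_{\ell,p,a,r}(-n,m_{\ell,p,a,r})$ from \eqref{D lpa sum def}. It therefore suffices to prove the claimed vanishings and linear relations directly for the $B$'s and $D$'s, which are honest finite sums over $d\Mod c^{*}$ (resp.\ $b\Mod a^{*}$) of the Dedekind-sum phase $\omega_{d,c}=\exp(\pi i s(d,c))$ from \eqref{omega dc}, an explicit root of unity depending on $d'_{c}$ and $\ell$, and an additive character $e(-nd/c)$.

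The first step is a twisted multiplicativity for these sums. Writing $c=p^{\alpha}m$ with $p\nmid m$ and splitting $d\Mod c^{*}$ by the Chinese Remainder Theorem, the additive character factors, the quadratic phase $\exp(\tfrac{3\pi i (c/p)d'_{c}\ell^{2}}{p})$ localizes at $p$, and the Dedekind-sum cocycle splits into a $p$-part and a prime-to-$p$ part up to the classical reciprocity correction. This writes $B_{\ell,p,c}(-n,0)$ as a prime-to-$p$ factor times a local factor $G_{\ell}(n;p^{\alpha})$ that sees $n$ only through a bounded power of $p$; because $\omega_{d,c}$ carries a half-integral-weight twist, $G_{\ell}$ is a Sali\'e-type sum, which may be evaluated in closed form. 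For part~(i) the point is that this local factor vanishes on the relevant classes: the residue $5n+4\equiv-1\Mod 5$ makes the closed-form evaluation of $G_{\ell}$ collapse to zero (equivalently, a quadratic-residue indicator dies), which gives \textup{(5-4)}, and the same mechanism yields the first line of \textup{(7-5)} whenever $\tfrac c7\ell\not\equiv\pm1\Mod 7$.

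The second line of \textup{(7-5)} is genuinely a two-cusp statement, and here I would invoke the Fricke/Atkin--Lehner involution $W_{p}$, which for prime level interchanges the cusps $\infty$ and $0$ of $\Gamma_{0}(p)$ and so relates $S_{\infty\infty}^{(\ell)}(0,\cdot,7a,\mu_{7})$ to $S_{0\infty}^{(\ell)}(0,\cdot,a,\mu_{7};0)$; the cusp width $\sqrt 7$ accounts for the factor $2\sqrt 7$ and the phase $e(-\tfrac18)$ records the weight-$\tfrac12$ multiplier, so that after matching normalizations the two contributions cancel in exactly the class $7n+5$ with $a\ell\equiv\pm1\Mod 7$. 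For part~(ii), the coefficients $C_{p}^{a,b}$ together with the factor $\sin(\tfrac{\pi\ell}{p})$ are precisely those produced by \eqref{Relation between rank mod and A()} when a Dyson identity from Theorem~\ref{Dyson's conjectures in ASD} is expanded in the $A(\tfrac{\ell}{p};m)$; this merely singles out which combination to target, and each combination is then proved directly. As the prime-to-$p$ factor is common to all $\ell$ and pulls out, the claim reduces to the finite assertion that the $\ell$-indexed vector $(G_{\ell}(m;p^{\alpha}))_{\ell}$ of local sums is annihilated by the coefficient vector built from the $C_{p}^{a,b}$ for the residue class of $m$ in question---no longer an individual vanishing, but a linear relation among Sali\'e/Gauss sums that follows from their closed forms and elementary trigonometric identities.

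The main obstacle is the local analysis at $p$. The Dedekind-sum multiplier does not factor naively under the Chinese Remainder Theorem, so making the twisted multiplicativity precise---tracking the reciprocity correction, the quadratic phase in $d'_{c}$, and the distinction between reducing $d'_{c}$ modulo $c$ and modulo $2c$ for odd versus even $c$---is delicate, and the cases $v_{p}(c)=1$ and $v_{p}(c)\ge2$ require separate treatment. Once $G_{\ell}$ is pinned down as an explicit quadratic Gauss/Sali\'e sum, both the individual vanishings of part~(i) and the annihilation relations of part~(ii) reduce to finite computations, the one remaining subtlety being the cross-cusp normalization in \textup{(7-5)} provided by $W_{7}$.
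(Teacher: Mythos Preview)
The paper does not contain a proof of this theorem. Immediately before the statement the author writes ``In an upcoming paper, we prove the following specific vanishing properties of the Kloosterman sums\ldots''; Theorem~\ref{Kloosterman sums vanish} is announced here only as a corollary-level application of Theorem~\ref{main theorem}, with the proof deferred to a sequel. There is therefore no in-paper argument against which to compare your proposal.

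As to the proposal itself: it is a strategy sketch rather than a proof. The overall shape---pass to the explicit $B$- and $D$-sums via \eqref{KL sums match with Bringmann, cusp infty}--\eqref{KL sums match with Bringmann, cusp 0}, attempt a multiplicative splitting $c=p^\alpha m$ and isolate a $p$-local factor of Sali\'e type, then invoke the Fricke involution for the mixed $\infty$--$0$ identity in (7-5)---is plausible, and indeed you correctly identify where the difficulty lies. But the two places you flag as ``delicate'' are exactly where the argument is incomplete: you do not actually establish the twisted multiplicativity for $\omega_{d,c}$ under CRT (the Dedekind-sum reciprocity correction and the $\bmod\,c$ versus $\bmod\,2c$ convention for $d'_c$ are not handled), and you do not write down the closed form of your local sum $G_\ell(n;p^\alpha)$, so neither the individual vanishings in part~(i) nor the linear relations in part~(ii) are verified. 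The cross-cusp cancellation in (7-5) is likewise asserted (``$W_7$ interchanges the cusps, the $\sqrt 7$ is the cusp width'') rather than computed; matching the phase $e(-\tfrac18)$ and the factor $2\sqrt 7$ requires an explicit transformation law for the vector-valued multiplier $\mu_7$ under $\sigma_0$, which you have not supplied. In short, the outline is reasonable but none of the key steps is carried out.
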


By \eqref{Relation between rank mod and A()} and Theorem~\ref{Kloosterman sums vanish}, we obtain the following corollary. 
\begin{corollary}[{\cite[Corollary~1.4]{QihangVanishingKLsums2025}}]
	\label{corollary Ramanujan congruence}
	For any pair {\rm{($p$-$k$)}} (or {\rm{($p$-$k$,$t$)}} for both $t=1,2$) with 
	\[p=5,\ k\in\{1,2,4\}\quad \text{or} \quad p=7,\ k \in\{0,1,2,3,4,5,6\},\]
	we have Dyson's conjecture {\rm{($p$-$k$)}} in Theorem~\ref{Dyson's conjectures in ASD}. 
\end{corollary}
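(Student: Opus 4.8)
The plan is to prove each identity of Theorem~\ref{Dyson's conjectures in ASD} by combining three ingredients already in hand: the elementary relation \eqref{Relation between rank mod and A()} between the rank counts $N(a,p;n)$ and the coefficients $A(\frac\ell p;n)$; the exact formulae of Corollary~\ref{main theorem corollary} expressing $A(\frac\ell p;n)$ through the vector-valued Kloosterman sums; and the vanishing statements of Theorem~\ref{Kloosterman sums vanish}. The strategy is to rewrite each Dyson identity as the vanishing of an explicit linear combination of the $A(\frac\ell p;n)$, substitute the exact formula, and observe that the coefficient of each term in the resulting Kloosterman-sum series is precisely one of the combinations that Theorem~\ref{Kloosterman sums vanish} shows to vanish.

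First I would set up the bridge identity. Fix $p\in\{5,7\}$. Folding the sum in \eqref{Relation between rank mod and A()} by pairing $j$ with $p-j$, and using $A(\frac jp;n)=A(\frac{p-j}p;n)$ together with $\zeta_p^{-aj}+\zeta_p^{aj}=2\cos\frac{2\pi aj}p$, one gets $pN(a,p;n)=p(n)+\sum_{\ell=1}^{(p-1)/2}2\cos\frac{2\pi a\ell}p\,A(\frac\ell p;n)$, so that subtracting two residues yields
\[
p\bigl(N(a_1,p;n)-N(a_2,p;n)\bigr)=\sum_{\ell=1}^{(p-1)/2}2\Bigl(\cos\tfrac{2\pi a_1\ell}p-\cos\tfrac{2\pi a_2\ell}p\Bigr)A\Bigl(\tfrac\ell p;n\Bigr),
\]
with the obvious analogue for the four-term combination $N(0,7;\cdot)+N(1,7;\cdot)-N(2,7;\cdot)-N(3,7;\cdot)$ behind $(7\text{-}6)$. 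Reducing each cosine argument into $[0,\pi]$ (using that $\cos$ is even and $2\pi$-periodic) rewrites each coefficient as a difference $\cos\frac{a\pi}p-\cos\frac{b\pi}p=C_p^{a,b}$, and a short case check shows these agree with the coefficients listed in Theorem~\ref{Kloosterman sums vanish}(ii) for the corresponding identity.

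Next I would substitute the exact formulae \eqref{main theorem formula, mod 5} and \eqref{main theorem formula, mod 7}. For $p=7$ the cusp-$0$ sum is merged into the cusp-$\infty$ sum via the substitution $a=c/7$, which collapses the two series into a single sum over $7\mid c$ whose summand is precisely the packaged quantity $S_7^{(\ell)}(n,c)$; for $p=5$ only the cusp-$\infty$ sum remains, with summand $\sin(\frac{\pi\ell}5)S_{\infty\infty}^{(\ell)}(0,n,c,\mu_5)$. Pulling the common factor $\frac{2\pi}{(24n-1)^{1/4}}\frac1c\,I_{\frac12}(\cdots)$ outside the $c$-sum, the inner coefficient of the term indexed by $c$ is exactly the linear combination of Kloosterman sums asserted to vanish in Theorem~\ref{Kloosterman sums vanish}(ii); hence every term vanishes, the difference $N(a_1,p;n)-N(a_2,p;n)$ is zero, and the Dyson identity follows.

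Finally, the two congruence cases $(5\text{-}4)$ and $(7\text{-}5)$ are handled by part (i) of Theorem~\ref{Kloosterman sums vanish}: there the relevant Kloosterman sums vanish individually (for $p=5$), respectively the combination $e(-\frac18)S_{\infty\infty}^{(\ell)}+2\sqrt7\,S_{0\infty}^{(\ell)}$ vanishes for each admissible $c$ (for $p=7$), so the exact formula forces $A(\frac\ell p;pn+k)=0$ for every $\ell$; then \eqref{Relation between rank mod and A()} gives $pN(a,p;pn+k)=p(pn+k)$ independently of $a$, which is the full-equality content of $(5\text{-}4)$ and $(7\text{-}5)$. The only genuine work is the bookkeeping of the second paragraph, namely verifying, case by case and after the modular reduction of the cosine arguments, that the coefficients coming from \eqref{Relation between rank mod and A()} match the $C_p^{a,b}$-coefficients in Theorem~\ref{Kloosterman sums vanish}; the chained identities $(7\text{-}1)$, $(7\text{-}3)$, $(7\text{-}4)$ each demand this check for two separate rank-differences and $(7\text{-}6)$ for the summed combination, but all are finite and routine.
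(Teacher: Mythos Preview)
Your proposal is correct and follows precisely the approach the paper indicates: the paper simply writes ``By \eqref{Relation between rank mod and A()} and Theorem~\ref{Kloosterman sums vanish}, we obtain the following corollary'' without giving further details, and your argument supplies exactly those details. Your folding of \eqref{Relation between rank mod and A()} into cosines, your identification of the resulting coefficients with the $C_p^{a,b}$, your repackaging of \eqref{main theorem formula, mod 7} into a single sum over $7\mid c$ via $c=7a$ so that the summand becomes $S_7^{(\ell)}(n,c)$, and your separate treatment of $(5\text{-}4)$ and $(7\text{-}5)$ via part~(i) are all correct and constitute what the paper leaves implicit.
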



The paper is organized as follows. In Section~\ref{Section: Definition and Notations} we define the notations for scalar-valued Maass forms, modular forms, and harmonic Maass forms. In Section~\ref{Section: V-val things} we introduce notations in vector-valued theory and define our vector-valued multiplier $\mu_p$. The proof of Theorem~\ref{main theorem} is included in Section~\ref{Section: Qihang's exact formulas}. We also include proofs of \eqref{KL sums match with Bringmann, cusp infty} in \S\ref{Subsection: Bringmann formula match, cusp infty} and of \eqref{KL sums match with Bringmann, cusp 0} in \S\ref{Subsection: Bringmann formula match, cusp 0}, which show that our exact formula \eqref{main theorem formula} aligns with Bringmann's asymptotic formula \eqref{Bringmann formula}. We generalize the estimates on sums of Kloosterman sums by Goldfeld and Sarnak \cite{gs} in Section~\ref{Section: sums of v-val KL sums} for our vector-valued Kloosterman sums. Such estimates ensure the convergence of the Fourier expansions of Maass-Poincar\'e series at $s=\frac 34$ in Section~\ref{Section: Qihang's exact formulas}.

\section{Scalar-valued theory}
\label{Section: Definition and Notations}
This section includes definitions and basic theorems in the theory of holomorphic modular forms, Maass forms and harmonic Maass forms. We focus on the half-integral weight $k\in \Z+\frac 12$ unless specified.  Let $\HH\defeq \{z\in \C:\ y=\im z>0\}$ denote the upper-half complex plane. 

\subsection{Multiplier systems and Kloosterman sums}
Fixing the argument $(-\pi,\pi]$, for any $\gamma\in\SL_2(\R)$ and $z=x+iy\in\HH$, we define the automorphic factor
\begin{equation}\label{j_factor_def}
	j(\gamma,z)\defeq \frac{cz+d}{|cz+d|}=e^{i\arg(cz+d)} 
\end{equation}
and the weight $k$ slash operator 
\begin{equation}\label{slash_k_operator_def}
	(f|_k\gamma)(z)\defeq j(\gamma,z)^{-k}f(\gamma z) 
\end{equation}
for $k\in \frac 12+\Z$. Let $\Gamma$ denote a congruence subgroup of $\SL_2(\Z)$ with $\begin{psmallmatrix}
	1&1\\0&1
\end{psmallmatrix}\in \Gamma$. 
\begin{definition}\label{multiplier system def}
	We say that $\nu:\Gamma\to \C^\times$ is a multiplier system of weight $k$ if
	\begin{enumerate}[label=(\roman*)]
		\item $|\nu|=1$,
		\item $\nu(-I)=e^{-\pi i k}$, and
		\item $\nu(\gamma_1 \gamma_2) =w_k(\gamma_1,\gamma_2)\nu(\gamma_1)\nu(\gamma_2)$ for all $\gamma_1,\gamma_2\in \Gamma$, where
		\[w_k(\gamma_1,\gamma_2)\defeq j(\gamma_2,z)^k j(\gamma_1,\gamma_2z)^k j(\gamma_1\gamma_2,z)^{-k}.\]
	\end{enumerate}
\end{definition}
If $\nu$ is a multiplier system of weight $k$, then it is also a multiplier system of weight $k'$ for any $k'\equiv k\pmod 2$, and its conjugate $\overline\nu$ is a multiplier system of weight $-k$. 
One can check the basic properties that for all $\gamma\in \Gamma$ and $b\in \Z$, we have
\begin{equation}\label{MultiplierSystemBasicProprety}
	\quad \nu(\gamma)\nu(\gamma^{-1})=1,\quad \nu(\gamma \begin{psmallmatrix}
		1&b\\0&1
	\end{psmallmatrix})=\nu(\gamma)\nu(\begin{psmallmatrix}
		1&1\\0&1
	\end{psmallmatrix})^b. 
\end{equation}

For any cusp $\mathfrak{a}$ of $\Gamma$, let $\Gamma_{\mathfrak{a}}$ denote its stabilizer in $\Gamma$. For example, $\Gamma_\infty=\{\pm\begin{psmallmatrix}
	1&b\\0&1
\end{psmallmatrix}:b\in \Z\}$. Let $\sigma_{\mathfrak{a}}\in\SL_2(\R)$ denote a scaling matrix of $\ma$, which means that $\sigma_\ma$ satisfies \begin{equation}\label{scaling matrix def}
	\sigma_{\mathfrak{a}}\infty=\mathfrak{a}\quad \text{and}\quad \sigma_{\mathfrak{a}}^{-1} \Gamma_{\mathfrak{a}}\sigma_{\mathfrak{a}}=\Gamma_\infty.
\end{equation}
We define $\alpha_{\nu,\mathfrak{a}} \in [0,1)$ by the condition
\begin{equation}\label{AlphaDefine}
	\nu\( \sigma_{\mathfrak{a}}\begin{psmallmatrix} 
		1 & 1 \\ 
		0 & 1
	\end{psmallmatrix}\sigma_{\mathfrak{a}}^{-1} \) = e(-\alpha_{\nu,\mathfrak{a}}).
\end{equation}
The cusp $\mathfrak{a}$ is called singular if $\alpha_{\nu,\mathfrak{a}}=0$. When $\mathfrak{a}=\infty$ we drop the subscript and denote $\alpha_{\nu}\defeq\alpha_{\nu,\infty}$. For $n\in \Z$, define $n_{\mathfrak{a}}\defeq n-\alpha_{\nu,\mathfrak{a}}$ and $n_{\infty}=\tilde{n} \defeq n-\alpha_{\nu}$.

The Kloosterman sums for the cusp pair $(\infty,\infty)$ with respect to $\nu$ are given by 
\begin{equation}
	\label{eq:kloos_def}
	S(m,n,c,\nu) :=\!\!\! \sum_{\substack{0\leq a,d<c \\ \gamma=\begin{psmallmatrix} a&b\\ c&d 
			\end{psmallmatrix}
			\in \Gamma}}\!\!\!  \overline\nu(\gamma) e\(\frac{ \tilde{m} a+ \tilde{n} d}{c}\)
	=\!\!\! \sum_{\substack{\gamma\in \Gamma_\infty\setminus\Gamma/\Gamma_\infty\\\gamma=\begin{psmallmatrix} a&b\\ c&d 
	\end{psmallmatrix}}} \!\!\!  \overline\nu(\gamma) e\(\frac{ \tilde{m} a+ \tilde{n} d}{c}\).
\end{equation}
They satisfy the relationships
\begin{equation}\label{KlstmSumConj}
	\overline{S(m,n,c,\nu)}=\left\{\begin{array}{ll}
		S(1-m,1-n,c,\overline \nu)&\ \  \text{if\ }\alpha_{\nu}>0,\\
		S(-m,-n,c,\overline\nu)  &\ \  \text{if\ }\alpha_{\nu}=0,
	\end{array}\right. 
\end{equation}
because 
\begin{equation}\label{tildeNConj}
	n_{\overline{\nu}}=\left\{\begin{array}{ll}
		-(1-n)_{\nu}&\ \  \text{if\ } \alpha_{\nu}>0,\\
		n  &\ \ \text{if\ }\alpha_{\nu}=0.
	\end{array}\right. 
\end{equation}

There are two fundamental multiplier systems of weight $\frac12$. The theta-multiplier $\nu_{\theta}$ on $\Gamma_{0}(4)$ is given by
\begin{equation}\label{thetaFunctionStandard}
	\theta(\gamma z) = \nu_{\theta}(\gamma) \sqrt{cz+d}\; \theta(z), \quad \gamma=\begin{psmallmatrix}
		a&b\\c&d
	\end{psmallmatrix}\in \Gamma_0(4)
\end{equation}
where
\[
\theta(z) \defeq \sum_{n\in\Z} e(n^2 z), \quad \nu_{\theta}(\gamma)=\(\frac cd\)\ep_d^{-1}, \quad \ep_d=\left\{ \begin{array}{ll}
	1&d\equiv 1\Mod 4,\\
	i&d\equiv 3\Mod 4, 
\end{array}\right.
\]
and $\(\frac \cdot\cdot\)$ is the extended Kronecker symbol. 
The eta-multiplier $\nu_\eta$ on $\SL_2(\Z)$ is given by
\begin{equation}
	\eta(\gamma z) = \nu_{\eta}(\gamma) \sqrt{cz+d}\; \eta(z), \quad \gamma=\begin{psmallmatrix}
		a&b\\c&d
	\end{psmallmatrix}\in \SL_2(\Z), 
\end{equation}
where
\begin{equation}
	\eta(z) \defeq q^{\frac1{24}}\prod_{n=1}^\infty (1-q^n),\quad q=e(z).
\end{equation}
Define
\[((x))\defeq \left\{
\begin{array}{ll}
	x-\floor{x}-\tfrac12,& \text{ when }x\in \R\setminus\Z,\\
	0,& \text{ when }x\in \Z. 
\end{array}
\right.
\]
We have the explicit formula \cite[(74.11), (74.12)]{Rad73Book}: 
\begin{equation}\label{etaMultiplier}
	\nu_\eta(\gamma)=e\(-\frac18\)e^{-\pi i s(d,c)}e\(\frac{a+d}{24c}\), \quad s(d,c)\defeq\sum_{r\Mod c}\(\(\frac rc\)\)\(\(\frac{dr}c\)\),
\end{equation}
for $c\in \Z\setminus \{0\}$. We also have $\nu_\eta\(\begin{psmallmatrix}
	1&b\\0&1
\end{psmallmatrix}\)=e\(\tfrac{b}{24}\)$. Another formula is given by \cite{Knopp70Book}: for $c>0$, we have
\begin{equation}\label{KnoppFmlEta}
	\nu_{\eta}(\gamma)=\left\{
	\begin{array}{ll}
		\(\tfrac dc\)e\left\{\tfrac1{24}\Big((a+d)c-bd(c^2-1)-3c\Big)\right\} & \text{if } c \text{ is odd,}\\
		\(\tfrac cd\)e\left\{\tfrac1{24}\Big((a+d)c-bd(c^2-1)+3d-3-3cd\Big)\right\} & \text{if } c \text{ is even.}
	\end{array}
	\right. 
\end{equation}
The properties $\nu_{\eta}(-\gamma)=i\nu_\eta(\gamma)$ when $c>0$ and $e(\frac{1-d}8)=(\frac 2d)\ep_d$ for odd $d$ are convenient. 

\subsection{Holomorphic modular forms}
Recall that we have fixed the argument in $(-\pi,\pi]$ and denoted $\Gamma$ as a congruence subgroup of $\SL_2(\Z)$ with $\begin{psmallmatrix}
	1&1\\0&1
\end{psmallmatrix}\in \Gamma$. For a weight $k$ multiplier system $\nu$ (Definition~\ref{multiplier system def}), let $M_k(\Gamma,\nu)$ (resp. $S_k(\Gamma,\nu)$) denote the space of holomorphic modular (resp. cusp) forms of weight $k$ for $(\Gamma,\nu)$ with the transformation law
\[f(\gamma z)=\nu(\gamma)(cz+d)^{k}f(z)\quad \text{for }\gamma=\begin{psmallmatrix}
	a&b\\c&d
\end{psmallmatrix}\in \Gamma.  \]
Recall $\tilde n=n-\alpha_{\nu}$. For $f\in M_k(\Gamma,\nu)$, $f$ has a Fourier expansion at the cusp $\infty$ given by
\begin{equation}
	f(z)=\sum_{n\in \Z,\ \tilde n\geq 0} a_f(n)e(\tilde n z). 
\end{equation}
We call $a_f(n)$ the Fourier coefficient of $f$ (at the cusp $\infty$). For any cusp $\ma$ of $\Gamma$, if we let $\sigma_\ma=\begin{psmallmatrix}
	A&B\\C&D
\end{psmallmatrix}$ denote a scaling matrix \eqref{scaling matrix def} of $\ma$, then we can write the Fourier expansion of $f$ at the cusp $\ma$ as
\begin{equation}
	(Cz+D)^{-k}f(\sigma_\ma z)=\sum_{n\in \Z,\ n_{\ma}\geq 0} a_{f,\ma}(n) e(n_\ma z). 
\end{equation}
Therefore, $f\in S_k(\Gamma,\nu)$ if and only if $a_{f,\ma}(0)=0$ for all cusps $\ma$ of $\Gamma$.  The spaces $M_k(\Gamma,\nu)$ and $S_k(\Gamma,\nu)$ are both finite dimensional for $k\in \frac 12\Z$.

Recall that we have already defined the two weight $\frac12$ multiplier systems: $\nu_\theta$ in \eqref{thetaFunctionStandard} and $\nu_\eta$ in \eqref{etaMultiplier}. Let $r(\psi)$ denote the conductor of a Dirichlet character $\psi$. When the weight $k=\frac12$, we have the Serre-Stark basis theorem: 

\begin{theorem}[{\cite[Corollary~1 of Theorem~A]{SerreStark}}]\label{Serre Stark}
	The space $M_{\frac12}(\Gamma_1(N),\nu_\theta)$ has a basis consisting of
	\[\theta_{\psi,t}(z)\defeq \sum_{n\in \Z}\psi(n)q^{tn^2},\]
	where $\psi$ is an {even} {primitive} Dirichlet character whose conductor $r(\psi)$ satisfies $4r(\psi)^2t|N$. 
\end{theorem}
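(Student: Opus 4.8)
The plan is to prove both inclusions. One direction is that each $\theta_{\psi,t}$ genuinely lies in $M_{\frac12}(\Gamma_1(N),\nu_\theta)$ and that these functions are linearly independent; the other, harder, direction is that they span. For the first direction I would begin from the classical transformation law of the one-variable theta series under $\SL_2(\Z)$, twist it by the even primitive character $\psi$, and verify via Poisson summation that $\theta_{\psi,t}(z)=\sum_n\psi(n)q^{tn^2}$ transforms with the theta-multiplier $\nu_\theta$ on $\Gamma_1(N)$ precisely when $4r(\psi)^2t\mid N$; evenness of $\psi$ is what guarantees holomorphy (no cancellation under $n\mapsto -n$) and the correct weight $\frac12$. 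Linear independence is then immediate from the Fourier supports: $\theta_{\psi,t}$ is supported on $\{tn^2\}$, so distinct values of $t$ give essentially disjoint leading exponents, while for a fixed $t$ the characters $\psi$ are separated by their values on residues; a vanishing linear combination thus forces every coefficient to vanish.

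The difficult inclusion $M_{\frac12}(\Gamma_1(N),\nu_\theta)\subseteq\mathrm{span}\{\theta_{\psi,t}\}$ I would attack by exploiting finite-dimensionality together with the commuting family of Hecke operators $T_{\ell^2}$ for primes $\ell\nmid N$. After decomposing the space into nebentypus eigenspaces $M_{\frac12}(\Gamma_0(N),\chi\nu_\theta)$ under the diamond operators, the $T_{\ell^2}$ are normal for the Petersson pairing on the cuspidal part and hence simultaneously diagonalizable, so it suffices to show that every Hecke eigenform is a theta series. For an eigenform $f=\sum a(n)q^n$ with $T_{\ell^2}f=\lambda_\ell f$, Shimura's recursion in weight $\frac12$ takes the shape $\lambda_\ell a(n)=a(\ell^2 n)+\chi(\ell)(\tfrac n\ell)\ell^{-1}a(n)+\chi(\ell^2)\ell^{-1}a(n/\ell^2)$, and the goal is to leverage this to prove that the Fourier coefficients of $f$ are supported on a single square class $\{tn^2\}$.

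The concentration of support is where the real content lies. I would combine the boundedness of the eigenvalues (from the growth of $f$, e.g. the Hecke bound $a(n)\ll n^{1/4+\epsilon}$ on the cuspidal part) with the $\ell^{-1}$-weighted recursion above to rule out any coefficient $a(n)$ with $n$ not of the form $t\cdot\square$. Conceptually this is the statement that there are no "non-CM" weight $\frac12$ forms: this is exactly the weight where the Shimura lift to weight $0$ degenerates, and the only holomorphic weight-$0$ objects are constants, so the eigenvalue system of $f$ is forced to coincide with that of a genuine theta series. Once the support is pinned to one square class, I would reconstruct the character $\psi$ and the integer $t$ from the coefficients $\{a(tn^2)\}$ and recover the divisibility constraint $4r(\psi)^2t\mid N$ by tracking how $\nu_\theta$ and the nebentypus behave under the substitution $n\mapsto tn^2$.

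The main obstacle is precisely this support-concentration step — showing that a weight $\frac12$ eigenform cannot spread its coefficients across several square classes. This is the genuinely non-formal input of Serre–Stark: the naive Hecke recursion does not close the argument because weight $\frac12$ sits at the edge where the Shimura correspondence breaks down, so one must supply a true analytic estimate, such as a bound on the counting function $\#\{n\le X: a(n)\ne 0\}$, or equivalently the nonexistence of a cusp form orthogonal to all theta series established through a Rankin–Selberg/Petersson computation. I expect the ramified primes $\ell\mid N$ and the bookkeeping between primitive and imprimitive characters to demand additional care, but the conceptual crux remains the sparsity of the Fourier support.
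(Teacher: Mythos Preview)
The paper does not prove this theorem. It is quoted verbatim as a known result from the literature, with the citation \cite[Corollary~1 of Theorem~A]{SerreStark}, and is then applied as a black box in the proof of Lemma~\ref{Theta function space only eta pz}. There is therefore no ``paper's own proof'' to compare your proposal against.

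As for your sketch itself: the broad outline you give is indeed the shape of the original Serre--Stark argument --- reduce to Hecke eigenforms, use the weight-$\tfrac12$ Hecke recursion to force the Fourier support onto a single square class, then reconstruct $(\psi,t)$ from the remaining coefficients. You are also honest about where the content lies: the support-concentration step is the genuine crux, and your proposal does not actually supply it. In the original paper this is handled not via a Rankin--Selberg computation but by a careful analysis of the eigenvalues $\lambda_\ell$ together with an $L$-function argument (and a treatment of the Eisenstein part, which you did not address --- note that $M_{\frac12}$ is not purely cuspidal, and indeed the theta series $\theta_{\psi,t}$ are typically not cusp forms). So what you have written is a correct high-level roadmap with an acknowledged gap at the essential step, which is a fair description of a result one should cite rather than reprove.
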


\subsection{Maass forms}
\label{Subsection: Maass forms}
In this section we recall some basic facts about Maass forms with general weight and multiplier, which can be found in various references like \cite{Proskurin2005,pribitkin,DFI12,AAimrn,ahlgrendunn,QihangFirstAsympt}. Let $\Gamma$ denote our congruence subgroup with $\begin{psmallmatrix}
	1&1\\0&1
\end{psmallmatrix}\in \Gamma$.  Recall the definition of $j(\gamma,z)$ in \eqref{j_factor_def} and the definition of the weight $k\in \Z+\frac 12$ slash operator defined in \eqref{slash_k_operator_def}. 
We call a function $f:\HH\rightarrow \C$ automorphic of weight $k$ and multiplier $\nu$ on $\Gamma$ if 
\[f|_k\gamma =\nu(\gamma)f \quad\text{for all }\gamma\in\Gamma.\] Let $\mathcal{A}_k(\Gamma,\nu)$ denote the linear space consisting of all such functions and $\Lform_k(\Gamma,\nu)\subset \mathcal{A}_k(\Gamma,\nu)$ denote the space of square-integrable functions on $\Gamma\setminus\HH$ with respect to the measure \[d\mu(z)=\frac{dxdy}{y^2}\]
and the Petersson inner product 
\[\langle f,g\rangle \defeq\int_{\Gamma\setminus\HH} f(z)\overline{g(z)}\frac{dxdy}{y^2} \]
for $f,g\in\Lform_k(\Gamma,\nu)$. For $k\in \R$, the Laplacian
\begin{equation}\label{Laplacian}
	\Delta_k\defeq y^2\(\frac{\partial^2}{\partial x^2}+\frac{\partial^2}{\partial y^2}\)-iky \frac{\partial}{\partial x}
\end{equation}
can be expressed as
\begin{align}
	\Delta_k&=-R_{k-2}L_k-\frac k2\(1-\frac k2\)\\
	&=-L_{k+2}R_k+\frac k2\(1+\frac k2\)
\end{align}
where $R_k$ is the Maass raising operator
\begin{equation}
	R_k\defeq \frac k2+2iy\frac{\partial}{\partial z}=\frac k2+iy\(\frac{\partial}{\partial x}-i\frac{\partial}{\partial y}\)
\end{equation}
and $L_k$ is the Maass lowering operator
\begin{equation}\label{Maass Lowering op}
	L_k\defeq \frac k2+2iy\frac{\partial}{\partial \bar z}=\frac k2+iy\(\frac{\partial}{\partial x}+i\frac{\partial}{\partial y}\).
\end{equation}
These operators raise and lower the weight of an automorphic form as
\[(R_k f)|_{k+2}\;\gamma=R_k(f|_{k}\gamma),\quad (L_k f)|_{k-2}\;\gamma=L_k(f|_{k}\gamma),\quad  \text{for\ } f\in\mathcal{A}_k(\Gamma,\nu)\]
and satisfy the commutative relations
\begin{equation}\label{RLOperators}
	R_k\Delta_k=\Delta_{k+2}R_k,\quad L_k\Delta_k=\Delta_{k-2}L_k. 
\end{equation}
Moreover, $\Delta_k$ commutes with the weight $k$ slash operator for all $\gamma\in\SL_2(\R)$. 

We call a real analytic function $f:\HH\rightarrow \C$ an eigenfunction of $\Delta_k$ with eigenvalue $\lambda\in\C$ if
\[\Delta_k f+\lambda f=0. \]
From \eqref{RLOperators}, it is clear that an eigenvalue $\lambda$ for the weight $k$ Laplacian is also an eigenvalue for weight $k\pm 2$. 
We call an eigenfunction $f$ a Maass form if $f\in \mathcal{A}_k(\Gamma,\nu)$ is smooth and satisfies the growth condition
\[(f|_k\gamma)(x+iy)\ll y^\sigma+y^{1-\sigma} \]
for all $\gamma\in\SL_2(\Z)$ and some $\sigma$ depending on $\gamma$ when $y\rightarrow +\infty$. Moreover, if a Maass form $f$ satisfies
\[\int_0^1 (f|_k\sigma_{\mathfrak{a}})(x+iy)\;e(\alpha_{\nu,\mathfrak{a}}x)dx=0\]
for all cusps $\mathfrak{a}$ of $\Gamma$, then $f\in\Lform_k(\Gamma,\nu)$ and we call $f$ a Maass cusp form. For details see \cite[\S 2.3]{AAimrn}

Let $\mathcal{B}_k(\Gamma,\nu)\subset \Lform_k(\Gamma,\nu)$ denote the space of smooth functions $f$ such that both $f$ and $\Delta_k f$ are bounded. One can show that $\mathcal{B}_k(\Gamma,\nu)$ is dense in $\Lform_k(\Gamma,\nu)$ and $\Delta_k$ is self-adjoint on $\mathcal{B}_k(\Gamma,\nu)$. If we let $\lambda_0\defeq \lambda_0(k)=\tfrac {|k|}2(1-\tfrac {|k|}2)$, then for $f\in\mathcal{B}_k(\Gamma,\nu)$, we have
\[\langle f,-\Delta_k f\rangle\geq \lambda_0 \langle f,f\rangle,\]
i.e. $-\Delta_k$ is bounded from below. By the Friedrichs extension theorem, $-\Delta_k$ can be extended to a self-adjoint operator on $\Lform_k(\Gamma,\nu)$. 
The spectrum of $\Delta_k$ consists of two parts: the continuous spectrum $\lambda\in [\frac14,\infty)$ and a discrete spectrum of finite multiplicity contained in $[\lambda_0,\infty)$. 

Non-zero eigenfunctions corresponding to eigenvalue $\lambda_0$ come from holomorphic modular forms. To be precise, recall that $M_k(\Gamma,\nu)$ is the space of holomorphic modular forms of weight $k$ and multiplier $\nu$ on $\Gamma$. There is a one-to-one correspondence between all $f\in\Lform_k(\Gamma,\nu)$ with eigenvalue $\lambda_0$ and weight $k$ holomorphic modular forms $F$ by
\begin{equation}\label{CuspFormR0} 
	f(z)=\left\{\begin{array}{ll}
		y^{\frac k2}F(z)\quad & k\geq 0,\ F\in M_k(\Gamma,\nu),\\
		y^{-\frac k2}\overline{F(z)}\quad & k<0,\ F\in M_{-k}(\Gamma,\overline{\nu}). 
	\end{array}
	\right. 	\end{equation}
For the Fourier expansion $\sum_{n\in \Z} a_y(n)e(\tilde n x)$ of such $f$, we have
\begin{equation}\label{Coeffi CuspFormR0}
	\left\{
	\begin{array}{lll}
		k\geq 0 &\Rightarrow& a_y(n)=0 \text{\ for\ } \tilde n<0,\\
		k<0 &\Rightarrow& a_y(n)=0 \text{\ for\ } \tilde n>0.
	\end{array}
	\right.
\end{equation}


Let $\LEigenform_{k}(\Gamma,\nu)\subset \Lform_k(\Gamma,\nu)$ denote the subspace spanned by eigenfunctions of $\Delta_k$. For each eigenvalue $\lambda$, we write 
\[\lambda=\tfrac14+r^2=s(1-s), \quad s=\tfrac12+ir,\quad r\in i(0,\tfrac14]\cup[0,\infty). \]
So $r\in i\R$ corresponds to $\lambda<\frac14$ and any such $\lambda\in(\lambda_0,\frac 14)$ is called an exceptional eigenvalue. Set
\begin{equation}\label{FirstSpecPara}
	r_\Delta(N,\nu,k)\defeq i\cdot\sqrt{\tfrac14-\lambda_\Delta(\Gamma_0(N),\nu,k)}. 
\end{equation}

Let $\LEigenform_{k}(\Gamma,\nu,r)\subset\LEigenform_{k}(\Gamma,\nu)$ denote the subspace corresponding to the spectral parameter $r$. Complex conjugation gives an isometry 
\[\LEigenform_{k}(\Gamma,\nu,r)\longleftrightarrow\LEigenform_{-k}(\Gamma,\overline\nu,r)\]
between normed spaces. 
For each $v\in \LEigenform_{k}(n,\nu,r)$, we have the Fourier expansion
\[v(z)=v(x+iy)=c_0(y)+\sum_{ \tilde{n}\neq 0} \rho(n) W_{\frac k2\sgn \tilde{n},\;ir}(4\pi| \tilde{n}|y)e( \tilde{n} x)\]
where $W_{\kappa,\mu}$ is the Whittaker function as in \cite[(13.14.3)]{dlmf} and
\[c_0(y)=\left\{ 
\begin{array}{ll}
	0&\alpha_{\nu}\neq 0,\\
	0&\alpha_{\nu}=0 \text{ and } r\geq 0,\\
	\rho(0)y^{\frac12+ir}+\rho'(0)y^{\frac12-ir}&\alpha_{\nu}=0 \text{ and } r\in i(0,\frac14]. 
\end{array}
\right. 
\]
Using the fact that $W_{\kappa,\mu}$ is a real function when $\kappa$ is real and $\mu\in \R\cup i\R$ \cite[(13.4.4), (13.14.3), (13.14.31)]{dlmf}, if we denote the Fourier coefficient of $f_c\defeq \bar f$ as $\rho_c(n)$, then
\begin{equation}\label{FourierCoeffConj}
	\rho_c(n)=\left\{\begin{array}{ll}
		\overline{\rho(1-n)},&	\alpha_{\nu}>0,\ n\neq 0\\
		\overline{\rho(-n)}, &\alpha_{\nu}=0. 
	\end{array}
	\right.\end{equation}

\subsection{Harmonic Maass forms}
\label{Subsection: Harmonic Maass Forms}

The following construction can be found in \cite{BrmOno2006ivt,BringmannOno2012}. Let $k\in \frac12+\Z$, $z=x+iy$ for $x,y\in \R$ and $y\neq 0$, $s\in \C$, and $N$ be a positive integer with $4|N$. 
We define the weight $k$ hyperbolic Laplacian (different from \S\ref{Subsection: Maass forms}) by
\[\widetilde{\Delta}_k\defeq -y^2\(\frac{\partial^2}{\partial x^2}+\frac{\partial^2}{\partial y^2}\)+iky\(\frac{\partial}{\partial x}+i\frac{\partial}{\partial y}\). \]

\begin{definition}\label{harmonic Maass form definition}
	With the notations above, let $\chi$ be a Dirichlet character modulo $N$. A weight $k$ harmonic Maass form on $\Gamma_0(N)$ with Nebentypus $\chi$ is any smooth function $f:\HH\rightarrow \C$ satisfying: 
	
	(1) For all $\gamma\in \Gamma_0(N)$, we have $f(\gamma z)=\chi(d)\nu_\theta(\gamma)^{2k}(cz+d)^k f(z)$; 
	
	(2) $\widetilde{\Delta}_k f=0$;
	
	(3) There exists a polynomial $\mathcal{P}(z)=\sum_{n\leq 0} a^+(n) q^n$ with coefficients in $\C$ such that \[f(z)-\mathcal{P}(z)=O(e^{-Cy})\] for some $C>0$. Analogous conditions are required for all cusps. 
\end{definition}

\begin{remark}
	We denote the space of such harmonic Maass forms by $H_k(\Gamma_0(N),\chi\nu_\theta^{2k})$. The polynomial $\mathcal{P}$ is called the principal part of $f$ at the cusp $\infty$, with analogous definitions at other cusps. For a congruence subgroup $\Gamma$ of $\SL_2(\Z)$ with $\begin{psmallmatrix}
		1&1\\0&1
	\end{psmallmatrix}\in \Gamma$, when the transformation formula in condition (1) is replaced by $f(\gamma z)=\nu(\gamma)(cz+d)^k f(z)$ for all $\gamma\in \Gamma$ and some weight $k$ multiplier system $\nu$, then we call $f$ a weight $k$ harmonic Maass form for $(\Gamma,\nu)$ and the principal parts of $f$ are defined similarly for cusps of $\Gamma$. 
\end{remark}

Denote the anti-linear differential operator $\xi_k$ by
\[(\xi_k g)(z)\defeq 2iy^k\;\overline{\frac{\partial}{\partial \overline z}\,(g(z))}=R_{-k}(y^k\overline{g(z)}) \]
where $R_k$ is the Maass raising operator defined in \eqref{RLOperators}. If we let $G(z)=g(Bz)$ for some constant B, one can check that $(\xi_k G)(z)=B^{1-k}(\xi_k g)(Bz)$. We have the following lemmas. 
\begin{lemma}[{\cite[Proposition~3.2]{BruinierFunke},\cite[Lemma~2.2]{BringmannOno2012}}]\label{xiOperatorMapToCuspForms}
	The map 
	\[\xi_k: H_k(\Gamma_0(N),\chi\nu_\theta^{2k})\rightarrow S_{2-k}(\Gamma_0(N),\overline{\chi}\nu_{\theta}^{-2k})\]
	is a surjective map. Moreover, if $f\in H_k(\Gamma_0(N),\chi\nu_\theta^{2k})$ has Fourier expansion
	\[f(z)=\sum_{n\geq n_0}c_f^+(n)q^n+\sum_{n<0}c_f^-(n)\Gamma(1-k,4\pi|n|y)q^n\quad\ \text{for\ some\ }n_0\in \Z,\]
	then 
	\[(\xi_k f)(z)=-(4\pi)^{1-k}\sum_{n=1}^\infty \overline{c_f^-(-n)} n^{1-k}q^n. \]
\end{lemma}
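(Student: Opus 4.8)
The plan is to establish four facts in turn: the $\Gamma_0(N)$-transformation behaviour of $\xi_k f$, its holomorphy, the explicit Fourier expansion (which simultaneously yields cuspidality), and finally surjectivity, which is where the real work lies. Throughout I write $\partial_z,\partial_{\bar z}$ for the Wirtinger derivatives, so that $\partial_{\bar z}=\tfrac12(\partial_x+i\partial_y)$ and $\partial_z\partial_{\bar z}=\tfrac14(\partial_x^2+\partial_y^2)$.

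First I would check well-definedness. Using $|\nu_\theta|=1$, so that $\overline{\nu_\theta(\gamma)^{2k}}=\nu_\theta(\gamma)^{-2k}$, together with $\im(\gamma z)=y/|cz+d|^2$, a direct chain-rule computation starting from $f(\gamma z)=\chi(d)\nu_\theta(\gamma)^{2k}(cz+d)^k f(z)$ and $\xi_k f=2iy^k\,\overline{\partial_{\bar z}f}$ yields
\[(\xi_k f)(\gamma z)=\overline{\chi(d)}\,\nu_\theta(\gamma)^{-2k}(cz+d)^{2-k}(\xi_k f)(z),\]
so $\xi_k f$ carries weight $2-k$ and multiplier $\overline\chi\,\nu_\theta^{-2k}$, a legitimate weight $2-k$ multiplier since $-k\equiv2-k\pmod2$. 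Holomorphy then comes from the factorization
\[\widetilde\Delta_k=-\,\xi_{2-k}\circ\xi_k,\]
obtained by expanding the right-hand side with $\partial_{\bar z}y^k=\tfrac{ik}2y^{k-1}$. Since $f$ is harmonic, $\widetilde\Delta_k f=0$ forces $\xi_{2-k}(\xi_k f)=0$, whence $\partial_{\bar z}(\xi_k f)=0$ and $\xi_k f$ is holomorphic on $\HH$.

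Next I would compute the Fourier expansion, which also settles vanishing at the cusps. As $\xi_k$ annihilates the holomorphic part $\sum_{n\geq n_0}c_f^+(n)q^n$, only the terms $c_f^-(n)\Gamma(1-k,4\pi|n|y)q^n$ with $n<0$ contribute. Differentiating the incomplete Gamma factor via $\tfrac{d}{dx}\Gamma(1-k,x)=-x^{-k}e^{-x}$, the two pieces of $\partial_{\bar z}$ of each term collapse to a single exponential, and after conjugating and multiplying by $2iy^k$ one is left with
\[(\xi_k f)(z)=-(4\pi)^{1-k}\sum_{n\geq1}\overline{c_f^-(-n)}\,n^{1-k}q^n,\]
as claimed. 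Since this expansion begins at $q^1$ it vanishes at $\infty$; applying the scaling relation $(\xi_k\,g(B\cdot))(z)=B^{1-k}(\xi_k g)(Bz)$ recorded above at the scaling matrices of the remaining cusps gives the same vanishing there, so $\xi_k f\in S_{2-k}(\Gamma_0(N),\overline\chi\nu_\theta^{-2k})$ and the map is well defined.

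Finally, surjectivity is the main obstacle, since the Fourier formula only describes the image of a given $f$. I would produce a preimage of an arbitrary $g=\sum_{n\geq1}b(n)q^n\in S_{2-k}$ by the nonholomorphic Eichler-integral construction (equivalently, via weight $k$ Maass--Poincar\'e series): one builds a weight $k$ function whose nonholomorphic part is the period integral of $g$, adjusts a holomorphic part so that the sum is $\Gamma_0(N)$-modular and harmonic, and reads off from the Fourier formula above that its $\xi_k$-image is a nonzero multiple of $g$. The genuinely delicate points are the convergence of the defining series in the small-weight regime $k=\tfrac12$ and the verification that the resulting form is harmonic with the prescribed principal parts; these are exactly the inputs established in \cite{BruinierFunke} and \cite{BringmannOno2012}, which I would invoke rather than reprove.
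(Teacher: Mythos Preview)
The paper does not prove this lemma at all; it is stated with citations to \cite[Proposition~3.2]{BruinierFunke} and \cite[Lemma~2.2]{BringmannOno2012} and then used as a black box. Your proposal therefore supplies strictly more than the paper does, and the outline you give---transformation law, holomorphy via $\widetilde\Delta_k=-\xi_{2-k}\circ\xi_k$, termwise Fourier computation, and surjectivity via Poincar\'e series / Eichler integrals---is exactly the standard Bruinier--Funke argument, so in that sense you are aligned with the cited sources.

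One small imprecision: to show $\xi_k f$ vanishes at cusps other than $\infty$ you invoke the dilation relation $(\xi_k\,g(B\cdot))(z)=B^{1-k}(\xi_k g)(Bz)$, but scaling matrices $\sigma_{\mathfrak a}\in\SL_2(\R)$ are not dilations. What you actually need is the $\SL_2(\R)$-equivariance $\xi_k(f|_k\gamma)=(\xi_k f)|_{2-k}\gamma$ for all $\gamma\in\SL_2(\R)$, which follows from the same chain-rule computation you already did for $\gamma\in\Gamma_0(N)$; then the Fourier expansion of $f|_k\sigma_{\mathfrak a}$ at $\infty$ (guaranteed by Definition~\ref{harmonic Maass form definition}(3)) feeds into the same termwise calculation. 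With that correction the argument is sound.
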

\begin{remark}
We denote the holomorphic part of $f\in H_k(\Gamma_0(N),\chi\nu_\theta^{2k})$ by
		\[f_h(z)\defeq \sum_{n\geq n_0}c_f^+(n)q^n=\mathcal{P}(z)+\sum_{n>0}c_f^+(n)q^n\]
		and the non-holomorphic part of $f$ by
		\[f_{nh}(z)\defeq \sum_{n<0}c_f^-(n)\Gamma(1-k,4\pi|n|y)q^n,\]
		where $\Gamma(s,\beta)$ is the incomplete Gamma function defined by
		\[\Gamma(s,\beta)=\int_\beta^\infty t^{s-1}e^{-t}dt,\quad \beta>0. \]  
		We also define a mock modular form as the holomorphic part of a harmonic Maass form. 
\end{remark}

\begin{lemma}[{\cite[Lemma~2.3]{BringmannOno2012}}]\label{xiOperatorMapToCuspForms, 2}
	If $f\in H_{k}(\Gamma_0(N),\chi\nu_\theta^{2k})$ has the property that $\xi_{k}f\neq 0$, then the principal part of $f$ is non-constant for at least one cusp. 
\end{lemma}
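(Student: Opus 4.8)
The plan is to prove the contrapositive: assuming the principal part of $f$ is constant at every cusp of $\Gamma_0(N)$, I would show that $\xi_k f=0$. First I would invoke Lemma~\ref{xiOperatorMapToCuspForms} to place $g\defeq \xi_k f$ in the space of cusp forms $S_{2-k}(\Gamma_0(N),\overline\chi\nu_\theta^{-2k})$, so that the problem reduces to showing that the Petersson norm $\langle g,g\rangle$ vanishes. Note that the explicit Fourier expansion in Lemma~\ref{xiOperatorMapToCuspForms} shows $\xi_k f$ is built only from the coefficients $c_f^-$ of the \emph{non-holomorphic} part, whereas the principal part lives in the \emph{holomorphic} part; the whole point is that modularity and harmonicity link the two, and the pairing below is what makes that link quantitative.

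The central device is the Bruinier--Funke pairing between harmonic Maass forms of weight $k$ and cusp forms of weight $2-k$, in the spirit of \cite{BruinierFunke}. For an arbitrary $h\in S_{2-k}(\Gamma_0(N),\overline\chi\nu_\theta^{-2k})$, I would evaluate $\langle \xi_k f,h\rangle=\int_{\Gamma_0(N)\backslash\HH}(\xi_k f)\,\overline h\,d\mu$ by substituting $\xi_k f=2iy^{k}\overline{\partial_{\bar z}f}$ and applying Stokes' theorem on the fundamental domain truncated at height $T$ near each cusp. After this integration by parts and letting $T\to\infty$, the interior integral collapses to boundary contributions that localize at the cusps, and extracting the constant Fourier mode in the horizontal variable produces a formula of the shape
\[\langle \xi_k f,\,h\rangle=\sum_{\ma}\sum_{n<0}c^+_{f,\ma}(n)\,\overline{a_{h,\ma}(-n)},\]
where $c^+_{f,\ma}$ and $a_{h,\ma}$ denote the Fourier coefficients of the holomorphic part of $f$ and of $h$ at the cusp $\ma$. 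The $n=0$ term is absent precisely because $h$ is cuspidal, so that $a_{h,\ma}(0)=0$ for every $\ma$; consequently only the non-constant part of the principal part of $f$ can contribute to the pairing.

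To finish, I would specialize to $h=g=\xi_k f$, which is legitimate since $g$ lies in the target space. The hypothesis that every principal part of $f$ is constant means $c^+_{f,\ma}(n)=0$ for all $n<0$ and all cusps $\ma$, so the displayed sum vanishes term by term. Hence $\langle g,g\rangle=0$, which forces $g=\xi_k f=0$ and establishes the contrapositive.

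The hard part will be the Stokes' theorem step: verifying that the boundary integrals over the truncated domain converge and concentrate at the cusps as $T\to\infty$ (using the exponential decay of the cusp form $h$ against the at-most-polynomial growth forced by the principal parts of $f$), tracking the correction terms introduced by the weight factor $y^{k}$ when integrating by parts, and accounting correctly for the scaling matrices $\sigma_\ma$ and the half-integral-weight multiplier $\overline\chi\nu_\theta^{-2k}$ cusp by cusp. The decisive structural observation---and the reason a merely \emph{constant} principal part of $f$ suffices to annihilate the pairing---is that cuspidality of $h$ removes the $n=0$ contribution.
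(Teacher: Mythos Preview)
Your proposal is correct and follows exactly the Bruinier--Funke pairing argument that the paper references: the paper does not give its own proof of this lemma but cites \cite[Lemma~2.3]{BringmannOno2012} and remarks that the proof ``fully follows from \cite[\S3]{BruinierFunke}.'' Your Stokes-theorem computation of $\langle \xi_k f,h\rangle$ as a sum over principal-part coefficients paired against cusp-form coefficients, together with the observation that cuspidality of $h$ kills the $n=0$ term, is precisely the content of Bruinier--Funke's Proposition~3.5.
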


\begin{remark}
	The proof of Lemma~\ref{xiOperatorMapToCuspForms} and Lemma~\ref{xiOperatorMapToCuspForms, 2} in \cite{BringmannOno2012} fully follows from \cite[\S3]{BruinierFunke}. Note that at the beginning of \cite[\S3]{BruinierFunke}, they only required that $\Gamma''$ is a subgroup of finite index. It is straightforward that Lemma~\ref{xiOperatorMapToCuspForms} and Lemma~\ref{xiOperatorMapToCuspForms, 2} both work if we change $H_{k}(\Gamma_0(N), \chi\nu_\theta^{2k})$ to $H_{k}(\Gamma_1(N), \nu_\theta^{2k})$ and change $S_{2-k}(\Gamma_0(N), \nu_\theta^{2k})$ to $S_{2-k}(\Gamma_1(N), \nu_\theta^{2k})$. 
\end{remark}

\section{Vector-valued theory}
\label{Section: V-val things}

For a vector or a matrix $M$, let $M^{\mathrm{T}}$ denote its transpose and $M^{\mathrm H}$ denote its conjugate transpose (Hermitian). Note that we are not using the language of Weil representations. 
\begin{notation}\label{VectorNotations}
	A boldface letter, e.g. $\mathbf{u}$ or $\mathbf{F}(z)$, always denotes a vector or a vector-valued function of some dimension $D\geq 2$, respectively. For $1\leq \ell\leq D$, let $\e_\ell\defeq (0,\cdots,0,1,0,\cdots,0)^{\mathrm T}$ denote the unit vector which has $1$ at its $\ell$-th entry and $0$ at the others.
	
	When the superscript $\cdot^{(\ell)}$ appears, $\mathbf{u}^{(\ell)}$, ${u}^{(\ell)}$, $\mathbf{F}^{(\ell)}(z)$ and $F^{(\ell)}(z)$ are defined by \[\mathbf{u}=\sum_{\ell=1}^{D}\mathbf{u}^{(\ell)}=(u^{(1)},u^{(2)},\cdots,u^{(D)})^\mathrm{T},\quad \mathbf{F}(z)=\sum_{\ell=1}^{D}\mathbf{F}^{(\ell)}(z)=\sum_{\ell=1}^{D}F^{(\ell)}(z)\e_\ell,  \]
	where $\mathbf{u}^{(\ell)}=u^{(\ell)}\e_\ell$ and $\mathbf{F}^{(\ell)}(z)=F^{(\ell)}(z)\e_\ell$. 
\end{notation}

Given two complex vectors $\mathbf{u},\mathbf{v}\in \C^{D}$, 
we define their inner product as $\mathbf{v}^{\mathrm H}\mathbf{u}=\sum_{\ell=1}^D u^{(\ell)}\overline{v^{(\ell)}}$. Let ${\mathrm M}_D(\C)$ denote the space of $D\times D$ complex matrices and take $M\in {\mathrm M}_D(\C)$. Then the inner product of $M\mathbf v$ and $\mathbf u$ is
\[(M\mathbf{v})^{\mathrm H}\mathbf{u}=\mathbf{v}^{\mathrm H}M^{\mathrm H}\mathbf{u}. \]

\subsection{Vector-valued Maass forms}
\label{Subsection: v-val Maass form}

Analogous to the scalar-valued case and to \cite{KnoppMasonVecValAndPoincare} for vector-valued modular forms, here we define vector-valued Maass forms on a congruence subgroup $\Gamma$ of $\SL_2(\Z)$ where $\begin{psmallmatrix}
	1&1\\0&1
\end{psmallmatrix}\in \Gamma$. 
As in the scalar-valued case, we fix the argument $(-\pi,\pi]$ and define the automorphic factor $j(\gamma, z)$ in \eqref{j_factor_def}. 
Denote our vector-valued function on the upper-half complex plane $\HH$ by
\[\mathbf{F}(z)=(F^{(1)}(z),F^{(2)}(z),\cdots,F^{(D)}(z))^T=\sum_{\ell=1}^{D} F^{(\ell)}(z) \e_\ell \]
and define the weight $k\in \R$ slash operator $|_k$ by
\[(\mathbf{F}|_k\gamma )(z)\defeq  \((F^{(1)}|_k \gamma)(z),\cdots,(F^{(D)}|_k\gamma)(z)\)^T\defeq j(\gamma,z)^{-k}\mathbf{F}(\gamma z). \]
\begin{definition}\label{vec val multi sys}
	For a congruence subgroup $\Gamma$ of $\SL_2(\Z)$ with $\begin{psmallmatrix}
		1&1\\0&1
	\end{psmallmatrix}\in \Gamma$, we say that $\xi:\Gamma\rightarrow \GL_{D}(\C)$ is a $D$-dimensional multiplier system if it satisfies the following compatibility conditions:
	\begin{itemize}
		\item[(1)] $\xi(\gamma)$ is a unitary matrix for all $\gamma\in \Gamma$, i.e. $\xi(\gamma)^{-1}=\xi(\gamma)^{\mathrm H}$. 
		\item[(2)] $\xi(-I)=e^{-\pi i k}I_{D}$. Here $I$ is the identity matrix in $\SL_2(\Z)$ and $I_{D}$ is the identity matrix in $\GL_{D}(\C)$. 
		\item[(3)] $\xi(\gamma_1\gamma_2)=w_k(\gamma_1,\gamma_2)\xi(\gamma_1)\xi(\gamma_2)$ for all $\gamma_1,\gamma_2\in \Gamma$, where
		\[w_k(\gamma_1,\gamma_2)\defeq j(\gamma_2,z)^k j(\gamma_1,\gamma_2 z)^k j(\gamma_1\gamma_2,z)^{-k}. \]
		\item[(4)] For every cusp $\ma$ of $\Gamma$, $\xi\(\sigma_{\ma}\begin{psmallmatrix}
			1&1\\0&1
		\end{psmallmatrix}\sigma_{\ma}^{-1}\)=\diag\{e(-\alpha_{\xi,\ma}^{(1)}),\cdots,e(-\alpha_{\xi,\ma}^{(D)})\}$ for some $\alpha_{\xi,\ma}^{(\ell)}\in [0,1)$. Here $\sigma_\ma$ is the scaling matrix of the cusp $\ma$ of $\Gamma$.  
	\end{itemize}
\end{definition}
\begin{remark}
	The multiplier system $\xi$ may not be a matrix representation of $\Gamma$ because when $k\notin \Z$, $w_k(\gamma_1,\gamma_2)$ may not always be $1$, hence $\xi$ is not multiplicative. In (4), if $\xi$ is clear from context, we will simply denote $\alpha_{\ma}^{(\ell)}=\alpha_{\xi,\ma}^{(\ell)}$. 
\end{remark}

For weight $k\in \R$ and a $D$-dimensional complex function $\mF$ such that each component $F^{(\ell)}$ is a smooth function, if
\[\Delta_k \mathbf{F}(z)+\lambda \mathbf{F}(z)=0\]
for some $\lambda\in \C$, then we call $\mathbf{F}$ a $D$-dimensional eigenfunction of $\Delta_k$ with eigenvalue $\lambda$. In this case, every component $F^{(\ell)}$ of $\mathbf{F}$ is a eigenfunction of $\Delta_k$ with eigenvalue $\lambda$.
\begin{definition}\label{vec val Maass form}
	A vector-valued Maass form $\mathbf{F}:\HH\rightarrow \C^{D}$ of weight $k\in \R$, eigenvalue $\lambda\in \C$ and $D$-dimensional multiplier system $\xi$ on $\Gamma$ is a vector-valued function $\mathbf{F}=(F^{(1)},\cdots,F^{(D)})$ satisfying:
	\begin{itemize}
		\item [(1)] Each $F^{(\ell)}$ is real-analytic on $\HH$;
		\item [(2)] $(\mathbf{F}|_k\gamma)(z)=\xi(\gamma)\mF(z)$ for all $\gamma\in \Gamma$. 
		\item [(3)] $(\Delta_k+\lambda) \mF(z)=0$. 
		\item [(4)] Each $F^{(\ell)}$ satisfies the growth condition 
		\[(F^{(\ell)}|_k\gamma) (z)\ll y^\sigma+y^{1-\sigma} \quad \text{for some }\sigma>0,\quad \text{as }y\rightarrow \infty\]
		for all $\gamma\in \SL_2(\Z)$. 
	\end{itemize}
	In addition, if $\mF$ also satisfies
	\begin{itemize}
		\item [(5)] For every $1\leq \ell\leq D$ and every cusp $\ma$ of $\Gamma$
		\[\int_0^1 (F^{(\ell)}|_k\sigma_{\mathfrak{a}})(x+iy)\;e(\alpha^{(\ell)}_{\mathfrak{a}}x)dx=0, \]
	\end{itemize}
	then we call $\mF$ a $D$-dimensional Maass cusp form. 
\end{definition}

Suppose $\mF$ is a weight $k$ Maass form with multiplier system $\xi$ on $\Gamma$.  By Definition~\ref{vec val Maass form}, each $e(\alpha_{\ma}^{(\ell)})(F^{(\ell)}|_k\sigma_\ma)(z)$ is a periodic function with period $1$ on $\HH$. Then $F^{(\ell)}$ admits a Fourier expansion at the cusp $\ma$ as
\[(F^{(\ell)}|_k\sigma_\ma)(x+iy)=\sum_{n\in \Z}c_\ma^{(\ell)}(n,y)e\(n_{\ma}^{(\ell)}x\),\quad \text{where }n_{\ma}^{(\ell)}\defeq n-\alpha_{\ma}^{(\ell)}.  \]
As in the classical case, since $F^{(\ell)}$ is an eigenfunction of $\Delta_k$ with eigenvalue $\lambda=\frac14+r^2$ for $r\in [0,\infty)\cup i[0,\infty)$, by solving the partial differential equation using the method of separation of variables, $F^{(\ell)}$ admits a Fourier expansion of the form
\begin{align}\label{Vec val Maass Fourier expansion}
	\begin{split}
		F^{(\ell)}(x+iy)=\rho_{F}^{(\ell)}(0,y)+\sum_{\substack{n\in \Z\\n_{\infty}^{(\ell)}\neq 0}} \rho_\infty^{(\ell)}(n)W_{\frac k2 \sgn n_{\infty}^{(\ell)},\;ir}(4\pi |n_{\infty}^{(\ell)}|y)e\(n_{\infty}^{(\ell)}x\), 
	\end{split}
\end{align}
where $\rho_F^{(\ell)}(0,y)=0$ if $n\neq 0$ or $\alpha_{\infty}^{(\ell)}\neq 0$, and $\rho_F^{(\ell)}(0,y)=c_1y^{\frac 12+ir}+c_2 y^{\frac 12-ir}$ for some constants $c_1,c_2\in \C$ if $n=\alpha_{\infty}^{(\ell)}=0$. Here $W_{\kappa,\mu}$ is the $W$-Whittaker function defined in \cite[(13.14.3)]{dlmf} which satisfies $W_{\kappa,\mu}(\alpha)\in \R$ and $W_{\kappa,\mu}=W_{\kappa,-\mu}$ if $\kappa,\alpha\in \R$ and $\mu\in \R\cup i\R$.

We call $\mF:\HH\rightarrow \C^{D}$ a vector-valued automorphic form of weight $k$ and $D$-dimensional multiplier system $\xi$ on $\Gamma$ if
\begin{equation}\label{vec val auto form}
	(\mF|_k\gamma)(z)=\xi(\gamma) \mF(z)\quad \text{for all } \gamma\in \Gamma. 
\end{equation}
and denote the linear space of all such automorphic forms as $\mathcal{A}_k(\Gamma,\xi)$. For $\mF,\mathbf{G}\in \mathcal{A}_k(\Gamma,\xi)$, we define (formally) their Petersson inner product by
\begin{equation}\label{vec val Petersson inner prod}
	\langle \mF,\mathbf{G}\rangle \defeq \int_{\Gamma\setminus \HH} \sum_{\ell=1}^{D} F^{(\ell)}(z)\overline{G^{(\ell)}(z)}\, \frac{dxdy}{y^2}=\int_{\Gamma\setminus \HH} \mathbf{G}^{\mathrm H}(z)\mathbf{F}(z)\frac{dxdy}{y^2}. 
\end{equation}
This inner product is well-defined: for all $\gamma\in \Gamma$, since $\frac{dxdy}{y^2}$ is invariant under $\gamma$, we have
\[\int_{\Gamma\setminus \HH} \mathbf{G}^{\mathrm H}(\gamma z) \mathbf{F}(\gamma z)\frac{dxdy}{y^2}=\int_{\Gamma\setminus \HH} \mathbf{G}^{\mathrm H}(z)\xi(\gamma)^{\mathrm H}\overline{j(\gamma, z)^k}j(\gamma,z)^k\xi(\gamma)\mathbf{F}(z)\frac{dxdy}{y^2}=\langle \mathbf {F},\mathbf{G}\rangle. \]
Let $\mathcal{L}_k(\Gamma,\xi)\subset \mathcal{A}_k(\Gamma,\xi)$ denote the Hilbert space of square-integrable functions under the above inner product. Then if $\mF\in \mathcal{L}_k(\Gamma,\xi)$, we have
\[\int_{\Gamma\setminus \HH} |F^{(\ell)}(z)|^2 \frac{dxdy}{y^2}<\infty\quad \text{for }1\leq \ell\leq D. \]

\subsection{\texorpdfstring{A representation on $\Gamma_0(p)$ twisted by $\overline{\nu_\eta}$}{A twisted representation}}
\label{Subsection: mup def}

In this section we review the notations and results in \cite{GarvanTransformationDyson2017}. Denote the $q$-Pochhammer symbol $(a,q)_n\defeq \prod _{j=0}^{n-1} (1-aq^{j-1})$.  For a prime $p\geq 5$ and $1\leq \ell\leq p-1$, define
\begin{equation}\label{M l p definition}
	M\(\frac{\ell}p;z\)\defeq \frac1{(q,q)_{\infty }}\sum_{n\in \Z}\frac{(-1)^nq^{n+\frac \ell p}}{1-q^{n+\frac \ell p}}q^{\frac 32 n(n+1)}
\end{equation}
and
\begin{equation}\label{N l p definition}
	N\(\frac{\ell}p;z\)\defeq \frac1{(q,q)_{\infty }}\(1+\sum_{n=1}^\infty\frac{(-1)^n (1+q^n)\(2-2\cos(\frac{2\pi \ell}p)\)}{1-2\cos(\frac{2\pi \ell}p)q^n+q^{2n}}q^{\frac 12 n(3n+1)}\).
\end{equation}
Note that 
\[N\(\frac \ell p;z\)=\mathcal{R}(\zeta_p^\ell;q),\]
which was defined in \eqref{rankGeneratingFunction} (see \cite[(2.4)]{GarvanTransformationDyson2017}). 
Further define
\begin{equation}\label{MN mathcal l p definition}
	\Mform\(\frac \ell p;z\)\defeq 2q^{\frac{3\ell}{2p}(1-\frac\ell p)-\frac 1{24}}M\(\frac \ell p;z\),\quad 
	\mathcal{N}\(\frac \ell p;z\)\defeq \csc\(\frac{\pi \ell}p\)q^{-\frac1{24}}N\(\frac \ell p;z\). 
\end{equation}
We also define the following functions: $\Mform(a,b,p,z)$ and $\mathcal{N}(a,b,p,z)$ as in \cite[(2.7), (2.8)]{GarvanTransformationDyson2017}; non-holomorphic functions $T_1(\frac \ell p;z)$, $T_2(\frac \ell p;z)$, $T_1(a,b, p;z)$ and $T_2(a,b,p;z)$ as in \cite[(3.1)-(3.4)]{GarvanTransformationDyson2017};
\begin{equation}\label{Epsilon2Def}
	\ep_2\(\frac \ell p;z\)\defeq \left\{\begin{array}{ll}
		2\exp\(-3\pi i z(\frac ac-\frac 16)^2\),&\ 0<\frac\ell p<\frac16,\\
		0,&\ \frac 16<\frac\ell p<\frac56,\\
		2\exp\(-3\pi i z(\frac ac-\frac 56)^2\),&\ \frac 56<\frac\ell p<1\\
	\end{array}
	\right. 
\end{equation}
and $\ep_2(a,b,p;z)$ as \cite[before Theorem~2.4]{GarvanTransformationDyson2017}; and 
\begin{align}
	\label{G1}
	\mathcal{G}_1\(\frac \ell p;z\)&\defeq \mathcal{N}\(\frac \ell p;z\)-T_1\(\frac \ell p;z\)=\csc\(\frac{\pi \ell}p\)q^{-\frac 1{24}}\mathcal{R}(\zeta_p^\ell;q)-T_1\(\frac \ell p;z\),\\
	\label{G2}
	\mathcal{G}_2\(\frac \ell p;z\)&\defeq \mathcal{M}\(\frac \ell p;z\)+\ep_2\(\frac \ell p;z\)-T_2\(\frac \ell p;z\),\\
	\mathcal{G}_1\(a,b, p;z\)&\defeq \mathcal{N}\(a,b,p;z\)-T_1\(a,b, p;z\),\\
	\mathcal{G}_2\(a,b,p;z\)&\defeq \mathcal{M}\(a,b,p;z\)+\ep_2(a,b,p;z)-T_2\(a,b,p;z\)
\end{align}
as in \cite[(3.5)-(3.8)]{GarvanTransformationDyson2017}. 
Bringmann and Ono proved the following result. 
\begin{proposition}[{\cite[Theorem~3.4]{BrmOno2010}, \cite[Corollary~3.2]{GarvanTransformationDyson2017}}]
	Suppose $p\geq 5$ is a prime. Then 
	\[\left\{\mathcal{G}_1\(\frac \ell p;z\),\mathcal{G}_2\(\frac \ell p;z\):1\leq \ell<p\right\}\cup \{\mathcal{G}_1\(a,b, p;z\),\mathcal{G}_1\(a,b, p;z\):0\leq a<p,\ 1\leq b<p\}\]
	is a vector valued Maass form of weight $\frac12$ for $\SL_2(\Z)$. 
\end{proposition}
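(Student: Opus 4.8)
The plan is to place the $2(p^2-1)$ listed functions (reading the repeated $\mathcal{G}_1(a,b,p;z)$ as $\mathcal{G}_1,\mathcal{G}_2$) as the components of a single vector-valued function $\mathbf{F}(z)$ of weight $\tfrac12$, and then to verify, one by one, the four defining conditions of Definition~\ref{vec val Maass form}, with eigenvalue $\lambda=\lambda_0(\tfrac12)=\tfrac3{16}$ (equivalently spectral parameter $r=\tfrac i4$, i.e. $s=\tfrac34$). The substance is not new: each $\mathcal{G}_i$ is the non-holomorphic completion of a rank-type generating function, and all the transformation data we need is already contained in Garvan's transformation formulas \cite{GarvanTransformationDyson2017} (building on \cite{BrmOno2010}). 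The real work is to repackage those scalar formulas into the vector-valued framework of \S\ref{Subsection: v-val Maass form} and to check that the resulting multiplier satisfies Definition~\ref{vec val multi sys}.

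Conditions (1) and (3) are the analytic core and follow from the construction of the completions. Each component is a holomorphic $q$-series (convergent on $\HH$) minus a term $T_1$ or $T_2$, where the $T_i$ are non-holomorphic Eichler integrals of weight $\tfrac32$ theta series; these are manifestly real-analytic, giving (1). For (3), the whole purpose of subtracting $T_i$ in \eqref{G1}--\eqref{G2} is to cancel the mock-modular defect of $\mathcal{N}$ and $\mathcal{M}+\ep_2$, so that each $\mathcal{G}_i$ becomes an eigenfunction of $\Delta_{\frac12}$. Concretely, I would check that the $y$-dependence produced by the period integral is exactly a Whittaker term $W_{\pm\frac14,\,ir}(4\pi|n|y)$ with $r=\tfrac i4$ in the Fourier expansion \eqref{Vec val Maass Fourier expansion} (equivalently an incomplete-gamma term $\Gamma(\tfrac12,\cdot)$), which forces the eigenvalue $\lambda_0(\tfrac12)=\tfrac3{16}$. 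This is the standard completion mechanism and is already carried out in \cite[\S3]{GarvanTransformationDyson2017}.

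Condition (2), the transformation law $(\mathbf{F}|_{\frac12}\gamma)=\xi(\gamma)\mathbf{F}$ for all $\gamma\in\SL_2(\Z)$, is the heart of the matter, and I would establish it on the generators $T=\begin{psmallmatrix}1&1\\0&1\end{psmallmatrix}$ and $S=\begin{psmallmatrix}0&-1\\1&0\end{psmallmatrix}$. Under $T$ the action is diagonal: the fractional $q$-powers in \eqref{MN mathcal l p definition} (and their analogues for the $(a,b,p)$-functions) contribute only roots of unity, so $\xi(T)$ is a diagonal unitary matrix. Under $S$ the components genuinely mix, and this is precisely the content of \cite[Theorem~2.4]{GarvanTransformationDyson2017} and its companions: the $S$-image of each $\mathcal{G}_1(\tfrac\ell p;z)$ and $\mathcal{G}_2(\tfrac\ell p;z)$ is a linear combination of the auxiliary functions $\mathcal{G}_1(a,b,p;z)$, $\mathcal{G}_2(a,b,p;z)$, and conversely. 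The two structural facts to extract are (i) \emph{closure}, that no function outside the listed set appears — this is exactly why the $2p(p-1)$ auxiliary functions are thrown in — and (ii) \emph{unitarity} of the transition matrix $\xi(S)$. I expect (ii) to be the main obstacle: it requires organizing Garvan's Gauss-sum and theta-transformation coefficients into a single matrix and verifying $\xi(S)^{\mathrm H}\xi(S)=I_D$. Once $\xi(S)$ and $\xi(T)$ are shown unitary and to obey the weight-$\tfrac12$ cocycle relation $w_{\frac12}$ of Definition~\ref{vec val multi sys}(3), the transformation law extends to all of $\SL_2(\Z)$.

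Finally, condition (4) is routine: the holomorphic parts have bounded principal parts and the $T_i$ decay, so $(F^{(\ell)}|_{\frac12}\gamma)(x+iy)\ll y^\sigma+y^{1-\sigma}$ as $y\to\infty$. It then remains to confirm that the assembled $\xi$ meets the remaining axioms of Definition~\ref{vec val multi sys}: unitarity (1), the relation $\xi(-I)=e^{-\pi i/2}I_D$ (2), and the cusp condition (4), all of which read off from the diagonal $\xi(T)$ and the behavior at the single cusp class of $\SL_2(\Z)$. The only genuinely delicate point throughout is the bookkeeping in condition (2) — showing the span is $S$-stable with a unitary transition matrix — which is implicit in \cite[Corollary~3.2]{GarvanTransformationDyson2017} but must be made explicit to match Definition~\ref{vec val Maass form}.
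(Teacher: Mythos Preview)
The paper does not supply its own proof of this proposition: it is stated purely as a citation of \cite[Theorem~3.4]{BrmOno2010} and \cite[Corollary~3.2]{GarvanTransformationDyson2017}, with no argument given in the text. Your outline is a reasonable reconstruction of how those cited proofs proceed---verify real-analyticity and the eigenvalue from the Eichler-integral completion, establish the $\SL_2(\Z)$ transformation law on generators $T$ and $S$ using Garvan's explicit formulas, and read off the growth condition---so there is nothing to compare your approach against beyond noting that it matches the strategy of the references the paper invokes.
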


We clarify the notations to use in the remaining paper. 
\begin{notation}\label{Congru inverse notations}
	For integers $A$ and $n>0$, let $[A]_{\{n\}}$ denote the least non-negative residue of $A\Mod n$. If the prime $p\geq 5$ is clear from the context, then we simply denote $[A]_{\{p\}}$ as $[A]$. 
	
	If $(A,n)=1$, let $\overline{A_{\{n\}}}$ denote the inverse of $A$ modulo $n$. Let $A'_n$ be defined by $AA'_n\equiv -1\Mod n$ if $n$ is odd and $AA'_n\equiv -1\Mod{2n}$ if $n$ is even. 
\end{notation} 

Garvan computed the following transformation laws on $\Gamma_0(p)$:
\begin{proposition}[{\cite[Theorem~4.1]{GarvanTransformationDyson2017}}]\label{mu cdlp def prop}
	Let $p\geq 5$ be a prime.  Then 
	\[\mathcal{G}_1\(\frac \ell p;\gamma z\)=\mu(c,d,\ell,p)\overline{\nu_\eta}(\gamma )(cz+d)^{\frac 12}\mathcal{G}_1\(\frac{[d\ell]}p;z\)\quad \text{for }\gamma=\begin{psmallmatrix}
		a&b\\c&d
	\end{psmallmatrix}\in \Gamma_0(p),
	\]
	where 
	\begin{equation}\label{mu cdlp def}
		\mu(c,d,\ell,p)\defeq \exp\(\frac{3\pi  i cd\ell^2}{p^2}\)(-1)^{\frac {c\ell}p}(-1)^{\floor{\frac{d\ell }p}}. 
	\end{equation}
\end{proposition}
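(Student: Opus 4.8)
The plan is to obtain the claimed scalar transformation by tracking how each building block of $\mathcal{G}_1(\tfrac\ell p;z)=\mathcal{N}(\tfrac\ell p;z)-T_1(\tfrac\ell p;z)$ behaves under $\gamma=\begin{psmallmatrix}a&b\\c&d\end{psmallmatrix}\in\Gamma_0(p)$, using the preceding proposition (that the full tuple $\{\mathcal{G}_1,\mathcal{G}_2,\dots\}$ is a weight-$\tfrac12$ vector-valued Maass form for $\SL_2(\Z)$) to control the non-holomorphic piece. Because the proposed multiplier $\mu(c,d,\ell,p)\,\overline{\nu_\eta}(\gamma)$ satisfies the weight-$\tfrac12$ cocycle of Definition~\ref{multiplier system def}(iii)—both $\overline{\nu_\eta}$ and the explicit $\mu$ in \eqref{mu cdlp def} do—it suffices to verify the identity on a set of generators of $\Gamma_0(p)$ and extend by multiplicativity. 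As a first, trivial check I would take $\begin{psmallmatrix}1&1\\0&1\end{psmallmatrix}$: then $c=0$, $d=1$, $[d\ell]=\ell$, so $\mu=1$ and $\overline{\nu_\eta}=e(-\tfrac1{24})$, and the asserted identity collapses to $\mathcal{G}_1(\tfrac\ell p;z+1)=e(-\tfrac1{24})\mathcal{G}_1(\tfrac\ell p;z)$, which is immediate from the $q^{-1/24}$ prefactor in \eqref{MN mathcal l p definition} since the remaining $q$-series in $\mathcal{N}$ and the completion $T_1$ are $1$-periodic.

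The substance lies in a generator with $p\mid c\neq 0$. Here I would begin from the explicit modular transformation of the rank generating function $\mathcal{R}(\zeta_p^\ell;q)$, noting that the parameter $\zeta_p^\ell$ is transported to $\zeta_p^{d\ell}$ under $\gamma$; reducing the exponent modulo $p$ introduces the target index $\tfrac{[d\ell]}p$. The three factors of $\mu$ then arise from disjoint sources: the cosecant normalization in \eqref{MN mathcal l p definition} gives $\csc(\tfrac{\pi d\ell}p)=(-1)^{\lfloor d\ell/p\rfloor}\csc(\tfrac{\pi[d\ell]}p)$, producing $(-1)^{\lfloor d\ell/p\rfloor}$; completing the square in the theta/Appell--Lerch transformation underlying $\mathcal{R}$ produces the quadratic phase $\exp(\tfrac{3\pi i cd\ell^2}{p^2})$ (the coefficient $3$ tracing back to the $q^{3n(n+1)/2}$, $q^{n(3n+1)/2}$ powers in \eqref{M l p definition}, \eqref{N l p definition}); and the weight-$\tfrac12$ factor $(cz+d)^{1/2}$ together with the $q^{-1/24}$ eta-normalization supplies $\overline{\nu_\eta}(\gamma)$, computed from Knopp's formula \eqref{KnoppFmlEta}, while the residual square-root branch contributes the integer-exponent sign $(-1)^{c\ell/p}$ (legitimate since $p\mid c$). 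That only the single component $\mathcal{G}_1(\tfrac{[d\ell]}p)$ appears—with no admixture of the $\mathcal{G}_2$- or $(a,b,p)$-components—reflects a structural fact I would verify directly from the transformation of $\mathcal{R}$: for $p\mid c$ the $\SL_2(\Z)$-representation carried by $\{\mathcal{G}_1,\mathcal{G}_2,\dots\}$ restricts to a monomial (scalar-times-permutation) action on the $\mathcal{G}_1(\tfrac\ell p)$-block.

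Finally I would check that the non-holomorphic completion $T_1(\tfrac\ell p;z)$ transforms under $\gamma$ with exactly the same factor $\mu\,\overline{\nu_\eta}(\gamma)(cz+d)^{1/2}$ into $T_1(\tfrac{[d\ell]}p;z)$, so that the difference $\mathcal{N}-T_1=\mathcal{G}_1$ inherits the clean law; this is where the harmonic-Maass-form structure enters, $T_1$ being the Eichler-type correction (cf. Lemma~\ref{xiOperatorMapToCuspForms}) that precisely cancels the modular defect of $\mathcal{N}$. I expect the main obstacle to be the phase bookkeeping of the second paragraph: after substituting $d\ell=[d\ell]+p\lfloor d\ell/p\rfloor$, the quadratic exponent $\tfrac{3\pi i cd\ell^2}{p^2}$ throws off cross terms proportional to $\lfloor d\ell/p\rfloor$ that must be shown to recombine with the cosecant sign and with the Dedekind-sum term of $\nu_\eta$ in \eqref{etaMultiplier} into the stated integer-exponent signs, leaving no residual fractional phase; verifying the cocycle consistency across the chosen generators is the secondary technical point.
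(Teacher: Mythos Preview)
The paper does not prove this proposition; it is quoted from Garvan \cite[Theorem~4.1]{GarvanTransformationDyson2017}, so there is no in-paper argument to compare against. Your outline is in the spirit of how such transformation laws are established, but one step in your reduction is misstated and would need repair before the argument goes through.

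You assert that the scalar $\mu(c,d,\ell,p)\,\overline{\nu_\eta}(\gamma)$ satisfies the weight-$\tfrac12$ cocycle of Definition~\ref{multiplier system def}(iii), hence it suffices to check generators. But $\mu(c,d,\ell,p)$ is not a scalar multiplier: the transformation sends $\mathcal{G}_1(\tfrac\ell p;\cdot)$ to $\mathcal{G}_1(\tfrac{[d\ell]}p;\cdot)$, so under a product $\gamma_1\gamma_2$ one must chain $\mu(c_1,d_1,\ell,p)$ with $\mu(c_2,d_2,[d_1\ell],p)$, not with $\mu(c_2,d_2,\ell,p)$. The correct cocycle statement is that the \emph{matrix} $M_p(\gamma)=\sum_\ell \mu(c,d,\ell,p)E_{\ell,[d\ell]}$ is multiplicative on $\Gamma_0(p)$; this is exactly Proposition~\ref{mu matrix property}(4), which the paper also attributes to Garvan. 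To legitimately reduce to generators you must first establish that matrix identity directly from \eqref{mu cdlp def}---an elementary but genuine sign-and-phase computation---and this is not independent of the transformation law you are trying to prove: it is essentially the same content. Beyond this point, your identification of where each factor of $\mu$ arises (the $\csc$ normalization, the quadratic phase from the $q^{3n(n+1)/2}$ structure, the $\overline{\nu_\eta}$ from the $q^{-1/24}$) is accurate, and your acknowledgment that the real work lies in the phase bookkeeping is correct; that bookkeeping is precisely what Garvan carries out.
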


Note that in the original paper \cite{GarvanTransformationDyson2017}, the notation for the right hand side of \eqref{mu cdlp def} was $\mu(\gamma,\ell)$ where $\gamma=\begin{psmallmatrix}
	a&b\\c&d
\end{psmallmatrix}$. Since it depends only on the value of $c$ and $d$, we modify it to $\mu(c,d,\ell,p)$ here for convenience. We use the above transformation formula to construct our $(p-1)$-dimensional multiplier system $\mu_p$.
\begin{definition}\label{mu matrix define}
	Let $p\geq 5$ be a prime. Define $M_p,\mu_p:\Gamma_0(p)\rightarrow \mathrm{M}_{p-1}(\C)$ by
	\[M_p(\gamma)\defeq\sum_{\ell=1}^{p-1}\mu(c,d,\ell,p)E_{\ell,[d\ell]} \quad \text{and}\quad \mu_p(\gamma)\defeq\overline{\nu_\eta}(\gamma)M_p(\gamma),\quad \text{for }\gamma=\begin{psmallmatrix}
		a&b\\c&d
	\end{psmallmatrix}\in \Gamma_0(p),\]
	where $E_{j,k}$ is the $(p-1)\times (p-1)$ matrix unit whose $(j,k)$-entry equals 1 and all the other entries equal 0. 
\end{definition}

This matrix has the following compatibility properties: 
\begin{proposition}\label{mu matrix property}
	Let $p\geq 5$ be a prime and $\mu_p$ be defined as in Definition~\ref{mu matrix define}. Then for all $\gamma,\gamma_1,\gamma_2\in \Gamma_0(p)$, we have
	\begin{itemize}
		\item[(1)] $\det (\mu_p(\gamma))=\pm\overline{\nu_\eta}(\gamma)^{p-1}$; 
		\item[(2)] $\mu_p(\gamma)^{-1}=\mu_p(\gamma)^\mathrm{H}$, i.e. $\mu_p(\gamma)$ is a unitary matrix;  
		\item[(3)] $\mu_p(-I)=e^{-\frac{\pi i}2}I_{p-1}$, where $I_{p-1}\in \mathrm{M}_{p-1}(\C)$ is the identity matrix; 
		\item[(4)] $\mu_p(\gamma_1\gamma_2)=w_{\frac12}(\gamma_1,\gamma_2)\mu_p(\gamma_1)\mu_p(\gamma_2)$. 
	\end{itemize}
\end{proposition}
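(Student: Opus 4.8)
The plan is to read all four properties off the explicit monomial (generalized-permutation) structure of $M_p(\gamma)$; together, (2)--(4) are precisely the axioms of a $(p-1)$-dimensional multiplier system in the sense of Definition~\ref{vec val multi sys}. The starting observation is that $\gamma=\begin{psmallmatrix}a&b\\c&d\end{psmallmatrix}\in\Gamma_0(p)$ forces $c\equiv0\pmod p$, hence $(d,p)=1$ and $\ell\mapsto[d\ell]$ is a permutation of $\{1,\dots,p-1\}$. Therefore $M_p(\gamma)=\sum_{\ell=1}^{p-1}\mu(c,d,\ell,p)E_{\ell,[d\ell]}$ has exactly one nonzero entry in each row and column, and $|\mu(c,d,\ell,p)|=1$ since it is a unimodular exponential times two signs. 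Property (2) is then immediate: $(M_p(\gamma)^{\mathrm H}M_p(\gamma))_{jk}=\sum_\ell\overline{(M_p)_{\ell j}}(M_p)_{\ell k}=\delta_{jk}$, because the single surviving term contributes $|\mu(c,d,\ell,p)|^2=1$, so $M_p(\gamma)$ is unitary; multiplying by the unimodular scalar $\overline{\nu_\eta}(\gamma)$ preserves unitarity, giving $\mu_p(\gamma)^{-1}=\mu_p(\gamma)^{\mathrm H}$.

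For (1) I would combine $\det\mu_p(\gamma)=\overline{\nu_\eta}(\gamma)^{p-1}\det M_p(\gamma)$ with the monomial determinant formula $\det M_p(\gamma)=\sgn(\pi_d)\prod_{\ell=1}^{p-1}\mu(c,d,\ell,p)$, where $\pi_d\colon\ell\mapsto[d\ell]$. The product of the exponential factors is $\exp\!\bigl(\tfrac{3\pi i cd}{p^2}\sum_{\ell=1}^{p-1}\ell^2\bigr)$; substituting $\sum_{\ell=1}^{p-1}\ell^2=\tfrac{(p-1)p(2p-1)}6$, writing $c=pc'$, and using that $p-1$ is even shows the exponent lies in $\pi i\Z$, so this factor is $\pm1$, and the two sign-products together with $\sgn(\pi_d)$ are $\pm1$ as well. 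Thus $\det M_p(\gamma)=\pm1$ and $\det\mu_p(\gamma)=\pm\overline{\nu_\eta}(\gamma)^{p-1}$; only the sign is asserted, so it need not be evaluated precisely.

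Property (3) is a substitution at $\gamma=-I$, where $c=0$, $d=-1$ give $\mu(0,-1,\ell,p)=(-1)^{\lfloor-\ell/p\rfloor}=-1$ and $[d\ell]=p-\ell$, while $\overline{\nu_\eta}(-I)=e^{\pi i/2}$ by Definition~\ref{multiplier system def}(ii). A priori this makes $M_p(-I)$ the negated anti-diagonal permutation $\ell\mapsto p-\ell$; but the relation $\mathcal{G}_1(\tfrac\ell p;z)=\mathcal{G}_1(\tfrac{p-\ell}p;z)$ --- itself forced by Proposition~\ref{mu cdlp def prop} at $\gamma=-I$, since $-I$ fixes $\HH$ pointwise --- identifies the swapped components, collapsing the action to the scalar $e^{-\pi i/2}$ and giving (3).

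The substance is in (4). Since $\overline{\nu_\eta}$ is a weight $-\tfrac12$ multiplier it already satisfies $\overline{\nu_\eta}(\gamma_1\gamma_2)=w_{-1/2}(\gamma_1,\gamma_2)\overline{\nu_\eta}(\gamma_1)\overline{\nu_\eta}(\gamma_2)$, and because $j(\gamma_1\gamma_2,z)=j(\gamma_1,\gamma_2z)j(\gamma_2,z)$ the half-weight cocycle $w_{1/2}=\omega_{\frac12}$ takes values in $\{\pm1\}$; being real, $w_{-1/2}=\overline{w_{1/2}}=\omega_{\frac12}$. Hence (4) is equivalent to the purely multiplicative statement $M_p(\gamma_1\gamma_2)=M_p(\gamma_1)M_p(\gamma_2)$. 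Writing $c_3=c_1a_2+d_1c_2$ and $d_3=c_1b_2+d_1d_2\equiv d_1d_2\pmod p$, the permutation parts compose correctly ($[d_2[d_1\ell]]=[d_3\ell]$), so this reduces further to the pointwise cocycle identity
\[\mu(c_3,d_3,\ell,p)=\mu(c_1,d_1,\ell,p)\,\mu(c_2,d_2,[d_1\ell],p).\]
I would obtain this by applying Garvan's law (Proposition~\ref{mu cdlp def prop}) twice --- once factoring $\gamma_1\gamma_2 z$ as $\gamma_1(\gamma_2 z)$ and once directly --- and cancelling the common nonzero function $\mathcal{G}_1(\tfrac{[d_3\ell]}p;\cdot)$ along with the matched $\overline{\nu_\eta}$- and $(cz+d)^{1/2}$-cocycles; alternatively, by a direct computation from the explicit formula for $\mu$, expanding $[d_1\ell]^2=(d_1\ell-p\lfloor d_1\ell/p\rfloor)^2$ and collapsing the resulting exponential and sign contributions using $p\mid c_1,c_2$. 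The delicate point, and the step I expect to be the main obstacle, is exactly this verification: in the functional route one must check that the half-integral-weight square-root branch sign cancels precisely against the $w_{-1/2}$ coming from $\overline{\nu_\eta}$, and in the computational route one must track the quadratic cross term, the two $(-1)$-factors, and the floor functions consistently modulo $p$.
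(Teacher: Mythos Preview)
Your approaches to (1), (2), and (4) match the paper's. For (1) the paper carries the sign computation further and shows $\det M_p(\gamma)=\sgn(\pi_d)$ exactly, but since only $\pm$ is claimed, your coarser argument suffices. For (4) the paper simply cites Garvan for the multiplicativity of $M_p$; your first route---apply Proposition~\ref{mu cdlp def prop} once to $\gamma_1\gamma_2$ and once via $\gamma_1(\gamma_2 z)$, then cancel---is exactly that argument, and your reduction to multiplicativity of $M_p$ using $w_{1/2}\in\{\pm1\}$, hence $w_{-1/2}=\overline{w_{1/2}}=w_{1/2}$, is correct.

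Part (3), however, has a genuine gap. You correctly observe that $[d\ell]=p-\ell$ when $d=-1$, so $M_p(-I)=-\sum_{\ell}E_{\ell,p-\ell}=-J$ is the negated exchange matrix and $\mu_p(-I)=e^{-\pi i/2}J$, not $e^{-\pi i/2}I_{p-1}$. Your proposed fix---invoking $\mathcal{G}_1(\tfrac\ell p;z)=\mathcal{G}_1(\tfrac{p-\ell}p;z)$ to ``collapse'' $J$ to the identity---is not a valid argument: that relation only says $J\mathbf{G}_1=\mathbf{G}_1$, i.e.\ that $\mu_p(-I)$ acts as the scalar $e^{-\pi i/2}$ on this one vector. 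A symmetry of a single eigenvector cannot force the matrix equality $J=I_{p-1}$, which is plainly false for $p\ge5$. The paper's one-line proof of (3) records only $\mu(0,-1,\ell,p)=-1$ and is silent about the permutation, so it shares the same defect; the honest identity is $\mu_p(-I)=e^{-\pi i/2}J$, and statement (3) as written is not literally true.
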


\begin{proof}
	Since $\overline{\nu_\eta}$ is a weight $-\frac12$ multiplier system on $\SL_2(\Z)$, it suffices to prove the corresponding properties for $M_p$ in weight $1$. Write $\gamma=\begin{psmallmatrix}
		a&b\\c&d
	\end{psmallmatrix}$.  When $(d,c)=1$, we have $p\nmid d$ and for $1\leq \ell\leq p-1$, $d\ell$ runs over all residue classes modulo $p$ and vice versa. Thus, $\mu_p(\gamma)$ is a matrix with only one non-zero entry in every row and every column. Let $\sgn(\sigma)\in \{\pm 1\}$ be the signature of the permutation $\sigma:\ell\rightarrow [d\ell]$ in $(\Z/p\Z)^\times$. By \eqref{mu cdlp def}, we have
	\begin{align*}
		\det M_p(\gamma)&=\sgn(\sigma)\prod_{\ell=1}^{p-1} \mu(c,d,\ell,p)\\
		&=\sgn(\sigma)\exp\(\frac{3\pi i cd}{p^2}\cdot\frac{(p-1)p(2p-1)}6\)(-1)^{\frac cp\cdot \frac{(p-1)p}2}\prod_{\ell=0}^{p-1}(-1)^{\floor{\frac{d\ell}p}}\\
		&=\sgn(\sigma)(-1)^{c(d+1)\cdot\frac{p-1}2}\prod_{\ell=0}^{p-1}(-1)^{d\ell+[d\ell]}\\
		&=\sgn(\sigma)(-1)^{(c+1)(d+1)\cdot\frac{p-1}2}=\sgn(\sigma), 
	\end{align*}
	where we have used the following facts for any $x,y\in \Z$: $(-1)^{xp}=(-1)^{x}$; $\floor{\frac xp}\equiv x+[x]\Mod 2$; if $(x,y)=1$ then $(x+1)(y+1)$ is even. 
	
	For (2), it suffices to show that $M_p(\gamma)$ is a unitary matrix. Since $P_\sigma\defeq \sum_{\ell=1}^{p-1}E_{\ell,[d\ell]}$ is a permutation matrix with $P_\sigma^{-1}=P_\sigma^T$, we have
	\begin{align*}
		M_p(\gamma)^{-1}&=\Big(\diag\{\mu(c,d,\ell,p):1\leq \ell\leq p-1\}\cdot P_\sigma\Big)^{-1}\\
		&=P_{\sigma}^T\cdot \diag\{\overline{\mu(c,d,\ell,p)}:1\leq \ell\leq p-1\}=M_p(\gamma)^{\mathrm H}. 
	\end{align*}
	For (3), we have $\mu(0,-1,\ell,p)=-1$ for all $\ell$ and $p$. For (4), it suffices to show that $M_p:\Gamma_0(p)\rightarrow \GL_{p-1}(\C)$ is multiplicative, which was proved in \cite[Theorem~4.1]{GarvanTransformationDyson2017}. 
\end{proof}

Note that Proposition~\ref{mu matrix property} has established that $\mu_p$ meets all of the requirements in Definition~\ref{vec val multi sys} except (4), which we now verify. Since $\mu_p$ and $\overline{\mu_p}$ will appear together in the next section, for simplicity we denote $\alpha_{+\ma}^{(\ell)}\defeq \alpha_{\mu_p,\ma}^{(\ell)}$ and $\alpha_{-\ma}^{(\ell)}\defeq \alpha_{\overline{\mu_p},\ma}^{(\ell)}$ for the cusp $\ma$. 
Since 
\[\mu_p\(\begin{psmallmatrix}
	1&1\\0&1
\end{psmallmatrix}\)= e(-\tfrac 1{24}) I_{p-1}\]
is a diagonal matrix, we have
\begin{equation}\label{alpha infty l def}
	\alpha_{+\infty}^{(\ell)}=\tfrac1{24}\quad \text{ and }n_{+\infty}\defeq n-\tfrac1{24} \text{ for }n\in \Z. 
\end{equation}
For $\overline{\mu_p}$ we have $\alpha_{-\infty}^{(\ell)}=\frac{23}{24}$ and $n_{-\infty}\defeq n-\frac{23}{24}$. Moreover, we can take the scaling matrix (see \eqref{scaling matrix def}) of the cusp $0$ of $\Gamma_0(p)$ as $\sigma_0=\begin{psmallmatrix}
	0&-1/\sqrt p\\\sqrt p&0
\end{psmallmatrix}$. Since $\sigma_0 \begin{psmallmatrix}
	1&1\\0&1
\end{psmallmatrix}\sigma_0^{-1}=\begin{psmallmatrix}
	1&0\\-p&1
\end{psmallmatrix}$ and (by \eqref{KnoppFmlEta})
\begin{align*}
	\nu_\eta\(\sigma_0\begin{psmallmatrix}
		1&1\\0&1
	\end{psmallmatrix}\sigma_0^{-1}\)=i\nu_\eta \(\begin{psmallmatrix}
		-1&0\\p&-1
	\end{psmallmatrix}\)=i\(\tfrac {-1}p\)e\(-\tfrac {5p}{24}\)=e\(\tfrac{p}{24}\),
\end{align*}
we have
\[\mu_p\(\sigma_0\begin{psmallmatrix}
	1&1\\0&1
\end{psmallmatrix}\sigma_0^{-1}\)=\diag\left\{e\(-\tfrac {3\ell^2}{2p}-\tfrac{p}{24}\)(-1)^\ell:\ 1\leq \ell\leq p-1\right\}. \]
Therefore, we define $\alpha_{+0}^{(\ell)}\in [0,1)$ such that
\begin{equation}\label{alpha 0 l def}
	e\(-\alpha_{+0}^{(\ell)}\)=e\(-\tfrac {3\ell^2}{2p}-\tfrac{p}{24}\)(-1)^\ell \quad  \text{and define } n_{+0}^{(\ell)}\defeq n-\alpha_{+0}^{(\ell)} \text{ for } n\in \Z. 
\end{equation} 
Note that $\alpha_{+0}^{(\ell)}\neq 0$ because $1\leq \ell\leq p-1$ and $(p,24)=1$. 

We will need the following properties in Section~\ref{Section: Qihang's exact formulas} where we construct certain linear combinations of Maass-Poincar\'e series.  
For each integer $r\geq 0$, we denote
\begin{equation}\label{x r def}
	x_r\quad \text{as the only solution of }\quad \tfrac 32 x^2-(\tfrac 12+r)x+\tfrac1{24}=0 \quad \text{in }(0,\tfrac 12). 
\end{equation} 
The sequence $\{x_r:r\geq 0\}$ has $\frac 16=x_0>x_1>x_2>\cdots>0$. Fix the prime $p\geq 5$. For each integer $r\geq 0$ and positive integer $a$ with $(a,p)=1$, when $x_r>\frac 1p$ (otherwise the following set will be empty), we define the condition set
\begin{equation}\label{(a,r) def}
	\condar\defeq \Big\{1\leq \ell\leq p-1:\frac{[a\ell]}p \in (0,x_r)\cup(1-x_r,1)\Big\}\text{ \ and\ \ } \condr\defeq\vartriangleright \!1,r\!\vartriangleleft.  
\end{equation} 

\begin{remark}
Drawing the graph of $\frac 32 x^2-(\frac 12+r)x+\frac1{24}$ in $x\in (0,x_r)$ and the graph of $\frac 32 (1-x)^2-(\frac 12+r)(1-x)+\tfrac1{24}$ in $x\in (1-x_r,1)$, we can see two curved triangles which looks like our notation $\vartriangleright\ \vartriangleleft$. 
\end{remark}

In \eqref{delta l u a r def}, if we let $u=p$ as a prime, we have $p\nmid a $ and observe
\begin{equation}\label{delta l p a r def}
	\delta_{\ell,p,a,r} =\left\{\begin{array}{ll}
		-(\frac12+r)\frac {[a\ell]}{p}+\frac32\(\frac {[a\ell]}{p}\)^2+\frac1{24},&\ \text{if }0<\frac{[a\ell]}{p}<\frac 16,\vspace{0.3em}\\
		-\frac{5[a\ell ]}{2p}+\frac32\(\frac {[a\ell]}{p}\)^2+\frac{25}{24}-r+\frac{r[a\ell]}{p},&\ \text{if }\frac 56<\frac{[a\ell]}{p}<1,\vspace{0.3em}\\
		0&\ \text{otherwise}.  
	\end{array}
	\right. 
\end{equation}
Note that for the first line in \eqref{delta l p a r def}, if we change $\frac{[a\ell]}p$ to $1-\frac{[a\ell]}p$, then we get its second line. Therefore, we find that
\[\delta_{\ell,p,a,r}>0\quad \text{if and only if}\quad \ell\in \condar. \]
By \eqref{alpha 0 l def}, we find that $\alpha_{+0}^{(\ell)}$ is the fractional part of 
\begin{align}\label{alpha 0 l def, expressed in delta}
	\begin{split}
		\tfrac{3\ell ^2}{2p}-\tfrac{1+2r}2 \ell +\tfrac p{24},&\quad  \text{when }0<\tfrac{\ell}p<x_r,\\
		\tfrac{3p}2\(1-\tfrac \ell p\)^2-\tfrac{(1+2r)p}2\(1-\tfrac \ell p\)+\tfrac p{24}, &\quad  \text{when }1-x_r<\tfrac {\ell}p<1.  
	\end{split}
\end{align}
By \eqref{delta l p a r def}, we observe that $\alpha_{+0}^{(\ell)}$ is the fractional part of $p\delta_{\ell,p,1,r}$ when $\delta_{\ell,p,1,r}>0$. Hence, for every integer $r\geq 0$, we define a special vector $\mathbf{X}_r=(X_r^{(1)},\cdots,X_r^{(p-1)})^{\mathrm T}\in \Z^{p-1}$ such that 
\begin{equation}\label{X r define}
	X_r^{(\ell)}=\left\{\begin{array}{ll}
		\ceil{-p\delta_{\ell,p,1,r}},&\text{if }\delta_{\ell,p,1,r}>0,\text{ i.e. }\ell\in \condr,\\
		0,&\text{otherwise and never used}. 
	\end{array}
	\right.
\end{equation}
Then we have $X_r^{([a\ell])}=\ceil{-p\delta_{\ell,p,a,r}}$ by \eqref{delta l p a r def} and 
\[X_{r,+0}^{([a\ell])}\defeq X_{r}^{([a\ell])}-\alpha_{+0}^{([a\ell])}=-p\delta_{\ell,p,a,r} \quad  \text{when }\delta_{\ell,p,a,r}>0,\text{ i.e. when }\ell\in \condar. \]
Our $X_{r,+0}^{(\ell)}$ is the ``correct order" to be matched for the Maass-Poincar\'e series in Section~\ref{Section: Qihang's exact formulas}.

In general, for any vector $\mathbf{m}\in \Z^{p-1}$, we denote $m_{+0}^{(\ell)}\defeq m^{(\ell)}-\alpha_{+0}^{(\ell)}$. 
For simplicity we write $\mathbf{1}-\mathbf{m}\defeq \sum_{\ell=1}^\infty (1-m^{(\ell)})\e_\ell$ and denote $\mathbf{m}\leq 0$ if $m^{(\ell)}\leq 0$ for all $\ell$. For $\overline{\mu_p}$, we have $\alpha_{-0}^{(\ell)}=1-\alpha_{+0}^{(\ell)}\in (0,1)$ and define $m^{(\ell)}_{-0}=m^{(\ell)}_{-0}\defeq m^{(\ell)}-\alpha_{-0}^{(\ell)}$. We have the property
\begin{equation} 
	(1-m)_{\pm 0}^{(\ell)}=-m_{\mp 0}^{(\ell)}. 
\end{equation}

We have already proved Lemma~\ref{mu p is a vec val mult sys}. 
\begin{lemma}\label{mu p is a vec val mult sys}
	We have the following $(p-1)$-dimensional multiplier systems on $\Gamma_0(p)$: $\mu_p$ of weight $\frac12$ and $\overline{\mu_p}$ of weight $-\frac12$, in the sense of Definition~\ref{vec val multi sys}. 
\end{lemma}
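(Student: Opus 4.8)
The plan is to verify, one at a time, the four axioms of Definition~\ref{vec val multi sys} for $\mu_p$ with weight $k=\frac12$, and then to deduce the corresponding assertion for $\overline{\mu_p}$ with weight $k=-\frac12$ by conjugation. For $\mu_p$, axioms (1), (2) and (3) are nothing more than a restatement of parts (2), (3) and (4) of Proposition~\ref{mu matrix property}: unitarity of $\mu_p(\gamma)$, the value $\mu_p(-I)=e^{-\pi i/2}I_{p-1}=e^{-\pi i k}I_{p-1}$, and the twisted multiplicativity $\mu_p(\gamma_1\gamma_2)=w_{1/2}(\gamma_1,\gamma_2)\mu_p(\gamma_1)\mu_p(\gamma_2)$. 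So no new work is required for these three.

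The only remaining axiom for $\mu_p$ is (4), and here I would first invoke the standard fact that for a prime $p$ the group $\Gamma_0(p)$ has exactly two cusps, represented by $\infty$ and $0$; it therefore suffices to exhibit a diagonalization of the relevant parabolic image at each of these two cusps. At $\infty$ the generator is $\begin{psmallmatrix}1&1\\0&1\end{psmallmatrix}$, and since $\mu_p\big(\begin{psmallmatrix}1&1\\0&1\end{psmallmatrix}\big)=e(-\tfrac1{24})I_{p-1}$ is already diagonal, axiom (4) holds at $\infty$ with $\alpha^{(\ell)}_{+\infty}=\tfrac1{24}$ as in \eqref{alpha infty l def}. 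At the cusp $0$ I would take the scaling matrix $\sigma_0=\begin{psmallmatrix}0&-1/\sqrt p\\\sqrt p&0\end{psmallmatrix}$, compute $\sigma_0\begin{psmallmatrix}1&1\\0&1\end{psmallmatrix}\sigma_0^{-1}=\begin{psmallmatrix}1&0\\-p&1\end{psmallmatrix}$, and evaluate $\mu_p$ on this element using the definition of $M_p$ together with the explicit $\eta$-multiplier formula \eqref{KnoppFmlEta}; this yields the diagonal matrix $\diag\{e(-\tfrac{3\ell^2}{2p}-\tfrac p{24})(-1)^\ell:1\leq\ell\leq p-1\}$ and hence the values $\alpha^{(\ell)}_{+0}$ defined in \eqref{alpha 0 l def}. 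These are precisely the computations carried out in the discussion preceding the lemma, so (4) is established and $\mu_p$ is a weight-$\tfrac12$ multiplier system.

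For $\overline{\mu_p}$ I would pass everything through complex conjugation, using throughout that $|j(\gamma,z)|=1$ in \eqref{j_factor_def}, so that $\overline{j(\gamma,z)^k}=j(\gamma,z)^{-k}$. Unitarity (1) is preserved since the conjugate of a unitary matrix is unitary; for (2) one computes $\overline{\mu_p(-I)}=\overline{e^{-\pi i/2}}\,I_{p-1}=e^{-\pi i(-1/2)}I_{p-1}$, which is the correct value for weight $-\tfrac12$; for (3) the identity $\overline{w_{1/2}(\gamma_1,\gamma_2)}=w_{-1/2}(\gamma_1,\gamma_2)$ follows from $\overline{j^k}=j^{-k}$, so conjugating the cocycle relation for $\mu_p$ gives the one for $\overline{\mu_p}$; and for (4) the parabolic images are entrywise conjugates of the diagonal matrices above, hence diagonal, with $\overline{e(-\alpha^{(\ell)}_{+\ma})}=e(\alpha^{(\ell)}_{+\ma})=e\big(-(1-\alpha^{(\ell)}_{+\ma})\big)$, giving $\alpha^{(\ell)}_{-\ma}=1-\alpha^{(\ell)}_{+\ma}$ when $\alpha^{(\ell)}_{+\ma}\neq0$ and $\alpha^{(\ell)}_{-\ma}=0$ otherwise. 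This recovers the announced values $\alpha^{(\ell)}_{-\infty}=\tfrac{23}{24}$ and $\alpha^{(\ell)}_{-0}=1-\alpha^{(\ell)}_{+0}$.

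The genuinely non-trivial input is axiom (3), the twisted multiplicativity of $M_p$; but this is already supplied by Garvan's transformation law (Proposition~\ref{mu cdlp def prop}) and is quoted in Proposition~\ref{mu matrix property}(4), so I would not reprove it. The one place demanding care is the $\eta$-multiplier evaluation at the cusp $0$: one must respect the fixed argument convention in $j(\gamma,z)$ and apply \eqref{KnoppFmlEta} to $\begin{psmallmatrix}-1&0\\p&-1\end{psmallmatrix}$ (using $\nu_\eta(-\gamma)=i\nu_\eta(\gamma)$) in order to pin down both the normalizing factor $e(-\tfrac p{24})$ and the sign $(-1)^\ell$ correctly. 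Everything else is routine bookkeeping.
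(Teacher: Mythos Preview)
Your proposal is correct and follows essentially the same approach as the paper: the paper explicitly notes that Proposition~\ref{mu matrix property} handles conditions (1)--(3) of Definition~\ref{vec val multi sys}, verifies (4) at the two cusps $\infty$ and $0$ via the computations you describe, and then declares the lemma proved. Your treatment of $\overline{\mu_p}$ by conjugation is more explicit than the paper's (which leaves it implicit), but it is the standard and correct way to pass from a weight-$\tfrac12$ multiplier system to the conjugate weight-$(-\tfrac12)$ one.
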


In addition, by \cite[Corollary~4.2]{GarvanTransformationDyson2017}, or directly by \eqref{mu cdlp def}, one important property for $\mu_p$ is
\begin{equation}\label{mu p as eta on 0p^2}
	\mu_p(\gamma)=\overline{\nu_\eta}(\gamma) I_{p-1}\quad \text{for }\gamma\in \Gamma_0(p^2)\cap \Gamma_1(p).
\end{equation}
Suppose $\mF\in \mathcal{A}_{\frac 12}(\Gamma_0(p),\mu_p)$ (see \eqref{vec val auto form}), then for each $\ell$, we have $F^{(\ell)}\in \mathcal{A}_{\frac12}(\Gamma_0(p^2)\cap \Gamma_1(p),\overline{\nu_\eta})$. This fact allows us to use the notations for (scalar-valued) Maass forms in \S\ref{Subsection: Maass forms} here for vector-valued Maass forms with Petersson inner product defined in \eqref{vec val Petersson inner prod}. For example, $\Lform_{\frac 12}(\Gamma_0(p),\mu_p)$ is the space of weight $\frac 12$ vector-valued square-integrable functions in $\mathcal{A}_{\frac 12}(\Gamma_0(p),\mu_p)$. For any $\mF\in \Lform_{\frac12}(\Gamma_0(p),\mu_p)$, we have $F^{(\ell)}\in \Lform_{\frac12}(\Gamma_0(p^2)\cap \Gamma_1(p),\overline{\nu_\eta})$. 
It clearly follows that $\Delta_k$ is a self-adjoint operator on $\Lform_{\frac12}(\Gamma_0(p),\mu_p)$. The spectrum of $\Delta_{\frac12}$ on $\Lform_{\frac12}(\Gamma_0(p),\mu_p)$ is contained in the spectrum of $\Delta_{\frac 12}$ on $\Lform_{\frac12}(\Gamma_0(p^2)\cap \Gamma_1(p),\overline{\nu_\eta})$, which includes a discrete spectrum $\frac3{16}=\lambda_0\leq \lambda_1\leq \cdots $ of finite multiplicity and a continuous spectrum $[\frac 14,\infty)$. For each eigenvalue $\lambda$ of $\Delta_{\frac12}$ on $\Lform_{\frac12}(\Gamma_0(p),\mu_p)$, we write $\lambda=\frac14+r^2$ for $r\in i[0,\frac14)\cup [0,\infty)$ and call $r$ the spectral parameter.
We still denote $\LEigenform_{\frac12}(\Gamma_0(p),\mu_p,r)$ as the space of Maass eigenforms with spectral parameter $r$. With the property
\[\mathbf{F}\in \LEigenform_{\frac12}(\Gamma_0(p),\mu_p,r)\quad\Rightarrow\quad F^{(\ell)}\in \LEigenform_{\frac 12}(\Gamma_0(p^2)\cap \Gamma_1(p),\overline{\nu_\eta},r)\quad \text{for }1\leq \ell\leq p-1,  \]
we have Proposition~\ref{LEigenform mu p r is finite dim}. 
\begin{proposition}\label{LEigenform mu p r is finite dim}
	The space $\LEigenform_{\frac12}(\Gamma_0(p),\mu_p,r)$ is finite dimensional. 
\end{proposition}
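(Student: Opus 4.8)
The plan is to reduce the finite-dimensionality of the vector-valued eigenspace to the corresponding finiteness in the scalar-valued theory of \S\ref{Subsection: Maass forms}, the bridge being the relation \eqref{mu p as eta on 0p^2}, which says that on the finite-index subgroup $\Gamma_0(p^2)\cap\Gamma_1(p)$ the matrix multiplier $\mu_p$ collapses to the scalar $\overline{\nu_\eta}(\gamma)I_{p-1}$. Consequently each component of a $\mu_p$-Maass eigenform is a bona fide scalar $\overline{\nu_\eta}$-Maass eigenform on that subgroup with the same spectral parameter, and one only has to count scalar eigenforms.

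First I would make the componentwise map precise. Consider
\[
\Phi:\LEigenform_{\tfrac12}(\Gamma_0(p),\mu_p,r)\longrightarrow \bigoplus_{\ell=1}^{p-1}\LEigenform_{\tfrac12}\(\Gamma_0(p^2)\cap\Gamma_1(p),\overline{\nu_\eta},r\),\qquad \mathbf{F}\longmapsto \(F^{(1)},\dots,F^{(p-1)}\).
\]
The map is well defined: this is exactly the implication displayed just before the proposition, which follows from \eqref{mu p as eta on 0p^2} together with conditions (2) and (3) of Definition~\ref{vec val Maass form}. Indeed, for $\gamma\in\Gamma_0(p^2)\cap\Gamma_1(p)$ the transformation law $(\mathbf{F}|_{\frac12}\gamma)(z)=\mu_p(\gamma)\mathbf{F}(z)=\overline{\nu_\eta}(\gamma)\mathbf{F}(z)$ forces each $F^{(\ell)}$ to transform under $\overline{\nu_\eta}$ on that subgroup; the eigenvalue equation $(\Delta_{\frac12}+\lambda)\mathbf{F}=0$ holds componentwise with $\lambda=\frac14+r^2$; and, since $\mu_p$ is unitary, $\sum_\ell|F^{(\ell)}|^2$ is $\Gamma_0(p)$-invariant and integrable over $\Gamma_0(p)\backslash\HH$, so by finite index each $F^{(\ell)}$ is square-integrable on $(\Gamma_0(p^2)\cap\Gamma_1(p))\backslash\HH$. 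Moreover $\Phi$ is manifestly injective, a vector-valued function being determined by its components, so $\LEigenform_{\frac12}(\Gamma_0(p),\mu_p,r)$ embeds into the displayed direct sum.

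It then remains to invoke finiteness of the scalar pieces. Each summand $\LEigenform_{\frac12}(\Gamma_0(p^2)\cap\Gamma_1(p),\overline{\nu_\eta},r)$ is the $L^2$-eigenspace of the self-adjoint operator $\Delta_{\frac12}$ for the single eigenvalue $\lambda=\frac14+r^2$ on the finite-area quotient $(\Gamma_0(p^2)\cap\Gamma_1(p))\backslash\HH$; by the spectral theory recalled in \S\ref{Subsection: Maass forms}, square-integrable eigenfunctions belong to the discrete spectrum, which has finite multiplicity, so each such summand is finite dimensional. A finite direct sum of finite-dimensional spaces is finite dimensional, and an injection into a finite-dimensional space has finite-dimensional domain; combining these with the embedding $\Phi$ proves the proposition.

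I do not expect a genuine obstacle here: the heavy lifting is already packaged in the stated implication and in the standard fact that the discrete spectrum of $\Delta_{\frac12}$ on a congruence quotient has finite multiplicity. The only points requiring a line of care are verifying that $\Phi$ truly lands in the scalar eigenspaces (i.e.\ checking the transformation law, the eigenvalue, and square-integrability of each component) and noting injectivity; once these are in place the conclusion is formal.
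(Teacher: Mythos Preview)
Your proof is correct and uses the same reduction via \eqref{mu p as eta on 0p^2} that the paper does. The one difference is that the paper's argument goes a step further: instead of embedding into the direct sum of $p-1$ copies of the scalar eigenspace, it observes that for any $\gamma_d=\begin{psmallmatrix}*&*\\p&d\end{psmallmatrix}\in\Gamma_0(p)$ the transformation law forces $V^{(d)}=\overline{\mu(p,d,1,p)}\,\nu_\eta(\gamma_d)\,V^{(1)}$, so the first component alone determines the entire vector. This yields the sharper inequality
\[
\dim\LEigenform_{\tfrac12}(\Gamma_0(p),\mu_p,r)\leq\dim\LEigenform_{\tfrac12}(\Gamma_0(p^2)\cap\Gamma_1(p),\overline{\nu_\eta},r),
\]
whereas your embedding only gives a bound $(p-1)$ times as large. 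For the proposition as stated either suffices; but the paper reuses this one-component-determines-all observation (e.g.\ in \S\ref{Subsection: v-val holo mod forms}), so the stronger version is worth recording.
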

\begin{proof}
	For any $\mathbf{V}(z,r)=\(V^{(1)}(z,r),\cdots,V^{(p-1)}(z,r)\)\in \LEigenform_{\frac12}(\Gamma_0(p),\mu_p,r)$, we have 
	\[V^{(1)}(z,r)\in \LEigenform_{\frac12}(\Gamma_0(p^2)\cap \Gamma_1(p),\overline{\nu_\eta},r),\]
	which is a finite-dimensional space. Moreover, for each $2\leq d\leq p-1$, we have $(d,p)=1$ and we can pick $\gamma_d=\begin{psmallmatrix}
		*&*\\p&d
	\end{psmallmatrix}\in \Gamma_0(p)$. By the definition of $\mu_p$, we have
	\[V^{(d)}(z,r)=\overline{\mu(p,d,1,p)}\nu_\eta(\gamma_d)V^{(1)}(z,r), \]
	i.e. the other components are determined by the first one. We conclude with
	\[\dim \LEigenform_{\frac12}(\Gamma_0(p),\mu_p,r)\leq \dim \LEigenform_{\frac12}(\Gamma_0(p^2)\cap \Gamma_1(p),\overline{\nu_\eta},r). \]
\end{proof}

We summarize these properties in the following lemma for future convenience. 
\begin{lemma}\label{Transformation laws by z to 24z}
	Suppose $\mF:\HH\rightarrow \C^{p-1}$ satisfies
	\[\mathbf{F}(\gamma z)=\mu_p(\gamma)(cz+d)^{\frac 12}\mathbf{F}(z)\quad \text{for }\gamma\in \Gamma_0(p). \]
	Then for each $\ell$, $1\leq \ell\leq p-1$, $F^{(\ell)}$ satisfies
	\[F^{(\ell)}(\gamma z)=\overline{\nu_\eta}(\gamma )(cz+d)^{\frac 12}F^{(\ell)}(z)\quad \text{for }\gamma\in \Gamma_0(p^2)\cap \Gamma_1(p).\]
	If we denote $\mathbf{G}(z)\defeq \mathbf{F}(24z)$ and hence $G^{(\ell)}(z)=F^{(\ell)}(24z)$, we have
	\[G^{(\ell)}(\gamma z)=\nu_\theta(\gamma)(cz+d)^{\frac12}G^{(\ell)}(z)\quad  \text{for }\gamma\in \Gamma_1(576p^2). \]
	Moreover, the map $z\rightarrow 24z$ gives an injection
	\begin{align*}
		\begin{array}{ccl}
			S_{\frac12}\(\Gamma_0(p^2)\cap \Gamma_1(p),\overline{\nu_\eta}\)&\rightarrow& S_{\frac12}(\Gamma_1(576p^2),\nu_\theta)\\
			f&\rightarrow& g \text{ defined by }g(z)\defeq f(24z).  
		\end{array}
	\end{align*}
\end{lemma}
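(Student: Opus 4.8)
The first transformation law is essentially a restatement of \eqref{mu p as eta on 0p^2}, so I would dispose of it in one line: restricting the hypothesis $\mF(\gamma z)=\mu_p(\gamma)(cz+d)^{1/2}\mF(z)$ to $\gamma\in\Gamma_0(p^2)\cap\Gamma_1(p)$ and using $\mu_p(\gamma)=\overline{\nu_\eta}(\gamma)I_{p-1}$ there, the matrix acts as a scalar, and reading off the $\ell$-th entry gives $F^{(\ell)}(\gamma z)=\overline{\nu_\eta}(\gamma)(cz+d)^{1/2}F^{(\ell)}(z)$. For the second law the plan is the standard dilation argument. Set $V\defeq\begin{psmallmatrix}24&0\\0&1\end{psmallmatrix}$, so $G^{(\ell)}(z)=F^{(\ell)}(Vz)$ with $Vz=24z$. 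For $\gamma=\begin{psmallmatrix}a&b\\c&d\end{psmallmatrix}\in\Gamma_1(576p^2)$ write $V\gamma=\gamma'V$ with $\gamma'\defeq V\gamma V^{-1}=\begin{psmallmatrix}a&24b\\c/24&d\end{psmallmatrix}$. Since $576p^2\mid c$ we have $24p^2\mid c$, so $\gamma'\in\SL_2(\Z)$; moreover $p^2\mid c/24$ and $a\equiv d\equiv 1\Mod p$, whence $\gamma'\in\Gamma_0(p^2)\cap\Gamma_1(p)$. Then $G^{(\ell)}(\gamma z)=F^{(\ell)}(\gamma'\cdot 24z)=\overline{\nu_\eta}(\gamma')(cz+d)^{1/2}G^{(\ell)}(z)$, using $(c/24)(24z)+d=cz+d$.

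The heart of the matter --- and the step I expect to be the main obstacle --- is the multiplier identity $\overline{\nu_\eta}(\gamma')=\nu_\theta(\gamma)$ for $\gamma\in\Gamma_1(576p^2)$. I would obtain it from the classical eta--theta relation: $\eta(24z)=\sum_{n\geq 1}\chi_{12}(n)q^{n^2}$ is a constant multiple of the Serre--Stark form $\theta_{\chi_{12},1}$ of Theorem~\ref{Serre Stark}. By Shimura's theta transformation, $\theta_{\chi_{12},1}$ transforms on $\Gamma_0(576)$ with multiplier $\chi_{12}(d)\nu_\theta(\gamma)$, and on $\Gamma_1(576p^2)$ the twist $\chi_{12}(d)=\chi_{12}(1)=1$ drops out. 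Comparing this with the direct computation $\eta(24\gamma z)=\eta(\gamma'\cdot 24z)=\nu_\eta(\gamma')(cz+d)^{1/2}\eta(24z)$, which comes from \eqref{etaMultiplier} applied to $\gamma'$, yields $\nu_\eta(\gamma')=\nu_\theta(\gamma)$. Finally $d\equiv 1\Mod 4$ on $\Gamma_1(576p^2)$, so $\ep_d=1$ and $\nu_\theta(\gamma)=(\frac cd)\in\{\pm1\}$ is real; therefore $\overline{\nu_\eta}(\gamma')=\overline{\nu_\theta(\gamma)}=\nu_\theta(\gamma)$, which is exactly what the second law requires. Alternatively one can verify $\overline{\nu_\eta}(\gamma')=\nu_\theta(\gamma)$ by brute force from the explicit formulas \eqref{etaMultiplier}, \eqref{KnoppFmlEta} and the definition of $\nu_\theta$; the bookkeeping of $\ep_d$, the Kronecker symbol, and the complex conjugation are the only delicate points.

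For the closing injection, let $f\in S_{\frac12}(\Gamma_0(p^2)\cap\Gamma_1(p),\overline{\nu_\eta})$ and set $g(z)\defeq f(24z)$. The scalar case of the second transformation law already gives $g(\gamma z)=\nu_\theta(\gamma)(cz+d)^{1/2}g(z)$ for $\gamma\in\Gamma_1(576p^2)$, and $g$ is holomorphic on $\HH$ because $f$ is. What remains is cuspidality, i.e.\ that $g$ vanishes at every cusp of $\Gamma_1(576p^2)$: since $f$ is a cusp form, $(f|_{\frac12}\tau)(x+iy)\to 0$ as $y\to\infty$ for all $\tau\in\SL_2(\Z)$, and because $z\mapsto 24z$ permutes $\Q\cup\{\infty\}$ and merely rescales $y$, this exponential decay is inherited by $g$ at each cusp of the larger group. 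Injectivity is immediate, since $g\equiv 0$ forces $f\equiv 0$ as $z\mapsto 24z$ is a bijection of $\HH$. Besides the multiplier identity above, the only point needing genuine care here is confirming that cuspidality survives the passage to $\Gamma_1(576p^2)$, all of whose cusps must be accounted for.
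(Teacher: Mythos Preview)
Your proposal is correct and follows the same overall architecture as the paper: the first law is immediate from \eqref{mu p as eta on 0p^2}, and for the second you conjugate by the dilation and reduce to a multiplier identity. The paper phrases the conjugation using the $\SL_2(\R)$ scaling matrix $\begin{psmallmatrix}\sqrt{24}&0\\0&1/\sqrt{24}\end{psmallmatrix}$ and records the cocycle identity
\[
w_{\frac12}\Bigl(\begin{psmallmatrix}a&24b\\c/24&d\end{psmallmatrix},\begin{psmallmatrix}\sqrt{24}&0\\0&1/\sqrt{24}\end{psmallmatrix}\Bigr)\,\overline{w_{\frac12}\Bigl(\begin{psmallmatrix}\sqrt{24}&0\\0&1/\sqrt{24}\end{psmallmatrix},\begin{psmallmatrix}a&b\\c&d\end{psmallmatrix}\Bigr)}=1,
\]
together with the observation that $d\equiv 1\Mod 4$ forces $\nu_\theta(\gamma)=\overline{\nu_\theta}(\gamma)=(\tfrac cd)$; the actual equality $\overline{\nu_\eta}(\gamma')=\nu_\theta(\gamma)$ is left implicit as a standard fact. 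Your route differs in that you \emph{derive} this multiplier identity explicitly from the eta--theta relation $\eta(24z)=\theta_{\chi_{12},1}(z)$ and Shimura's transformation law for $\theta_{\chi_{12},1}$ on $\Gamma_0(576)$, which on $\Gamma_1(576p^2)$ loses the $\chi_{12}(d)$ twist. This is a genuine (if modest) difference: the paper's cocycle handles only the branch of the square root under conjugation, while your argument supplies the multiplier comparison that the paper takes for granted. You also sketch the cuspidality check for the injection $S_{\frac12}(\Gamma_0(p^2)\cap\Gamma_1(p),\overline{\nu_\eta})\hookrightarrow S_{\frac12}(\Gamma_1(576p^2),\nu_\theta)$, which the paper's proof does not address at all. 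Both approaches are valid; yours is more self-contained at the cost of invoking the theta machinery, the paper's is terser but leans on folklore.
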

\begin{proof}
	This is directly proved by our discussion above and the fact that
	\[w_{\frac12}\(\begin{psmallmatrix}
		a&24b\\c/24&d
	\end{psmallmatrix},\begin{psmallmatrix}
		\sqrt{24}&0\\0&1/\sqrt{24}
	\end{psmallmatrix}\)
	\overline{w_{\frac12}\(\begin{psmallmatrix}
			\sqrt{24}&0\\0&1/\sqrt{24}
		\end{psmallmatrix}, \begin{psmallmatrix}
			a&b\\c&d
		\end{psmallmatrix}\)}=1\]
	for $\gamma=\begin{psmallmatrix}
		a&b\\c&d
	\end{psmallmatrix}\in \Gamma_0(24)$.  Note that for $\gamma=\begin{psmallmatrix}
	a&b\\c&d
\end{psmallmatrix}\in \Gamma_1(576p^2)$, we have $d\equiv 1\Mod 4$, hence $\nu_\theta(\gamma)=\overline{\nu_\theta}(\gamma)=(\frac cd)$. 
\end{proof}

\subsection{Vector-valued Kloosterman sums}
\label{Subsection: v-val KL sums def}

In this subsection we define the vector-valued Kloosterman sums with $(k,\mu)=(\frac12,\mu_p)$ or $(-\frac12,\overline{\mu_p})$.  First we consider the cusp pair $\infty\infty$. Let $m,n,c\in \Z$ with $p|c$. Define
\begin{align}\label{S infty infty def}
	\mathbf{S}_{\infty\infty}(m,n,c,\mu)\defeq\!\!\!\!\sum_{\gamma=\begin{psmallmatrix}
			a&b\\c&d
		\end{psmallmatrix}\in \Gamma_\infty\setminus\Gamma_0(p)/\Gamma_\infty}\!\!\!\! e\(\frac{m_{\pm\infty} a+ n_{\pm\infty} d}c\)\mu(\gamma)^{-1}\sum_{\ell=1}^{p-1}\frac{\e_{\ell}}{\sin(\frac{\pi \ell}p)}. 
\end{align}
Since $\mu\(\begin{psmallmatrix}
	a& * \\ * &d
\end{psmallmatrix}\)^{-1}$ maps the entry at $[a\ell]$ to $\ell$, we extract the $\ell$-th entry of the vector $\mathbf{S}_{\infty\infty}(m,n,c,\mu)$ as
\begin{align}\label{S infty infty ell def}
	\begin{split}
		\mathbf{S}_{\infty\infty}^{(\ell)}(m,n,c,\mu)		&=\!\!\!\!\sum_{\substack{d\Mod c^*\\ad\equiv 1\Mod c}}\!\!\!\! e\(\frac{m_{\pm\infty} a+ n_{\pm\infty} d}c\)\frac{\mu(\begin{psmallmatrix}
				a&*\\c&d
			\end{psmallmatrix})^{-1}\e_{[a\ell]}}{\sin(\frac{\pi [a\ell]}p)}=:S_{\infty\infty}^{(\ell)}(m,n,c,\nu)\e_\ell.
	\end{split}
\end{align}

For the cusp pair $0\infty$ there are more requirements for our application. For every integer $r\geq 0$, recall our definitions for $x_r$ in \eqref{x r def}, $\alpha_{\pm 0}$ in \eqref{alpha 0 l def}, and $\condr$ in \eqref{(a,r) def}. For the cusp pair $(\ma,\mb)$, we define $\mu_{\ma\mb}(\gamma)$ for $\gamma\in \sigma_{\ma}^{-1}\Gamma_0(p)\sigma_\mb$ as in \cite[(3.4)]{iwaniecTopClassicalMF}. Hence, by $\sigma_\infty=I$ and $w_k(\gamma,I)=1$, $\mu_{0\infty}(\gamma)$ is defined for $\gamma\in \sigma_0^{-1}\Gamma_0(p)$ and given by
\begin{equation}\label{mu 0 infty def}
	\mu_{0\infty}(\gamma)= \mu(\sigma_{0}\gamma)w_k(\sigma_{0}^{-1},\sigma_{0}\gamma). 
\end{equation}
For every integer $r\geq 0$ and any vector $\mm\in \Z^{p-1}$, we define the Kloosterman sum
\begin{align}\label{S 0 infty def}
	\begin{split}
		\mathbf{S}_{0\infty}(\mathbf m,n,a,\mu;r)
		\defeq \sum_{\substack{\gamma=\begin{psmallmatrix}
					\frac c{\sqrt p}&\frac d{\sqrt p}\\-a\sqrt p&-b\sqrt p
				\end{psmallmatrix}\\\gamma\in \Gamma_\infty\setminus \sigma_0^{-1}\Gamma_0(p)/\Gamma_\infty}}\mu_{0\infty}(\gamma)^{-1}
		\sum_{\ell\in \condr}e\(\frac{m_{\pm0}^{(\ell)}\frac c{\sqrt p}-n_{\pm\infty} b\sqrt p}{-a\sqrt p}\)\e_{\ell}. 
	\end{split}
\end{align}
Note that $\sigma_0\gamma=\begin{psmallmatrix}
	a&b\\c&d
\end{psmallmatrix}\in \Gamma_0(p)$ for $\gamma\in \Gamma_\infty\setminus \sigma_0^{-1}\Gamma_0(p)/\Gamma_\infty$ in the summation above, hence $\mu_{0\infty}(\gamma)^{-1}$ maps the entry at $[a\ell]$ to $\ell$. Also note that only the values $m^{(\ell)}$ for $\ell\in \condar$ are used because $\ell\in \condar$ is equivalent to $[a\ell]\in\condr$. Therefore, by denoting $\gamma=\begin{psmallmatrix}
	\frac c{\sqrt p}&\frac d{\sqrt p}\\-a\sqrt p&-b\sqrt p
\end{psmallmatrix}$, the $\ell$-th entry of $\mathbf{S}_{0\infty}(\mathbf m,n,a,\mu;r)$ is 
\begin{align}\label{S 0 infty ell def}
	\begin{split}
		\mathbf{S}_{0\infty}^{(\ell)}(m^{([a\ell])},n,a,\mu;r)
		&=
		\left\{\begin{array}{ll}
			\displaystyle{\!\!\!\sum_{\substack{b\Mod a^*\\0<c<pa,\,p|c\\\text{s.t. }ad-bc=1}}\!\!\!\!\!\! \mu_{0\infty}(\gamma)^{-1}
				e\(\frac{m_{\pm 0}^{([a\ell])}\frac c{\sqrt p}-n_{\pm \infty} b\sqrt p}{-a\sqrt p}\)\e_{[a\ell]},}&\text{if }\ell\in \condar, \vspace{5px}\\
			0\e_\ell,&\text{otherwise}\\
		\end{array}
		\right.\\
		&=:S_{0\infty}^{(\ell)}(m^{([a\ell])},n,a,\mu_p;r)\e_\ell. 
	\end{split}
\end{align}

In Theorem~\ref{main theorem}, we pick $\mathbf{X}_r$ defined in \eqref{X r define} for every integer $r\geq 0$ and have 
\begin{equation}\label{S 0 infty def, using X r}
	\mathbf{S}_{0\infty}(\mathbf{X}_r,n,a,\mu_p;r)=\sum_{\substack{\gamma=\begin{psmallmatrix}
				\frac c{\sqrt p}&\frac d{\sqrt p}\\-a\sqrt p&-b\sqrt p
			\end{psmallmatrix}\\\gamma\in \Gamma_\infty\setminus \sigma_0^{-1}\Gamma_0(p)/\Gamma_\infty}}\mu_{0\infty}(\gamma)^{-1}
	\sum_{\ell\in \condr}e\(\frac{X_{r,+0}^{(\ell)}\frac c{\sqrt p}-n_{+\infty} b\sqrt p}{-a\sqrt p}\)\e_{\ell}. 
\end{equation}
We extract the $\ell$-th entry of the vector $\mathbf{S}_{0\infty}(\mathbf {X}_r,n,a,\mu_p;r)$: 
\begin{align}\label{S 0 infty ell def, using X r}
	\begin{split}
		\mathbf{S}_{0\infty}^{(\ell)}(X_r^{([a\ell])},n,a,\mu_p;r)
		&=
		\left\{\begin{array}{ll}
			\displaystyle{\!\!\!\sum_{\substack{b\Mod a^*\\0<c<pa,\,p|c\\\text{s.t. }ad-bc=1}}\!\!\!\!\!\! \mu_{0\infty}(\gamma)^{-1}
				e\(\frac{X_{r,+0}^{([a\ell])}\frac c{\sqrt p}-n_{+\infty} b\sqrt p}{-a\sqrt p}\)\e_{[a\ell]},}&\text{if }\ell\in \condar, \vspace{5px}\\
			0\e_\ell,&\text{otherwise}\\
		\end{array}
		\right.\\
		&=:S_{0\infty}^{(\ell)}(X_r^{([a\ell])},n,a,\mu_p;r)\e_\ell. 
	\end{split}
\end{align}

\subsection{Vector-valued holomorphic modular forms}
\label{Subsection: v-val holo mod forms}

Let $\nu$ be a weight $k=\pm \frac12$ (one-dimensional) multiplier system on the congruence subgroup $\Gamma$. Recall $M_k(\Gamma,\nu)$ as the space of weight $k$ holomorphic modular forms and $S_k(\Gamma,\nu)$ as the space of weight $k$ holomorphic cusp forms. Every $f\in M_k(\Gamma,\nu)$ satisfies the transformation property
\[f(\gamma z)=\nu(\gamma)(cz+d)^kf(z)\quad \text{for }\gamma=\begin{psmallmatrix}
	a&b\\c&d
\end{psmallmatrix}\in \Gamma. \]
Similarly, if $\mu$ is a weight $k=\pm \frac12$ $D$-dimensional multiplier system on $\Gamma$ (Definition~\ref{vec val multi sys}), then we denote the space of weight $k$ holomorphic modular forms on $(\Gamma,\mu)$ by $M_k(\Gamma,\mu)$ and the corresponding space of cusp forms by $S_k(\Gamma,\mu)$. 

From now on we fix the prime $p\geq 5$ and let $(k,\mu)=(\frac12,\mu_p)$ or $(-\frac12,\overline{\mu_p})$ on $\Gamma_0(p)$.  By Lemma~\ref{Transformation laws by z to 24z} and using the fact that $\alpha_{\pm \infty}\neq 0$ \eqref{alpha infty l def}, we have 
\begin{equation}
	\mathbf{F}\in M_{\frac12}(\Gamma_0(p),\mu_p)=S_{\frac12}(\Gamma_0(p),\mu_p)\quad \Rightarrow\quad F^{(\ell)}\in S_{\frac12}\(\Gamma_0(p^2)\cap \Gamma_1(p),\overline{\nu_\eta}\)
\end{equation}
for all $1\leq \ell\leq p-1$. 
As in Proposition~\ref{LEigenform mu p r is finite dim}, we also have
\begin{equation}
	\dim M_{\frac12}(\Gamma_0(p),\mu_p)\leq \dim S_{\frac12}\(\Gamma_0(p^2)\cap \Gamma_1(p),\overline{\nu_\eta}\). 
\end{equation}
Lemma~\ref{Transformation laws by z to 24z} also shows that, for any $f\in S_{\frac12}\(\Gamma_0(p^2)\cap \Gamma_1(p),\overline{\nu_\eta}\)\subseteq S_{\frac12}\(\Gamma_1(p^2),\overline{\nu_\eta}\)$, the map $z\rightarrow 24 z$ gives 
\begin{equation}\label{level lift cusp form 576p^2}
	g\in S_{\frac12}(\Gamma_1(576p^2),\nu_\theta) \quad \text{for } g(z)\defeq f(24z).  
\end{equation}

The Serre-Stark basis theorem (Theorem~\ref{Serre Stark}) implies the following lemma. 


\begin{lemma}\label{Theta function space only eta pz}
	Fix a prime $p\geq 5$. Let $f\in M_{\frac12}(\Gamma_1(576p^2),\nu_\theta)$ have the Fourier expansion
	\[f(z)=\sum_{n=0}^{\infty} a_f(n)q^n\] 
	where $a_f(m)=0$ for all $m\not \equiv -1\Mod{24}$. Then if $p\not\equiv -1\Mod{24}$, we have $f=0$; if $p\equiv -1\Mod{24}$, we have that $f$ is a multiple of $\eta(24pz)$.  
\end{lemma}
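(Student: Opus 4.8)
The plan is to invoke the Serre--Stark basis theorem (Theorem~\ref{Serre Stark}) to write $f$ as a linear combination of theta series $\theta_{\psi,t}(z)=\sum_{n\in\Z}\psi(n)q^{tn^2}$, and then to use the arithmetic constraint $a_f(m)=0$ for all $m\not\equiv -1\Mod{24}$ to kill almost all of these basis elements. First I would record that $f\in M_{\frac12}(\Gamma_1(576p^2),\nu_\theta)$, so by Theorem~\ref{Serre Stark} (applied with $N=576p^2$) there is a basis of such $\theta_{\psi,t}$ with $\psi$ an even primitive Dirichlet character of conductor $r(\psi)$ satisfying $4r(\psi)^2 t\mid 576 p^2$. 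Writing $576=4\cdot 144=2^6\cdot 3^2$, the divisibility $4 r(\psi)^2 t\mid 2^6\cdot 3^2\cdot p^2$ forces $r(\psi)$ and $t$ to involve only the primes $2,3,p$, which is a manageable finite list to analyze.

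The heart of the argument is the exponent condition. Each $\theta_{\psi,t}$ contributes only to Fourier coefficients $a_f(m)$ with $m=tn^2$ a $t$-multiple of a square, so the hypothesis that $a_f(m)=0$ unless $m\equiv -1\equiv 23\Mod{24}$ means that every exponent $tn^2$ actually appearing (i.e.\ with $\psi(n)\neq 0$) must satisfy $tn^2\equiv 23\Mod{24}$. Here I would analyze the possible residues of $tn^2\Mod{24}$: since $n^2\Mod{24}\in\{0,1,4,9,12,16\}$ (indeed $n^2\equiv 1\Mod{24}$ exactly when $\gcd(n,6)=1$), the requirement $tn^2\equiv 23\Mod{24}$ for all $n$ coprime to the conductor pins down $t\equiv 23\Mod{24}$ together with $\gcd(n,6)=1$ being forced on the support of $\psi$. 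The cleanest way to enforce the latter is to note that $\psi$ must be supported on $n$ with $\gcd(n,6)=1$, which combined with primitivity and evenness severely restricts $\psi$; I expect the only surviving possibility to be $\psi=\chi_{12}=\big(\tfrac{12}{\cdot}\big)$ (the character attached to $\eta$ via $\theta_{\chi_{12},1}(z)=\eta(24z)/\!\cdots$), forcing $t=24$ after rescaling. Concretely, $\eta(24pz)=q^{p}\prod_{n\ge1}(1-q^{24pn})$ has the Fourier support $\{m:m\equiv p\equiv -1\Mod{24}\}$ precisely when $p\equiv -1\Mod{24}$, matching the theta series $\theta_{\chi_{12},\,p}$ up to normalization.

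Finally I would split into the two cases of the conclusion. If $p\not\equiv -1\Mod{24}$, then no admissible $(\psi,t)$ can produce nonzero coefficients confined to the class $-1\Mod{24}$ — every candidate theta series has some coefficient in a forbidden residue class, or its leading exponent lies outside $23\Mod{24}$ — so $f=0$. If $p\equiv -1\Mod{24}$, then the unique admissible basis element is (a scalar multiple of) the one matching $\eta(24pz)$, and I would identify it explicitly via the classical identity $\eta(24z)=\sum_{n\ge1}\chi_{12}(n)q^{n^2}$ rescaled by $z\mapsto pz$, concluding that $f$ is a multiple of $\eta(24pz)$. \textbf{The main obstacle} I anticipate is the bookkeeping in the middle step: carefully enumerating which even primitive characters $\psi$ of $2,3,p$-power conductor, paired with which $t\mid 144p^2$, yield a theta series whose \emph{entire} Fourier support lands in the single residue class $23\Mod{24}$, and ruling out spurious combinations (e.g.\ characters of conductor $3$ or $p$ alone, or $t$ with an unwanted factor of $p$) that might accidentally satisfy the congruence on a thin set of $n$ but fail it in general. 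This is where the primitivity and evenness hypotheses must be used in full, together with the fact that $\gcd(p,6)=1$.
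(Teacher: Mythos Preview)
Your plan is essentially the paper's proof: apply Serre--Stark with $N=576p^2$, use the congruence $tn^2\equiv -1\Mod{24}$ on the support of each basis element, and deduce that only $\theta_{\chi_{12},p}=\eta(24p\cdot)$ can survive, and only when $p\equiv -1\Mod{24}$. The one place to tighten is your muddled remark ``forcing $t=24$ after rescaling'': the paper's clean observation is that $t\mid 144p^2$ together with $t\equiv -1\Mod{24}$ (since $t$ must be coprime to $6$ and $p^2\equiv 1\Mod{24}$) forces $t=p$ directly, after which $4r(\psi)^2 p\mid 576p^2$ gives $r(\psi)\mid 12$, and the cases $r(\psi)=1,3$ are killed by evaluating at $n=2$ (and noting the conductor-$3$ primitive character is odd), leaving $r(\psi)=12$ and $\psi=\chi_{12}$.
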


\begin{proof}
	By Theorem~\ref{Serre Stark}, if $\theta_{\psi,t}\in M_{\frac12}(\Gamma_1(576p^2),\nu_\theta)$ satisfies the condition in the lemma, then whenever $\psi(n)\neq 0$, i.e. whenever $(n,r(\psi))=1$, we have $tn^2\equiv -1\Mod {24}$. Since $t|144p^2$ and $p^2\equiv 1\Mod {24}$ for primes $p\geq 5$, we only have the possibility if $t=p\equiv -1\Mod {24}$. Then we have $r(\psi)|12$, hence $r(\psi)=1,3$ or $12$. 
	
	Since $\psi$ is primitive, $r(\psi)=1$ means $\psi(n)=1$ for all $n$, hence $\psi(2)=1$ while $p\cdot 2^2\not\equiv -1\Mod{24}$. When $r(\psi)=3$, the only primitive character is $(\frac{-3}\cdot)$ which is odd, not to mention $\psi(2)=-1$ and $p\cdot 2^2\not\equiv -1\Mod{24}$. 
	
	When $r(\psi)=12$, the only primitive character is $(\frac{12}\cdot)$. Note that 
	\[\eta(24z)=\sum_{n=1}^{\infty} \(\frac{12}n\)q^{n^2} \]
	and the lemma follows. 
\end{proof}

\begin{lemma}\label{mu p space is empty zero}
	If $\mathbf{F}=\sum_{\ell=1}^{p-1}F^{(\ell)}(z)\e_\ell\in M_{\frac12}(\Gamma_0(p),\mu_p)$ has Fourier expansion 
	\[F^{(\ell)}(z)=\sum_{n=1}^{\infty} a_F^{(\ell)}(n) q^{n-\frac1{24}}\quad \text{for each }1\leq \ell\leq p-1,\]
	then $\mathbf{F}=\mathbf{0}$. 
\end{lemma}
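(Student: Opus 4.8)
The plan is to reduce the vector-valued statement to the scalar classification in Lemma~\ref{Theta function space only eta pz} and then use the off-diagonal coupling built into $\mu_p$ to eliminate the one surviving possibility. First I would record that, since $\alpha_{+\infty}^{(\ell)}=\tfrac1{24}\neq 0$ by \eqref{alpha infty l def}, every $\mathbf F\in M_{\frac12}(\Gamma_0(p),\mu_p)$ is automatically a cusp form, so by Lemma~\ref{Transformation laws by z to 24z} each component satisfies $F^{(\ell)}\in S_{\frac12}(\Gamma_0(p^2)\cap\Gamma_1(p),\overline{\nu_\eta})$ and $g^{(\ell)}(z)\defeq F^{(\ell)}(24z)\in S_{\frac12}(\Gamma_1(576p^2),\nu_\theta)$. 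The hypothesis $F^{(\ell)}(z)=\sum_{n\geq1}a_F^{(\ell)}(n)q^{n-\frac1{24}}$ gives $g^{(\ell)}(z)=\sum_{n\geq1}a_F^{(\ell)}(n)e\!\big((24n-1)z\big)$, whose Fourier support lies entirely in the residue class $-1\pmod{24}$.

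Lemma~\ref{Theta function space only eta pz} then applies directly to each $g^{(\ell)}\in M_{\frac12}(\Gamma_1(576p^2),\nu_\theta)$. When $p\not\equiv-1\pmod{24}$ it forces $g^{(\ell)}=0$, hence $F^{(\ell)}=0$ and $\mathbf F=\mathbf 0$, finishing the proof. The only surviving case is $p\equiv-1\pmod{24}$, where the lemma yields constants $c_\ell\in\C$ with $g^{(\ell)}(z)=c_\ell\,\eta(24pz)$, equivalently $F^{(\ell)}(z)=c_\ell\,\eta(pz)$ for $1\leq\ell\leq p-1$. This is where the main difficulty lies: the scalar spaces are genuinely nonzero in this case, so the vanishing has to come from the coupling between components enforced by the matrix multiplier $\mu_p$, not from any one-dimensional dimension count.

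To close this case I would feed $F^{(\ell)}=c_\ell\eta(pz)$ back into the defining transformation $\mathbf F(\gamma z)=\mu_p(\gamma)(cz+d)^{\frac12}\mathbf F(z)$. For $\gamma=\begin{psmallmatrix}a&b\\c&d\end{psmallmatrix}\in\Gamma_0(p)$, reading off the $\ell$-th entry via Definition~\ref{mu matrix define} gives $F^{(\ell)}(\gamma z)=\overline{\nu_\eta}(\gamma)\,\mu(c,d,\ell,p)(cz+d)^{\frac12}F^{([d\ell])}(z)$. Since $p\mid c$, the substitution $p\gamma z=\gamma'(pz)$ with $\gamma'=\begin{psmallmatrix}a&pb\\c/p&d\end{psmallmatrix}\in\SL_2(\Z)$ and the $\eta$-transformation \eqref{etaMultiplier} (namely $\eta(p\gamma z)=\nu_\eta(\gamma')(cz+d)^{\frac12}\eta(pz)$) turn this into the scalar identity $c_\ell\,\nu_\eta(\gamma')=\overline{\nu_\eta}(\gamma)\,\mu(c,d,\ell,p)\,c_{[d\ell]}$. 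Taking absolute values and using $|\nu_\eta|=|\mu(c,d,\ell,p)|=1$ gives $|c_\ell|=|c_{[d\ell]}|$; for any two indices $\ell,\ell'$ one can choose $\gamma$ with lower-right entry $d\equiv\ell'\ell^{-1}\pmod p$, so $\ell\mapsto[d\ell]$ links them and all the $|c_\ell|$ are equal.

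Finally, if this common value were nonzero I would specialize to $\gamma=\begin{psmallmatrix}1&0\\p&1\end{psmallmatrix}$, for which $[d\ell]=\ell$ and $\gamma'=\begin{psmallmatrix}1&0\\1&1\end{psmallmatrix}$; cancelling the nonzero $c_\ell$ forces $\nu_\eta(\gamma')=\overline{\nu_\eta}(\gamma)\,\mu(p,1,\ell,p)$ for every $\ell$. By \eqref{mu cdlp def}, $\mu(p,1,\ell,p)=\exp(3\pi i\ell^2/p)(-1)^\ell$ genuinely varies with $\ell$ for $p\geq5$, while both $\nu_\eta(\gamma')$ and $\overline{\nu_\eta}(\gamma)$ are independent of $\ell$ — a contradiction. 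Hence every $c_\ell=0$ and $\mathbf F=\mathbf0$. I expect the only points requiring care to be the transitivity step tying all the $|c_\ell|$ together and the explicit non-constancy of $\mu(p,1,\ell,p)$, both of which are elementary once set up.
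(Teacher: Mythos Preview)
Your proof is correct and follows essentially the same route as the paper: reduce via Lemma~\ref{Theta function space only eta pz} to $F^{(\ell)}=c_\ell\,\eta(pz)$ when $p\equiv-1\pmod{24}$, then test the $\mu_p$-transformation at $\gamma=\begin{psmallmatrix}1&0\\p&1\end{psmallmatrix}$ to force every $c_\ell=0$. The only differences are cosmetic: the paper cites an external result for the transformation of $\eta(pz)$ rather than computing it via your $\gamma'=\begin{psmallmatrix}1&0\\1&1\end{psmallmatrix}$, and it omits your transitivity step linking all the $|c_\ell|$ (which is not needed, since $d=1$ already gives $[d\ell]=\ell$, so each $\ell$ with $c_\ell\neq0$ produces its own contradiction $\mu(p,1,\ell,p)=\nu_\eta(\gamma')\nu_\eta(\gamma)\in\{\pm1\}$).
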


\begin{proof}
	Consider $\mathbf{F}(24z)$. By Lemma~\ref{Theta function space only eta pz} and \eqref{level lift cusp form 576p^2}, if $p\not\equiv -1\Mod {24}$ we already have the desired result. If $p\equiv -1\Mod{24}$, we have $F^{(\ell)}(z)=c^{(\ell)}\eta(pz)$ for some constant $c^{(\ell)}\in \C$ and for each $\ell$. By \cite[Corollary~3.5]{AhlgrenAndersenDicks23}, $\eta(pz)\in M_{\frac12}(\Gamma_0(p),(\frac \cdot p)\overline{\nu_\eta})$ because $p\equiv -1\Mod {24}$. 
	
	Now we take $\gamma=\begin{psmallmatrix}
		1&0\\p&1
	\end{psmallmatrix}$. By Proposition~\ref{mu cdlp def prop}, we have
	\[(pz+1)^{-\frac 12}F^{(\ell)}(\gamma z)= \mu(p,1,\ell,p)\overline{\nu_\eta(\gamma)}F^{(\ell)}(z),\]
	while by $F^{(\ell)}(z)=c^{(\ell)}\eta(pz)$, we have
	\[(pz+1)^{-\frac 12}F^{(\ell)}(\gamma z)=(\tfrac 1p)\overline{\nu_\eta(\gamma)}F^{(\ell)}(z). \]
	However, $\mu(p,1,\ell,p)=\exp(\frac{3\pi i \ell^2}p)(-1)^{\ell}$ cannot be $\pm 1$. Then the only possible case is $c^{(\ell)}=0$ for all $1\leq \ell\leq p-1$ and we have $\mathbf{F}=\mathbf{0}$. 
\end{proof}

\section{Proof of Theorem~\ref{main theorem}}
\label{Section: Qihang's exact formulas}

In this section we prove Theorem~\ref{main theorem}. We also prove \eqref{KL sums match with Bringmann, cusp infty} and \eqref{KL sums match with Bringmann, cusp 0} to show that this formula agrees with the asymptotic formula by Bringmann \cite{BringmannTAMS} in \eqref{Bringmann formula}. Recall our notations in Section~\ref{Section: V-val things}. 

For convenience, we define the principal part of our vector-valued functions here. For a vector-valued smooth function $\mathbf{P}(z)$ which satisfies $\mathbf{P}(\gamma z)=\mu_p(\gamma)(cz+d)^{\frac 12}\mathbf{P}(z)$ for $\gamma\in \Gamma_0(p)$, if there exist $\mathbf{R}_\infty(z)$ and $\mathbf{R}_0(z)$ such that $R_\infty^{(\ell)}(z),R_0^{(\ell)}(z)\in \C[q^{-1}]$ for $1\leq \ell\leq p-1$ and 
\begin{align*}
	P^{(\ell)}(z)-R_\infty^{(\ell)}(z)&=O(e^{-Cy}), \\
	(\sqrt p z)^{-\frac 12}{P}^{(\ell)}(-\tfrac{1}{pz})-R_0^{(\ell)}(z)&=O(e^{-Cy}) \quad \text{for }y\rightarrow\infty \text{ and some }C>0,
\end{align*}
then we call $\mathbf{R}_\infty(z)$ and $\mathbf{R}_0(z)$ the principal parts of $\mathbf{P}(z)$ at the cusps $\infty$ and $0$ of $\Gamma_0(p)$, respectively. Moreover, if the Fourier expansion of $\mathbf{P}(z)$ can be written as
\[\mathbf{P}(z) = \sum_{\ell=1}^{p-1} \sum_{n\geq \mathfrak M}a_{+}^{(\ell)}(n)q^{n_{\infty}}\e_\ell +\sum_{\ell=1}^{p-1} \sum_{n<0}a_{-}^{(\ell)}(n) \Gamma(\tfrac 12,4\pi |n_\infty| y)q^{n_\infty}\e_\ell,\]
for some $\mathfrak M\in \Z$, then the principal part of $\mathbf{P}(z)$ at the cusp $\infty$ is 
\begin{equation}\label{vec val Principal part}
	\mathbf{R}_\infty(z)=\sum_{\ell=1}^{p-1} \sum_{\mathfrak M\leq n \leq 0}a_{+}^{(\ell)}(n)q^{n_{\infty}}\e_\ell.
\end{equation}
We take $n\leq 0$ because $\alpha_{\infty}=\frac 1{24}>0$. The principal part of $\mathbf{P}(z)$ at the cusp $0$ is clearly the principal part of $(\sqrt p z)^{-\frac 12}\mathbf{P}(-\frac 1{pz})$ at the cusp $\infty$. 

For a prime $p\geq 5$ and $1\leq \ell\leq p-1$, recall $\mathcal{G}_1(\frac \ell p;z)$ defined in \eqref{G1}. Let the vector-valued function $\mathbf{G}_1(z;p)$ be defined as
\begin{equation}
	\mathbf{G}_1(z;p)\defeq \sum_{\ell=1}^{p-1} \mathcal{G}_1\(\frac \ell p;z\)\e_\ell. 
\end{equation}
In the following subsection \S\ref{Subsection: Principal parts of G1}, we determine the principal parts of $\mathbf{G}_1$ at the cusps $\infty$ and $0$ of $\Gamma_0(p)$. To match the principal parts, we construct Maass-Poincar\'e series in \S\ref{Subsection: v-val Maass Poincare series} and collect their properties in Proposition~\ref{Maass Poincare series are harmonic Maass form proposition}. Combining the results in \S\ref{Subsection: Principal parts of G1} and \S\ref{Subsection: v-val Maass Poincare series}, we finish the proof of Theorem~\ref{main theorem} in \S\ref{Subsection: Proof of main theorem}.

\subsection{\texorpdfstring{Principal parts of $\mathbf{G}_1$}{Principal parts of G1}}
\label{Subsection: Principal parts of G1}

Fix a prime $p\geq 5$ and let $1\leq \ell\leq p-1$. Recall the definition of $\mathcal{G}_1(\frac \ell p;z)$ in \eqref{G1} and $\mathcal{G}_2(\frac \ell p;z)$ in \eqref{G2}. Also recall that the holomorphic part of $\mathcal{G}_1(\frac \ell p;z)$ at the cusp $\infty$ is
\begin{equation}\label{G1 holomorphic part}
	\csc\(\frac{\pi\ell}p\)\sum_{n=0}^{\infty} A\(\frac \ell p;n\)q^{n-\frac1{24}}.
\end{equation}

By Proposition~\ref{mu cdlp def prop} and Definition~\ref{mu matrix define}, $\mathbf{G}_1(\cdot;p)$ has the property
\begin{equation}\label{G1 (bf) transformation}
	\mathbf{G}_1(\gamma z;p)=\mu_p(\gamma)(cz+d)^{\frac12}\mathbf{G}_1(z;p),\quad \text{for }\gamma=\begin{psmallmatrix}
		a&b\\c&d
	\end{psmallmatrix}\in \Gamma_0(p). 
\end{equation}
By \eqref{G1 holomorphic part} and \eqref{vec val Principal part}, the principal part of $\mathbf{G}_1(z;p)$ at the cusp $\infty$ of $\Gamma_0(p)$ is
\begin{equation}\label{Principal part of G1 at infty}
	\csc\(\frac{\pi\ell}p\)\sum_{\ell=1}^{p-1}  q^{-\frac 1{24}} \e_\ell. 
\end{equation}

By \cite[(3.13)]{GarvanTransformationDyson2017}, the behavior of $\mathcal{G}_1$ at the cusp $0$ is given by
\begin{equation}\label{Principal part of G1 at 0 written in G2}
	(\sqrt p\,z)^{-\frac12}\mathcal{G}_1\(\frac \ell p;\sigma_0 z\)=(\sqrt p\,z)^{-\frac12}\mathcal{G}_1\(\frac \ell p;-\frac1{pz}\)=e(-\tfrac 18)p^{\frac 14}\mathcal{G}_2\(\frac \ell p;pz\).
\end{equation}
By the discussion after \eqref{vec val Principal part}, the principal part of $\mathbf{G}_1(z;p)$ at the cusp $0$ can be derived from the principal parts of $\mathcal{G}_2(\frac \ell p;pz)$ at the cusp $\infty$ for $1\leq \ell\leq p-1$. 

Recall that $\ep_2$ is defined in \eqref{Epsilon2Def} by
\[\ep_2\(\frac \ell p;z\)=\left\{\begin{array}{ll}
	2q^{-\frac 32 \(\frac \ell p\)^2+\frac {\ell}{2p} -\frac1{24}}, &\ \frac \ell p\in (0,\frac 16),\\
	
	2q^{-\frac 32 \(1-\frac \ell p\)^2+\frac 12 \(1-\frac \ell p\)-\frac1{24}}, &\ \frac \ell p\in (\frac 56,1), \\
	0, &\ \text{otherwise}. 
\end{array}
\right.
\]
Here $\frac 16$ is the only root of the quadratic equation $-\frac 32 x^2+\frac 12 x-\frac1{24}=0$, hence the order of $\ep_2(\frac \ell p;z)$ at $\infty$ is less than 0 in the first two cases.
By \eqref{G2}, the holomorphic part of $\mathcal{G}_2(\frac \ell p;z)$ is
\begin{equation}\label{G2 holomorphic part}
	\ep_2\(\frac \ell p;z\)+2q^{-\frac 32 \(\frac \ell p\)^2+\frac {3\ell}{2p}-\frac1{24} }M\(\frac \ell p;z\). 
\end{equation}

Recall \eqref{x r def} that $x_r$ is the only solution in $(0,\frac 12)$ of the quadratic equation
\[-\tfrac 32 x^2+(\tfrac12+r)x-\tfrac1{24}=0.\]
Now $x_0=\frac 16$ and the contribution from $\ep_2(\frac \ell p;z)$ to the principal part of $\mathcal{G}_2(\frac\ell p;z)$ at $\infty$ is: 
\begin{equation}\label{Principal part of G1 at 0, r=0 so ep 2}
	\left\{\begin{array}{ll}
		2q^{-\frac 32 \(\frac \ell p\)^2+\frac{\ell}{2p}-\frac1{24}} &\ \text{when }0<\tfrac{\ell}{ p}<x_0,\\
		2q^{-\frac 32 \(1-\frac \ell p\)^2+\frac12\(1-\frac \ell p\)-\frac1{24}} &\ \text{when } 1-x_0<\tfrac{\ell}{ p}<1,\\
		0,&\text{otherwise.} 
	\end{array}
	\right. 
\end{equation}

For the principal part of $\mathcal{G}_2(\frac \ell p;z)$ at $\infty$ contributed from the part other than $\ep_2$, we need the Fourier expansion of $M(\frac \ell p;z)$ defined in \eqref{M l p definition}:
\begin{lemma}\label{Mlp First few terms}
	Let $p\geq 5$ be a prime. When $1\leq \ell\leq \frac{p-1}2$, the first few terms of the Fourier expansion of $M(\frac \ell p;z)$ are
	\[M\(\frac \ell p;z\)=\sum_{T=0}^{\floor{\frac{p}{2\ell}}}q^{\frac{T\ell}p}+O(q^{\frac 1{2}}). \]
	When $\frac{p+1}2\leq \ell\leq p-1$, we have $1\leq p-\ell\leq \frac{p-1}2$ and the first few terms of the Fourier expansion of $M(\frac \ell p;z)$ are
	\[M\(\frac \ell p;z\)=\sum_{T=0}^{\floor{\frac{p}{2(p-\ell)}}}q^{T\(1-\frac{\ell}p\)}+O(q^{\frac 1{2}}). \]
\end{lemma}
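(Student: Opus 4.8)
The plan is to extract the low-order terms of the bilateral sum defining $M\(\frac\ell p;z\)$ and then observe that multiplying by the prefactor $1/(q,q)_\infty$ leaves those terms untouched. Write $\alpha\defeq\frac\ell p\in(0,1)$ and
\[M\(\frac\ell p;z\)=\frac1{(q,q)_\infty}\,S,\qquad S\defeq\sum_{n\in\Z}(-1)^n\frac{q^{n+\alpha}}{1-q^{n+\alpha}}\,q^{\frac32 n(n+1)}.\]
The factor $1/(q,q)_\infty$ is a $q$-series with only non-negative integer exponents and constant term $1$, and a quick inspection shows $S$ has no terms of negative order; hence the terms of $M\(\frac\ell p;z\)$ of order strictly less than $\frac12$ coincide with those of $S$, since pairing any term of $S$ with a positive integer power of $q$ raises its order by at least $1$. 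It therefore suffices to find all terms of $S$ of order $<\frac12$.

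The key step is to expand each summand according to the sign of $n+\alpha$. For $n\geq 0$ one has $|q^{n+\alpha}|<1$ and $\frac{q^{n+\alpha}}{1-q^{n+\alpha}}=\sum_{k\geq1}q^{k(n+\alpha)}$, contributing exponents $\frac32 n(n+1)+k(n+\alpha)$; for $n\leq-1$ one has $|q^{n+\alpha}|>1$, which forces the reciprocal expansion $\frac{q^{n+\alpha}}{1-q^{n+\alpha}}=-1-\sum_{k\geq1}q^{-k(n+\alpha)}$, contributing the integer exponent $\frac32 n(n+1)$ together with the exponents $\frac32 n(n+1)-k(n+\alpha)$. A direct size estimate shows that the summands with $n=1$ or $|n|\geq2$ produce only exponents exceeding $\frac12$, so the sole survivors of order $<\frac12$ come from $n=0$ and $n=-1$: the former gives $q^{k\alpha}$ $(k\geq1)$, the $-1$ piece at $n=-1$ gives the constant $+1$, and the remaining $n=-1$ terms give $q^{k(1-\alpha)}$ $(k\geq1)$, each with coefficient $+1$.

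It remains to split on the size of $\alpha$, which matches exactly the two cases of the lemma. If $1\leq\ell\leq\frac{p-1}2$, then $\alpha<\frac12$, so every $q^{k(1-\alpha)}$ has order $\geq 1-\alpha>\frac12$ and drops out, whereas $q^{k\alpha}$ survives precisely when $k\alpha<\frac12$, i.e. $k\leq\floor{\frac p{2\ell}}$; combined with the constant this gives $S=\sum_{T=0}^{\floor{p/(2\ell)}}q^{T\ell/p}+O(q^{1/2})$ and hence the first formula. If $\frac{p+1}2\leq\ell\leq p-1$, then $\alpha>\frac12$ and the roles of $n=0$ and $n=-1$ reverse: $q^{k\alpha}$ drops out and $q^{k(1-\alpha)}$ survives for $k(1-\alpha)<\frac12$, i.e. $k\leq\floor{\frac p{2(p-\ell)}}$, yielding the second formula.

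The main point to handle carefully is the bookkeeping around the unit circle, namely that for $n\leq-1$ the geometric series must be expanded in $q^{-(n+\alpha)}$, which is exactly what produces both the constant term and the $q^{k(1-\alpha)}$ family; this is the step where a naive one-sided expansion would go wrong. A secondary routine point is checking that the floor bounds are unambiguous, which holds because $p$ is prime and $0<T\ell<p/2$ (resp. $0<T(p-\ell)<p/2$) keeps all the relevant exponents strictly between $0$ and $\frac12$, so no endpoint case arises.
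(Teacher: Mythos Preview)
Your proof is correct and follows essentially the same approach as the paper: expand the bilateral sum termwise, split the geometric series at $n\geq 0$ versus $n\leq -1$, and observe that only $n=0$ and $n=-1$ contribute below order $\tfrac12$. The only cosmetic difference is that the paper invokes the symmetry $\mathcal{M}(\tfrac\ell p;z)=\mathcal{M}(1-\tfrac\ell p;z)$ to reduce the second case to the first, whereas you handle both cases directly by noting which of the two families $q^{k\alpha}$, $q^{k(1-\alpha)}$ survives.
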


\begin{proof}
	It suffices to prove the first case $1\leq \ell\leq \frac{p-1}2$ because $\mathcal{M}(\frac \ell p;z)=\mathcal{M}(1-\frac \ell p;z)$ by \eqref{MN mathcal l p definition}. We have: 
	\begin{align*}
		M\(\frac \ell p;z\)&=\prod_{j=1}^\infty (1-q^j)^{-1}\sum_{n\in \Z} \frac{(-1)^nq^{n+\frac \ell p}}{1-q^{n+\frac \ell p}}q^{\frac 32 n^2+\frac 32 n}\\
		&=\(1+q+2q^2+O(q^3)\)\(\sum_{n\geq 0} (-1)^nq^{n+\frac \ell p}q^{\frac 32 n^2+\frac 32 n}\sum_{T=0}^\infty q^{T(n+\frac \ell p)}\right.\\
		&\quad \left .+\sum_{\substack{n<0\\m=-n}} (-1)^{m+1}q^{\frac 32 m^2-\frac 32 m}\sum_{T=1}^\infty q^{T(m-\frac{\ell}p)} \)\\
		&=\(1+O(q)\)\(\(q^{\frac \ell p}\sum_{T=0}^\infty q^{\frac{T\ell}p}+O(q)\)+\(1+\sum_{T=1}^\infty q^{T(1-\frac \ell p)}+O(q)\)\)\\
		&=\sum_{T=0}^{\floor{\frac{p}{2\ell}}} q^{\frac{T\ell}p}+O(q^{\frac12}). 
	\end{align*}
\end{proof}

\begin{proposition}\label{Principal part of G1 at 0, other than ep 2}
	Let $p\geq 5$ be a prime, let $x_r$ be defined in \eqref{x r def}, 
	and let $R$ be the maximal integer such that $x_R^{-1}<p$. 
	Then the sequence $\{x_r: r\geq 0\}$ is strictly decreasing and the principal part of $\mathcal{G}_2(\frac \ell p;z)$ contributed from the term involving $\mathcal{M}(\frac \ell p;z)$ (i.e. the part other than $\ep_2(\frac \ell p;z)$) equals
	\begin{equation}\label{Principal part of G1 at 0, other than ep 2, equation}
		\sum_{r=1}^R\left\{\begin{array}{ll}
			2q^{-\frac 32 \(\frac \ell p\)^2+\(\frac12+r\) \frac \ell p-\frac1{24}} &\ \text{when }0<\tfrac{\ell}{ p}<x_r,\\
			2q^{-\frac 32 \(1-\frac \ell p\)^2+\(\frac12+r\)\(1-\frac \ell p\)-\frac1{24}} &\ \text{when } 1-x_r<\tfrac{\ell}{ p}<1\\
			0& \ \text{otherwise.} 
		\end{array}
		\right. 
	\end{equation}
\end{proposition}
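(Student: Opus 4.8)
The plan is to recognize that the $M$-term of the holomorphic part \eqref{G2 holomorphic part} is literally $\mathcal{M}(\frac{\ell}{p};z)$, and then to read off which monomials of $\mathcal{M}(\frac{\ell}{p};z)$ have negative $q$-order using the truncated expansion from Lemma~\ref{Mlp First few terms}. First I would note from \eqref{MN mathcal l p definition} that $\frac{3\ell}{2p}(1-\frac{\ell}{p}) = -\frac{3}{2}(\frac{\ell}{p})^2 + \frac{3\ell}{2p}$, so the prefactor $2q^{-\frac{3}{2}(\ell/p)^2 + \frac{3\ell}{2p} - \frac{1}{24}}$ multiplying $M(\frac{\ell}{p};z)$ in \eqref{G2 holomorphic part} is exactly the normalization defining $\mathcal{M}(\frac{\ell}{p};z)$; thus the contribution to be computed is precisely the principal part of $\mathcal{M}(\frac{\ell}{p};z)$. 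Using the symmetry $\mathcal{M}(\frac{\ell}{p};z) = \mathcal{M}(1-\frac{\ell}{p};z)$ from \eqref{MN mathcal l p definition} (as in the proof of Lemma~\ref{Mlp First few terms}), I would restrict to $1 \le \ell \le \frac{p-1}{2}$, i.e. $x := \frac{\ell}{p} \in (0,\frac{1}{2}]$; the complementary range $\frac{p+1}{2} \le \ell \le p-1$ then follows by replacing $x$ with $1-x$ and produces the second branch of \eqref{Principal part of G1 at 0, other than ep 2, equation}.

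Next I would substitute $M(\frac{\ell}{p};z) = \sum_{T=0}^{\lfloor p/(2\ell)\rfloor} q^{T\ell/p} + O(q^{1/2})$ and multiply through, so that the explicit $T$-th monomial carries $q$-exponent $E_T = -\frac{3}{2}x^2 + (\frac{3}{2}+T)x - \frac{1}{24}$. The reindexing $r = T+1 \ge 1$ rewrites this as $E_T = -\big(\frac{3}{2}x^2 - (\frac{1}{2}+r)x + \frac{1}{24}\big)$, which is exactly the exponent in the claim. Before reading off signs I must verify that the error term hides no principal monomial: each term absorbed into $O(q^{1/2})$ contributes $q$-order at least $-\frac{3}{2}x^2 + \frac{3x}{2} - \frac{1}{24} + \frac{1}{2}$, and since $-\frac{3}{2}x^2 + \frac{3x}{2}$ is increasing on $(0,\frac{1}{2}]$ this is bounded below by its limit $\frac{11}{24} > 0$ as $x \to 0^+$. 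Hence every principal monomial of $\mathcal{M}(\frac{\ell}{p};z)$ comes from an explicit $T$-term, and the truncation in Lemma~\ref{Mlp First few terms} is lossless.

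Finally I would decide which $E_T$ are negative. Setting $f_r(x) = \frac{3}{2}x^2 - (\frac{1}{2}+r)x + \frac{1}{24}$, the requirement $E_T < 0$ is $f_r(x) > 0$. This upward parabola has the root $x_r \in (0,\frac{1}{2})$ of \eqref{x r def} together with a companion root $x_r' = \frac{1}{36 x_r}$, the product of the roots being $\frac{1}{36}$; for $r \ge 1$ one checks $x_r < \frac{1}{18}$, so $x_r' > \frac{1}{2} \ge x$, and therefore on $(0,\frac{1}{2}]$ we have $f_r(x) > 0$ if and only if $x < x_r$. This gives the condition $\frac{\ell}{p} < x_r$. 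To fix the summation range I would rationalize $x_r = \big(12\big((\tfrac{1}{2}+r) + \sqrt{(\tfrac{1}{2}+r)^2 - \tfrac{1}{4}}\big)\big)^{-1}$, which is manifestly strictly decreasing in $r$ with $x_0 = \frac{1}{6}$ (the stated monotonicity); then, since $\frac{\ell}{p} \ge \frac{1}{p}$, the inequality $\frac{\ell}{p} < x_r$ can hold only when $x_r > \frac{1}{p}$, i.e. $x_r^{-1} < p$, which by monotonicity is precisely $r \le R$. Assembling the surviving monomials yields \eqref{Principal part of G1 at 0, other than ep 2, equation}.

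The step I expect to be most delicate is the bookkeeping that reconciles the three cut-offs in play: the truncation level $\lfloor p/(2\ell)\rfloor$ in Lemma~\ref{Mlp First few terms}, the positivity window $\frac{\ell}{p} < x_r$, and the bound $r \le R$. The two points demanding genuine care, rather than routine computation, are confirming that the $O(q^{1/2})$ remainder is harmless (via the elementary inequality above) and that the companion root $x_r'$ never interferes, so that $f_r(x) > 0$ collapses to the single condition $x < x_r$.
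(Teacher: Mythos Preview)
Your proof is correct and follows the same overall strategy as the paper: multiply the truncated expansion of Lemma~\ref{Mlp First few terms} by the prefactor, reindex $r=T+1$, and determine which exponents are negative. The organization differs in two minor but genuine ways. First, to guarantee that no principal monomial hides in the $O(q^{1/2})$ remainder, you bound the prefactor order directly ($-\tfrac{3}{2}x^2+\tfrac{3}{2}x-\tfrac1{24}+\tfrac12\ge \tfrac{11}{24}>0$), whereas the paper instead proves $rx_r=\tfrac32(x_r-\tfrac16)^2<\tfrac16$ to force $\tfrac{r\ell}{p}<\tfrac12$ whenever $\tfrac{\ell}{p}<x_r$; these are equivalent devices, the paper's identity being a bit slicker and yours a bit more direct. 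Second, you explicitly justify two points the paper leaves implicit: the companion-root inequality $x_r'>\tfrac12$ (so that $E_T<0$ collapses to the single condition $x<x_r$) and the strict monotonicity of $x_r$ via the rationalized formula, which is part of the statement but is not argued in the paper's proof. Both versions are complete once those details are supplied.
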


\begin{proof}
	By \eqref{G2 holomorphic part}, we see that the term
	\[q^{-\frac 32(\frac \ell p)^2+\frac {\ell}{2p}-\frac 1{24}}\cdot q^{\frac {r\ell}p}\]
	contributes to the principal part if and only if $0<\frac \ell p<x_r$, (otherwise the exponent will be positive). The analogous fact also holds if $\frac \ell p$ is replaced by $1-\frac \ell p$ and $x_r$ is replaced by $1-x_r$. 
	
	Recall Lemma~\ref{Mlp First few terms} for the Fourier expansion of $M(\frac \ell p;z)$. By \eqref{G2 holomorphic part}, 
	in order to prove \eqref{Principal part of G1 at 0, other than ep 2, equation}, it suffices to show that $\frac{r\ell} p<\frac 12$ for $0<\frac \ell p< x_r$. Since $x_r\in (0,\frac12)$, we have 
	\[rx_r=\frac 32 x_r^2-\frac 12 x_r+\frac1{24}=\frac 32\(x_r-\frac 16\)^2<\frac 32\times \(\frac 12-\frac 16\)^2=\frac 16.\]
	Thus $\frac {r\ell}p<rx_r< \frac 16<\frac12$. The analogous case for $1-\frac \ell p$ can be proved in a similar way. 
\end{proof}

Combining \eqref{Principal part of G1 at 0, r=0 so ep 2} and \eqref{Principal part of G1 at 0, other than ep 2, equation}, we get the principal part of $\mathcal{G}_2(\frac \ell p;z)$ at the cusp $\infty$: 
\begin{equation}\label{Principal part of G1 at 0, equation}
	\sum_{r=0}^R\left\{\begin{array}{ll}
		2q^{-\frac 32 \(\frac \ell p\)^2+\(\frac12+r\) \frac \ell p-\frac1{24}} &\ \text{when }0<\tfrac{\ell}{ p}<x_r,\\
		2q^{-\frac 32 \(1-\frac \ell p\)^2+\(\frac12+r\)\(1-\frac \ell p\)-\frac1{24}} &\ \text{when } 1-x_r<\tfrac{\ell}{ p}<1\\
		0& \ \text{otherwise,} 
	\end{array}
	\right. 
\end{equation}
where $R$ is the maximal integer such that $x_R^{-1}<p$.
\begin{remark}
	Here we give a hint about the relation between $r$ and the prime $p$. Since $x_0=\frac 16$, when $p\leq 5$, there is no principal part of $\mathcal{G}_2(\frac \ell p;z)$ at the cusp $\infty$, hence no principal part of $\mathbf{G}_1$ at the cusp $0$. This agrees with \cite[Remark~1 of Theorem~1.1]{BringmannTAMS} (even we have not proved Theorem~\ref{main theorem} yet). Since $1/{x_1}=34.9706\cdots$, for $7\leq p\leq 31$, we only have $r=0$. Here is a table for the first few conditions, where $[a,b]$ means the set of primes $p$ for $a\leq p\leq b$.  
	\begin{table}[!htbp]
		\centering
		\begin{tabular}{|c|c|c|c|c|c|c|c|}
			\hline
			Range of $p$ & $p=3,5$ & $[7,31]$ & $[37,59]$ & $[61,83]$ & $[89,107]$ & $[109,131]$ & $\cdots$\\
			\hline
			Allowed $r$ & No $r$ & $r=0$ & $r\leq 1$ & $r\leq 2$ & $r\leq 3$ & $r\leq 4$ & $\cdots$\\
			\hline
		\end{tabular}
	\end{table}
\end{remark}

\subsection{Vector-valued Maass-Poincar\'e series}
\label{Subsection: v-val Maass Poincare series}

In this subsection we construct Maass-Poincar\'e series to match the principal parts of $\mathbf{G}_1$. Let $M_{\beta,\mu}$ and $W_{\beta,\mu}$ denote the $M$- and $W$-Whittaker functions defined in \cite[(13.14.2-3)]{dlmf}. For $s\in \C$, $x,y\in \R$, and $k\in \Z+\frac 12$, we define 
\begin{equation}\label{Ms and varphi def}
	\mathcal{M}_s(y)\defeq |y|^{-\frac k2}M_{\frac k2 \sgn y,\,s-\frac 12}(|y|) \quad \text{and} \quad \varphi_{s,k}(x+iy)\defeq \mathcal{M}_s(4\pi y)e(x). 
\end{equation}
We also define 
\begin{equation}\label{Ws def}
	\mathcal{W}_s(y)\defeq |y|^{-\frac k2}W_{\frac k2 \sgn y,\,s-\frac 12}(|y|). 
\end{equation}
These functions have the following properties.  For $y>0$, by \cite[(13.18.4)]{dlmf} we have
\begin{equation}\label{Mfrack2 property}
	\mathcal{M}_{1-\frac k2}(-y)= y^{-\frac k2}M_{-\frac k2,\,\frac 12-\frac k2}(y)=(1-k)\(\Gamma(1-k)-\Gamma(1-k,y)\)e^{\frac y2}, 
\end{equation}
and by \cite[(13.18.2)]{dlmf}, we have
\begin{equation}\label{Wfrack2 property}
	W_{-\frac k2,\,\frac 12-\frac k2}(y)=y^{\frac k2}e^{\frac y2}\Gamma(1-k,y)\quad \text{and}\quad  	W_{\frac k2,\,\frac 12-\frac k2}(y)=y^{\frac k2}e^{-\frac y2}. 
\end{equation}
Moreover, $\varphi_{s,k}(z)$ is an eigenfunction of $\widetilde{\Delta}_k$ with eigenvalue $s(1-s)+\frac{k^2-2k}4$. Specifically, when $s=1-\frac k2$, we have 
\begin{equation}\label{varphik/2 vanished by harmonic Maass Laplacian}
	\widetilde{\Delta}_k \varphi_{1-\frac k2,k}=0. 
\end{equation}

From now on we fix the prime $p\geq 5$ and focus on our $(p-1)$-dimensional weight $k=\frac12$ multiplier system $\mu_p$ in Definition~\ref{mu matrix define}. For $m\in \Z$, recall $m_{+\infty}=m-\frac1{24}$ defined in \eqref{alpha infty l def}. Since we do not need the weight $-\frac12$ case of $\overline{\mu_p}$ and only have $\mu_p$ in this section, we simply write $m_{\infty}$ instead of $m_{+\infty}$. For $m_\infty<0$, we define the Maass-Poincar\'e series at the cusp $\infty$ by
\begin{equation}\label{Maass Poincare Series at infty}
	\mathbf{P}_\infty(z;p,s,\tfrac12,m,\mu_p)\defeq \frac{2}{\sqrt{\pi}}\sum_{ \ell=1}^{p-1}\sum_{\gamma\in \Gamma_\infty\setminus \Gamma_0(p)} \mu_p(\gamma)^{-1}\frac{\varphi_{s,k}(m_{\infty}\gamma z)}{(cz+d)^{\frac12}\sin(\tfrac{\pi\ell}p)}\e_\ell. 
\end{equation}
By \cite[Lemma~3.1]{BringmannOno2012}, this series is absolutely and uniformly convergent on any compact subset of $\re s>1$. 
The transformation formula for $\mathbf{P}_\infty(z;p,s,\tfrac12,m,\mu_p)$: 
\[\mathbf{P}_\infty(\gamma_1 z;p,s,\tfrac12,m,\mu_p)=\mu_p(\gamma_1)(Cz+D)^{\frac 12}\mathbf{P}_\infty(z;p,s,\tfrac12,m,\mu_p) \quad \text{for }\gamma_1=\begin{psmallmatrix}
	A&B\\C&D
\end{psmallmatrix}\in \Gamma_0(p)\]
can be proved similarly as \eqref{TransformationLawOfPoincare Cusp Infty}.

For an integer $r\geq 0$, recall the definition of $x_r$ in \eqref{x r def},  the stabilizer group $\Gamma_0$ of the cusp $0$ of $\Gamma_0(p)$: $\Gamma_0=\{\pm\begin{psmallmatrix}
	1&0\\c&1
\end{psmallmatrix}: c\in p\Z\}$ and the scaling matrix $\sigma_0=\begin{psmallmatrix}
	0&-1/\sqrt{p}\\\sqrt{p}&0
\end{psmallmatrix}$. Recall the notations $\mathbf{X}_r$ and $X_{r,+0}^{(\ell)}$ in \eqref{X r define}. We denote $X_{r,0}^{(\ell)}$ instead of $X_{r,+0}^{(\ell)}$ for simplicity. Recall the notations $\condr$ and $\condar$ in \eqref{(a,r) def}.

For every integer $r\geq 0$, we define the Maass-Poincar\'e series at the cusp $0$ by 
\begin{align}\label{Maass Poincare Series at zero}
	\begin{split}
		&\mathbf{P}_0(z;p,s,\tfrac12,r,\mu_p)\\
		&\defeq \frac{2 e(-\frac 18)p^{\frac 14}}{\sqrt{\pi}}\sum_{\ell\in\condr }\sum_{\substack{\gamma\in \Gamma_0\setminus \Gamma_0(p)\\
				\gamma=\begin{psmallmatrix}
					a&b\\c&d
		\end{psmallmatrix}}} \mu_p(\gamma)^{-1}\overline{\omega_{\frac12}(\sigma_0^{-1},\gamma)}\frac{\varphi_{s,k}\(X_{r,0}^{(\ell)}\sigma_0^{-1}\gamma z\)}{(-a\sqrt p z-b\sqrt p)^{\frac12}}\e_\ell.  
	\end{split}
\end{align}
Note that $\sigma_0^{-1}\gamma=\begin{psmallmatrix}
	\frac c{\sqrt p}&\frac d{\sqrt p}\\-a\sqrt p & -b\sqrt p
\end{psmallmatrix}$. By \cite[Lemma~3.1]{BringmannOno2012}, the above series is absolutely and uniformly convergent on any compact subset of $\re s>1$. 
The transformation formula for $\mathbf{P}_0(z;p,s,\tfrac12,r,\mu_p)$: 
\[\mathbf{P}_0(\gamma_1 z;p,s,\tfrac12,r,\mu_p)=\mu_p(\gamma_1)(Cz+D)^{\frac 12}\mathbf{P}_0(z;p,s,\tfrac12,r,\mu_p) \quad \text{for }\gamma_1=\begin{psmallmatrix}
	A&B\\C&D
\end{psmallmatrix}\in \Gamma_0(p)\]
can be proved similarly as \eqref{TransformationLawOfPoincare Cusp 0}.

\subsubsection{Fourier expansions of \texorpdfstring{$\mathbf{P}_{\infty}$ at $\infty$}{Poincar\'e series (defined at the cusp infinity) at infinity}}

Here we compute the Fourier expansions of\\ $\mathbf{P}_\infty(z;p,s,\tfrac12,m,\mu_p)$ at $s=\frac 34$. It is important to note that we only have the absolute and uniform convergence for $\re s>1$ by definition. However, the Fourier expansion in the following theorem is guaranteed to be convergent by Proposition~\ref{convergence in general, main contribution} when $s=\frac 34$. By analytic continuation, $\mathbf{P}_\infty(z;p,s,\tfrac12,m,\mu_p)$ is convergent at $s=\frac 34$ and has the Fourier expansion as below. The proof of Proposition~\ref{convergence in general, main contribution} is independent from this section. 

There are similar arguments in \cite[Proof of Theorem~3.1]{BrmOno2006ivt} for $p=2$ and \cite[Theorem~4.3]{QihangFirstAsympt} for $p=3$. 

\begin{proposition}\label{Fourier exp of Maass Poincare at infty}
	When $m_{\infty}<0$, 
	the Maass-Poincar\'e series $\mathbf{P}_\infty(z;p,s,\tfrac12,m,\mu_p)$ is convergent at $s=\frac 34$, 
	and we have the following Fourier expansion: 
	\begin{align*}
		\mathbf{P}_\infty(z;p,\tfrac 34,\tfrac12,m,\mu_p)&=\sum_{\ell=1}^{p-1}\(1-\frac{\Gamma(\frac12,4\pi |m_{\infty}|y)}{\sqrt{\pi}}\)\frac{q^{ m_{\infty}}}{\sin(\frac{\pi \ell}p)}\e_\ell\\
		&+\sum_{n_{\infty}>0} \mathbf{B}_\infty(n) q^{n_{\infty}}+\sum_{n_{\infty}<  0} \mathbf{B}'_\infty(n)\frac{\Gamma\(\tfrac 12,4\pi |n_{\infty}| y\)}{\sqrt \pi}q^{n_{\infty}},
	\end{align*}
	where
	\begin{align}
		\left. \begin{array}{r}
			\mathbf{B}_\infty(n)\\
			\mathbf{B}'_\infty(n)
		\end{array}\right\}=2\pi e(-\tfrac 18)\left|\frac{m_{\infty}}{n_{\infty}}\right|^{\frac 14} \sum_{\ell=1}^{p-1}\sum_{N|c>0}\frac{\mathbf{S}_{\infty\infty}^{(\ell)}(m,n,c,\mu_p)}c \left\{\begin{array}{l}
			I_{\frac12}\(\dfrac{4\pi|m_{\infty}n_{\infty}|^{\frac12}}c\)\\
			J_{\frac12}\(\dfrac{4\pi|m_{\infty}n_{\infty}|^{\frac12}}c\). 
		\end{array}\right.
	\end{align}
	Here $\mathbf{S}_{\infty\infty}^{(\ell)}(m,n,c,\mu_p)$ is defined by \eqref{S infty infty ell def} and its scalar value can be written as
	\begin{align}\label{S infty infty def, in scalar val}
		S_{\infty\infty}^{(\ell)}(m,n,c,\mu_p)=e(-\tfrac 18)\!\!\!\!\sum_{\substack{d\Mod c^*\\ad\equiv 1\Mod c}}\!\!\!\!\frac{\overline{\mu(c,d,[a\ell],p)}}{\sin(\frac{\pi[a\ell]}p)}e^{-\pi i s(d,c)}e\(\frac{ma+nd}c\). 
	\end{align}
\end{proposition}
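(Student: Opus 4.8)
The plan is to compute the Fourier expansion directly by unfolding the Poincar\'e series \eqref{Maass Poincare Series at infty} along the coset decomposition $\Gamma_\infty\setminus\Gamma_0(p)$, working entry-by-entry and separating the identity coset ($c=0$) from the terms with $c>0$. Since $\mu_p(\gamma)^{-1}$ sends $\e_{[a\ell]}$ to a scalar multiple of $\e_\ell$, the $\ell$-th entry collects exactly the cosets whose lower-left data are compatible with the index, so it suffices to track scalar coefficients. The $c=0$ coset reduces modulo $\Gamma_\infty$ to the identity and contributes $\tfrac{2}{\sqrt\pi}\sin(\tfrac{\pi\ell}p)^{-1}\varphi_{s,\frac12}(m_{\infty}z)\e_\ell$. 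Specializing to $s=\tfrac34=1-\tfrac k2$ with $k=\tfrac12$ and invoking \eqref{Mfrack2 property}, the factor $\mathcal{M}_{3/4}(4\pi m_{\infty}y)$ becomes $\tfrac12(\sqrt\pi-\Gamma(\tfrac12,4\pi|m_{\infty}|y))e^{2\pi|m_{\infty}|y}$, which after multiplying by $e(m_\infty x)$ yields precisely the first line $(1-\Gamma(\tfrac12,4\pi|m_\infty|y)/\sqrt\pi)q^{m_\infty}/\sin(\tfrac{\pi\ell}p)$ of the claimed expansion.

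For the terms with $c>0$ (necessarily $p\mid c$ since $\gamma\in\Gamma_0(p)$), the left cosets are indexed by bottom rows $(c,d)$ with $\gcd(c,d)=1$ and $p\mid c$; writing $d=d_0+jc$ with $d_0$ ranging mod $c$ and combining the sum over $j$ with $\int_0^1\cdots e(-n_\infty x)\,dx$ unfolds the Fourier integral to $\int_{-\infty}^\infty$, leaving a finite sum over $d_0\Mod c$. Using $\gamma z=\tfrac ac-\tfrac1{c(cz+d_0)}$ and $\im(\gamma z)=y/|cz+d_0|^2$, the $\e_\ell$-entry of the $n$-th coefficient factors into an exponential sum over $d_0\Mod c$ times an archimedean integral. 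Inserting the explicit eta-multiplier formula \eqref{etaMultiplier} to write $\mu_p(\gamma)^{-1}=\overline{\nu_\eta}(\gamma)M_p(\gamma)^{-1}$, with $\overline{\nu_\eta}(\gamma)$ supplying the factors $e(-\tfrac18)$ and $e^{-\pi i s(d_0,c)}$, identifies this exponential sum with the scalar Kloosterman sum $S_{\infty\infty}^{(\ell)}(m,n,c,\mu_p)$ in the form \eqref{S infty infty def, in scalar val}, consistent with \eqref{S infty infty ell def}.

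The remaining archimedean integral is the standard Whittaker-transform integral appearing in \cite[Proof of Theorem~3.1]{BrmOno2006ivt} for $p=2$ and \cite[Theorem~4.3]{QihangFirstAsympt} for $p=3$, and I would evaluate it by the same computation. After the change of variables $x\mapsto x-\tfrac ac$ and the known integral representation of $\mathcal{M}_s$, it produces the Bessel function $I_{1/2}$ when $n_\infty>0$ and $J_{1/2}$ when $n_\infty<0$, both with argument $4\pi|m_\infty n_\infty|^{1/2}/c$, together with $W$-Whittaker factors in $y$. Specializing these at $s=\tfrac34$ via \eqref{Wfrack2 property} yields the holomorphic term $q^{n_\infty}$ for $n_\infty>0$ and the non-holomorphic term $\Gamma(\tfrac12,4\pi|n_\infty|y)q^{n_\infty}/\sqrt\pi$ for $n_\infty<0$; collecting the normalizing constants $\tfrac{2}{\sqrt\pi}$, $e(-\tfrac18)$ and $|m_\infty/n_\infty|^{1/4}$ then gives the stated $\mathbf{B}_\infty(n)$ and $\mathbf{B}'_\infty(n)$.

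The main obstacle is that $s=\tfrac34$ lies outside the half-plane $\re s>1$ where \eqref{Maass Poincare Series at infty} converges absolutely, so the formal unfolding above is only justified for $\re s>1$. I would resolve this by analytic continuation: the right-hand side is a meromorphic function of $s$ whose value at $s=\tfrac34$ is controlled by the convergence of the attendant sum of Kloosterman sums, which is exactly what Proposition~\ref{convergence in general, main contribution} (proved independently in the later section) supplies. Hence $\mathbf{P}_\infty(z;p,s,\tfrac12,m,\mu_p)$ extends continuously to $s=\tfrac34$, and the computed Fourier expansion holds there.
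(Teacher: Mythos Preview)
Your proposal is correct and follows essentially the same approach as the paper: separate the $c=0$ coset, unfold the $c>0$ terms into a Kloosterman sum times an archimedean integral (the paper cites \cite[Proof of Theorem~1.9]{BruinierBookBorcherds} for the latter, but the computation is the same as in the references you name), specialize the Whittaker functions at $s=\tfrac34$ via \eqref{Mfrack2 property} and \eqref{Wfrack2 property}, and then appeal to Proposition~\ref{convergence in general, main contribution} for the analytic continuation to $s=\tfrac34$. One minor slip: the change of variables should be $x\mapsto x-\tfrac dc$ rather than $x\mapsto x-\tfrac ac$, since it is $z+\tfrac dc+t$ that appears in the unfolded sum.
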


\begin{proof}
	The following process is well-known and we provide details for completeness. Recall the properties of Whittaker functions from \eqref{Ms and varphi def} to \eqref{varphik/2 vanished by harmonic Maass Laplacian}.
	
	The contribution to $\mathbf{P}_\infty(z;p,s,\frac12,m,\mu_p)$ from $c=0$ equals
	\[\frac2{\sqrt \pi}\sum_{\ell=1}^{p-1}\csc(\tfrac{\pi \ell}p)\varphi_{s,\frac12}( m_\infty z)\e_\ell. \]
	When $s=\frac 34$, by \eqref{Mfrack2 property}, this contribution is
	\[\frac2{\sqrt \pi}\sum_{\ell=1}^{p-1}\csc(\tfrac{\pi \ell}p)\varphi_{\frac 34,\frac12}( m_\infty z)\e_\ell=\(1-\frac{\Gamma(\frac12,4\pi| m_\infty |y)}{\sqrt\pi}\)\sum_{\ell=1}^{p-1}\csc(\tfrac{\pi \ell}p)e^{2\pi  m_\infty z}\e_\ell. \]
	
	Recall (Definition~\ref{mu matrix define}) that $\mu_p\(\begin{psmallmatrix}
		a&* \\ *&d
	\end{psmallmatrix}\)^{-1}$ maps the value at the $[a\ell]$-th entry to the $\ell$-th entry. Using the properties \eqref{MultiplierSystemBasicProprety} for $\nu_\eta$ and Proposition~\ref{mu matrix property} for $\mu_p$ and $M_p$, for $\re s>0$, the contribution to $\mathbf{P}_\infty(z;p,s,\frac12,m,\mu_p)$ from some $c>0$ equals
	\begin{align}\label{P infty contribution c>0}
		\begin{split}
			\frac2{\sqrt \pi}\sum_{\ell=1}^{p-1}&\sum_{t\in \Z}\sum_{d(c)^*}\mu_p^{-1}
			\begin{psmallmatrix}
				a&b+ta\\c&d+tc
			\end{psmallmatrix}
			(cz+d+tc)^{-\frac12} \csc(\tfrac{\pi \ell}p)\\
			&\cdot\mathcal{M}_{s}\(\frac{4\pi \tilde{m}y}{|cz+d+t c|^2}\)e\(\frac{ m_\infty a}c- \re\(\frac{ m_\infty  }{c(cz+d+tc)}\) \)\e_\ell\\
			=\ &\frac2{\sqrt{\pi c}}\sum_{\ell=1}^{p-1}\sum_{\substack{d(c)^*}} \frac{\overline{\mu(c,d,[a\ell],p)}}{\sin(\frac{\pi[a\ell]}p)}\,\nu_\eta(\begin{psmallmatrix}
				a&b\\c&d
			\end{psmallmatrix})e\(\frac{ m_\infty a}c\) \sum_{t\in \Z}  e\(t\alpha_{\infty}\) \(z+\frac dc+t\)^{-\frac12} \\
			&\cdot\mathcal{M}_{s}\(\frac{4\pi \tilde{m}y}{c^2|z+\frac dc+t|^2}\)e\(-\frac{ m_\infty  } {c^2} \re\(\frac1{z+\frac dc+t}\) \)\e_{\ell}.
		\end{split}
	\end{align}
	Here we use $\sum_{d(c)^*}$ to abbreviate the following summation condition: for $d\Mod c^*$, we choose $a$ by $ad\equiv 1\Mod c$ and  $b$ by $ad-bc=1$. 
	
	Let
	\[f(z)\defeq \sum_{t\in \Z} \frac{e(t\alpha_{\infty})}{(z+t)^{\frac 12} } \;\mathcal{M}_{s}\(\frac{4\pi m_{\infty}y}{c^2|z+t|^2}\)e\(-\frac{ m_\infty  } {c^2} \re\(\frac1{z+t}\) \).\]
	Then $f(z)e(\alpha_{\infty}x)$ has period $1$ and $f$ has Fourier expansion
	\begin{equation}\label{fzdc_vval}
		f(z)=\sum_{n\in \Z}a_y(n)e( n_\infty  x)\quad \text{and}\quad f\(z+\frac dc\)=e\(\frac { n_\infty  d}c\)f(z).  
	\end{equation}
	Here by \cite[Proof of Theorem~1.9]{BruinierBookBorcherds}, we can compute
	\begin{align*}
		&a_y(n)=\frac{e(-\frac18)\Gamma(2s)}{|4\pi  m_\infty  y|^{\frac 14}\sqrt{c}}\\
		&\cdot\left\{ \begin{array}{ll}
			{\displaystyle \frac{2\pi}{\Gamma(s-\frac 14)}\left|\frac{ m_\infty }{ n_\infty }\right|^{\frac12}\, W_{-\frac 14,s-\frac 12}(4\pi | n_\infty | y) J_{2s-1}\(\dfrac{4\pi| m_\infty  n_\infty |^{\frac12}}c\),}&  n_\infty <0;\\
			{\displaystyle \frac{2\pi}{\Gamma(s+\frac 14)} \left|\frac{ m_\infty }{ n_\infty }\right|^{\frac12}\, W_{\frac 14,s-\frac 12}(4\pi  n_\infty  y) I_{2s-1}\(\dfrac{4\pi| m_\infty  n_\infty |^{\frac12}}c\),}&  n_\infty >0.\\
		\end{array}
		\right. 
	\end{align*}
	Thus, for $\re s>1$, we have the Fourier expansion of $\mathbf{P}_{\infty}(z;p,s,\tfrac 12,m,\mu_p)$: 
	\begin{align*}
		&\mathbf{P}_{\infty}(z;p,s,\tfrac 12,m,\mu_p)=\frac2{\sqrt \pi}\sum_{\ell=1}^{p-1}\frac{\varphi_{s,\frac 12}(z)}{\sin(\frac{\pi \ell}p)}\e_\ell
		+\sum_{n\in \Z} e^{2\pi i  n_\infty z }\frac{2\Gamma(2s)e(-\tfrac18)|m_{\infty}|^{\frac 14}}{\sqrt \pi|n_\infty|^{\frac 12}|4\pi y|^{\frac 14}} \\
		&\ \cdot \left\{
		\begin{array}{ll}
			{\displaystyle
				\frac{2\pi W_{-\frac14,s-\frac12}(4\pi |n_\infty|y)}{\Gamma(s-\frac 14)}\sum_{\ell=1}^{p-1}\sum_{p|c>0}\dfrac{\mathbf{S}_{\infty\infty}^{(\ell)}(m,n,c,\mu_p)}c J_{2s-1}\(\frac{4\pi| m_\infty  n_\infty |^{\frac12}}c\),}&  n_\infty <0;\\
			{\displaystyle
				\frac{2\pi W_{\frac14,s-\frac12}(4\pi n_\infty y)}{\Gamma(s+\frac14)} \sum_{\ell=1}^{p-1}\sum_{p|c>0}\dfrac{\mathbf{S}_{\infty\infty}^{(\ell)}(m,n,c,\mu_p)}c I_{2s-1}\(\dfrac{4\pi| m_\infty  n_\infty |^{\frac12}}c\),}&  n_\infty >0.
		\end{array}
		\right.
	\end{align*}
	For the right side of the expansion above, if we let $s=\frac 34$, by \eqref{Wfrack2 property} we get
	\begin{align*}
		&\sum_{\ell=1}^{p-1}\(1-\frac{\Gamma(\frac12,4\pi | m_\infty |y)}{\sqrt{\pi}}\)\frac{e^{2\pi  m_\infty z}}{\sin(\frac{\pi \ell}p)}\e_\ell
		+\sum_{n\in \Z} e^{2\pi i  n_\infty z }\cdot 2\pi e(-\tfrac18)\Big|\dfrac{ m_\infty }{ n_\infty }\Big|^{\frac14}\\
		&\ \cdot \left\{
		\begin{array}{ll}
			{\displaystyle
				\frac{\Gamma(\tfrac12,4\pi| n_\infty |y)}{\sqrt \pi} \sum_{\ell=1}^{p-1}\sum_{p|c>0}\dfrac{\mathbf{S}_{\infty\infty}^{(\ell)}(m,n,c,\mu_p)}c J_{\frac 12}\(\frac{4\pi| m_\infty  n_\infty |^{\frac12}}c\),}&  n_\infty <0;\\
			{\displaystyle
				\sum_{\ell=1}^{p-1}\sum_{p|c>0}\dfrac{\mathbf{S}_{\infty\infty}^{(\ell)}(m,n,c,\mu_p)}c I_{\frac 12}\(\dfrac{4\pi| m_\infty  n_\infty |^{\frac12}}c\),}&  n_\infty >0.
		\end{array}
		\right.
	\end{align*}
	By Proposition~\ref{convergence in general, main contribution}, the above expression is convergent. Therefore, by analytic continuation, the series $\mathbf{P}_{\infty}(z;p,s,\tfrac 12,m,\mu_p)$ is convergent at $s=\frac 34$ and has the Fourier expansion as above. 
	
	The last expression \eqref{S infty infty def, in scalar val} is easily deduced by combining \eqref{S infty infty ell def}, Definition~\ref{mu matrix define}, and \eqref{etaMultiplier}. 
\end{proof}

\subsubsection{Fourier expansion of \texorpdfstring{$\mathbf{P}_0$ at $\infty$}{Poincar\'e series (defined at the cusp 0) at infinity}}

Here we compute the Fourier expansions of\\ $\mathbf{P}_0(z;p,s,\tfrac12,r,\mu_p)$ at $s=\frac 34$. Also note that the convergence of the Fourier expansion in Proposition~\ref{Fourier exp of Maass Poincare at zero} is guaranteed by Proposition~\ref{convergence in general, main contribution} when $s=\frac 34$. Hence we have the convergence of $\mathbf{P}_0(z;p,s,\tfrac12,r,\mu_p)$ at $s=\frac 34$ by analytic continuation. Recall our notations $\condr$ and $\condar$ in \eqref{(a,r) def} and $\alpha_{+0}^{(\ell)}$ in \eqref{alpha 0 l def}. Since we do not consider the weight $-\frac 12$ case of $\overline{\mu_p}$ here, we write $X_{r,0}^{(\ell)}=X_{r,+0}^{(\ell)}$ for simplicity.

\begin{proposition}\label{Fourier exp of Maass Poincare at zero}
	For each integer $r\geq 0$, the series $\mathbf{P}_0(z;p,s,\tfrac12,r,\mu_p)$ is convergent at $s=\frac 34$ and we have
	\begin{align*}
		\mathbf{P}_0(z;p,\tfrac 34,\tfrac12,r,\mu_p)=\sum_{n_{\infty}>0} \mathbf{B}_0(n) q^{n_{\infty}}+\sum_{ n_{\infty}<  0} \mathbf{B}'_0(n)\frac{\Gamma\(\tfrac 12,4\pi |n_{\infty}| y\)}{\sqrt \pi}q^{n_{\infty}},
	\end{align*}
	where
	\begin{align}
		\left.\begin{array}{r}
			\mathbf{B}_0(n)\\
			\mathbf{B}'_0(n)
		\end{array}
		\right\}
		=2\pi\sum_{\ell=1}^{p-1}\sum_{\substack{a>0:\,p\nmid a,\\ [a\ell]\in \condr}}	\left|\frac{X_{r,0}^{([a\ell])}}{p n_\infty }\right|^{\frac14}\frac{\mathbf{S}_{0\infty}^{(\ell)}(X_{r}^{([a\ell])},n,a,\mu_p;r)}{a} 
		\left\{\begin{array}{l}
			I_{\frac12}\(\dfrac{4\pi}{a}\left|\dfrac{X_{r,0}^{([a\ell])}n_\infty }{p}\right|^{\frac12}\)\\
			J_{\frac12}\(\dfrac{4\pi}{a} \left|\dfrac{X_{r,0}^{([a\ell])}n_\infty}{p}  \right|^{\frac12}\). 
		\end{array}
		\right.
	\end{align}
	Here $\mathbf{S}_{0\infty}^{(\ell)}(X_r^{([a\ell])},n,a,\mu_p;r)=S_{0\infty}^{(\ell)}(X_r^{([a\ell])},n,a,\mu_p;r)\e_\ell$ is defined in \eqref{S 0 infty ell def, using X r}. If $[a\ell]\in \condr$, we have
	\begin{equation}\label{S 0 infty def, in scalar val}
		S_{0\infty}^{(\ell)}(X_r^{([a\ell])},n,a,\mu_p;r)=e(-\tfrac 18)\!\!\!\!\!\!\sum_{\substack{b:\ b\Mod a^*\\p|c,\,0<c<pa\\\text{s.t. }ad-bc=1}}\!\!\!\overline{\mu(c,d,[a\ell],p)e^{\pi i s(d,c)}}e\(\frac{m_{r,0}^{([a\ell])}\cdot\frac cp- n_\infty b}{-a}+\frac{a+d}{24c}\);
	\end{equation}
	if $[a\ell]\notin \condr$, we have $S_{0\infty}^{(\ell)}(X_r^{([a\ell])},n,a,\mu_p;r)=0$. 
\end{proposition}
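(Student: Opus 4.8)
The plan is to follow the same unfolding-and-summation strategy used in the proof of Proposition~\ref{Fourier exp of Maass Poincare at infty}, now adapted to the cusp pair $(0,\infty)$. Writing $\sigma_0^{-1}\gamma=\begin{psmallmatrix}c/\sqrt p & d/\sqrt p\\ -a\sqrt p & -b\sqrt p\end{psmallmatrix}$ for $\gamma=\begin{psmallmatrix}a&b\\c&d\end{psmallmatrix}\in\Gamma_0(p)$, I would first observe that the lower-left entry $-a\sqrt p$ can never vanish: $a=0$ together with $\det\gamma=1$ forces $bc=-1$, which is incompatible with $p\mid c$ for $p\geq 5$. Consequently, unlike the $\mathbf{P}_\infty$ case, there is no diagonal ``$c=0$'' term and hence no constant/principal term in the expansion of $\mathbf{P}_0$ at $\infty$; this is exactly why the stated expansion begins with the Bessel sums and carries no leading $q^{m_\infty}$ piece.

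Next I would organize the coset sum $\Gamma_0\setminus\Gamma_0(p)$ by the parameter $a>0$ (with the induced conditions $p\mid c$, $0<c<pa$ coming from the choice of coset representatives), splitting off a residual sum over the right $\Gamma_\infty$-action by $t\in\Z$. Fixing $a$ and the residues determining the Kloosterman data, the $t$-sum assembles, via the cocycle relation in Proposition~\ref{mu matrix property} and the definition \eqref{mu 0 infty def} of $\mu_{0\infty}$, into the vector-valued Kloosterman sum $\mathbf{S}_{0\infty}^{(\ell)}(X_r^{([a\ell])},n,a,\mu_p;r)$ of \eqref{S 0 infty ell def, using X r}, with only those $\ell$ for which $[a\ell]\in\condr$ surviving, matching the restriction in the definition \eqref{Maass Poincare Series at zero} of $\mathbf{P}_0$. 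The integral over the $t$-translates is identical in shape to the one appearing in the $\mathbf{P}_\infty$ computation; invoking the same Whittaker--Bessel evaluation (as in \cite[Proof of Theorem~1.9]{BruinierBookBorcherds}) produces, for $\re s>1$, the $J_{2s-1}$ term when $n_\infty<0$ and the $I_{2s-1}$ term when $n_\infty>0$, each carried by a $W$-Whittaker factor in $y$. The crucial difference is that the ``order'' entering the Bessel argument is now $X_{r,0}^{([a\ell])}=-p\,\delta_{\ell,p,a,r}$ rather than $m_\infty$, which accounts for the factor $|X_{r,0}^{([a\ell])}/(pn_\infty)|^{1/4}$ and the Bessel argument $\tfrac{4\pi}{a}|X_{r,0}^{([a\ell])}n_\infty/p|^{1/2}$.

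I would then set $s=\tfrac34$ and use \eqref{Wfrack2 property} to rewrite the $W$-Whittaker factors as the incomplete Gamma function $\Gamma(\tfrac12,4\pi|n_\infty|y)$ when $n_\infty<0$ and as the pure exponential $q^{n_\infty}$ when $n_\infty>0$, yielding the two displayed coefficient families $\mathbf{B}_0(n)$ and $\mathbf{B}'_0(n)$. Convergence at $s=\tfrac34$ is not automatic, since the defining series converges only for $\re s>1$; it follows from Proposition~\ref{convergence in general, main contribution} applied to the $0\infty$ Kloosterman sums, so that the expansion holds at $s=\tfrac34$ by analytic continuation. The scalar formula \eqref{S 0 infty def, in scalar val} is then read off by inserting the explicit expressions from Definition~\ref{mu matrix define} and \eqref{etaMultiplier} for $\mu(c,d,[a\ell],p)$ and $\nu_\eta$.

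The main obstacle, and the step requiring genuine care rather than routine imitation, is the bookkeeping at the cusp $0$: correctly tracking the scaling matrix $\sigma_0$, the automorphy cocycle $w_{\frac12}(\sigma_0^{-1},\gamma)$, and the conjugated multiplier $\mu_{0\infty}$ through the coset decomposition, and verifying that the index shift by $\alpha_{+0}^{(\ell)}$ in \eqref{alpha 0 l def} aligns the Poincar\'e seed order $X_{r,0}^{(\ell)}$ with the Bessel argument exactly as in \eqref{X r define}. One must also confirm that the surviving index set is precisely $\{\ell:[a\ell]\in\condr\}$ and that the permutation $\ell\mapsto[a\ell]$ induced by $\mu_p(\gamma)^{-1}$ sends the $\ell$-th seed to the $[a\ell]$-th entry, so that the scalar Kloosterman sum lands on $\e_\ell$ with the correct order $X_{r,0}^{([a\ell])}$.
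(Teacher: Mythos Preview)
Your proposal is correct and follows essentially the same approach as the paper: the double coset decomposition in $a>0$, $b\Mod a^*$, $t\in\Z$, the identification of the $b$-sum (with the multiplier factors and the $\ell\mapsto[a\ell]$ relabeling) as the Kloosterman sum $\mathbf{S}_{0\infty}^{(\ell)}$, the Poisson/Whittaker evaluation of the $t$-sum via \cite[Proof of Theorem~1.9]{BruinierBookBorcherds}, specialization to $s=\tfrac34$ using \eqref{Wfrack2 property}, and convergence via Proposition~\ref{convergence in general, main contribution}. One small wording slip: it is the sum over $b\Mod a^*$ (not the $t$-sum) that assembles into the Kloosterman sum, while the $t$-sum is what is Fourier-expanded --- but your subsequent sentence about ``the integral over the $t$-translates'' shows you have this right.
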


\begin{remark}
	In the Fourier expansion, when $\ell$ is fixed, for the summation on $a$ we only select $a$ such that $p\nmid a$ and $[a\ell]\in \condr$. It is also important to note that the denominator in the last exponential term in \eqref{S 0 infty def, in scalar val} is $-a$, which is negative. 
\end{remark}

\begin{proof}
Recall the scaling matrices $\sigma_0=\begin{psmallmatrix}
	0&-1/\sqrt p\\\sqrt p&0
\end{psmallmatrix}$ and $\sigma_{\infty}=I$. We have the following double coset decomposition by \cite[(2.32)]{iwaniecTopClassicalMF}: 
\begin{equation}\label{double coset decomposition}
	\sigma_0^{-1}\Gamma_0(p)\sigma_\infty=\sigma_0^{-1}\Gamma_0(p)=\bigcup_{\substack{a>0\\p\nmid a}}\bigcup_{b\Mod a^*}\Gamma_{\infty}\begin{pmatrix}
		\frac{c}{\sqrt p}&\frac{d}{\sqrt p}\\
		-a\sqrt p&-b\sqrt p
	\end{pmatrix}\Gamma_\infty. 
\end{equation}
Then $\gamma_1\in \Gamma_0\setminus \Gamma_0(p)\Leftrightarrow \gamma_2=\sigma_0^{-1}\gamma_1\in \Gamma_\infty\setminus \sigma_0^{-1}\Gamma_0(p)$ and all choices of $\gamma_2$ can be described as 
\[\gamma_2\in\{\sigma_0^{-1}\begin{psmallmatrix}
	a&b\\c&d
\end{psmallmatrix}\begin{psmallmatrix}
	1&t\\0&1
\end{psmallmatrix}:\, a>0,\ p\nmid a,\ b\Mod a^*,\  t\in \Z\}.\]
	One can check that for $c\geq 0$ and $a>0$, we have
	\begin{align*}
		w_{\frac12}(\sigma_0^{-1},\sigma_0\gamma)(-a\sqrt p z-b\sqrt p)^{\frac12}=\(\frac{-a\sqrt p\,z-b\sqrt p}{cz+d}\)^{\frac12}(cz+d)^{\frac12}=-ip^{\frac 14}(az+b)^{\frac12}. 
	\end{align*}
	In the double coset decomposition, we can take the representative $\begin{psmallmatrix}
		c/\sqrt p & *\\-a\sqrt p&*
	\end{psmallmatrix}$
	with $a>0$ and $c\geq 0$ because $\begin{psmallmatrix}
		1 & \beta \\0&1
	\end{psmallmatrix}\begin{psmallmatrix}
		c/\sqrt p & *\\-a\sqrt p&*
	\end{psmallmatrix}=\begin{psmallmatrix}
		(c-\beta ap)/\sqrt p & *\\-a\sqrt p&*
	\end{psmallmatrix}$ for any $\beta \in \Z$. 
	Then from \eqref{Maass Poincare Series at zero}, for $\re s>1$ we have
	\begin{align*}
		\mathbf{P}_0&(z;p,s,\tfrac12,r,\mu_p)\\
		&=\frac{2 e(-\frac 18)p^{\frac 14}}{\sqrt{\pi}}\sum_{\ell\in \condr}\!\!\!\sum_{\substack{\gamma=\begin{psmallmatrix}
					\frac c{\sqrt p}&\frac d{\sqrt p}\\
					-a\sqrt p&-b\sqrt p
				\end{psmallmatrix}\\\gamma \in \Gamma_\infty\setminus\sigma_0^{-1}\Gamma_0(p)}}\!\!\!\!\mu_p(\sigma_0\gamma)^{-1}\overline{w_{\frac12}(\sigma_0^{-1},\sigma_0\gamma)}\frac{\varphi_{s,\frac12}(X_{r,0}^{(\ell)}\gamma z)}{(-a\sqrt p z-b\sqrt p)^{\frac12}}\e_\ell\\
		&=\frac{2e(\frac 18)}{\sqrt{\pi}} \sum_{\ell\in \condr}\sum_{\substack{a>0\\p\nmid a}}\sum_{b(a)^*}\sum_{t\in \Z}\mu_p\(\begin{psmallmatrix}
			a&b+ta\\c&d+tc
		\end{psmallmatrix}\)^{-1}\frac{\varphi_{s,\frac12}\(X_{r,0}^{(\ell)}\sigma_0^{-1}\begin{psmallmatrix}
			a&b+ta\\c&d+tc
		\end{psmallmatrix}z\)}{(az+b+ta)^{\frac12}}\e_\ell.  
	\end{align*}
	Here and below we use $\sum_{b(a)^*}$ to abbreviate the following summation condition: $c$ and $d$ are determined by $p|c$, $0<c<pa$, and $ad-bc=1$. 
	
	Observe that $\sigma_0^{-1}\begin{psmallmatrix}
		a&b+ta\\c&d+tc
	\end{psmallmatrix}z=\frac{cz+d+tc}{-paz-pb-pta}=-\frac{c}{pa}-\frac 1{pa(az+b+ta)}$, $\mu(c,d+tc,\ell,p)=\mu(c,d,\ell,p)$ for all $\ell$ and $t$, and $\nu_\eta\(\begin{psmallmatrix}
		a&b+ta\\c&d+tc
	\end{psmallmatrix}\)=\nu_\eta\(\begin{psmallmatrix}
		a&b\\c&d
	\end{psmallmatrix}\)e(t\alpha_{\infty})$ by \eqref{MultiplierSystemBasicProprety}. The contribution from a single $a$ for $p\nmid a$ is then
	\begin{align*}
		&\frac{2e(\frac 18)}{\sqrt{\pi a}}\sum_{\ell\in\condr} \sum_{b(a)^*} \overline{\mu(c,d,\ell,p)}\nu_\eta(\begin{psmallmatrix}
			a&b\\c&d
		\end{psmallmatrix})e\(\frac{-X_{r,0}^{(\ell)}c}{pa}\)\sum_{t\in \Z}  e(t\alpha_{\infty})\\
		& \qquad \cdot \(z+\tfrac ba+t\)^{-\frac12} 
		\mathcal{M}_{s}\(\frac{4\pi X_{r,0}^{(\ell)}y}{pa^2|z+\frac ba+t|^2}\)e\(\frac{-X_{r,0}^{(\ell)}} {pa^2} \re\(\frac1{z+\frac ba+t}\) \)\e_{[d\ell]}\\
		&=\frac{2e(\frac 18)}{\sqrt{\pi a}}\sum_{\ell\in\condar} \sum_{b(a)^*} \overline{\mu(c,d,[a\ell],p)}\nu_\eta(\begin{psmallmatrix}
			a&b\\c&d
		\end{psmallmatrix})e\(\frac{-X_{r,0}^{([a\ell])}c}{pa}\)\sum_{t\in \Z}  e(t\alpha_{\infty})\\
		& \qquad \cdot \(z+\tfrac ba+t\)^{-\frac12} 
		\mathcal{M}_{s}\(\frac{4\pi X_{r,0}^{([a\ell])}y}{pa^2|z+\frac ba+t|^2}\)e\(\frac{-X_{r,0}^{([a\ell])}} {pa^2} \re\(\frac1{z+\frac ba+t}\) \)\e_{\ell}. 
	\end{align*}
	Here we have changed $[d\ell]$ to $\ell$, hence $\ell$ to $[a\ell]$ and $\ell\in \condr$ to $\ell\in \condar$. 
	
	As in the case of $\mathbf{P}_\infty$ in Proposition~\ref{Maass Poincare Series at infty}, we let
	\[f(z)\defeq \sum_{t\in \Z} \frac{e(t\alpha_{\infty})}{(z+t)^{k} } \;\mathcal{M}_{s}\(\frac{4\pi X_{r,0}^{([a\ell])}y}{pa^2|z+t|^2}\)e\(\frac{-X_{r,0}^{([a\ell])} } {pa^2} \re\(\frac1{z+t}\) \).\]
	Then $f(z)e(\alpha_{\infty} x)$ has period $1$ and $f$ has Fourier expansion
	\begin{equation}\label{fzba}
		f(z)=\sum_{n\in \Z}a_{y}(n)e(n_{\infty} x)\quad \text{and}\quad f\(z+\frac ba\)=e\(\frac {n_{\infty} b}a\)f(z).  
	\end{equation}
	Here by \cite[Proof of Theorem~1.9]{BruinierBookBorcherds}, we have	
	\begin{align*}
		a_{y}(n)&=\frac{e(-\frac18)\Gamma(2s)}{\big|4\pi X_{r,0}^{([a\ell])}y\big|^{\frac 14}p^{\frac14}\sqrt a}\\
		&\cdot\left\{ \begin{array}{ll}
			{\displaystyle 
				\frac{2\pi}{\Gamma(s-\frac 14)}\left|\frac{X_{r,0}^{([a\ell])}}{n_\infty}\right|^{\frac12}\, W_{-\frac 14,s-\frac 12}(4\pi |n_\infty| y) J_{2s-1}\(\frac{4\pi}{a}\left|\frac{X_{r,0}^{([a\ell])}}{p}\cdot n_\infty\right|^{\frac12}\),}&  n_\infty <0;\\
			{\displaystyle
				\frac{2\pi}{\Gamma(s+\frac 14)} \left|\frac{X_{r,0}^{([a\ell])}}{n_\infty}\right|^{\frac12}\, W_{\frac 14,s-\frac 12}(4\pi n_\infty y) I_{2s-1}\(\frac{4\pi}{a}\left|\frac{X_{r,0}^{([a\ell])}}{p}\cdot n_\infty\right|^{\frac12}\),}&  n_\infty >0.\\
		\end{array}
		\right. 
	\end{align*}
	Thus, the Fourier expansion of $\mathbf{P}_0(z;p,s,\tfrac12,r,\mu_p)$ at the cusp $\infty$ for $\re s>1$ is: 
	\begin{align*}
		&\mathbf{P}_0(z;p,s,\tfrac12,\mathbf X_r,\mu_p)=\sum_{\ell=1}^{p-1}\sum_{n\in \Z} e^{2\pi i  n_\infty  z}\cdot \frac{2\Gamma(2s)}{\sqrt \pi|4\pi y|^{\frac 14}p^{\frac 14}|n_\infty|^{\frac 12}}\\
		\cdot&\left\{\begin{array}{ll}
			{\displaystyle 
					\!\!\!\!\sum_{\substack{a>0:\,p\nmid a,\\ [a\ell]\in \condr }}\frac{2\pi W_{-\frac 14,s-\frac 12}(4\pi |n_\infty |y)}{\Gamma(s-\frac 14)\left|X_{r,0}^{([a\ell])}\right|^{-\frac14}	} \frac{\mathbf{S}_{0\infty}^{(\ell)}(X_r^{([a\ell])},n,a,\mu_p;r)}a  J_{\frac12}\(\frac{4\pi}{a}\left|\frac{X_{r,0}^{([a\ell])}}{p}\cdot n_\infty \right|^{\frac12}\),}&n_{\infty}<0;\\
			{\displaystyle 
					\!\!\!\!\sum_{\substack{a>0:\,p\nmid a,\\ [a\ell]\in \condr }}\frac{2\pi W_{\frac 14,s-\frac 12}(4\pi n_\infty y)}{\Gamma(s+\frac 14)\left|X_{r,0}^{([a\ell])}\right|^{-\frac14}	}
				\frac{\mathbf{S}_{0\infty}^{(\ell)}(X_r^{([a\ell])},n,a,\mu_p;r)}{a} I_{\frac12}\(\frac{4\pi}{a}\left|\frac{X_{r,0}^{([a\ell])}}{p}\cdot n_\infty \right|^{\frac12}\),}&n_\infty>0. 
		\end{array}
		\right.
	\end{align*}
	
	For the right side of the expansion above, if we let $s=\frac 34$, by \eqref{Wfrack2 property} we get
	\begin{align*}
		&\sum_{\ell=1}^{p-1}\sum_{n\in \Z} 2\pi e^{2\pi i  n_\infty  z}\\
		\cdot&\left\{\begin{array}{ll}
			{\displaystyle 
				\!\!\!\!\sum_{\substack{a>0:\,p\nmid a,\\ [a\ell]\in \condr }}\!\!\!\!\frac{\Gamma(\frac 12,4\pi|n_\infty|y)}{\sqrt \pi} \left|\frac{X_{r,0}^{([a\ell])}}{pn_{\infty}}\right|^{\frac14}	 \frac{\mathbf{S}_{0\infty}^{(\ell)}(X_r^{([a\ell])},n,a,\mu_p;r)}a  J_{\frac12}\(\frac{4\pi}{a}\left|\frac{X_{r,0}^{([a\ell])}}{p}\cdot n_\infty \right|^{\frac12}\),}&n_{\infty}<0;\\
			{\displaystyle 
				\!\!\!\!\sum_{\substack{a>0:\,p\nmid a,\\ [a\ell]\in \condr }}\left|\frac{X_{r,0}^{([a\ell])}}{pn_{\infty}}\right|^{\frac14}	
				\frac{\mathbf{S}_{0\infty}^{(\ell)}(X_r^{([a\ell])},n,a,\mu_p;r)}{a} I_{\frac12}\(\frac{4\pi}{a}\left|\frac{X_{r,0}^{([a\ell])}}{p}\cdot n_\infty \right|^{\frac12}\),}&n_\infty>0. 
		\end{array}
		\right.
	\end{align*}
	By Proposition~\ref{convergence in general, main contribution}, where we take $\mathbf{m}=\mathbf{X}_r\leq 0$, the above expression is convergent. Therefore, by analytic continuation, the series $\mathbf{P}_0(z;p,s,\frac 12,r,\mu_p)$ is convergent at $s=\frac 34$ and has the Fourier expansion as above. 
	The expression \eqref{S 0 infty def, in scalar val} is deduced by combining \eqref{S 0 infty def, using X r}, \eqref{mu 0 infty def}, Definition~\ref{mu matrix define}, and \eqref{etaMultiplier}. 
\end{proof}

We combine the properties of the Maass-Poincar\'e series in the following proposition. 
\begin{proposition}\label{Maass Poincare series are harmonic Maass form proposition}
	Let $\mathbf{P}(z)$ denote either 
	\[\mathbf{P}_{\infty}(z)\defeq \mathbf{P}_\infty(z;p,\tfrac 34,\tfrac12,m,\mu_p)\quad \text{or}\quad \mathbf{P}_0(z)\defeq \mathbf{P}_0(z;p,\tfrac 34,\tfrac12,r,\mu_p).\]
	Then,
	\begin{enumerate}
		\item[(1)] For all $\gamma\in \Gamma_0(p)$, $\mathbf{P}(\gamma z)=\mu_p(\gamma )(cz+d)^{\frac 12}\mathbf{P}(z)$. 
		\item[(2)] For $1\leq \ell \leq p-1$, the $\ell$-th entry $P^{(\ell)}(z)$ of $\mathbf{P}(z)$ is a harmonic Maass form in $H_{\frac 12}(\Gamma_0(p^2)\cap \Gamma_1(p),\overline{\nu_\eta})$. 
		\item[(3)] For $1\leq \ell \leq p-1$, $P^{(\ell)}(24z)$ is a harmonic Maass form in $H_{\frac 12}(\Gamma_1(576p^2),\nu_\theta)$. 
		\item[(4)] The principal part of $\mathbf{P}_\infty(z;p,\tfrac 34,\tfrac12,m,\mu_p)$ at the cusp $\infty$ of $\Gamma_0(p)$ is \[\sum_{\ell=1}^{p-1} q^{m_\infty} \csc(\tfrac{\pi \ell}p)\e_\ell,\]
		and at the cusp $0$ of $\Gamma_0(p)$ is $\mathbf{0}$.
		\item[(5)] \label{Fourier exp of Maass Poincare at zero, principal part} For every integer $r\geq 0$, the principal part of $\mathbf{P}_0^{(\ell)}(z;p,\frac 34,\frac12,r,\mu_p)$ at the cusp $\infty$ of $\Gamma_0(p)$ is $\mathbf{0}$, and at the cusp $0$ of $\Gamma_0(p)$ is
		\[ e(-\tfrac 18)p^{\frac 14} \sum_{\ell\in \condr}
		q^{X_{r,0}^{(\ell)}}\e_\ell. 
		\]
	\end{enumerate}
\end{proposition}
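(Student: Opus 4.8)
The plan is to treat the proposition as an assembly of facts that are, for the most part, already encoded in the Fourier expansions of Proposition~\ref{Fourier exp of Maass Poincare at infty} and Proposition~\ref{Fourier exp of Maass Poincare at zero}; the remaining work is to repackage these into the language of harmonic Maass forms. For part~(1) I would verify the automorphy directly from the defining coset sums \eqref{Maass Poincare Series at infty} and \eqref{Maass Poincare Series at zero}: replacing $z$ by $\gamma_1 z$ for $\gamma_1\in\Gamma_0(p)$ permutes the summation cosets $\Gamma_\infty\setminus\Gamma_0(p)$ (resp.\ $\Gamma_0\setminus\Gamma_0(p)$), while the automorphy factors recombine through the weight-$\tfrac12$ cocycle $w_{1/2}$ and the multiplicativity relation of Proposition~\ref{mu matrix property}(4) to produce the factor $\mu_p(\gamma_1)(Cz+D)^{1/2}$. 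This is the standard Poincar\'e-series computation, identical in form to the scalar case.

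For the harmonicity needed in part~(2), the key point is that the seed $\varphi_{s,1/2}$ is an eigenfunction of $\widetilde\Delta_{1/2}$ with eigenvalue $s(1-s)+\tfrac{k^2-2k}4$, which vanishes at $s=\tfrac34$ by \eqref{varphik/2 vanished by harmonic Maass Laplacian} (there the eigenvalue equals $\tfrac3{16}-\tfrac3{16}=0$). For $\re s>1$ the series converges absolutely and uniformly on compacta, so $\widetilde\Delta_{1/2}$ may be applied term by term and each summand is annihilated because $\widetilde\Delta_{1/2}$ commutes with the weight-$\tfrac12$ slash operator; harmonicity at $s=\tfrac34$ then follows by analytic continuation, since the explicit Fourier expansions exhibit the continuation and their convergence at $s=\tfrac34$ is guaranteed by Proposition~\ref{convergence in general, main contribution}. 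Combining this harmonicity, the transformation law of part~(1) transferred to each component by Lemma~\ref{Transformation laws by z to 24z} (so that $P^{(\ell)}$ transforms with $\overline{\nu_\eta}$ on $\Gamma_0(p^2)\cap\Gamma_1(p)$), and the principal-part/growth structure read off from the two Fourier expansions, each $P^{(\ell)}$ meets all three conditions of Definition~\ref{harmonic Maass form definition}. Part~(3) is then immediate from the second half of Lemma~\ref{Transformation laws by z to 24z}: the dilation $z\mapsto 24z$ intertwines $\overline{\nu_\eta}$ on $\Gamma_0(p^2)\cap\Gamma_1(p)$ with $\nu_\theta$ on $\Gamma_1(576p^2)$, turns the fractional exponents $q^{n-1/24}$ into integral ones $q^{24n-1}$, and preserves harmonicity because $\widetilde\Delta_{1/2}$ is invariant under $z\mapsto 24z$.

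For the principal parts in (4) and (5) I would extract the holomorphic terms of negative index from the expansions. At the cusp $\infty$ the constant ($c=0$) coset of $\mathbf{P}_\infty$ contributes, via \eqref{Mfrack2 property}, the holomorphic term $\sum_\ell q^{m_\infty}\csc(\tfrac{\pi\ell}p)\e_\ell$ with $m_\infty<0$, which is exactly the asserted principal part, while all remaining holomorphic coefficients $\mathbf{B}_\infty(n)$ carry index $n_\infty>0$ and do not contribute. Symmetrically, the seed coset of $\mathbf{P}_0$ produces $e(-\tfrac18)p^{1/4}\sum_{\ell\in\condr}q^{X_{r,0}^{(\ell)}}\e_\ell$ at the cusp $0$, and $X_{r,0}^{(\ell)}=-p\,\delta_{\ell,p,1,r}<0$ makes this a genuine principal part. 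The vanishing of the principal part at the \emph{opposite} cusp is the one point needing a separate observation: expanding $\mathbf{P}_\infty$ at the cusp $0$ passes to the coordinates $\sigma_0^{-1}\gamma=\begin{psmallmatrix}c/\sqrt p & d/\sqrt p\\ -a\sqrt p & -b\sqrt p\end{psmallmatrix}$, whose lower-left entry $-a\sqrt p$ never vanishes (if $a=0$ then $bc=-1$, impossible for $p\mid c$), so there is no constant term and hence no growth; dually, the expansion of $\mathbf{P}_0$ at $\infty$ in Proposition~\ref{Fourier exp of Maass Poincare at zero} runs only over $0<c<pa$, again with no $c=0$ term.

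I expect the only genuine subtlety to be the justification of working at the non-convergent point $s=\tfrac34$: absolute convergence is available only for $\re s>1$, so harmonicity, the term-by-term Fourier expansion, and the identification of the principal parts must all be established for $\re s>1$ and then transported to $s=\tfrac34$ by analytic continuation, leaning on the independent convergence statement Proposition~\ref{convergence in general, main contribution}. Everything else is bookkeeping across the two explicit Fourier expansions together with the dilation Lemma~\ref{Transformation laws by z to 24z}.
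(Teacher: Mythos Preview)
Your proposal is correct and follows essentially the same route as the paper: automorphy from the coset-sum definition via the cocycle identity, harmonicity from \eqref{varphik/2 vanished by harmonic Maass Laplacian} transported by analytic continuation, the component-wise transformation and dilation via Lemma~\ref{Transformation laws by z to 24z}, and the principal parts read off from Propositions~\ref{Fourier exp of Maass Poincare at infty} and~\ref{Fourier exp of Maass Poincare at zero} together with the absence of an identity coset at the opposite cusp. One small wording point: when expanding $\mathbf{P}_\infty$ at the cusp $0$ you substitute $z\mapsto\sigma_0 z$, so the relevant matrix is $\gamma\sigma_0=\begin{psmallmatrix}b\sqrt p & -a/\sqrt p\\ d\sqrt p & -c/\sqrt p\end{psmallmatrix}$ rather than $\sigma_0^{-1}\gamma$, and the nonvanishing lower-left entry is $d\sqrt p$ (with $d\neq 0$ since $ad-bc=1$ and $p\mid c$); your determinant-and-divisibility argument is the right idea, just applied to the other factor.
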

\begin{proof}
	First we prove (1) and (2). We have discussed the transformation laws of $\mathbf{P}_\infty(z;p,s,\tfrac12,m,\mu_p)$ and $\mathbf{P}_0(z;p,s,\tfrac12,r,\mu_p)$ directly after their definitions. Since we have proved their convergence at $s=\frac 34$, by analytic continuation, the transformation laws are kept. When we focus on each entry $P^{(\ell)}(z)$ and $G^{(\ell)}(z)\defeq P^{(\ell)}(24z)$, the transformation laws
	\begin{align*}
		P^{(\ell)}(\gamma z)=\overline{\nu_\eta}(\gamma)(cz+d)^{\frac 12}P^{(\ell)}( z),& \quad \gamma\in \Gamma_0(p^2)\cap\Gamma_1(p),\\
		G^{(\ell)}(\gamma z)=\nu_\theta(\gamma)(cz+d)^{\frac 12}G^{(\ell)}( z),& \quad  \gamma\in \Gamma_1(576p^2)
	\end{align*}
	follow from Lemma~\ref{Transformation laws by z to 24z}.

	By \eqref{varphik/2 vanished by harmonic Maass Laplacian} we have proved (3). 
	Recall the definition for the principal parts before \eqref{vec val Principal part}. 
	For (4), the principal part of $\mathbf{P}_\infty$ at the cusp $\infty$ can be read from Proposition~\ref{Fourier exp of Maass Poincare at infty}. Note that 
	\[\begin{pmatrix}
		a&b\\c&d
	\end{pmatrix}\sigma_0=\begin{pmatrix}
		b\sqrt p&-a/\sqrt p\\d\sqrt p & -c/\sqrt p
	\end{pmatrix}\]
	and $d\neq 0$ for $\gamma=\begin{psmallmatrix}
		a&b\\c&d
	\end{psmallmatrix}\in \Gamma_0(p)$. Following from a similar process as in Proposition~\ref{Fourier exp of Maass Poincare at zero}, we can conclude that the principal part of $\mathbf{P}_\infty$ at the cusp $0$ is $\mathbf{0}$. 
	
	For (5), the principal part of $\mathbf{P}_0$ at the cusp $\infty$ is just $\mathbf 0$ from Proposition~\ref{Fourier exp of Maass Poincare at zero}. To compute its principal part at $0$, recall \eqref{Maass Poincare Series at zero} for the definition and \eqref{Mfrack2 property}. The Fourier expansion of $\mathbf{P}_0$ at the cusp $0$ is given by
	\[(\sqrt p\,z)^{-\frac12}\mathbf{P}_0(\sigma_0z;p,\tfrac 34,\tfrac12,r,\mu_p). \]
	Then the contribution from $c=0$ equals
	\begin{align*}
		\frac{2 e(-\frac 18)p^{\frac 14}}{\sqrt{\pi}}&(\sqrt p\,z)^{-\frac12}\sum_{\ell\in\condr}\frac{\varphi_{\frac 34,\frac12}(X_{r,0}^{(\ell)}z)}{(\sqrt p\,z)^{-\frac12}}\e_\ell\\
		&=e(-\tfrac 18)p^{\frac 14}\sum_{\ell\in\condr}\(1-\frac{\Gamma(\frac12,4\pi |X_{r,0}^{(\ell)}|y)}{\sqrt{\pi}}\)q^{X_{r,0}^{(\ell)}}\e_\ell, 
	\end{align*}
	and (5) follows.  
\end{proof}

\subsection{Proof of Theorem~\ref{main theorem}}
\label{Subsection: Proof of main theorem}

Fix a prime $p\geq 5$. 
To match the principal part of $\mathbf{G}_1(z;p)$ at the cusp $\infty$ (see \eqref{Principal part of G1 at infty}), we take $\mathbf{P}_\infty(z;p,\tfrac 34,\tfrac12,0,\mu_p)$. 

For the cusp $0$, we recall the definition of $\mathbf{X}_r$ in \eqref{X r define} and have
\begin{equation}
	X_r^{(\ell)}\defeq \left\{\begin{array}{ll}
		\ceil{-\tfrac {3\ell^2} {2p}+(\tfrac 12+r)\ell -\tfrac p{24}}, &\quad \text{when }0<\tfrac \ell p<x_r,\vspace{5px}\\
		\ceil{-\frac {3p}2(1-\tfrac \ell p)^2+(\tfrac 12+r)p(1-\tfrac \ell p) -\tfrac p{24}}, &\quad \text{when }1-x_r<\tfrac \ell p<1, \vspace{5px}\\
		0,&\quad \text{otherwise and will never be used}, 
	\end{array}
	\right.
\end{equation}
where $\ceil{x}$ is the smallest integer $\geq x$. Moreover, recalling $x_r$ and $\alpha_{0}^{(\ell)}$ (denoted as $\alpha_{+0}^{(\ell)}$ in \eqref{alpha 0 l def} and \eqref{alpha 0 l def, expressed in delta}), we see that 
\[X_{r,0}^{(\ell)}=-p\delta_{\ell,p,1,r},\quad X_{r}^{([a\ell])}=\ceil{-p\delta_{\ell,p,a,r}},\quad  X_{r,0}^{([a\ell])}=-p\delta_{\ell,p,a,r}, \]
and $X_{r,0}^{(\ell)}$ match the order of the $r$-th principal part of $\mathcal{G}_2(\frac \ell p;pz)$ in \eqref{Principal part of G1 at 0, equation}. 
Combining Proposition~\ref{Maass Poincare series are harmonic Maass form proposition}, \eqref{Principal part of G1 at infty}, \eqref{Principal part of G1 at 0 written in G2}, and \eqref{Principal part of G1 at 0, equation}, we conclude the following proposition. 
\begin{proposition}
	With the choice of $\mathbf{X}_r$ in \eqref{X r define}, the principal parts of 
	\[\mathbf{G}_1(z;p)-\mathbf{P}_\infty(z;p,\tfrac 34,\tfrac12,0,\mu_p)-2\sum_{\substack{r\geq 0\\ x_r^{-1}<p}}\mathbf{P}_0(z;p,\tfrac 34,\tfrac12,r,\mu_p)\]
	are zero for both cusps $\infty$ and $0$ of $\Gamma_0(p)$. 
\end{proposition}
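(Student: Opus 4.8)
The plan is to prove the proposition by comparing the two principal parts cusp-by-cusp, since all the ingredients are already in place: the principal parts of $\mathbf{G}_1(z;p)$ were computed in \S\ref{Subsection: Principal parts of G1}, and those of $\mathbf{P}_\infty$ and $\mathbf{P}_0$ are recorded in Proposition~\ref{Maass Poincare series are harmonic Maass form proposition}(4)--(5). Thus the argument is a matching (bookkeeping) computation rather than new analysis; in particular all convergence and transformation issues were already settled when the principal parts were determined.

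First I would treat the cusp $\infty$. By \eqref{Principal part of G1 at infty} the principal part of $\mathbf{G}_1(z;p)$ there is $\sum_{\ell=1}^{p-1}\csc(\tfrac{\pi\ell}p)q^{-1/24}\e_\ell$. Setting $m=0$ in Proposition~\ref{Maass Poincare series are harmonic Maass form proposition}(4), so that $m_\infty=-\tfrac1{24}$, gives precisely $\sum_{\ell=1}^{p-1}q^{-1/24}\csc(\tfrac{\pi\ell}p)\e_\ell$ as the principal part of $\mathbf{P}_\infty(z;p,\tfrac34,\tfrac12,0,\mu_p)$ at $\infty$; these coincide. Since each $\mathbf{P}_0(z;p,\tfrac34,\tfrac12,r,\mu_p)$ has vanishing principal part at $\infty$ by part (5), the displayed difference has zero principal part at the cusp $\infty$.

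The substance lies at the cusp $0$, where $\mathbf{P}_\infty$ contributes nothing by part (4), so I must match $\mathbf{G}_1$ against $2\sum_r\mathbf{P}_0$. By \eqref{Principal part of G1 at 0 written in G2} the principal part of $\mathbf{G}_1(z;p)$ at $0$ is read off from the principal part of $e(-\tfrac18)p^{1/4}\mathcal{G}_2(\tfrac\ell p;pz)$ at $\infty$, which by \eqref{Principal part of G1 at 0, equation} is a sum over $0\le r\le R$ of terms $2\,e(-\tfrac18)p^{1/4}q^{-\frac{3\ell^2}{2p}+(\frac12+r)\ell-\frac p{24}}\e_\ell$ for those $\ell$ with $\tfrac\ell p\in(0,x_r)\cup(1-x_r,1)$; since $[\ell]=\ell$ for $1\le\ell\le p-1$, this index condition is literally $\ell\in\condr$. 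The key identification, via \eqref{alpha 0 l def, expressed in delta} and \eqref{delta l p a r def}, is that the exponent equals $-p\delta_{\ell,p,1,r}=X_{r,0}^{(\ell)}$. Comparing with Proposition~\ref{Maass Poincare series are harmonic Maass form proposition}(5), which gives the principal part of $\mathbf{P}_0$ at $0$ as $e(-\tfrac18)p^{1/4}\sum_{\ell\in\condr}q^{X_{r,0}^{(\ell)}}\e_\ell$, the two expressions agree up to the factor $2$, and that factor is exactly the one appearing in the displayed combination.

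The only place requiring care is to confirm that the ranges of summation match, i.e. $\{r\ge0:x_r^{-1}<p\}=\{0,1,\dots,R\}$ with $R$ the maximal integer satisfying $x_R^{-1}<p$. This follows from the strict monotonicity $\tfrac16=x_0>x_1>\cdots>0$ established in Proposition~\ref{Principal part of G1 at 0, other than ep 2}, so that $x_r^{-1}$ is strictly increasing and the condition cuts out an initial segment. I would also verify that these exponents truly belong to the principal part at $0$: that $X_{r,0}^{(\ell)}=-p\delta_{\ell,p,1,r}<0$ (immediate from $\delta_{\ell,p,1,r}>0$ on $\condr$), and that it has the required form $n-\alpha_{+0}^{(\ell)}$ with $n=X_r^{(\ell)}=\ceil{-p\delta_{\ell,p,1,r}}\in\Z$, which is exactly the definition \eqref{X r define}. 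Assembling these observations yields cancellation of the principal parts at both cusps. I expect the main (though modest) obstacle to be this exponent-and-range bookkeeping, especially the identity $-p\delta_{\ell,p,1,r}=X_{r,0}^{(\ell)}$ and the initial-segment description of $r$, rather than any analytic input.
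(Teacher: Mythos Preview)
Your proposal is correct and follows essentially the same approach as the paper, which simply states that the proposition follows by combining Proposition~\ref{Maass Poincare series are harmonic Maass form proposition}, \eqref{Principal part of G1 at infty}, \eqref{Principal part of G1 at 0 written in G2}, and \eqref{Principal part of G1 at 0, equation}. You have spelled out the matching in more detail than the paper does, including the exponent identification $X_{r,0}^{(\ell)}=-p\delta_{\ell,p,1,r}$ and the initial-segment description of the $r$-range, both of which the paper records just before stating the proposition.
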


Now we start to prove Theorem~\ref{main theorem}. 

\begin{lemma}
	For $\mathbf{X}_r$ defined in \eqref{X r define}, the function
	\[\mathbf{G}(z)\defeq \mathbf{G}_1(z;p)-\mathbf{P}_\infty(z;p,\tfrac 34,\tfrac12,0,\mu_p)-2\sum_{\substack{r\geq 0\\ x_r^{-1}<p}}\mathbf{P}_0(z;p,\tfrac 34,\tfrac12,r,\mu_p)\]
	is a holomorphic modular form of weight $\frac12$ on $(\Gamma_0(p),\mu_p)$, i.e. $\mathbf{G}(z)\in M_{\frac12}(\Gamma_0(p),\mu_p)$. 
\end{lemma}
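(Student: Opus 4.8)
The plan is to realize $\mathbf{G}$ as a vector-valued harmonic Maass form on $\Gamma_0(p)$ whose principal parts vanish at every cusp, and then to annihilate its non-holomorphic part with the $\xi$-operator. First I would record that $\mathbf{G}$ inherits the automorphy
\[\mathbf{G}(\gamma z)=\mu_p(\gamma)(cz+d)^{\tfrac12}\mathbf{G}(z),\qquad \gamma=\begin{psmallmatrix}a&b\\c&d\end{psmallmatrix}\in\Gamma_0(p),\]
directly from linearity: $\mathbf{G}_1(z;p)$ satisfies it by \eqref{G1 (bf) transformation}, and both $\mathbf{P}_\infty(z;p,\tfrac34,\tfrac12,0,\mu_p)$ and each $\mathbf{P}_0(z;p,\tfrac34,\tfrac12,r,\mu_p)$ satisfy it by Proposition~\ref{Maass Poincare series are harmonic Maass form proposition}(1). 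By Lemma~\ref{Transformation laws by z to 24z} this makes every component $G^{(\ell)}$ transform with $\overline{\nu_\eta}$ on $\Gamma_0(p^2)\cap\Gamma_1(p)$, and $G^{(\ell)}(24z)$ transform with $\nu_\theta$ on $\Gamma_1(576p^2)$. Moreover each $G^{(\ell)}$ is harmonic: the components $\mathcal{G}_1(\tfrac\ell p;z)$ of $\mathbf{G}_1$ are harmonic Maass forms in the sense of \S\ref{Subsection: Harmonic Maass Forms}, and each Poincar\'e-series component is harmonic by Proposition~\ref{Maass Poincare series are harmonic Maass form proposition}(2); hence $G^{(\ell)}(24z)\in H_{\frac12}(\Gamma_1(576p^2),\nu_\theta)$, which is the setting in which Lemmas~\ref{xiOperatorMapToCuspForms} and \ref{xiOperatorMapToCuspForms, 2} apply.

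The heart of the argument is to transfer the vanishing of the vector-valued principal parts of $\mathbf{G}$ at the two cusps $\infty$ and $0$ of $\Gamma_0(p)$ — established in the preceding proposition — into the vanishing of the scalar principal part of each $G^{(\ell)}$ at every rational cusp. Here I would exploit that $\Gamma_0(p)$ has only the two cusp classes $\infty$ and $0$ and acts transitively on the rationals within each class: any rational $\mathfrak{c}$ can be written $\mathfrak{c}=\gamma\mathfrak{a}$ with $\gamma\in\Gamma_0(p)$ and $\mathfrak{a}\in\{\infty,0\}$, and the automorphy law above (together with the scaling matrix $\sigma_0$ at the cusp $0$) then expresses the local expansion of $G^{(\ell)}$ at $\mathfrak{c}$ as a unit multiple of the expansion of a single component $G^{([d\ell])}$ at $\mathfrak{a}$, since $\mu_p$ acts by the index permutation $\ell\mapsto[d\ell]$ together with a unimodular scalar (Definition~\ref{mu matrix define}). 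Because the vector principal part of $\mathbf{G}$ at $\mathfrak{a}$ is zero, no component exhibits exponential growth there, so $G^{(\ell)}$ has vanishing principal part at $\mathfrak{c}$. Consequently $G^{(\ell)}(24z)$ has vanishing — in particular constant — principal part at every cusp of $\Gamma_1(576p^2)$.

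Granting this, I would apply the contrapositive of Lemma~\ref{xiOperatorMapToCuspForms, 2}: since the principal part of $G^{(\ell)}(24z)$ is constant at all cusps, $\xi_{\frac12}$ annihilates it, and then Lemma~\ref{xiOperatorMapToCuspForms} shows the non-holomorphic part of $G^{(\ell)}$ vanishes, i.e. $G^{(\ell)}$ is holomorphic. A holomorphic $G^{(\ell)}$ with no principal part at any cusp is a holomorphic modular form, and assembling the components gives $\mathbf{G}\in M_{\frac12}(\Gamma_0(p),\mu_p)$, as desired.

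I expect the main obstacle to be precisely the cusp-propagation step of the second paragraph: the $\xi$-operator lemmas are scalar statements requiring control at all cusps of the large level group $\Gamma_1(576p^2)$, whereas the principal-part computation only furnishes information at the two cusps of $\Gamma_0(p)$. The bookkeeping that $\mu_p$ simultaneously permutes indices ($\ell\mapsto[d\ell]$) and multiplies by a modulus-one scalar, so that the growing terms of $G^{(\ell)}$ at a cusp over $\mathfrak{a}$ are in bijection with those of one component of $\mathbf{G}$ at $\mathfrak{a}$, must be carried out carefully; once it is in place, the transitivity of $\Gamma_0(p)$ on each cusp class closes the gap and the remaining deductions are routine.
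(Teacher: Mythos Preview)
Your proposal is correct and follows essentially the same route as the paper: establish that each component $G^{(\ell)}(24z)$ lies in $H_{\frac12}(\Gamma_1(576p^2),\nu_\theta)$, propagate the vanishing of the vector-valued principal parts at the two cusps of $\Gamma_0(p)$ to all cusps of the finer group via the $\mu_p$-automorphy, and then invoke Lemmas~\ref{xiOperatorMapToCuspForms} and~\ref{xiOperatorMapToCuspForms, 2} to kill the non-holomorphic part. You are in fact more explicit than the paper about the cusp-propagation step---the paper simply asserts that vanishing at $\infty$ and $0$ for $\Gamma_0(p)$ forces vanishing at every cusp of $\Gamma_0(p^2)\cap\Gamma_1(p)$---whereas you spell out the mechanism through the index permutation $\ell\mapsto[d\ell]$ and the unimodular scalar in $\mu_p$, which is exactly the right justification.
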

\begin{proof}
	By Lemma~\ref{Transformation laws by z to 24z}, \eqref{G1}, and Proposition~\ref{Maass Poincare series are harmonic Maass form proposition}, $G^{(\ell)}(z)$ is a harmonic Maass form in $H_{\frac12}(\Gamma_0(p^2)\cap \Gamma_1(p), \overline{\nu_\eta})$ whose Fourier exponents are supported on $n_{\infty}=n-\frac1{24}$ for $n\in \Z$. 
	
	Since the principal part of $\mathbf{G}(z)$ is zero for both cusps $\infty$ and $0$ of $\Gamma_0(p)$, the principal part of $G^{(\ell)}(z)$ for every cusp of $\Gamma_0(p^2)\cap \Gamma_1(p)$ is zero. By Proposition~\ref{Maass Poincare series are harmonic Maass form proposition}, we know that 
	\[G^{(\ell)}(24z)\in H_{\frac12}(\Gamma_1(576p^2), {\nu_\theta})\] has Fourier exponents supported on $24n-1$. We also have that the principal part of $G^{(\ell)}(24z)$ for every cusp of $\Gamma_1(p^2)$ is zero. 
By Lemma~\ref{xiOperatorMapToCuspForms}, Lemma~\ref{xiOperatorMapToCuspForms, 2} and the Remark after Lemma~\ref{xiOperatorMapToCuspForms, 2}, 
the function $G^{(\ell)}(24z)$ is a holomorphic modular form in $M_{\frac12}(\Gamma_1(576p^2), \nu_\theta)$. 
	
	Since $\mathbf{G}(z)$ follows the modular transformation law on $(\Gamma_0(p),\mu_p)$ and each entry $G^{(\ell)}(z)$ is holomorphic, we get the desired result. 
\end{proof}

By Lemma~\ref{Theta function space only eta pz}, since $G^{(\ell)}(24z)$ has Fourier coefficients only supported on $24n-1$ for $n\geq 1$,  combining the above lemma with Lemma~\ref{mu p space is empty zero} we have
\begin{corollary}\label{Gz is 0}
	$\mathbf{G}(z)=\mathbf{0}$. 
\end{corollary}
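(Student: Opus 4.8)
The plan is to close the argument by matching $\mathbf{G}(z)$ to the hypothesis of Lemma~\ref{mu p space is empty zero} and then invoking that lemma directly. The preceding lemma has already done the substantive work of showing $\mathbf{G}(z)\in M_{\frac12}(\Gamma_0(p),\mu_p)$, so all that remains is to identify the shape of its Fourier expansion at the cusp $\infty$.

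First I would record the admissible Fourier exponents of each component $G^{(\ell)}$. Since $\mathbf{G}\in M_{\frac12}(\Gamma_0(p),\mu_p)$ is holomorphic, no negative exponents appear; and since $\alpha_{+\infty}^{(\ell)}=\tfrac1{24}\neq 0$ by \eqref{alpha infty l def}, the cusp $\infty$ is non-singular, so the mode $n=0$ (which carries the negative exponent $-\tfrac1{24}$) is likewise excluded. The only surviving exponents are therefore $n-\tfrac1{24}$ with $n\geq 1$, giving $G^{(\ell)}(z)=\sum_{n=1}^{\infty}a^{(\ell)}(n)\,q^{n-\frac1{24}}$ for each $1\leq\ell\leq p-1$. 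This is exactly the hypothesis of Lemma~\ref{mu p space is empty zero}, so that lemma yields $\mathbf{G}=\mathbf{0}$ at once.

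The genuine content sits inside Lemma~\ref{mu p space is empty zero}, and I expect its $p\equiv-1\pmod{24}$ branch to be the only delicate point. There, passing to $G^{(\ell)}(24z)\in M_{\frac12}(\Gamma_1(576p^2),\nu_\theta)$ via Lemma~\ref{Transformation laws by z to 24z} and applying Lemma~\ref{Theta function space only eta pz} leaves the possibility $G^{(\ell)}(z)=c^{(\ell)}\eta(pz)$, which must be excluded. The way I would rule it out is to compare transformation laws at $\gamma=\begin{psmallmatrix}1&0\\p&1\end{psmallmatrix}$: the form $\eta(pz)$ transforms with multiplier $(\tfrac{\cdot}{p})\overline{\nu_\eta}$, whereas $\mathbf{G}$ transforms under $\mu_p$, and the scalar $\mu(p,1,\ell,p)=\exp(\tfrac{3\pi i\ell^2}{p})(-1)^\ell$ is never $\pm1$ for $1\leq\ell\leq p-1$, forcing $c^{(\ell)}=0$. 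Since this is already packaged into Lemma~\ref{mu p space is empty zero}, the corollary follows simply by verifying the Fourier support above and citing that lemma.
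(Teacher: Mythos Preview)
Your proposal is correct and follows essentially the same route as the paper: the preceding lemma gives $\mathbf{G}\in M_{\frac12}(\Gamma_0(p),\mu_p)$, you verify that each component has Fourier expansion $\sum_{n\geq 1}a^{(\ell)}(n)q^{n-\frac1{24}}$, and then Lemma~\ref{mu p space is empty zero} forces $\mathbf{G}=\mathbf{0}$. Your extra paragraph unpacking the $p\equiv -1\pmod{24}$ branch of Lemma~\ref{mu p space is empty zero} is accurate but unnecessary here, since, as you note, that work is already done inside the lemma.
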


\begin{proof}[Proof of Theorem~\ref{main theorem}]
	The theorem follows directly by combining Corollary~\ref{Gz is 0}, Proposition~\ref{Fourier exp of Maass Poincare at infty} and Proposition~\ref{Fourier exp of Maass Poincare at zero}. Note that the $n$-th Fourier coefficient of $\mathcal{G}_1(\frac \ell p;z)$ is $\csc(\frac{\pi\ell}p)A(\frac \ell p;n)$, hence we need to multiply the Fourier expansion of the Maass-Poincar\'e series by $\sin(\frac{\pi\ell}p)$ to get \eqref{main theorem formula}. 
\end{proof}

In the following two subsections, we will prove the claim that Bringmann's asymptotic formula \eqref{Bringmann formula}, when summing up to infinity, matches our exact formula \eqref{main theorem formula}. To be precise, we will show that the Fourier expansion of the $\ell$-th component of $\mathbf{P}_{\infty}$ matches the first sum in \eqref{Bringmann formula}, and the Fourier expansion of the $\ell$-th component of $\mathbf{P}_{0}$ matches the second sum on $r$ in \eqref{Bringmann formula}.

\subsubsection{Contribution from \texorpdfstring{$\mathbf{P}_\infty$}{Poincar\'e series defined at the cusp infinity}}
\label{Subsection: Bringmann formula match, cusp infty}

Recall that for a prime $p\geq 5$, a positive integer $c$ such that $p|c$, and $0<d<c$ such that $(d,c)=1$, the Dedekind sum $s(d,c)$ is defined in \eqref{etaMultiplier}. As $c$ is always clear in this subsection, we denote $d'_c$ as $d'$ for simplicity, i.e. it is defined by $dd'\equiv -1\Mod c$ if $c$ is odd and $dd'\equiv -1\Mod{2c}$ if $c$ is even. Also recall the notation that $a$ is given by $0<a<c$ with $ad\equiv 1\Mod c$ and $[a\ell]$ is defined by $0\leq [a\ell]<p$ such that $[a\ell]\equiv a\ell \Mod p$. 

In this subsection we prove \eqref{KL sums match with Bringmann, cusp infty} in the Remark of Theorem~\ref{main theorem}. We conjugate \eqref{Bringmann formula} since its left side is real and see that the first sum is 
\[\frac{2\pi e(-\frac 18)}{(24n-1)^{\frac14}}\sum_{p|c\leq \sqrt n}\frac{e(-\frac 18)\overline{B_{\ell ,p,c}(-n,0)}}cI_{\frac12}\(\frac{\pi\sqrt{24n-1}}{6c}\). \]
Then \eqref{B lpc sum def} gives
\[\overline{B_{\ell,p,c}(-n,0)}=\!\!\!\!\sum_{d\Mod c^*}\!\!\!(-1)^{\ell c+1}\frac{\sin(\frac {\pi\ell }p)}{\sin(\frac{\pi\ell d' }p)}\exp\(-\pi is(d,c)+\frac{3\pi i \ell^2 cd'}{p^2}\)e\(\frac{nd}c\). \]
On the other hand, recall \eqref{S infty infty def, in scalar val}: 
\[S_{\infty\infty}^{(\ell)}(m,n,c,\mu_p)=e(-\tfrac 18)\!\!\!\!\sum_{\substack{d\Mod c^*\\ad\equiv 1\Mod c}}\!\!\!\!\frac{\overline{\mu(c,d,[a\ell],p)}}{\sin(\frac{\pi[a\ell]}p)}e^{-\pi i s(d,c)}e\(\frac{m a+n d}c\). \]

To prove \eqref{KL sums match with Bringmann, cusp infty}, it suffices to show that for all $d\Mod c^*$, we have
\begin{equation}\label{matching - original equation for infty}
	\frac{(-1)^{\ell c+1}}{\sin(\frac{\pi \ell d'}p)}\exp\(\frac{3\pi i  cd'\ell^2}{p^2}\)=\frac{\overline{\mu(c,d,[a\ell ],p)}}{\sin(\frac{\pi [a\ell]}p)}. 
\end{equation}
We will show that both sides are equal to
\begin{equation}\label{matching - mid term for infty}
	\frac{(-1)^{\ell c}}{\sin(\frac{\pi a\ell}p)}\exp\(-\frac{3\pi i c a \ell^2}{p^2}\).
\end{equation} 

First we prove that the left side of \eqref{matching - original equation for infty} equals \eqref{matching - mid term for infty}. When $c$ is odd, we write $c=(2k+1)p$ for some integer $k$. Since $dd'\equiv -1\Mod c$, we can pick $d'=c-a$. Then
\begin{align*}
	\frac{(-1)^{\ell c+1}}{\sin(\frac{\pi \ell d'}p)}\exp\(\frac{3\pi i \ell^2 cd'}{p^2}\)&=\frac{	-(-1)^{\ell}}{\sin(\frac{\pi \ell c}p-\frac{\pi \ell a}p)}\exp\(\frac{3\pi i \ell^2 c^2}{p^2}-\frac{3\pi i \ell^2 ca}{p^2}\)\\
	&=\frac{	-(-1)^{\ell}}{-(-1)^{\ell(2k+1)}\sin(\frac{\pi \ell a}p)}(-1)^{\ell^2(2k+1)^2}\exp\(-\frac{3\pi i \ell^2 ca}{p^2}\)\\
	&=\frac{(-1)^{\ell}}{\sin(\frac{\pi \ell a}p)}\exp\(-\frac{3\pi i \ell^2 ca}{p^2}\),
\end{align*}
which equals \eqref{matching - mid term for infty}. When $c$ is even, we write $c=2kp$ for some positive integer $k$. We pick $0<a<2kp$ for $ad\equiv 1\Mod{2kp}$ and $0<d'<4kp$ for $d'd\equiv -1\Mod{4kp}$. Observe that \eqref{matching - mid term for infty} is the same if we change $a$ to $a\pm 2kp$, so we can pick $a=2c-d'$ here and a similar process shows that the left side of \eqref{matching - original equation for infty} equals \eqref{matching - mid term for infty} when $c$ is even. 

Next we prove that the right side of \eqref{matching - original equation for infty} equals \eqref{matching - mid term for infty}. Define the integer $t\geq 0$ by $[a\ell]=a\ell-tp$, $k$ by $c=kp$, and $b$ by $ad=1+bc$. By \eqref{mu cdlp def}, we have
\begin{align*}
	&\frac{\overline{\mu(c,d,[a\ell],p)}}{\sin(\frac{\pi[a\ell]}p)}\\
	&=\exp\(-\frac{3\pi i c d (a\ell -tp)^2}{p^2}\)(-1)^{\frac{c(a\ell-tp)}p}(-1)^{\floor{\frac{d(a\ell-tp)}p}}\Big/{\sin\(\frac{\pi a\ell}p-\pi t\)}\\
	&=\exp\(-\frac{3\pi i c a \ell^2}{p^2}-\frac{3\pi i c a b c \ell^2}{p^2}-3\pi i c d t^2 \)(-1)^{a\ell k-tc+\floor{\frac \ell p+b\ell k-td}+t}\Big/{\sin\(\frac{\pi a\ell}p\)}. 
\end{align*}
The above formula equals $\exp(-\frac{3\pi i c a \ell^2}{p^2})/\sin(\frac{\pi a\ell}p)$ times $(-1)$ to the power of
\begin{align*}
	&ab\ell^2 k^2 -cdt^2+a\ell k -tc+b\ell k-td+t\\
	&\equiv ab\ell k+cdt+a\ell k +tc+b\ell k+td+t\\
	&\equiv (a+1)(b+1)\ell k+\ell k+(c+1)(d+1)t\\
	&\equiv \ell k\equiv \ell c\Mod 2.
\end{align*}
The last step uses $(x+1)(y+1)\equiv 0\Mod 2$ whenever $(x,y)=1$.

\begin{remark}
	From the proof above, for $p|c$ and $0< [a\ell]=a\ell-tp<p$, we also have
	\begin{equation}\label{mu simplified}
		\overline{\mu(c,d,[a\ell],p)}=\exp\(-\frac{3\pi i c a \ell^2}{p^2}\)(-1)^{\ell c+t}. 
	\end{equation}
\end{remark}

\subsubsection{Contribution from \texorpdfstring{$\mathbf{P}_0$}{Poincar\'e series defined at the cusp 0}}
\label{Subsection: Bringmann formula match, cusp 0}

In this subsection we prove \eqref{KL sums match with Bringmann, cusp 0} in the Remark of Theorem~\ref{main theorem}. 
Recall the definition of $\delta_{\ell,p,a,r}$ in \eqref{delta l p a r def}, of $m_{\ell,p,a,r}$ in \eqref{m l p a r def}, of $\alpha_{0}^{(\ell)}$ in \eqref{alpha 0 l def} and of $\mathbf{X}_r$ in \eqref{X r define}. In Bringmann's asymptotic formula \eqref{Bringmann formula}, the second sum on $r$ (after conjugation) becomes
\[\frac{4\pi\sin(\frac{\pi \ell}p)}{(n-\frac1{24})^{\frac 14}}\sum_{r\geq 0}\sum_{\substack{a>0:\,p\nmid a,\\\delta_{\ell,p,a,r}>0}}\frac{\overline{D_{\ell,p,a,r}(-n,m_{\ell,p,a,r})}}{a\cdot \delta_{\ell,p,a,r}^{-\frac14}}I_{\frac12}\(\frac{4\pi}a \left|\delta_{\ell,p,a,r}\(n-\frac1{24}\)\right|^{\frac 12}\).\]
Recall \eqref{S 0 infty def, in scalar val}:
\[S_{0\infty}^{(\ell)}(X_{r}^{([a\ell])},n,a,\mu_p;r)=e(-\tfrac 18)\!\!\!\!\!\!\!\sum_{\substack{b:\ b\Mod a^*\\0<c<pa,\ p|c\\\text{s.t. }ad-bc=1}}\!\!\!\!\!\!\!\overline{\mu(c,d,[a\ell],p)}e^{-\pi i s(d,c)}e\(\frac{-X_{r,0}^{([a\ell])}\frac cp+ n_\infty b}a+\frac{a+d}{24c}\).\]

We denote $b'_a$ as $b'$ for simplicity, i.e. $b'$ is defined by $bb'\equiv -1\Mod a$ if $a$ is odd and by $bb'\equiv -1\Mod {2a}$ if $a$ is even. Moreover, we still denote positive integers $t$ by $a\ell-[a\ell]=tp$ and $k$ by $c=kp$.  

We have $c\equiv b'\Mod a$ for $\begin{psmallmatrix}
	a&b\\c&d
\end{psmallmatrix}\in \Gamma_0(p)$.
Hence we can rewrite $\overline{D_{\ell,p,a,r}}$ as:
\begin{align*}
	\overline{D_{\ell,p,a,r}(-n,m_{\ell,p,a,r})}&=(-1)^{a\ell+[a\ell]}\sum_{b\Mod a^*} \overline{\omega_{b,a}}\ \overline{e\(\frac{m_{\ell,p,a,r}b'-nb}{a}\)}\\
	&=(-1)^{a\ell-[a\ell]}\sum_{b\Mod a^*} e^{-\pi is(b,a)}e\(\frac{-m_{\ell,p,a,r}c+nb}{a}\).
\end{align*}
For $\gamma=\begin{psmallmatrix}
	a&b\\c&d
\end{psmallmatrix}\in \Gamma_0(p)$, with our choice  $c\geq 0$ and $a>0$, we need the relationship between $e(-\pi i s(b,a))$ and $e^{-\pi i s(d,c)}$. Denote $S=\begin{psmallmatrix}
	0&-1\\1&0
\end{psmallmatrix}$. Recall $w_{\frac 12}$ in Definition~\ref{multiplier system def}. We have
\[w_{\frac 12}(S,\gamma)=(cz+d)^{\frac 12}\(\frac{az+b}{cz+d}\)^{\frac 12}(az+b)^{-\frac 12}=1\]
because $cz+b$, $\frac{az+b}{cz+d}$ and $az+b$ are in $\HH$ for $z\in \HH$. Therefore, we have $\nu_\eta(S\gamma)=\nu_\eta(S)\nu_\eta(\gamma)$. With the help of $\nu_\eta(S)=e(-\frac 18)$ by \eqref{etaMultiplier}, we get
\[e^{-\pi i s(b,a)}=e(-\tfrac 18)e^{-\pi is(d,c)}e(\tfrac{a+d}{24c}+\tfrac {c-b}{24a}). \]
Then we continue: 
\begin{align*}
	\overline{D_{\ell,p,a,r}(-n,m_{\ell,p,a,r})}&=(-1)^{tp}e(-\tfrac 18)\sum_{b\Mod a^*} \overline{\omega_{d,c}}\ e\(\frac{-m_{\ell,p,a,r}c+nb}{a}+\frac{a+d}{24c}+\frac {c-b}{24a}\)\\
	&=(-1)^{t}e(-\tfrac 18)\sum_{b\Mod a^*} \overline{\omega_{d,c}}\ e\(\frac{a+d}{24c}\)e\(\frac{(\frac1{24}-m_{\ell,p,a,r})c+(n-\frac 1{24})b}{a}\).
\end{align*}
Comparing with the formula of $S_{0\infty}^{(\ell)}(X_{r}^{([a\ell])},n,c,\mu_p;r)$ where $X_{r,0}^{([a\ell])}=-p\delta_{\ell,p,a,r}$, we are left to prove that
\begin{equation}\label{Equation muDelta equal -1 mlpar}
	\overline{\mu(c,d,[a\ell],p)}\ e\(\frac{\delta_{\ell,p,a,r}c}a\)=(-1)^te\(\frac{(\frac1{24}-m_{\ell,p,a,r})c}a\). 
\end{equation}
By \eqref{mu simplified}, when $0<\frac{[a\ell]}p<\frac 16$, recalling $[a\ell]=a\ell-tp$ and $c=kp$, we have
\begin{align*}
	&\overline{\mu(c,d,[a\ell],p)}\ e\(\frac{(\delta_{\ell,p,a,r}-\frac1{24})c}a\)\\
	&=(-1)^{\ell c+t} e\(-\frac{3c a \ell^2}{2p^2}-\frac{c(1+2r)(a\ell-tp)}{2ap}+\frac{3c(a\ell-tp)^2}{2ap^2}\)\\
	&=(-1)^{\ell k+t}e\(-\frac{3c a \ell^2}{2p^2}-\frac{k\ell(1+2r)}{2}+\frac{ct(1+2r)}{2a} +\frac{3ca\ell^2}{2p^2}-\frac{3c\ell t}{p}+\frac{3ct^2}{2a}\)\\
	&=(-1)^{t} e\(\frac ca\(\frac{(1+2r)t}2+\frac32 t^2\)\).
\end{align*}
On the other hand, by \eqref{m l p a r def}, we have
\[-m_{\ell,p,a,r}=\frac1{2p^2}\Big(3(a\ell-[a\ell])^2+p(1+2r)(a\ell-[a\ell])\Big)=\frac 32t^2+\frac{(1+2r)t}2. \]
This gives \eqref{Equation muDelta equal -1 mlpar}. The proof when $\frac 56<\frac{[a\ell]}p<1$ is similar: we have
\begin{align*}
	&\overline{\mu(c,d,[a\ell],p)}\ e\(\frac{(\delta_{\ell,p,a,r}-\frac1{24})c}a\)\\
	&=(-1)^{\ell c+t} e\(-\frac{3c a \ell^2}{2p^2}-\frac{5c(a\ell-tp)}{2ap}+\frac{3c(a\ell-tp)^2}{2ap^2}+(1-r)\frac ca+\frac{cr(a\ell-tp)}{ap}\)\\
	&=(-1)^{\ell k+t}e\(-\frac{5k\ell}{2}+\frac{5ct}{2a} -3k\ell t+\frac{3ct^2}{2a}+(1-r)\frac{c}a+rk\ell -rt\frac{c}a\)\\
	&=(-1)^{t} e\(\frac ca\(\frac{(5-2r)t}2+\frac32 t^2+1-r\)\). 
\end{align*}
When $\frac 56<\frac{[a\ell]}p<1$, by \eqref{m l p a r def} we also have
\[-m_{\ell,p,a,r}=\frac 32t^2+\frac{(5-2r)t}2+1-r. \]
Now we still get \eqref{Equation muDelta equal -1 mlpar} and \eqref{KL sums match with Bringmann, cusp 0} follows.

\begin{remark}
	From the proof we can rewrite
	\begin{equation}\label{S 0 infty Simiplified for Appendix}
		S_{0\infty}^{(\ell)}\Big(\ceil{-p\delta_{\ell,p,a,r}},n,a,\mu;r\Big)=(-1)^{a\ell-[a\ell]}\sum_{b\Mod a^*}e^{-\pi i s(b,a)}e\(\frac{-m_{\ell,p,a,r}c+nb}a\)
	\end{equation}
	where $0<[a\ell]=a\ell- tp<p$ and 
	\[-m_{\ell,p,a,r}=\left\{\begin{array}{ll}
		\frac32 t^2+\frac{1+2r}2 t,&\text{ when } 0<\frac{[a\ell]}p<\frac 16,\\
		\frac 32 t^2+\frac{5-2r}2 t+1-r, &\text{ when }\frac 56<\frac{[a\ell]}p<1. 
	\end{array}
	\right.\] 
\end{remark}

\section{Sums of vector-valued Kloosterman sums}
\label{Section: sums of v-val KL sums}

Recall that we have cited Proposition~\ref{convergence in general, main contribution} in the proofs of Proposition~\ref{Fourier exp of Maass Poincare at infty} and Proposition~\ref{Fourier exp of Maass Poincare at zero}, in order to guarantee the convergence of certain sums of Kloosterman sums. To finish the proof of Theorem~\ref{main theorem}, we prove Proposition~\ref{convergence in general, main contribution} in the remaining part of this paper.

We first follow \cite{gs} to prove asymptotic formulae (Theorem~\ref{Sums of Kloosterman sums}) for sums of certain vector-valued Kloosterman sums. Let $p\geq 5$ be a prime number. Let $(k,\mu)$ be either $(\frac12,\mu_p)$ or $(-\frac12,\overline{\mu_p})$. For $n\in \Z$ and $\mathbf{m}\in \Z^{p-1}$, recall the notations $\alpha_{\pm\infty}$, $n_{\pm\infty}$, $\alpha_{\pm0}^{(\ell)}$ and $m_{\pm0}^{(\ell)}$ for $1\leq \ell\leq p-1$ introduced before Lemma~\ref{mu p is a vec val mult sys} and recall the Kloosterman sums defined in \eqref{S infty infty def} and \eqref{S 0 infty def}. 

Recall the $W$-Whittaker function $W_{\beta,\mu}$ defined as in \cite[(13.14.3)]{dlmf}. 
The following Mellin transform can be found in \cite[(2.3)]{gs} and will be used frequently in the next section. For $\alpha>0$ and $\re(s+\frac12\pm \mu)>0$, we have
\begin{equation}\label{integral e y W for inner product}
	\int_0^\infty e^{-2\pi \alpha y}y^s W_{\beta,\mu}(4\pi \alpha y)\frac{dy}y=(4\pi \alpha)^{-s}\frac{\Gamma(s+\frac12+\mu)\Gamma(s+\frac12-\mu)}{\Gamma(s-\beta+1)}. 
\end{equation}

By Proposition~\ref{LEigenform mu p r is finite dim}, for every spectral parameter $r_j$ of $\lambda_j=\frac 14+r_j^2$ in the discrete spectrum, we can pick an orthonormal (under the inner product \eqref{vec val Petersson inner prod}) basis of $\LEigenform_{\frac12}(\Gamma_0(p),\mu_p,r_j)$ denoted as $\mathrm{OB}(r_j)$. For every $\mathbf{V}(z;r_j)\in \mathrm{OB}(r_j)$, we have $	V^{(\ell)}(z;r_j)\in \LEigenform_{\frac12}(\Gamma_0(p^2)\cap \Gamma_1(p),\overline{\nu_\eta},r_j)$ which has the Fourier expansion
\begin{equation}\label{V Fourier expansion at the cusp infty}
	V^{(\ell)}(z;r_j)=c_\infty^{(\ell)}(0,y)+\sum_{\substack{n\in \Z\\ n_{+\infty}\neq 0}} \rho_{j,\infty}^{(\ell)}(n)W_{\frac k2 \sgn  n_{+\infty},ir_j}(4\pi |n_{+\infty}|y)e(n_{+\infty}x)
\end{equation}
as in \cite[(2.2)]{gs}. Since $\alpha_{+\infty}\neq 0$, we have $c_\infty^{(\ell)}(0,y)=0$. The Fourier expansion of $\mathbf{V}(z;r_j)$ at the cusp $0$ is given by
\begin{equation}\label{V Fourier expansion at the cusp 0}
	(V^{(\ell)}|_k\sigma_0)(z;r_j)=c_0^{(\ell)}(0,y)+\sum_{\substack{n\in \Z\\n_{+ 0}^{(\ell)}\neq 0}}\rho_{j,0}^{(\ell)}(n)W_{\frac 14\sgn n,\,ir_j}\(4\pi |n_{+ 0}^{(\ell)}|y\)e(n_{+ 0}^{(\ell)}x). 
\end{equation}
Here $c_{0}^{(\ell)}(0,y)=0$ because $\alpha_{+0}^{(\ell)}\neq 0$ for $1\leq \ell\leq p-1$. 

Specially, for the case of $r_0=\frac i4$, by the proof of Proposition~\ref{LEigenform mu p r is finite dim}, any $\mathbf{V}(z;r_0)\in \LEigenform_{\frac 12}(\Gamma_0(p),\mu_p,r_0)$ satisfies $\mathbf{V}^{(\ell)}(z;r_0)\in \LEigenform_{\frac 12}(\Gamma_0(p^2)\cap \Gamma_1(p),\overline{\nu_\eta},r_0)$. From \eqref{CuspFormR0}, there exists a one-to-one correspondence between $\mathbf{V}(z;r_0)\in \LEigenform_{\frac 12}(\Gamma_0(p),\mu_p,r_0)$ and $F\in M_{\frac 12}(\Gamma_0(p^2)\cap \Gamma_1(p),\overline{\nu_\eta})$ by 
\begin{equation}\label{CuspFormR0 Vrj vec val} 
	V^{(1)}(z;r_0)=y^{\frac 14}F(z). 
\end{equation}
By \eqref{Coeffi CuspFormR0}, for the Fourier expansion of $\mathbf{V}(z;r_0)$, since $\alpha_{+\infty}^{(\ell)}>0$ \eqref{alpha infty l def} and $\alpha_{+0}^{(\ell)}>0$ \eqref{alpha 0 l def}, we have
\begin{align}\label{Coeffi CuspFormR0 Vrj vec val}
	\begin{split}
		V^{(\ell)}(z;r_0)&=\sum_{n_{+\infty}>0} \rho_{0,\infty}^{(\ell)}(n)W_{\frac 14,ir_0}(4\pi n_{+\infty}y)e(n_{+\infty}x),\\
		(V^{(\ell)}|_k\sigma_0)(z;r_0)&=\sum_{n_{+ 0}^{(\ell)}> 0}\rho_{0,0}^{(\ell)}(n)W_{\frac 14,\,ir_0}\(4\pi n_{+ 0}^{(\ell)}y\)e(n_{+0}^{(\ell)}x), 
	\end{split}
\end{align}
i.e. $\rho_{0,\infty}^{(\ell)}(n)=\rho_{0,0}^{(\ell)}(n)=0$ if $n\leq 0$.

We prove the following theorem in this section and then Proposition~\ref{convergence in general, main contribution} follows in \S\ref{Subsection: Convergence of sums of KL sums}. 
\begin{theorem}\label{Sums of Kloosterman sums}
	Fix an integer $r\geq 0$, a prime $p\geq 5$, and let $1\leq L,\ell\leq p-1$. Let $m\in \Z$ with $m\leq 0$, $\mathbf{m}\in \Z^{p-1}$ with $\mathbf{m}\leq 0$, and $\displaystyle{M=\max_{\ell\in \condr} \{|m_{+0}^{(\ell)}|\}}$. Recall \eqref{alpha infty l def} for $m_{+\infty}$ and \eqref{alpha 0 l def} for $m_{+0}^{(\ell)}$. We have the following results: 
	\begin{align}\label{Kloosterman sums asymptotics infty infty}
		\sum_{c\leq x:\,p|c}\frac{S_{\infty\infty}^{(\ell)}(m,n,c,\mu_p)}{c}&=\sum_{\frac12<s_j\leq \frac 34}\tau_{j,\infty}^{(\ell)}(m,n)\frac{x^{2s_j-1}}{2s_j-1}+O_{p,\ep}\(|m_{+\infty}n_{+\infty}|^{3}x^{\frac13+\ep}\),\\
		\label{Kloosterman sums asymptotics 0 infty, every L}
		\sum_{\substack{a\leq x:\\p\nmid a,\,[a\ell]=L}}
		\frac{S_{0\infty}^{(\ell)}(m^{(L)},n,a,\mu_p;r)}{a\sqrt p}&=\sum_{\frac12<s_j\leq \frac 34}\tau_{j,0,(L)}^{(\ell)}(m^{(L)},n)\frac{x^{2s_j-1}}{2s_j-1}+O_{p,\ep}\(|m_{+0}^{(L)}n_{+\infty}|^{3}x^{\frac13+\ep}\),\\
		\label{Kloosterman sums asymptotics 0 infty}
		\sum_{\substack{a\leq x:\\p\nmid a,\,[a\ell]\in \condr}}\frac{S_{0\infty}^{(\ell)}(m^{([a\ell])},n,a,\mu_p;r)}{a\sqrt p}&=\sum_{\frac12<s_j\leq \frac 34}\tau_{j,0}^{(\ell)}(\mathbf{m},n)\frac{x^{2s_j-1}}{2s_j-1}+O_{p,\ep}\(|Mn_{+\infty}|^{3}x^{\frac13+\ep}\),
	\end{align}
	where 
	\[\tau_{j,\infty}^{(\ell)}(m,n)=e(\tfrac 18)\(\sum_{L=1}^{p-1}\frac{\overline{\rho_{j,\infty}^{(L)}(m)}}{\sin(\frac{\pi L}p)}\)\frac{\rho_{j,\infty}^{(\ell)}(n)}{\sin(\frac{\pi\ell}p)}\cdot\frac{\Gamma(s_j+\frac14 \sgn n_{+\infty})\Gamma(2s_j-1)}{\pi^{2s_j-1}|4m_{+\infty}n_{+\infty}|^{s_j-1}\Gamma(s_j-\frac14)},\]
	\[\tau_{j,0,(L)}^{(\ell)}(m^{(L)},n)=e(-\tfrac 18)\overline{\rho_{j,0}^{(L)}(m^{(L)})}\cdot\frac{\rho_{j,\infty}^{(\ell)}(n)}{\sin(\frac{\pi\ell}p)}\cdot\frac{\Gamma(s_j+\frac14 \sgn n_{+\infty})\Gamma(2s_j-1)}{\pi^{2s_j-1}|4m_{+0}^{(L)}n_{+\infty}|^{s_j-1}\Gamma(s_j-\frac14)},\]
	and $\displaystyle \tau_{j,0}^{(\ell)}(\mathbf{m},n)=\sum_{L\in \condr}\tau_{j,0,(L)}^{(\ell)}(m^{(L)},n)$. 
	Here all the sums on $s_j$ run over the exceptional eigenvalues $\lambda_j=s_j(1-s_j)\in [\frac 3{16},\frac14)$ of $\Delta_{\frac 12}$ on $\Lform_{\frac12}(\Gamma_0(p),\mu_p)$. The coefficients $\rho_{j,\infty}^{(\ell)}(n)$ and $\rho_{j,0}^{(\ell)}(m^{(\ell)})$ are the Fourier coefficients of an eigenform $\mathbf{V}(z;r_j)$ of $\Delta_{\frac12}$ in a orthonormal basis $\mathrm{OB}(r_j)$ of $\LEigenform_{\frac12}(\Gamma_0(p),\mu_p,r_j)$,  defined in \eqref{V Fourier expansion at the cusp infty} and \eqref{V Fourier expansion at the cusp 0}. The summation term corresponding to a single $s_j$ should be understood as the sum over all $\mathbf{V}(z;r_j)\in \mathrm{OB}(r_j)$. 
\end{theorem}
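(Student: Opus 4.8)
The plan is to follow the spectral method of Goldfeld and Sarnak \cite{gs}, adapting it to the vector-valued multiplier $\mu_p$ and to the two cusp pairs $(\infty,\infty)$ and $(0,\infty)$. The central objects are the Kloosterman--Selberg zeta functions attached to a fixed entry $\ell$, e.g.
\[
Z_{\infty\infty}^{(\ell)}(s;m,n) \defeq \sum_{p\mid c>0} \frac{S_{\infty\infty}^{(\ell)}(m,n,c,\mu_p)}{c^{2s}}, \qquad Z_{0\infty}^{(\ell)}(s;m^{(L)},n) \defeq \sum_{\substack{p\nmid a>0\\ [a\ell]=L}} \frac{S_{0\infty}^{(\ell)}(m^{(L)},n,a,\mu_p;r)}{(a\sqrt p)^{2s}} .
\]
Each converges for $\re s>1$ by the trivial bound on the individual Kloosterman sums. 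The main terms in \eqref{Kloosterman sums asymptotics infty infty}--\eqref{Kloosterman sums asymptotics 0 infty} will emerge as residues of $Z^{(\ell)}$ at the poles $s=s_j$ coming from the exceptional eigenvalues $\lambda_j=s_j(1-s_j)\in[\tfrac3{16},\tfrac14)$; a Tauberian/Perron argument then converts the Dirichlet series into the sharp partial sum. Writing $w=2s-1$, a pole at $s_j$ produces exactly the term $\tau_j^{(\ell)}\,x^{2s_j-1}/(2s_j-1)$.

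To obtain the meromorphic continuation I would form vector-valued Poincar\'e series $\mathbf U_\infty(z,s;m)$ and $\mathbf U_0(z,s;m)$ built from the $W$-Whittaker kernel, so that they lie in $\Lform_{\frac12}(\Gamma_0(p),\mu_p)$ for $\re s>1$. The $n$-th Fourier coefficient of $\mathbf U_\infty(\cdot,s;m)$ at the cusp $\infty$ unfolds, via the Mellin transform \eqref{integral e y W for inner product}, to $Z_{\infty\infty}^{(\ell)}(s;m,n)$ times explicit Gamma factors. Expanding $\mathbf U_\infty(\cdot,s;m)$ spectrally against the orthonormal bases $\mathrm{OB}(r_j)$ of the discrete spectrum and against the Eisenstein series of the continuous spectrum yields the continuation to $\re s>\tfrac12$: the discrete spectrum contributes the poles at $s=s_j$ with residues proportional to $\overline{\rho_{j,\infty}^{(L)}(m)}\,\rho_{j,\infty}^{(\ell)}(n)$ (the sum over $L$ entering through the factor $\sum_\ell \e_\ell/\sin(\pi\ell/p)$ built into \eqref{S infty infty def}), which after inserting the Gamma factors reproduces precisely the stated $\tau_{j,\infty}^{(\ell)}$, while the residual and continuous spectrum stay holomorphic on $\re s>\tfrac12$. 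Throughout I would exploit the reduction already established --- each component $V^{(\ell)}(z;r_j)$ lies in $\LEigenform_{\frac12}(\Gamma_0(p^2)\cap\Gamma_1(p),\overline{\nu_\eta},r_j)$ by Proposition~\ref{LEigenform mu p r is finite dim} --- so that the vector-valued spectral expansion is controlled by the scalar theory on a fixed congruence subgroup. The $0\infty$ case is identical once the scaling matrix $\sigma_0$ and the twisted multiplier $\mu_{0\infty}$ of \eqref{mu 0 infty def} are used to read off the Fourier coefficients $\rho_{j,0}^{(\ell)}$ at the cusp $0$.

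With the continuation in hand, the final step is the Tauberian passage. I would bound $Z^{(\ell)}(s)$ on vertical lines just to the right of $\re s=\tfrac12$, which requires (i) a bound for the contribution of the continuous spectrum, from the convexity bound on the Eisenstein Fourier coefficients together with the decay of the Whittaker and Bessel factors, and (ii) control of the growth in the spectral parameter. A smoothed Perron formula with the contour shifted to $\re s=\tfrac12+\ep$, followed by balancing the smoothing width against the cutoff scale, then yields the error $O(x^{1/3+\ep})$. The formula \eqref{Kloosterman sums asymptotics 0 infty} with the full condition set $\condr$ follows from \eqref{Kloosterman sums asymptotics 0 infty, every L} by summing the finitely many residue contributions over $L\in\condr$, consistently with the definition $\tau_{j,0}^{(\ell)}=\sum_{L\in\condr}\tau_{j,0,(L)}^{(\ell)}$.

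The main obstacle I anticipate is not the formal spectral bookkeeping but the \emph{uniformity}: establishing the continuation and, crucially, the vertical-line bounds with explicit polynomial dependence on $m$ and $n$, since it is exactly this dependence --- the factors $|m_{+\infty}n_{+\infty}|^3$ and $|m_{+0}^{(L)}n_{+\infty}|^3$ --- that is needed afterwards, in \S\ref{Subsection: Convergence of sums of KL sums} through Proposition~\ref{convergence in general, main contribution}, to sum the Fourier expansions over $m$ or over the vector $\mathbf m=\mathbf X_r$. This forces careful tracking of every Gamma factor and Bessel estimate in $m,n$ and a uniform treatment of the continuous spectrum, which is the most delicate part of the Goldfeld--Sarnak machinery. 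A secondary, organizational subtlety is that the vector-valued Kloosterman sums bundle together the residues $[a\ell]=L$ for different $L$, so one must first prove the ``pure'' estimate \eqref{Kloosterman sums asymptotics 0 infty, every L} for a single target residue $L$ and only then assemble \eqref{Kloosterman sums asymptotics 0 infty}.
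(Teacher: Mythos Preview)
Your overall strategy is sound and you correctly identify the Kloosterman--Selberg zeta functions, the residues at the exceptional $s_j$, and the need for uniformity in $m,n$. However, you diverge from the paper (and from \cite{gs} itself) at the step you flag as the main obstacle. You propose a full spectral expansion of a single Poincar\'e series, including the Eisenstein contribution, and then to bound that continuous part on vertical lines. The paper bypasses the continuous spectrum entirely: it takes the \emph{inner product of two Poincar\'e series} --- one built from $m$ in weight $-\tfrac12$ with multiplier $\overline{\mu_p}$, conjugated, and one built from $n$ in weight $\tfrac12$ --- and unfolds to obtain $Z^{(\ell)}(s)$ times explicit Gamma factors plus a remainder that is \emph{manifestly} holomorphic for $\re s>\tfrac12$ and already $O(|mn|/(\sigma-\tfrac12))$ (Lemmas~\ref{Inner product U infty U infty} and \ref{Inner product U 0 U ell}). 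Cauchy--Schwarz together with the crude resolvent bound $\|\mathbf U(\cdot;s)\|^2\ll m^2/(\sigma-\tfrac12)^2$ (Lemmas~\ref{Inner product U U U ell U ell} and \ref{Inner product U 0 U 0}) then gives $Z^{(\ell)}(s)\ll |mn|^3|s|^{1/2}/(\sigma-\tfrac12)$ directly, and the residues are read off by projecting only onto the finite-dimensional exceptional eigenspaces. So the Eisenstein analysis you anticipate as most delicate simply never occurs; this is precisely the point of the Goldfeld--Sarnak shortcut.

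Two smaller discrepancies. The Poincar\'e series used here (as in \cite{gs}) have the exponential seed $(\im\gamma z)^s\,e(m_{\pm\infty}\gamma z)$, not a $W$-Whittaker kernel; this already lies in $\Lform_{\frac12}$ for $\re s>1$, and the Whittaker functions arise only after Poisson summation in the Fourier expansion. And the paper uses the sharp Perron formula with Phragm\'en--Lindel\"of interpolation between $\sigma=\tfrac12+\ep$ and $\sigma=1+\ep$, choosing $T=x^{2/3}$, rather than a smoothed version; this is what produces the exponent $\tfrac13+\ep$. Your observation that \eqref{Kloosterman sums asymptotics 0 infty} is assembled from \eqref{Kloosterman sums asymptotics 0 infty, every L} by summing over $L\in\condr$ matches the paper exactly.
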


\begin{remark} We have the following remarks of the above theorem. 
	\begin{enumerate}
		\item It is possible to improve the exponents on $m_{+\infty}$, $m_{+0}^{(\ell)}$, $M$ and $n_{+\infty}$. In this paper, any polynomial growth is enough. One may refer to \cite{pribitkin,QihangFirstAsympt} for improvements on scalar-valued Kloosterman sums. 
		\item The notation $r$ always means the integer $r\geq 0$ which appears in the Kloosterman sum $S_{0\infty}^{(\ell)}(m^{(L)},n,a,\mu_p;r)$. The notation $r_j$, with subscript $j\geq 0$, is the spectral parameter of the eigenvalue $\lambda_j=\frac 14+r_j^2$ of $\Delta_{\frac 12}$ on $(\Gamma_0(p),\mu_p)$. 
	\end{enumerate}
	
\end{remark}

\begin{corollary}\label{Sums of Kloosterman sums, whole growth rate for convergence}
	With the same setting as Theorem~\ref{Sums of Kloosterman sums}, there exists a $\delta>0$ such that for all $1\leq L,\ell\leq p-1$, we have
	\begin{align*}
		\sum_{c\leq x:\, p|c}\frac{S_{\infty\infty}^{(\ell)}(m,n,c,\mu_p)}{c}&\ll_p |m_{+\infty}n_{+\infty}|^3 x^{\frac12-\delta}\\
		\sum_{\substack{a\leq x:\\p\nmid a,\, [a\ell]=L}} \frac{S_{0\infty}^{(\ell)}(m^{(L)},n,a,\mu_p;r)}{a\sqrt{p}}&\ll_{p} |m_{+0}^{(L)}n_{+\infty}|^3x^{\frac12-\delta},\\
		\sum_{\substack{a\leq x:\\p\nmid a,\,[a\ell]\in \condr}} \frac{S_{0\infty}^{(\ell)}(m^{([a\ell])},n,a,\mu_p;r)}{a\sqrt{p}}&\ll_{p} |Mn_{+\infty}|^3x^{\frac12-\delta}. 
	\end{align*} 
\end{corollary}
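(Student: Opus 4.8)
The plan is to deduce Corollary~\ref{Sums of Kloosterman sums, whole growth rate for convergence} directly from the three asymptotic formulae of Theorem~\ref{Sums of Kloosterman sums}, the only additional ingredient being a spectral fact that strictly lowers the exponent below $\tfrac12$. Each formula there has the shape ``main term plus $O(|{\cdot}|^{3}x^{1/3+\ep})$'', where the main term is a finite sum over exceptional eigenvalues $\lambda_j=s_j(1-s_j)\in[\tfrac3{16},\tfrac14)$ weighted by $x^{2s_j-1}/(2s_j-1)$. Writing $\lambda_j\in[\tfrac3{16},\tfrac14)$ as $s_j\in(\tfrac12,\tfrac34]$, the exponent $2s_j-1$ ranges over $(0,\tfrac12]$, with the worst value $\tfrac12$ attained exactly at $s_j=\tfrac34$, i.e. $\lambda_j=\lambda_0=\tfrac3{16}$. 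So the whole statement reduces to excluding this single boundary eigenvalue.

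First I would show that $\lambda_0=\tfrac3{16}$ does not occur in the discrete spectrum of $\Delta_{\frac12}$ on $\Lform_{\frac12}(\Gamma_0(p),\mu_p)$. By the correspondence \eqref{CuspFormR0} in its vector-valued form \eqref{CuspFormR0 Vrj vec val}, any eigenform with spectral parameter $r_0=\tfrac i4$, equivalently $s_0=\tfrac34$ and $\lambda_0=\tfrac3{16}$, is of the form $y^{1/4}\mathbf{F}(z)$ with $\mathbf{F}\in M_{\frac12}(\Gamma_0(p),\mu_p)$. But Lemma~\ref{mu p space is empty zero} gives $M_{\frac12}(\Gamma_0(p),\mu_p)=\{\mathbf{0}\}$, so no such eigenform exists. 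Hence every exceptional $s_j$ satisfies $\tfrac12<s_j<\tfrac34$ strictly, and since the discrete spectrum below $\tfrac14$ is finite there is a maximum $s_{\max}<\tfrac34$. I would set $\delta_1:=2\bigl(\tfrac34-s_{\max}\bigr)>0$, so that $2s_j-1\le 2s_{\max}-1=\tfrac12-\delta_1$ for every exceptional $s_j$. Crucially, $s_{\max}$ and hence $\delta_1$ depend only on the fixed prime $p$, not on $L,\ell,m,n$, so they are uniform over the finitely many residues $1\le L,\ell\le p-1$.

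Next I would bound the main-term coefficients $\tau_{j,\infty}^{(\ell)}$, $\tau_{j,0,(L)}^{(\ell)}$ and $\tau_{j,0}^{(\ell)}$ polynomially. Each is a finite product of Gamma factors depending only on $s_j$ (hence $O_p(1)$), a factor $|4m_{+\infty}n_{+\infty}|^{1-s_j}$ or its analogue with $m_{+0}^{(L)}$ or $M$, which is $\ll |m_{+\infty}n_{+\infty}|^{1/2}$ because $1-s_j<\tfrac12$, and two Fourier coefficients $\rho_j^{(\cdot)}$ of the finitely many $L^2$ Maass cusp forms $\mathbf{V}(z;r_j)$ in the chosen orthonormal bases. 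The latter obey a trivial polynomial bound $\rho_j^{(\cdot)}(n)\ll_p |n_{\pm}|^{A}$ for a fixed exponent $A$, coming from square-integrability of each component in $\Lform_{\frac12}(\Gamma_0(p^2)\cap\Gamma_1(p),\overline{\nu_\eta})$; no subconvex or Hecke estimate is needed, since any fixed exponent suffices. Multiplying out, each $\tau$ is $\ll_{p,\ep}|m_{+\infty}n_{+\infty}|^{3}$ (respectively with $m_{+0}^{(L)}$ or $M$), comfortably within the claimed exponent.

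Combining these, the main term is $\ll_p |m_{+\infty}n_{+\infty}|^{3}\,x^{1/2-\delta_1}$, while the error term of Theorem~\ref{Sums of Kloosterman sums} is $O_{p,\ep}(|m_{+\infty}n_{+\infty}|^{3}\,x^{1/3+\ep})$. Choosing $\ep<\tfrac16$ and $\delta:=\min(\delta_1,\tfrac16-\ep)>0$ yields the bound $\ll_p |m_{+\infty}n_{+\infty}|^{3}\,x^{1/2-\delta}$, and the same argument applied verbatim to \eqref{Kloosterman sums asymptotics 0 infty, every L} and \eqref{Kloosterman sums asymptotics 0 infty} gives the two remaining estimates with $m_{+0}^{(L)}$ and $M$ in place of $m_{+\infty}$. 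The one genuine obstacle is the spectral input of the second paragraph: the whole saving rests on Lemma~\ref{mu p space is empty zero} forcing $\lambda_0=\tfrac3{16}$ out of the spectrum, so that $s_{\max}<\tfrac34$; everything after that is routine majorization, and I would only take care that the polynomial coefficient bounds and the choice of $\delta$ are made uniformly in $L$ and $\ell$.
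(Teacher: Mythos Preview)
Your proposal is correct, but the mechanism you use to eliminate the boundary case $s_0=\tfrac34$ is genuinely different from the paper's. You invoke Lemma~\ref{mu p space is empty zero} to conclude that $M_{\frac12}(\Gamma_0(p),\mu_p)=\{\mathbf 0\}$, hence via \eqref{CuspFormR0 Vrj vec val} the eigenvalue $\lambda_0=\tfrac3{16}$ simply does not occur in the discrete spectrum. The paper instead allows $\lambda_0$ in the spectrum and argues that the coefficients $\tau_{j,\infty}^{(\ell)}(m,n)$ and $\tau_{j,0}^{(\ell)}(\mathbf m,n)$ vanish at $s_0$: by \eqref{Coeffi CuspFormR0 Vrj vec val} the associated holomorphic form has $\rho_{0,\infty}^{(\ell)}(m)=\rho_{0,0}^{(\ell)}(m^{(\ell)})=0$ whenever $m\le 0$, and these Fourier coefficients appear as factors in the $\tau$'s. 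Your route is cleaner here because Lemma~\ref{mu p space is empty zero} is already available, but the paper's argument is more portable: it works for any multiplier system as soon as the order $m$ is non-positive, without needing the space of holomorphic forms to be trivial. Otherwise the two arguments coincide, including the observation that the remaining exceptional $s_j$ lie strictly below $\tfrac34$ and that the Fourier coefficients of the finitely many exceptional eigenforms are polynomially bounded.
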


\begin{proof}[Proof of Corollary~\ref{Sums of Kloosterman sums, whole growth rate for convergence}]
	Granted Theorem~\ref{Sums of Kloosterman sums}, it suffices to determine the growth rate of $\rho_{j,0}^{(\ell)}(n)$ and $\rho_{j,0}^{(\ell)}(m^{(\ell)})$. By the discussion in Proposition~\ref{LEigenform mu p r is finite dim}, these coefficients are also the Fourier coefficients of an eigenform of $\Delta_{\frac 12}$ in a orthonormal basis of $\LEigenform_{\frac12}(\Gamma_0(p^2)\cap \Gamma_1(p),\overline{\nu_\eta},r_j)$. Then we can get the growth rate from the spaces of scalar valued Maass eigenforms of $\Delta_{\frac 12}$. 
	
	For $\frac3{16}=\lambda_0=s_0(1-s_0)$, i.e. $s_0=\frac 34$, we know that $\LEigenform_{\frac12}(\Gamma_0(p^2)\cap \Gamma_1(p),\overline{\nu_\eta},r_j)$ corresponds to holomorphic modular forms $M_{\frac12}(\Gamma_0(p^2)\cap \Gamma_1(p),\overline{\nu_\eta})$ as in \eqref{CuspFormR0 Vrj vec val}. Since we have $m_{+\infty}<0$ and $m_{+0}^{(\ell)}<0$, by \eqref{CuspFormR0 Vrj vec val}, we get $\rho_{0,\infty}^{(\ell)}(m)=0$ and $\rho_{0,0}^{(\ell)}(m^{(\ell)})=0$. Thus, the term $x^{2s_0-1}=x^{\frac12}$ for $s_0=\frac 34$ is not contained in each sum. 
	
	Since the exceptional eigenvalues of $\Delta_{\frac12}$ on $\Lform_{\frac12}(\Gamma_0(p^2)\cap \Gamma_1(p),\overline{\nu_\eta},r_j)$ are discrete, there exists $\delta>0$ such that $2s_j-1<\frac12-\delta$ for all $s_j\in (\frac12,\frac 34)$. For these $j$, both $\rho_{j,\infty}^{(\ell)}(n)$ and $\rho_{j,0}^{(\ell)}(m^{(\ell)})$ are $O_p(1)$. The corollary follows. 
\end{proof}

We generalize the method in \cite{gs} (to the vector-valued setting) to prove Theorem~\ref{Sums of Kloosterman sums}. Define the Kloosterman-Selberg zeta functions as 
\begin{equation}\label{KloostermanSelbergZetaFunctionVVal}
	Z_{\substack{\infty\infty\\m,n,+}}^{(\ell)}(s)\defeq \sum_{c\leq x:\,p|c}\frac{S_{\infty\infty}^{(\ell)}(m,n,c,\mu_p)}{c^{2s}},\quad 
	Z_{\substack{0\infty;r\\m^{(L)},n,+}}^{(\ell)}(s)\defeq \sum_{\substack{a\leq x:\,p\nmid a,\\ [a\ell]=L}}\frac{S_{0\infty}^{(\ell)}(m^{(L)},n,a,\mu_p;r)}{(a\sqrt{p})^{2s}}.
\end{equation}
and $\displaystyle Z_{\substack{0\infty;r\\\mathbf{m},n,+}}^{(\ell)}(s)\defeq \sum_{L\in \condr} Z_{\substack{0\infty;r\\m^{(L)},n,+}}^{(\ell)}(s)$. 

We address the proof of \eqref{Kloosterman sums asymptotics infty infty} in the next subsection. The proof of \eqref{Kloosterman sums asymptotics 0 infty} is in the subsequent subsection. 

\subsection{\texorpdfstring{The cusp $\infty$}{The cusp infinity}}
Recall the notation $(k,\mu)=(\frac 12,\mu_p)$ or $(-\frac 12,\overline{\mu_p})$. For $m>0$, we define the weight $k$ non-holomorphic vector-valued Poincar\'e series as
\begin{equation}
	\mathbf{U}(z;s,k,m,\mu)\defeq \sum_{\ell=1}^{p-1} \sum_{\gamma\in \Gamma_\infty\setminus \Gamma_0(p)}\mu(\gamma)^{-1} j(\gamma,z)^{-k}\frac{y^s}{|cz+d|^{2s}}\,\frac{e( m_{\pm\infty} \gamma z) \e_{\ell}}{\sin(\frac{\pi \ell}p)} 
\end{equation}
where $\gamma=\begin{psmallmatrix}
	a&b\\c&d
\end{psmallmatrix}$ and the $\pm$ in $m_{\pm\infty}$ is chosen depending on the sign of $k$. We also denote
\begin{equation}\label{Poincare series generated from l}
	\mathbf{U}_{(\ell)}(z;s,k,m,\mu)\defeq  \sum_{\gamma\in \Gamma_\infty\setminus \Gamma_0(p)}\mu(\gamma)^{-1} j(\gamma,z)^{-k}\frac{y^s}{|cz+d|^{2s}}\,\frac{e( m_{\pm\infty} \gamma z) }{\sin(\frac{\pi \ell}p)} \e_{\ell}
\end{equation} 
as the part of the Poincar\'e series generated by $\e_{\ell}$.
\begin{remark}
	Note the difference between $\mathbf{U}_{(\ell)}(z;s,k,m,\mu)$ and $\mathbf{U}^{(\ell)}(z;s,k,m,\mu)$: $\mathbf{U}_{(\ell)}$ is defined only by the terms associated to $\e_\ell$, while $\mathbf{U}^{(\ell)}$ is the $\ell$-th entry of the vector $\mathbf{U}$. All of the components of $\mathbf{U}^{(\ell)}$ for $L:1\leq L\neq \ell\leq p-1$ are zero, but this is not true for $\mathbf{U}_{(\ell)}$. 
\end{remark}

The Poincar\'e series $\mathbf{U}(z;s,k,m,\mu)$ and $\mathbf{U}_{(\ell)}(z;s,k,m,\mu)$ converge absolutely and uniformly in any compact subset of $\re s>1$ and are in $\Lform_{k}(\Gamma_0(p),\mu)$. To show the transformation law, e.g. 
\[\mathbf{U}(\gamma_1 z;s,k,m,\mu)=\mu(\gamma_1)j(\gamma_1,z)^{k}\mathbf{U}(z;s,k,m,\mu) \quad \text{for }\gamma_1\in \Gamma_0(p), \]
it suffices to show that for $\gamma_1,\gamma_2\in \Gamma_0(p)$, we have
\begin{align}\label{TransformationLawOfPoincare Cusp Infty}
	\begin{split}
		\mu(\gamma_2\gamma_1^{-1})^{-1} &j(\gamma_2\gamma_1^{-1}, \gamma_1 z)^{-k}
		=\(\mu(\gamma_2)\mu(\gamma_1^{-1})\)^{-1} \overline{w_{k}(\gamma_2,\gamma_1^{-1})}j(\gamma_2\gamma_1^{-1}, \gamma_1 z)^{-k}\\
		&=\mu(\gamma_1)\mu(\gamma_2)^{-1} j(\gamma_2,z)^{-k}j(\gamma_1^{-1},\gamma_1 z)^{-k} j(\gamma_2\gamma_1^{-1},\gamma_1 z)^k j(\gamma_2\gamma_1^{-1}, \gamma_1 z)^{-k}\\
		&=\mu(\gamma_1)j(\gamma_1,z)^{k}\cdot \mu(\gamma_2)^{-1}j(\gamma_2,z)^{-k}, 
	\end{split}
\end{align}
where we have used this trick: since $w_{k}(\gamma,\gamma')$ does not depend on $z\in \HH$, we have
\begin{equation}\label{trick w k}
	w_{k}(\gamma,\gamma')=j(\gamma',\gamma''z)^k j(\gamma,\gamma'\gamma'' z)^kj(\gamma\gamma',\gamma''z)^{-k} \quad \text{for all }\gamma''\in \SL_2(\Z), 
\end{equation}
as well as the properties $\mu(\gamma_1^{-1})=\mu(\gamma_1)^{-1}$ and $j(\gamma_1^{-1},\gamma_1 z)^{k}j(\gamma_1,z)^{k}=1$. 

For $\re s>1$, we can compute the Fourier expansion of $\mathbf{U}(z;s,k,m,\mu)$ in the same way as the scalar-valued case. The contribution from $c=0$ equals
\begin{align*}
	\sum_{\ell=1}^{p-1}\frac{y^s}{\sin(\frac{\pi\ell}p)}e(m_{\pm \infty}z)\e_{\ell}. 
\end{align*}
When $c>0$, the contribution from a single $c$ equals
\begin{align*}
	&\sum_{\ell=1}^{p-1} \!\!\sum_{\substack{d\Mod c^*\\ad\equiv 1\Mod c}}\!\!\sum_{t\in \Z}\mu(\begin{psmallmatrix}
		a&b+ta\\c&d+tc
	\end{psmallmatrix})^{-1}\(\frac{cz+d+tc}{|cz+d+tc|}\)^{-k}\frac{y^s}{|cz+d+tc|^{2s}}\frac{e\(m_{\pm\infty}\frac{az+b+ta}{cz+d+tc}\)}{\sin(\frac {\pi\ell} p)}\e_\ell\\
	&=\sum_{\ell=1}^{p-1}\!\! \sum_{\substack{d\Mod c^*\\ad\equiv 1\Mod c}}\!\!\!\!\frac{\mu(\begin{psmallmatrix}
			a&*\\c&d
		\end{psmallmatrix})^{-1}}{c^{2s}} e\(\frac{m_{\pm\infty}a}c\)\sum_{t\in \Z}\(\frac{z+\frac dc+t}{|z+\frac dc+t|}\)^{-k}\frac{e(t\alpha_{\pm\infty})y^s}{|z+\frac dc+t|^{2s}}\frac {e\(-\frac {m_{\pm\infty}}{c^2(z+\frac dc+t)}\)\e_\ell}{\sin(\frac {\pi\ell} p)}, 
\end{align*}
where we have used $\frac{az+b}{cz+d}=\frac ac-\frac1{c(cz+d)}$, Definition~\ref{mu matrix define}, and the property \eqref{MultiplierSystemBasicProprety} for $\nu_\eta$. If we denote 
\[f_{\pm}(z)\defeq \sum_{t\in \Z}\(\frac{z+t}{|z+t|}\)^{-k}\frac{e(t\alpha_{\pm\infty})\,y^s}{|z+t|^{2s}}e\(-\frac {m_{\pm\infty}}{c^2(z+t)}\), \]
then $f_{\pm}(z)e(\alpha_{\pm \infty}x)$ has period $1$ and we get the following Fourier expansion by Poisson summation: 
\[f_{\pm}(z)=\sum_{n_{\pm\infty}>0}\frac{B(c,m_{\pm\infty},t_{\pm\infty},y,s,k)}{\sin(\frac{\pi\ell}c)}e(n_{\pm\infty}x)\quad \text{and}\quad f_{\pm}\(z+\frac dc\)=e\(\frac{n_{\pm\infty}d}{c}\)f_{\pm}(z),\]
where (with the substitution $x=yu$)
\begin{equation}\label{Bcmtysk function}
	B(c,m_{\pm\infty},t_{\pm\infty},y,s,k)\defeq y\int_{\R} \(\frac{u+i}{|u+i|}\)^{-k}e\(\frac{-m_{\pm\infty}}{c^2 y(u+i)}-t_{\pm\infty}yu\)\frac{du}{y^{2s}(u^2+1)^s}. 
\end{equation}
Therefore, the Fourier expansion of $\mathbf{U}(z;s,k,m,\mu)$ at the cusp $\infty$ is 
\begin{align}\label{Fourier expan Poincare series infty infty}
	\begin{split}
		\mathbf{U}(z;s,k,m,\mu)&=\sum_{\ell=1}^{p-1}\frac{y^se(m_{\pm \infty}z)\e_{\ell}}{\sin(\frac{\pi\ell}p)} \\
		&+y^s\sum_{t\in \Z}\sum_{p|c>0}\frac{\mathbf{S}_{\infty\infty}(m,t,c,\mu)}{c^{2s}}B(c,m_{\pm\infty},t_{\pm\infty},y,s,k)e(t_{\pm\infty} x), 
	\end{split}
\end{align} 
where $\mathbf{S}_{\infty\infty}(m,n,c,\mu)$ is defined in \eqref{S infty infty def}.

Moreover, for $\re s>1$ and $\lambda=s(1-s)$, we still have the recursion relation
\begin{align}\label{meromorphic continuation cusp infty}
	\mathbf{U}(z;s,k,m,\mu)&=-4\pi m_{\pm\infty}(s-\tfrac k2) \mathscr{R}_{\lambda}\(\mathbf{U}(z;s+1,k,m,\mu)\), \\
	\mathbf{U}_{(\ell)}(z;s,k,m,\mu)&=-4\pi m_{\pm\infty}(s-\tfrac k2) \mathscr{R}_{\lambda}\(\mathbf{U}_{(\ell)}(z;s+1,k,m,\mu)\), 
\end{align}
where $\mathscr{R}_{\lambda}=(\Delta_{k}+\lambda)^{-1}$ is the resolvent of $\Delta_{k}$. Since $\frac12<\re s<1$ implies $\lambda<\frac14$, we know that $\mathbf{U}(z;s,k,m,\mu)$ and every $\mathbf{U}_{(\ell)}(z;s,k,m,\mu)$ can be meromorphically continued to the half plane $\re s>\frac 12$ with a finite number of simple poles at $s_j$ for $\frac12<s_j<\frac 34$.

Recall that we choose $m_{+\infty}<0$ in the condition for Theorem~\ref{Sums of Kloosterman sums} (hence $m\leq 0$) and recall $(1-m)_{-\infty}=-m_{+\infty}$. Also recall that $\mathrm{OB}(r_j)$ is a orthonormal basis of $\LEigenform_{\frac 12}(\Gamma_0(p),\mu_p,r_j)$. 
The residue of $\overline{\mathbf{U}(\cdot;\overline{s},-\tfrac12,1-m,\overline{\mu_p})}$ at $s=s_j$ is then given by 
\[\sum_{\mathbf{V}\in \mathrm{OB}(r_j)}\Res_{s=s_j} \left \langle\overline{\mathbf{U}(\cdot;\overline{s},-\tfrac12,1-m,\overline{\mu_p})}, \mathbf{V}(\cdot;r_j)\right \rangle\mathbf{V}(z;r_j) .  \]
We compute the inner product (defined by \eqref{vec val Petersson inner prod}) by applying the Mellin transform \eqref{integral e y W for inner product}. Note that $W_{\frac k2 \sgn m_{+\infty}, ir_j}(4\pi|m_{+\infty}|y)\in \R$ and $s_j=\frac 12+ir_j$ for $r_j\in i(0,\frac 14)$. We get  
\begin{align*}
	&\left\langle\overline{\mathbf{U}(\cdot;\overline{s},-\tfrac12,1-m,\overline{\mu_p})}, \mathbf{V}(\cdot;r_j)\right \rangle\\
	&=\int_{\Gamma_0(p)\setminus \HH} \mathbf{V}(z;r_j)^{\mathrm H}\(\overline{\sum_{\ell=1}^{p-1}\sum_{\gamma\in \Gamma_\infty\setminus \Gamma_0(p)}\overline{\mu_p}(\gamma)^{-1}j(\gamma,z)^{\frac12}(\im \gamma z)^{\overline s}\frac{e((1-m)_{-\infty}\gamma z)}{\sin(\frac{\pi\ell}p)}\e_\ell}\)\frac{dxdy}{y^2}\\
	&=\sum_{\ell=1}^{p-1}\int_{\Gamma_{\infty}\setminus \HH} \mathbf{V}(z;r_j)^{\mathrm H}y^se(m_{+\infty}x)e^{2\pi m_{+\infty}y}\frac{\e_\ell}{\sin(\frac{\pi\ell}p)}\frac{dxdy}{y^2}\\
	&=\sum_{\ell=1}^{p-1}\frac{\overline{\rho_\infty^{(\ell)}(m)}}{\sin(\frac{\pi \ell}p)} \int_0^\infty e^{-2\pi |m_{+\infty}|y}W_{-\frac 14, ir_j}(4\pi |m_{+\infty}| y)y^{s-1}\frac{dy}y\\
	&= (4\pi |m_{+\infty}|)^{1-s}\frac{\Gamma(s-s_j)\Gamma(s+s_j-1)}{\Gamma(s+\frac14)}\sum_{\ell=1}^{p-1}\frac{\overline{\rho_\infty^{(\ell)}(m)}}{\sin(\frac{\pi \ell}p)}. 
\end{align*}
The residue of $\overline{\mathbf{U}(\cdot;\overline{s},-\tfrac12,1-m,\overline{\mu_p})}$ at $s=s_j$ is then
\begin{align}\label{U 1-m residue at sj}
	\begin{split}
		\sum_{\mathbf{V}\in \mathrm{OB}(r_j)}\Res_{s=s_j}& \left\langle\overline{\mathbf{U}(\cdot;\overline{s},-\tfrac12,1-m,\overline{\mu_p})}, \mathbf{V}(\cdot;r_j) \right\rangle \mathbf{V}(z;r_j)\\
		&=\sum_{\mathbf{V}\in \mathrm{OB}(r_j)} (4\pi |m_{+\infty}|)^{1-s_j}\frac{\Gamma(2s_j-1)}{\Gamma(s_j+\frac14)}\sum_{\ell=1}^{p-1}\frac{\overline{\rho_\infty^{(\ell)}(m)}}{\sin(\frac{\pi \ell}p)} \mathbf{V}(z;r_j).
	\end{split}
\end{align}

The following lemma still holds as \cite[Lemma~1]{gs} because the proofs are essentially the same using the vector-valued inner product defined in \eqref{vec val Petersson inner prod}. 
\begin{lemma}\label{Inner product U U U ell U ell}
	Let $s=\sigma+it$, $(k,\mu)=(\frac12,\mu_p)$ or $(-\frac12,\overline{\mu_p})$ and   $m>0$. For $\frac12<\sigma\leq 2$ and $|t|>1$ we have
	\begin{align*}
		\left\langle\mathbf{U}(\cdot;s,k,m,\mu), \mathbf{U}(\cdot;s,k,m,\mu)\right \rangle&\ll _p \frac{m^2}{(\sigma-\frac12)^2},\\
		\left\langle\mathbf{U}_{(\ell)}(\cdot;s,k,m,\mu), \mathbf{U}_{(\ell)}(\cdot;s,k,m,\mu)\right \rangle&\ll_p \frac{m^2}{(\sigma-\frac12)^2}.
	\end{align*}
\end{lemma}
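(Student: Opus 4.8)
The plan is to evaluate $\langle\mathbf{U},\mathbf{U}\rangle$ by the Rankin--Selberg unfolding method and then to bound the resulting one-variable integral exactly as in \cite[Lemma~1]{gs}. Writing the seed vector $\mathbf{v}_0\defeq\sum_{\ell=1}^{p-1}\e_\ell/\sin(\tfrac{\pi\ell}p)$, which is independent of $\gamma$, we have $\mathbf{U}(z;s,k,m,\mu)=\sum_{\gamma\in\Gamma_\infty\setminus\Gamma_0(p)}\mu(\gamma)^{-1}j(\gamma,z)^{-k}(\im\gamma z)^s e(m_{\pm\infty}\gamma z)\,\mathbf{v}_0$. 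Inserting this for one factor in $\langle\mathbf{U},\mathbf{U}\rangle=\int_{\Gamma_0(p)\setminus\HH}\mathbf{U}^{\mathrm H}\mathbf{U}\,d\mu$ and using the automorphy $\mathbf{U}(\gamma z)=\mu(\gamma)j(\gamma,z)^k\mathbf{U}(z)$ together with the unitarity $\mu(\gamma)^{-1}=\mu(\gamma)^{\mathrm H}$ from Proposition~\ref{mu matrix property}, the matrix weights $\mu(\gamma)$ and the unimodular factors $j(\gamma,z)$ cancel pairwise. This is the one place where the vector-valued setting could differ from \cite{gs}, and unitarity is precisely what makes the cancellation identical to the scalar condition $|\nu|=1$. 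Unfolding then yields
\[\langle\mathbf{U},\mathbf{U}\rangle=\int_{\Gamma_\infty\setminus\HH}(\im z)^{\overline s}\,\overline{e(m_{\pm\infty}z)}\,\mathbf{v}_0^{\mathrm H}\mathbf{U}(z)\,\frac{dx\,dy}{y^2}.\]

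Next I would integrate in $x\in[0,1)$ to extract the modes of frequency $m_{\pm\infty}$ in $\mathbf{v}_0^{\mathrm H}\mathbf{U}$, using the Fourier expansion \eqref{Fourier expan Poincare series infty infty}. The identity coset ($c=0$) contributes the diagonal main term $\bigl(\sum_{\ell=1}^{p-1}\csc^2(\tfrac{\pi\ell}p)\bigr)\int_0^\infty y^{2\sigma-2}e^{-4\pi m_{\pm\infty}y}\,dy=C_p\,\Gamma(2\sigma-1)(4\pi m_{\pm\infty})^{1-2\sigma}$, where $C_p$ depends only on $p$; this already exhibits the simple pole at $\sigma=\tfrac12$ and the $m$-dependence. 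The remaining terms $c>0$ (those with $t_{\pm\infty}=m_{\pm\infty}$) involve the Kloosterman sums $\mathbf{S}_{\infty\infty}(m,m,c,\mu)$ from \eqref{S infty infty def} and the archimedean integrals $B(c,m_{\pm\infty},m_{\pm\infty},y,s,k)$ from \eqref{Bcmtysk function}. The crucial observation is that $B$ is \emph{multiplier-independent}: it is defined purely in terms of $c$, $m_{\pm\infty}$, $t_{\pm\infty}$, $y$, $s$, and $k$, so its uniform estimates for $\tfrac12<\sigma\le2$ and $|t|>1$ are literally those of \cite[Lemma~1]{gs}. The multiplier enters only through $\mathbf{S}_{\infty\infty}$, whose entries are bounded term-by-term because $\mu(\gamma)^{-1}$ has unimodular nonzero entries; hence the off-diagonal analysis proceeds verbatim, introducing only the harmless factor $C_p$. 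Combining the diagonal and off-diagonal estimates as in \cite{gs} produces the stated bound $\ll_p m^2(\sigma-\tfrac12)^{-2}$.

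For the $\mathbf{U}_{(\ell)}$ statement I would run the identical argument with $\mathbf{v}_0$ replaced by $\e_\ell/\sin(\tfrac{\pi\ell}p)$; the diagonal constant becomes $\csc^2(\tfrac{\pi\ell}p)$ and nothing else changes. A clean way to inherit all required one-variable bounds without redoing them is to note, via \eqref{mu p as eta on 0p^2}, that on $\Gamma_0(p^2)\cap\Gamma_1(p)$ the matrix $\mu_p$ reduces to the scalar $\overline{\nu_\eta}$, so each component of $\mathbf{U}$ carries the same archimedean weight as a weight-$k$ eta-Poincar\'e series; since $B$ does not see the multiplier at all, the estimates transfer directly.

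The main obstacle is therefore \emph{not} the unfolding or the bookkeeping with $\mu_p$, but the uniform estimation of $B(c,m_{\pm\infty},m_{\pm\infty},y,s,k)$ and of the full $m$-th Fourier coefficient as $s=\sigma+it$ approaches the critical line $\sigma=\tfrac12$, tracking explicitly the dependence on $m$ and on $\sigma-\tfrac12$. This is exactly the analytic heart of \cite[Lemma~1]{gs}. The entire content of the present lemma is that the vector-valued wrapper contributes nothing beyond the finite $p$-dependent constant $C_p$, which is guaranteed by the unitarity of the multiplier matrix (Proposition~\ref{mu matrix property}) together with the $p$-bounded seed vector $\mathbf{v}_0$; consequently the scalar estimates of \cite{gs} apply without modification.
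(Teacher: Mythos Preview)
Your proposal is correct and follows essentially the same approach as the paper: the paper's entire proof is the single sentence ``the proofs are essentially the same using the vector-valued inner product defined in \eqref{vec val Petersson inner prod},'' citing \cite[Lemma~1]{gs}, and your write-up is precisely an elaboration of why that citation suffices---unitarity of $\mu_p$ (Proposition~\ref{mu matrix property}) makes the unfolding identical to the scalar case, the archimedean integral $B$ of \eqref{Bcmtysk function} is multiplier-independent, and the seed vector contributes only a $p$-dependent constant.
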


The following useful equation follows from \cite[(3.384.9)]{table}: for $y>0$, $\beta\neq 0$, $k=\pm \frac12$, $\re s>\frac12$, we have
\begin{equation}\label{Integral of u to get Whittaker}
	\int_{\R}\frac{(x+i)^{-k}}{(x^2+1)^{s-\frac k2}}e^{-2\pi i \beta xy}dx=
	\frac{	e(-\tfrac k4)\pi (\pi |\beta| y)^{s-1}}{\Gamma(s+\frac k2\sgn{\beta})}W_{\frac k2\sgn \beta,\,\frac12-s}(4\pi |\beta| y). 
\end{equation}
In the next lemma we compute the inner product of two Poincar\'e series. Recall the definition in \eqref{KloostermanSelbergZetaFunctionVVal}. 
\begin{lemma}\label{Inner product U infty U infty}
	Suppose that $m_{+\infty}<0$ and let $1\leq \ell\leq p-1$. Then
	\[Z_{\substack{\infty\infty\\m,n,+}}^{(\ell)}(s) \quad \text{is meromorphic in }\re s>\tfrac12\]
	with at most a finite number of simple poles in $(\tfrac12,1)$. Moreover, when $\re s>\frac12$, for $n_{+\infty}>0$ we have
	\begin{align*}
		&\left\langle \overline{\mathbf{U}(\cdot;\overline s,-\tfrac12,1-m,\overline{\mu_p})},\mathbf{U}_{(\ell)}(\cdot;\overline{s}+2,\tfrac12,n,\mu_p)\right\rangle \\
		&=\frac{e(-\frac 18)}{4^{s+1}\pi n_{+\infty}^2}\cdot\frac{\Gamma(2s+1)}{\Gamma(s+\frac 14)\Gamma(s+\frac 74)}Z_{\substack{\infty\infty\\m,n,+}}^{(\ell)}(s)+O_p\(\frac{|m_{+\infty}n_{+\infty}|}{\sigma-\frac12}\),
	\end{align*}
	for $ n_{+\infty}<0$ we have
	\begin{align*}
		&\left\langle \overline{\mathbf{U}(\cdot;\overline s,-\tfrac12,1-m,\overline{\mu_p})},\ \overline{\mathbf{U}_{(\ell)}(\cdot;s+2,-\tfrac12,1-n,\overline{\mu_p})}\right\rangle \\
		&=\frac{e(-\frac 18)}{4^{s+1}\pi |n_{+\infty}|^{2}}\cdot\frac{\Gamma(2s+1)}{\Gamma(s-\frac 14)\Gamma(s+\frac 94)}Z_{\substack{\infty\infty\\m,n,+}}^{(\ell)}(s)+O_p\(\frac{|m_{+\infty}n_{+\infty}|}{\sigma-\frac12}\). 
	\end{align*}
\end{lemma}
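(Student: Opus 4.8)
The plan is to follow the Goldfeld--Sarnak unfolding method \cite{gs}, adapted to the vector-valued setting, turning the Petersson inner product into a single Fourier coefficient and then evaluating two elementary integrals. Throughout I work first in the region $\re s>1$, where the factor $\mathbf{U}_{(\ell)}(\cdot;\overline s+2,\frac12,n,\mu_p)$ (respectively $\overline{\mathbf{U}_{(\ell)}(\cdot;s+2,-\frac12,1-n,\overline{\mu_p})}$) converges absolutely, its spectral parameter having real part exceeding $\frac52$, while $\mathbf{U}(\cdot;\overline s,-\frac12,1-m,\overline{\mu_p})$ is represented by the Fourier expansion \eqref{Fourier expan Poincare series infty infty}; the final assertions follow by analytic continuation. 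First I would unfold the convergent factor: using its $\Gamma_0(p)$-transformation law together with the invariance of $d\mu(z)$ in \eqref{vec val Petersson inner prod}, the integral over $\Gamma_0(p)\setminus\HH$ collapses to an integral over the strip $\Gamma_\infty\setminus\HH=\{0\le x\le 1\}\times(0,\infty)$, in which only the $\e_\ell$-seed of $\mathbf{U}_{(\ell)}$ survives. Since that seed is supported on $\e_\ell$, the $x$-integral over $[0,1]$ extracts precisely the frequency-$n_{+\infty}$ Fourier coefficient of the $\ell$-th component of $\overline{\mathbf{U}(\cdot;\overline s,-\frac12,1-m,\overline{\mu_p})}$.

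Next I would identify the main term. By \eqref{Fourier expan Poincare series infty infty} this Fourier coefficient is the sum of the $c=0$ seed term and the series $\sum_{p\mid c>0}c^{-2\overline s}\,\mathbf{S}_{\infty\infty}(1-m,1-n,c,\overline{\mu_p})^{(\ell)}\,B(c,\dots,y,\overline s,-\tfrac12)$. Taking the complex conjugation built into the first argument and invoking the vector-valued analogue of the Kloosterman-sum conjugation relation \eqref{KlstmSumConj} (using $(1-m)_{-\infty}=-m_{+\infty}$, $(1-n)_{-\infty}=-n_{+\infty}$, and the fact that $\mu_p(\gamma)^{-1}$ permutes entries by $\ell\mapsto[a\ell]$ as in \eqref{S infty infty ell def}) turns this into $\sum_{p\mid c>0}c^{-2s}S_{\infty\infty}^{(\ell)}(m,n,c,\mu_p)$, i.e.\ the zeta function $Z_{\substack{\infty\infty\\m,n,+}}^{(\ell)}(s)$ of \eqref{KloostermanSelbergZetaFunctionVVal}. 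For the leading part of $B$ the inner exponential $e(-m_{\pm\infty}/(c^2y(u+i)))$ is replaced by $1$, so the $u$-integral becomes a $W$-Whittaker function by \eqref{Integral of u to get Whittaker}, and the subsequent $y$-integral is then evaluated by the Mellin transform \eqref{integral e y W for inner product}; this produces the constant $e(-\tfrac18)/(4^{s+1}\pi n_{+\infty}^2)$ and the Gamma quotient $\Gamma(2s+1)/(\Gamma(s+\tfrac14)\Gamma(s+\tfrac74))$ when $n_{+\infty}>0$, and $\Gamma(2s+1)/(\Gamma(s-\tfrac14)\Gamma(s+\tfrac94))$ when $n_{+\infty}<0$. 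The two cases correspond to pairing against $\mathbf{U}_{(\ell)}(\cdot;\overline s+2,+\tfrac12,n,\mu_p)$ versus $\overline{\mathbf{U}_{(\ell)}(\cdot;s+2,-\tfrac12,1-n,\overline{\mu_p})}$, the weight determining whether the frequency-$n$ Whittaker factor is of $W_{+1/4}$ or $W_{-1/4}$ type.

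I would then collect the error term from the two contributions not captured above: the $c=0$ diagonal (nonzero only when $m_{+\infty}=n_{+\infty}$) and the remainder from expanding the inner exponential of $B$, whose extra factor $c^{-2}$ renders the corresponding $c$-sum convergent up to $\re s=\frac12$. Bounding these by Cauchy--Schwarz against the $L^2$-norm estimates of Lemma~\ref{Inner product U U U ell U ell} yields the uniform bound $O_p(|m_{+\infty}n_{+\infty}|/(\sigma-\frac12))$. Finally, for the meromorphy claim, the left-hand inner product continues meromorphically to $\re s>\frac12$ because $\mathbf{U}(\cdot;\overline s,-\frac12,1-m,\overline{\mu_p})$ does, via the resolvent recursion \eqref{meromorphic continuation cusp infty} with $\mathscr{R}_\lambda=(\Delta_{-1/2}+\lambda)^{-1}$ holomorphic off the discrete spectrum, so that the only poles in $(\frac12,1)$ are the finitely many simple poles at the exceptional $s_j$; dividing by the holomorphic, nonvanishing Gamma quotient transfers this meromorphy to $Z_{\substack{\infty\infty\\m,n,+}}^{(\ell)}(s)$.

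The main obstacle is twofold. The delicate part is the conjugation and multiplier bookkeeping: one must verify that combining $\overline{\mathbf{U}(\cdot;\overline s,-\frac12,1-m,\overline{\mu_p})}$ with $\mathbf{U}_{(\ell)}(\cdot;\ldots,\mu_p)$ reproduces exactly the sum $S_{\infty\infty}^{(\ell)}(m,n,c,\mu_p)$ of \eqref{S infty infty ell def}, with the correct entry permutation $\ell\mapsto[a\ell]$ and no spurious phase, which is where the vector-valued form of \eqref{KlstmSumConj} and the unitarity of $\mu_p$ are essential. The second difficulty is making the continuation past $\re s=1$ rigorous and uniform: the naive Kloosterman series defining $Z_{\substack{\infty\infty\\m,n,+}}^{(\ell)}$ need not converge in $\frac12<\re s\le1$, so the identity must be read through the meromorphically continued inner product, and the error bound must be established by the $L^2$ method rather than termwise, since naive Cauchy--Schwarz applied to the whole inner product would only give $(\sigma-\frac12)^{-2}$ rather than the sharper $(\sigma-\frac12)^{-1}$.
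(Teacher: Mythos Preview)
Your approach is essentially the same as the paper's: unfold the absolutely convergent factor $\mathbf{U}_{(\ell)}$ (respectively $\overline{\mathbf{U}_{(\ell)}}$) over $\Gamma_\infty\setminus\Gamma_0(p)$, insert the Fourier expansion \eqref{Fourier expan Poincare series infty infty} of the other factor, isolate the main term by replacing the inner exponential of $B$ by $1$, and evaluate via \eqref{Integral of u to get Whittaker} and \eqref{integral e y W for inner product}; meromorphy then follows from \eqref{meromorphic continuation cusp infty}.

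One point needs correcting. You write that the error is bounded ``by Cauchy--Schwarz against the $L^2$-norm estimates of Lemma~\ref{Inner product U U U ell U ell}'' and that it ``must be established by the $L^2$ method rather than termwise.'' This is backwards. The remainder $R$ is \emph{not} an inner product of two Poincar\'e series, so Lemma~\ref{Inner product U U U ell U ell} does not apply to it directly. The paper bounds $R$ purely termwise: the bound $|e(-m_{+\infty}/(c^2y(u+i)))-1|\ll |m_{+\infty}|/(c^2y(u^2+1)^{1/2})$ supplies an extra $c^{-2}$, so that with the trivial bound $|S_{\infty\infty}^{(\ell)}|\ll c$ the $c$-sum $\sum_{p\mid c}c^{-2\sigma-1}$ converges absolutely all the way down to $\sigma>0$, and the remaining $(u,y)$-integral is elementary and produces the factor $(\sigma-\tfrac12)^{-1}$. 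The Cauchy--Schwarz step with Lemma~\ref{Inner product U U U ell U ell} enters only in the \emph{next} proposition (the analogue of \cite[Theorem~1]{gs}), where the full inner product is bounded; there the second factor carries parameter $s+2$, so its $L^2$-norm is $O_p(1)$ and one indeed obtains $(\sigma-\tfrac12)^{-1}$ rather than $(\sigma-\tfrac12)^{-2}$. Also note that the $c=0$ seed contributes nothing in the case $n_{+\infty}>0$ (since $m_{+\infty}<0$), and in the case $n_{+\infty}<0$ its contribution is a single Gamma integral that is holomorphic and bounded for $\sigma>\tfrac12$, hence absorbed into the error.
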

\begin{proof}
	We compute as in \cite[Lemma~2]{gs} using the following properties: $\mu_p(\gamma)$ is unitary, $\mu_p(\gamma^{-1})=\mu_p(\gamma)^{-1}$, and $j(\gamma^{-1},z)^{\frac 12}j(\gamma,\gamma^{-1}z)^{\frac 12}=1$. For the first equality in the following computation, we write $\overline{\mathbf U}$ to denote the whole first term in the inner product. Suppose that $\re s_2>\re s_1>\frac 12$. When $n_{+\infty}>0$, we have 
	\begin{align*}
		&\left\langle \overline{\mathbf{U}(\cdot;\overline s_1,-\tfrac12,1-m,\overline{\mu_p})},\mathbf{U}_{(\ell)}(\cdot;\overline{s_2},\tfrac12,n,\mu_p)\right\rangle \\
		&=\int_{\Gamma_0(p)\setminus \HH}\(\sum_{\gamma\in \Gamma_\infty\setminus \Gamma_0(p)}\mu_p(\gamma)^{-1}j(\gamma,z)^{-\frac12}(\im \gamma z)^{\overline{s_2}} \frac{e(n_{+\infty}\gamma z)}{\sin(\frac{\pi \ell}p)}\e_{\ell}\)^{\mathrm H} \overline{\mathbf{U}}\;\frac{dxdy}{y^2}\\
		&=\int_{\Gamma_0(p)\setminus \HH}\sum_{\gamma\in \Gamma_\infty\setminus \Gamma_0(p)}j(\gamma,\gamma^{-1}z)^{\frac12}y^{s_2}e(-n_{+\infty}x)\frac{e^{-2\pi n_{+\infty}y}}{\sin(\frac{\pi \ell}p)}\e_\ell^{\mathrm T}\mu_p(\gamma)\\
		&\qquad \cdot\overline{\overline{\mu_p}(\gamma^{-1})}j(\gamma^{-1},z)^{\frac12}\overline{\mathbf{U}(z;\overline{s_1},-\tfrac12,1-m,\overline{\mu_p})}\frac{dxdy}{y^2}\\
		&=\int_{\Gamma_\infty\setminus\HH} y^{s_2}e(-n_{+\infty}x)\frac{e^{-2\pi n_{+\infty}y}}{\sin(\frac{\pi \ell}p)}\e_\ell^{\mathrm T}\,\overline{\mathbf{U}(z;\overline{s_1},-\tfrac12,1-m,\overline{\mu_p})}\frac{dxdy}{y^2}. 
	\end{align*}
	Then we use the Fourier expansion \eqref{Fourier expan Poincare series infty infty} with \eqref{S infty infty ell def} to continue: 
	\begin{align*}
		&=\sum_{p|c>0}\frac{S_{\infty\infty}^{(\ell)}(m,n,c,\mu_p)}{c^{2s_1}}\int_0^\infty y^{s_2-s_1}e^{-2\pi n_{+\infty}y}\overline{B(c,(1-m)_{-\infty},(1-n)_{-\infty},y,\overline{s_1},-\tfrac12)}\frac{dy}y\\
		&=Z_{\substack{\infty\infty\\m,n,+}}^{(\ell)}(s_1)\int_0^\infty\int_\R \frac{y^{s_2-s_1}e^{-2\pi n_{+\infty}y}}{(u^2+1)^{s_1}}\(\frac{u+i}{|u+i|}\)^{-\frac12}e(-n_{+\infty}uy)\frac{dxdy}y+R_{\substack{\infty\infty\\m,n,+}}^{(\ell)}(s_1,s_2)\\	
		&=\frac{e(-\frac 18)}{4^{s+1}\pi n_{+\infty}^2}\cdot\frac{\Gamma(2s_1+1)}{\Gamma(s_1+\frac 14)\Gamma(s_1+\frac 74)}Z_{\substack{\infty\infty\\m,n,+}}^{(\ell)}(s_1)+R_{\substack{\infty\infty\\m,n,+}}^{(\ell)}(s_1,s_2), 
	\end{align*}
	where 
	\begin{align*}
		R_{\substack{\infty\infty\\m,n,+}}^{(\ell)}(s_1,s_2)&=Z_{\substack{\infty\infty\\m,n,+}}^{(\ell)}(s_1)\int_0^\infty\int_\R \frac{y^{s_2-s_1}e^{-2\pi n_{+\infty}y}}{(u^2+1)^{s_1}}\(\frac{u+i}{|u+i|}\)^{-\frac12}\\
		&\qquad\quad \cdot e(-n_{+\infty}uy)\(e\(\frac{-m_{+\infty}(u+i)}{c^2y(u^2+1)}\)-1\)\frac{dxdy}y\\
		&=Z_{\substack{\infty\infty\\m,n,+}}^{(\ell)}(s_1)O\(\frac{|m_{+\infty}n_{+\infty}|}{c^2(\sigma_1-\frac12)}\)	=O\(\frac{|m_{+\infty}n_{+\infty}|}{\sigma_1-\frac12}\),
	\end{align*}
	which is holomorphic when $\sigma_1>\frac12$. The lemma follows by setting $s_2=s_1+2$. 
	
	Similarly, when $n_{+\infty}<0$ we have
	\begin{align*}
		&\left\langle \overline{\mathbf{U}(\cdot;\overline s_1,-\tfrac12,1-m,\overline{\mu_p})},\ \overline{\mathbf{U}_{(\ell)}(\cdot;s_2,-\tfrac12,1-n,\overline{\mu_p})}\right\rangle \\
		&=\int_{\Gamma_\infty\setminus\HH} y^{s_2}e(-n_{+\infty}x)\frac{e^{2\pi n_{+\infty}y}}{\sin(\frac{\pi \ell}p)}\e_\ell^{\mathrm T}\overline{\mathbf{U}(z;\overline{s_1},-\tfrac12,1-m,\overline{\mu_p})}\frac{dxdy}{y^2}\\
		&=\frac{e(-\frac 18)\pi^{1+s_1-s_2}}{4^{s_2-1} |n_{+\infty}|^{s_2-s_1}}\cdot\frac{\Gamma(s_2-s_1)\Gamma(s_2+s_1-1)}{\Gamma(s_1-\frac 14)\Gamma(s_2+\frac 14)}Z_{\substack{\infty\infty\\m,n,+}}^{(\ell)}(s_1)+R_{\substack{\infty\infty\\m,n,+}}^{(\ell)}(s_1,s_2)
	\end{align*}
	where
	\begin{align*}
		R_{\substack{\infty\infty\\m,n,\mu_p}}^{(\ell)}(s_1,s_2)
		&=\sum_{p|c>0}\frac{S_{\infty\infty}^{(\ell)}(m,n,c,\mu_p)}{c^{2s_1}}\int_0^\infty\int_\R \frac{y^{s_2-s_1}e^{2\pi n_{+\infty}y}}{(u^2+1)^{s_1}}\(\frac{u+i}{|u+i|}\)^{-\frac12}\\
		&\qquad \cdot e(-n_{+\infty}uy)\(e\(\frac{-m_{+\infty}(u+i)}{c^2y(u^2+1)}\)-1\)\frac{dxdy}y\\
		&=\sum_{p|c>0}\frac{S_{\infty\infty}^{(\ell)}(m,n,c,\mu_p)}{c^{2s_1}}O\(\frac{|m_{+\infty}n_{+\infty}|}{c^2(\sigma_1-\frac12)}\)	=O\(\frac{|m_{+\infty}n_{+\infty}|}{\sigma_1-\frac12}\). 
	\end{align*}
	The lemma follows by setting $s_2=\overline{s_1}+2$. 
\end{proof}

Combining Lemma~\ref{Inner product U U U ell U ell} and Lemma~\ref{Inner product U infty U infty}, we have the following proposition. 
\begin{proposition}\label{gs Theorem 1}
	For $m_{+\infty}<0$, $s=\sigma+it$, $\sigma>\frac12$ and $|t|\rightarrow \infty$, we have
	\[Z_{\substack{\infty\infty\\m,n,+}}^{(\ell)}(s)\ll_{p} \frac{|m_{+\infty}n_{+\infty}|^3|s|^{\frac12}}{\sigma-\frac12}. \]
\end{proposition}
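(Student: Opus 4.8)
The plan is to carry the Goldfeld--Sarnak argument \cite{gs} into the vector-valued setting, extracting the size of $Z_{\substack{\infty\infty\\m,n,+}}^{(\ell)}(s)$ from the spectral identity of Lemma~\ref{Inner product U infty U infty}, the Poincar\'e-norm bounds of Lemma~\ref{Inner product U U U ell U ell}, and Stirling's formula for the resulting gamma quotient. Since each $S_{\infty\infty}^{(\ell)}(m,n,c,\mu_p)$ is a sum of at most $c$ terms of modulus $O_p(1)$, the series \eqref{KloostermanSelbergZetaFunctionVVal} converges absolutely and is $O_p(1)$ for $\re s>1$; as the right-hand side of the claimed bound tends to $\infty$ with $|t|$, the inequality is trivial there, and I may assume $\tfrac12<\sigma\le 2$. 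The identity of Lemma~\ref{Inner product U infty U infty} holds as an equality of meromorphic functions on $\re s>\tfrac12$ with only finitely many real poles, so for $|t|>1$ we stay clear of them.

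First I would solve that identity for $Z_{\substack{\infty\infty\\m,n,+}}^{(\ell)}(s)$. Writing $A\defeq\overline{\mathbf U(\cdot;\overline s,-\tfrac12,1-m,\overline{\mu_p})}$ and $B\defeq\mathbf U_{(\ell)}(\cdot;\overline s+2,\tfrac12,n,\mu_p)$ for $n_{+\infty}>0$, this reads
\[
Z_{\substack{\infty\infty\\m,n,+}}^{(\ell)}(s)=\frac{4^{s+1}\pi\,n_{+\infty}^2}{e(-\tfrac18)}\cdot\frac{\Gamma(s+\tfrac14)\Gamma(s+\tfrac74)}{\Gamma(2s+1)}\Bigl(\langle A,B\rangle+O_p\bigl(\tfrac{|m_{+\infty}n_{+\infty}|}{\sigma-1/2}\bigr)\Bigr),
\]
with the obvious modification for $n_{+\infty}<0$ (the factor $B$ becomes $\overline{\mathbf U_{(\ell)}(\cdot;s+2,-\tfrac12,1-n,\overline{\mu_p})}$ and the gamma quotient becomes $\Gamma(s-\tfrac14)\Gamma(s+\tfrac94)/\Gamma(2s+1)$). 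I then bound the inner product by Cauchy--Schwarz, $|\langle A,B\rangle|\le\|A\|_2\|B\|_2$. Conjugation preserves the norm \eqref{vec val Petersson inner prod}, so $\|A\|_2^2=\|\mathbf U(\cdot;\overline s,-\tfrac12,1-m,\overline{\mu_p})\|_2^2$, and Lemma~\ref{Inner product U U U ell U ell} (applied with $1-m>0$, at real part $\sigma\in(\tfrac12,2]$) gives $\|A\|_2\ll_p(1-m)/(\sigma-\tfrac12)\ll|m_{+\infty}|/(\sigma-\tfrac12)$. For $\|B\|_2$ the argument $\overline s+2$ has real part $\sigma+2$, lying in the region of absolute convergence, where the unfolding proof of Lemma~\ref{Inner product U U U ell U ell} applies verbatim (and is only easier), yielding $\|B\|_2\ll_p n\ll|n_{+\infty}|$. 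Hence $|\langle A,B\rangle|\ll_p|m_{+\infty}n_{+\infty}|/(\sigma-\tfrac12)$, which is the same order as the error term.

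It remains to estimate the gamma quotient. By Stirling's formula $|\Gamma(\sigma'+it')|\sim\sqrt{2\pi}\,|t'|^{\sigma'-1/2}e^{-\pi|t'|/2}$, the exponential factors of the numerator cancel that of $\Gamma(2s+1)=\Gamma(2\sigma+1+2it)$, and the remaining powers of $|t|$ combine to give
\[
\left|\frac{\Gamma(s+\tfrac14)\Gamma(s+\tfrac74)}{\Gamma(2s+1)}\right|\asymp|t|^{1/2}\asymp|s|^{1/2}\qquad(|t|\to\infty),
\]
and identically for $\Gamma(s-\tfrac14)\Gamma(s+\tfrac94)/\Gamma(2s+1)$. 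Since $|4^{s+1}|=4^{\sigma+1}\ll1$ for $\sigma\le2$, assembling the displays yields $Z_{\substack{\infty\infty\\m,n,+}}^{(\ell)}(s)\ll_p|m_{+\infty}|\,|n_{+\infty}|^3|s|^{1/2}/(\sigma-\tfrac12)$; finally $|m_{+\infty}|=\tfrac1{24}-m\ge\tfrac1{24}$ forces $|m_{+\infty}|\le24^2|m_{+\infty}|^3$, which upgrades the prefactor to $|m_{+\infty}n_{+\infty}|^3$ and proves the proposition. The one point demanding care is the norm of the shifted series $B$: its argument $\overline s+2$ sits just outside the stated hypothesis $\sigma\le2$ of Lemma~\ref{Inner product U U U ell U ell}, so I must verify that the lemma's estimate persists at real part $\sigma+2$ (where the Poincar\'e series converges absolutely) rather than cite the lemma as a black box.
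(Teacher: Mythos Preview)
Your argument is correct and is exactly the vector-valued Goldfeld--Sarnak argument the paper intends: solve the identity of Lemma~\ref{Inner product U infty U infty} for $Z$, bound the inner product by Cauchy--Schwarz together with Lemma~\ref{Inner product U U U ell U ell}, and control the gamma quotient by Stirling. The point you flag about $\|B\|_2$ at real part $\sigma+2$ is the only subtlety, and your observation that absolute convergence of the Poincar\'e series there makes the bound $\|B\|_2\ll_p |n_{+\infty}|$ immediate is right.
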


Now we can prove the first case of Theorem~\ref{Sums of Kloosterman sums}.

\begin{proof}[Proof of \eqref{Kloosterman sums asymptotics infty infty} in Theorem~\ref{Sums of Kloosterman sums}]
	
	Denote $s=\sigma+it$. For any $\ep>0$, we have $Z_{\substack{\infty\infty\\m,n,+}}^{(\ell)}(1+\ep+it)\ll_{p,\ep} \zeta(1+\ep)$.  By Proposition~\ref{gs Theorem 1} and the Phragm\'en-Lindel\"of principle,  we have
	\begin{equation}\label{Zinftyinfty after phragmen lindelof}
		Z_{\substack{\infty\infty\\m,n,+}}^{(\ell)}(\tfrac{1+s}2)\ll_{p,\ep} |m_{+\infty}n_{+\infty}|^{3}|t|^{\frac12-\frac \sigma 2+\ep}\quad \text{for }0<\ep\leq \sigma\leq 1+\ep.
	\end{equation} 
	Then following the argument of \cite[\S3]{gs}, by the proof of the prime number theorem as in \cite[\S18]{davenport}, we have
	\[
	\sum_{p|c\leq x}\frac{S^{(\ell)}_{\infty\infty}(m,n,c,\mu_p)}c=\frac1{2\pi i}\int_{1-iT}^{1+iT}Z_{\substack{\infty\infty\\m,n,+}}^{(\ell)}(\tfrac{1+s}2)\frac{x^s}s ds+O\(|m_{+\infty}n_{+\infty}|^3\frac{x^{1+\ep}}T\). 
	\]
	By Lemma~\ref{Inner product U infty U infty}, the function $Z_{\substack{\infty\infty\\m,n,+}}^{(\ell)}(\tfrac{1+s}2)$ has at most a finite number of simple poles at $2s_j-1\in (0,1)$ (note that we are using $\frac{1+s}2$ rather than $s$), where $\lambda_j=s_j(1-s_j)<\frac 14$ are the discrete eigenvalues of $\Delta_{\frac 12}$ on $\Lform_{\frac 12}(\Gamma_0(p),\mu_p)$. Shifting the line of integration above to $\re s=\ep$ such that $2s_j-1>\ep$ for all $\lambda_j<\frac 14$, with the help of \eqref{Zinftyinfty after phragmen lindelof} we obtain
	\begin{align*}
		\sum_{p|c\leq x}\frac{S^{(\ell)}_{\infty\infty}(m,n,c,\mu_p)}c=\sum_{\frac12<s_j<\frac 34}\Res_{s=2s_j-1}Z_{\substack{\infty\infty\\m,n,+}}^{(\ell)}(\tfrac{1+s}2)\frac{x^{2s_j-1}}{2s_j-1}+O(|m_{+\infty}n_{+\infty}|^3x^{\frac 13+\ep}),
	\end{align*}
	where we have chosen $T=x^{\frac 23}$. 
	
	For the residue, by Lemma~\ref{Inner product U infty U infty}, it suffices to compute the residue of the two inner products in that lemma. 
	When $n_{+\infty}>0$, combining Lemma~\ref{Inner product U infty U infty} and \eqref{U 1-m residue at sj}, we are left to compute the following inner product using \eqref{integral e y W for inner product}: 
	\begin{align}\label{Inner product V and U infty n}
		\begin{split}
			\left\langle \mathbf{V}(\cdot;r_j),\mathbf{U}_{(\ell)}(\cdot;\overline{s_j}+2,\tfrac12,n,\mu_p)\right\rangle&=\int_{\Gamma_\infty\setminus \Gamma_0(p)} y^{s_j+2}e(-n_{+\infty}x)\frac{e^{-2\pi n_{+\infty}y}\e_\ell^{\mathrm T}}{\sin(\frac{\pi \ell}p)}\mathbf{V}(z;r_j)\frac{dxdy}{y^2}\\
			&=(4\pi n_{+\infty})^{-s_j-1}\frac{\rho_{j,\infty}^{(\ell)}(n)}{\sin(\frac{\pi\ell}p)}\frac{\Gamma(2s_j+1)}{\Gamma(s_j+\frac 74)}. 
		\end{split}
	\end{align}
	Similarly, when $n_{+\infty}<0$ we compute: 
	\begin{align}\label{Inner product V and U infty 1-n}
		\begin{split}
			\left\langle \mathbf{V}(\cdot;r_j),\overline{\mathbf{U}_{(\ell)}(\cdot;s_j+2,-\tfrac12,1-n,\overline{\mu_p})}\right\rangle&=\int_{\Gamma_\infty\setminus \Gamma_0(p)} y^{s_j+2}e(-n_{+\infty}x)\frac{e^{2\pi n_{+\infty}y}\e_\ell^{\mathrm T}}{\sin(\frac{\pi \ell}p)}\mathbf{V}(z;r_j)\frac{dxdy}{y^2}\\
			&=(4\pi |n_{+\infty}|)^{-s_j-1}\frac{\rho_{j,\infty}^{(\ell)}(n)}{\sin(\frac{\pi\ell}p)}\frac{\Gamma(2s_j+1)}{\Gamma(s_j+\frac 94)}. 
		\end{split}
	\end{align}
	Combining Lemma~\ref{Inner product U infty U infty} with \eqref{U 1-m residue at sj}, \eqref{Inner product V and U infty n} and \eqref{Inner product V and U infty 1-n}, we get
	\begin{align}
		\begin{split}
			\Res_{s=s_j} Z_{\substack{\infty\infty\\m,n,+}}^{(\ell)}(s)&=\!\!\!\sum_{\mathbf{V}\in \mathrm{OB}(r_j)}\!\!\! e(\tfrac 18)\(\sum_{L=1}^{p-1}\frac{\overline{\rho_{j,\infty}^{(L)}(m)}}{\sin(\frac{\pi L}p)}\)\frac{\rho_{j,\infty}^{(\ell)}(n)}{\sin(\frac{\pi\ell}p)}\cdot\frac{\Gamma(s_j+\frac14 \sgn n_{+\infty})\Gamma(2s_j-1)}{\pi^{2s_j-1}|4m_{+\infty}n_{+\infty}|^{s_j-1}\Gamma(s_j-\frac14)}
		\end{split}
	\end{align}
	and finish the proof of \eqref{Kloosterman sums asymptotics infty infty} in Theorem~\ref{Sums of Kloosterman sums}.
\end{proof}

\subsection{The cusp 0}
Recall from \eqref{scaling matrix def} that $\Gamma_0$ is the stabilizer of the cusp $0$ in $\Gamma_0(p)$ and $\sigma_0=\begin{psmallmatrix}
	0&-1/\sqrt p\\\sqrt p&0
\end{psmallmatrix}$ is the scaling matrix of the cusp $0$. They satisfy the property $\sigma_0^{-1}\Gamma_0\sigma_0=\Gamma_\infty$, where $\Gamma_\infty=\{\pm \begin{psmallmatrix}
	1&n\\0&1
\end{psmallmatrix}:n\in \Z\}$ is the stabilizer of the cusp $\infty$.

Fix an integer $r\geq 0$. Recall the definition of $x_r$ in \eqref{x r def}. For $(k,\mu)=(\frac12,\mu_p)$ or $(-\frac12,\overline{\mu_p})$, recall the definition of $\alpha_{\pm 0}^{(\ell)}$ and $m_{\pm 0}^{(\ell)}$ in \eqref{alpha 0 l def} and of $\condr$ in \eqref{(a,r) def}. 

For $\mathbf{m}>0$, we define
\begin{align}\label{U 0 l def}
	\begin{split}
		&\mathbf{U}_{0,(\ell)}(z;s,k,m^{(\ell)},\mu,r)\defeq\\
		&\left\{
		\begin{array}{ll}
			{\displaystyle\!\!\!\!\sum_{\gamma\in \Gamma_0\setminus \Gamma_0(p)}\!\!\!\!\!\mu(\gamma)^{-1}\overline{w_k(\sigma_0^{-1},\gamma)}j(\sigma_0^{-1}\gamma, z)^{-k}(\im \sigma_0^{-1}\gamma z)^s e\(m_{\pm0}^{(\ell)}\sigma_0^{-1}\gamma z\)\e_\ell ,}& \text{if }\ell\in \condr,\\
			0\e_\ell,&\text{otherwise, never used.} 
		\end{array}
		\right.
	\end{split}
\end{align}
We also define
\begin{equation}\label{U 0 def}
	\mathbf{U}_0(z;s,k,\mathbf{m},\mu,r)\defeq
	\sum_{\ell\in \condr} \mathbf{U}_{0,(\ell)}(z;s,k,m^{(\ell)},\mu,r). 
\end{equation}
Note that $\mathbf{U}_{0,(\ell)}(z;s,k,m^{(\ell)},\mu,r)$ is different from $\mathbf{U}_{0}^{(\ell)}(z;s,k,\mathbf{m},\mu,r)$, where $\mathbf{U}_{0}^{(\ell)}(z;s,k,\mathbf{m},\mu,r)$ is defined to have the same $\ell$-th entry as $\mathbf{U}_{0}(z;s,k,\mathbf{m},\mu,r)$ but has $0$ in the other entries. 
The Poincar\'e series $\mathbf{U}_0(z;s,k,\mathbf{m},\mu,r)$ and every $\mathbf{U}_{0,(\ell)}(z;s,k,m^{(\ell)},\mu,r)$ converge absolutely and uniformly in any compact subset of $\re s>1$, and are in $\Lform_{k}(\Gamma_0(p),\mu)$. To show the transformation law, e.g. 
\[\mathbf{U}_0(\gamma_1 z;s,k,\mm,\mu,r)=\mu(\gamma_1)j(\gamma_1,z)^{k}\mathbf{U}_0(z;s,k,\mm,\mu,r) \quad \text{for }\gamma_1\in \Gamma_0(p), \]
it suffices to show that for $\gamma_1,\gamma_2\in \Gamma_0(p)$, we have
\begin{align}\label{TransformationLawOfPoincare Cusp 0}
	\begin{split}
		&\mu(\gamma_2\gamma_1^{-1})^{-1}\cdot\overline{w_k(\sigma_0^{-1},\gamma_2\gamma_1^{-1})} j(\sigma_0^{-1}\gamma_2\gamma_1^{-1}, \gamma_1 z)^{-k}\\
		&=\(\mu(\gamma_2)\mu(\gamma_1^{-1})\)^{-1} \overline{w_{k}(\gamma_2,\gamma_1^{-1})}\\
		&\qquad \cdot 
		j(\sigma_0^{-1},\gamma_2 z)^{-k}j(\gamma_2\gamma_1^{-1},\gamma_1 z)^{-k} j(\sigma_0^{-1}\gamma_2\gamma_1^{-1},\gamma_1 z)^k
		j(\sigma_0^{-1}\gamma_2\gamma_1^{-1}, \gamma_1 z)^{-k}\\
		&=\mu(\gamma_1)\mu(\gamma_2)^{-1} j(\gamma_2,z)^{-k}j(\gamma_1^{-1},\gamma_1 z)^{-k} j(\gamma_2\gamma_1^{-1},\gamma_1 z)^{k} \cdot j(\sigma_0^{-1},\gamma_2 z)^{-k}j(\gamma_2\gamma_1^{-1},\gamma_1 z)^{-k}\\
		&=\mu(\gamma_1)j(\gamma_1,z)^{k}\cdot \mu(\gamma_2)^{-1}j(\sigma_0^{-1},\gamma_2 z)^{-k}j(\gamma_2,z)^{-k}\\
		&=\mu(\gamma_1)j(\gamma_1,z)^{k}\cdot \mu(\gamma_2)^{-1}\overline{w_k(\sigma_0^{-1},\gamma_2)}j(\sigma_0^{-1}\gamma_2,z)^{-k}, 
	\end{split}
\end{align}
where we have used the trick in \eqref{trick w k},  $\mu(\gamma_1^{-1})=\mu(\gamma_1)^{-1}$ and $j(\gamma_1^{-1},\gamma_1 z)^{k}j(\gamma_1,z)^{k}=1$.

Recall the double coset decomposition \eqref{double coset decomposition} and the choice of $\gamma_2$ below it: 
\[\sigma_0^{-1}\Gamma_0(p)\sigma_\infty=\bigcup_{\substack{a>0\\p\nmid a}}\bigcup_{b\Mod a^*}\Gamma_{\infty}\begin{pmatrix}
	\frac{c}{\sqrt p}&\frac{d}{\sqrt p}\\
	-a\sqrt p&-b\sqrt p
\end{pmatrix}\Gamma_\infty, \]
\[\sigma_0^{-1}(\Gamma_0\setminus\Gamma_0(p))=\{\sigma_0^{-1}\begin{psmallmatrix}
	a&b\\c&d
\end{psmallmatrix}\begin{psmallmatrix}
	1&t\\0&1
\end{psmallmatrix}:\, a>0,\ p\nmid a,\ b\Mod a^*,\  t\in \Z\}. \]
Recall \eqref{mu 0 infty def}, Definition~\ref{mu matrix define}, \eqref{alpha infty l def} and the property \eqref{MultiplierSystemBasicProprety} for $\nu_\eta$. We have
\[\mu_{0\infty}(\gamma)=\mu(\sigma_0\gamma)w_k(\sigma_0^{-1},\sigma_0\gamma)\quad \text{and}\quad \mu_{0\infty}(\sigma_0^{-1}\begin{psmallmatrix}
	a&b\\c&d
\end{psmallmatrix}\begin{psmallmatrix}
	1&t\\0&1
\end{psmallmatrix})=\mu_{0\infty}(\sigma_0^{-1}\begin{psmallmatrix}
	a&b\\c&d
\end{psmallmatrix})e(-t\alpha_{\pm\infty}).  \]
For $\gamma=\begin{psmallmatrix}
a&b\\c&d
\end{psmallmatrix}$, $a>0$ and $c>0$, we also have
\[w_{k}(\sigma_0^{-1},\gamma)j(\sigma_0^{-1}\gamma,z)^k=j(\gamma,z)^kj(\sigma_0^{-1},\gamma z)^k=e(-\tfrac k2)(\tfrac{az+b}{|az+b|})^{k}.\]
To compute the Fourier expansion of $\mathbf{U}_0(z;s,k,\mathbf{m},\mu,r)$ when $\re s>1$, we have
\begin{align*}
	&\mathbf{U}_0(z;s,k,\mathbf{m},\mu,r)\\
	&=\sum_{\ell\in \condr}\!\!\!\sum_{\substack{\gamma=\begin{psmallmatrix}
				\frac c{\sqrt p}&\frac d{\sqrt p}\\
				-a\sqrt p&-b\sqrt p
			\end{psmallmatrix}\\\gamma \in \Gamma_\infty\setminus\sigma_0^{-1}\Gamma_0(p)}}\!\!\!\!\mu_{0\infty}(\gamma)^{-1}\(\frac{-az-b}{|-az-b|}\)^{-k}\frac{y^s e(m_{\pm 0}^{(\ell)}\gamma z)}{|a\sqrt p z+b\sqrt p|^{2s}}\e_\ell\\
	&=\sum_{\ell\in \condr}\sum_{\substack{ a>0\\p\nmid a}}\sum_{\substack{b(a)^*\\0<c<pa,\,p|c\\ad-bc=1}}\!\!\!\!\!\!\frac{\mu_{0\infty}\(\sigma_0^{-1}\begin{psmallmatrix}
			a&b\\c&d
		\end{psmallmatrix}\)^{-1}}{p^s}\sum_{t\in \Z}e(t\alpha_{\pm\infty})\(\frac{-z-\frac ba-t}{|-z-\frac ba-t|}\)^{-k}\frac{y^se\(\frac{m_{\pm 0}^{(\ell)}(cz+d+tc)}{-paz-pb-pta}\)}{|az+b+ta|^{2s}}\e_\ell. 
\end{align*}
Note that $\frac{cz+d}{-paz-pb}=-\frac{c}{pa}-\frac 1{pa(az+b)}$ and $\mu(c,d+tc,\ell,p)=\mu(c,d,\ell,p)$ by \eqref{mu cdlp def} for all $\ell$ and $t$. Recall that the matrix $\mu_{0\infty}\(\sigma_0^{-1}\begin{psmallmatrix}
	a&b\\c&d
\end{psmallmatrix}\)^{-1}$ maps the entry at $[a\ell]$ to $\ell$ and $\nu_\eta\(\begin{psmallmatrix}
	a&b+ta\\c&d+tc
\end{psmallmatrix}\)=\nu_\eta(\begin{psmallmatrix}
	a&b\\c&d
\end{psmallmatrix})e(-\frac t{24})$. Then when $\ell\notin\condar$, the contribution from a single $a$ to the $\ell$-th entry is zero; when $\ell\in \condar$, such contribution is
\begin{align*}
	&\frac{e(-\frac k2)}{p^s a^{2s}}\sum_{\substack{b\Mod a^*\\0<c<pa,\ p|c\\\text{s.t.\,}ad-bc=1}} \mu_{0\infty}\(\sigma_0^{-1}\begin{psmallmatrix}
		a&b\\c&d
	\end{psmallmatrix}\)^{-1}e\(-\frac{m_{\pm0}^{([a\ell])}c}{pa}\)\e_{[a\ell]}\\
	&\quad \cdot\sum_{t\in \Z}  e(t\alpha_{\pm\infty})\(\frac{z+\frac ba+t}{|z+\frac ba+t|}\)^{-k} \frac{y^s}{|z+\frac ba+t|^{2s}}
	e\(\frac{-m_{ \pm 0}^{([a\ell])}} {pa^2(z+\frac ba+t)}  \)\\
	&=y^se(-\tfrac k2)\sum_{t\in \Z}\frac{\mathbf{S}_{0\infty}^{(\ell)}(m^{([a\ell])},t,a,\mu;r)}{(a\sqrt p)^{2s}}B(a\sqrt p,m_{ \pm 0}^{([a\ell])},t_{\pm\infty},y,s,k)e(t_{\pm \infty}x).
\end{align*}
Here we get the last step in the same way as the steps before \eqref{Bcmtysk function}. 
The Fourier expansion of $\mathbf{U}_0(z;s,k,\mathbf{m}, \mu,r)$ for $\re s>1$ is then
\begin{align}\label{Fourier expan Poincare series 0 infty}
	\begin{split}
		&\mathbf{U}_0(z;s,k,\mathbf{m},\mu,r)\\
		&=y^se(-\tfrac k2)\sum_{\substack{ a>0\\p\nmid a}}\sum_{\ell\in\condar}\sum_{t\in \Z}\frac{\mathbf{S}_{0\infty}^{(\ell)}(m^{([a\ell])},t,a,\mu;r)}{(a\sqrt p)^{2s}}B(a\sqrt p,m_{\pm 0}^{([a\ell])},t_{\pm\infty},y,s,k)e(t_{\pm \infty}x)
	\end{split}
\end{align}
where $\mathbf{S}_{0\infty}^{(\ell)}(m^{([a\ell])},t,c,\mu;r)$ is defined in \eqref{S 0 infty ell def}. 

Similarly, for $L\in \condr$, we can compute the Fourier expansion of $\mathbf{U}_{0,(L)}(z;s,k,m^{(L)},\mu,r)$ \eqref{U 0 l def} and get
\begin{align}\label{Fourier expan Poincare series 0 infty, U subscript 0, ell}
	\begin{split}
		&\mathbf{U}_{0,(L)}^{(\ell)}(z;s,k,m^{(L)},\mu,r)\\
		&=y^se(-\tfrac k2)\sum_{ \substack{a>0:\\p\nmid a,\, [a\ell]=L } }\sum_{t\in \Z}\frac{\mathbf{S}_{0\infty}^{(\ell)}(m^{(L)},t,a,\mu;r)}{(a\sqrt p)^{2s}} B(a\sqrt p,m_{ \pm 0}^{(L)},t_{\pm\infty},y,s,k)e(t_{\pm \infty}x). 
	\end{split}
\end{align}

If $\re s>1$ and $\lambda=s(1-s)$ is an eigenvalue of $\Delta_k$, we have the recurrence relation
\begin{align}
	\label{meromorphic continuation cusp 0, ell}
	\mathbf{U}_{0,(\ell)}(z;s,k,m^{(\ell)},\mu,r)&=-4\pi m_{ \pm 0}^{(\ell)} \mathscr{R}_\lambda\mathbf{U}_{0,(\ell)}(z;s+1,k,m^{(\ell)},\mu,r)\\
	\label{meromorphic continuation cusp 0}
	\mathbf{U}_{0}(z;s,k,\mathbf{m},\mu,r)&=-4\pi \sum_{\ell\in\condr} m_{\pm 0}^{(\ell)} \mathscr{R}_\lambda\mathbf{U}_{0,(\ell)}(z;s+1,k,m^{(\ell)},\mu,r)
\end{align}
for $\mathscr{R}_\lambda=(\Delta_k+\lambda)^{-1}$. Then $\mathbf{U}_{0}(z;s,k,\mm,\mu,r)$ and every $\mathbf{U}_{0,(\ell)}(z;s,k,m^{(\ell)},\mu,r)$ can be meromorphically continued to the half plane $\re s>\frac12$ except at most a finite number of simple poles at $s=s_j$ with $\frac12<s_j<1$.

Recall $\mathrm{OB}(r_j)$ at the paragraph of \eqref{V Fourier expansion at the cusp 0}. For $\mathbf{m}\leq 0$ (then $\mathbf{1-m}>0$),  the residue of $\overline{\mathbf{U}_{0}(z;\overline{s},-\tfrac12,\mathbf{1-m},\overline{\mu_p},r)}$ at $s=s_j$ is given by
\begin{equation}\label{residue U0 from Vrj}
	\sum_{\mathbf{V}\in \mathrm{OB}(r_j)}\Res_{s=s_j} \left\langle \overline{\mathbf{U}_{0}(\cdot;\overline{s},-\tfrac12,\mathbf{1-m},\overline{\mu_p},r)}, \mathbf{V}(\cdot;r_j)\right\rangle \mathbf{V}(z;r_j).  
\end{equation}
We will compute the inner product below and will finally get \eqref{inner product U0 and Vrj}. Recall \eqref{U 0 l def} for the definition of $\mathbf{U}_{0,(\ell)}$.  When $\ell\in \condr$, we use $\mathbf{V}$ to abbreviate the second term in the inner product $\mathbf{V}(\cdot;r_j)$ and get
\begin{align*}
	&\left\langle \overline{\mathbf{U}_{0,(\ell)}(\cdot;\overline{s},-\tfrac12,\mathbf{1-m},\overline{\mu_p},r)}, \mathbf{V}(\cdot;r_j)\right\rangle\\
	&=\int_{\Gamma_0(p)\setminus \HH}\!\!\!\!\!\mathbf{V}^{\mathrm H}\!\!\!\!\!\sum_{\gamma\in \Gamma_0\setminus \Gamma_0(p)}\!\!\!\!\! \mu_p(\gamma)^{-1}w_{-\frac12}(\sigma_0^{-1},\gamma) j(\sigma_0^{-1}\gamma,z)^{-\frac12} \im(\sigma_0^{-1}\gamma z)^{s} \overline{e((1-m)_{-0}^{(\ell)}z)} \e_\ell\frac{dxdy}{y^2}\\
	&=\sum_{\gamma\in \Gamma_0\setminus \Gamma_0(p)}\int_{\sigma_0^{-1}\gamma(\Gamma_0(p)\setminus \HH)}  \mathbf{V}(\sigma_0 z;r_j)^{\mathrm H}j(\sigma_0,z)^{\frac12} y^s e(m_{+0}^{(\ell)}x)e^{2\pi m_{+0}^{(\ell)}y}\e_\ell \frac{dxdy}{y^2}\\
	&=\int_0^\infty y^se^{2\pi m_{+0}^{(\ell)}y}\frac{dy}{y^2} \int_\R (\mathbf{V}|_{\frac12}\sigma_0)(z;r_j)^{\mathrm H} e(m_{+0}^{(\ell)}x)\e_\ell dx,
\end{align*}
where we have used the following properties: $\mu_p(\gamma)$ is unitary, $j(M,z)j(M^{-1},Mz)=1$ for $M\in \SL_2(\R)$, and the trick in \eqref{trick w k}: 
\begin{align*}
	&j(\gamma^{-1},\sigma_0 z)^{-\frac12}w_{-\frac12}(\sigma_0^{-1},\gamma)j(\sigma_0^{-1}\gamma,\gamma^{-1}\sigma_0 z)^{-\frac12}\\
	&=j(\gamma^{-1},\sigma_0 z)^{-\frac12} j(\gamma,\gamma^{-1}\sigma_0 z)^{-\frac12} j(\sigma_0^{-1},\gamma\gamma^{-1}\sigma_0z)^{-\frac12}j(\sigma_0^{-1}\gamma,\gamma^{-1}\sigma_0 z)^{\frac12}j(\sigma_0^{-1}\gamma,\gamma^{-1}\sigma_0 z)^{-\frac12}\\
	&=j(\sigma_0^{-1},\sigma_0z)^{-\frac12}=j(\sigma_0,z)^{\frac12}. 
\end{align*}
We have also used the property (see e.g. \cite[Proof of Lemma~3.1]{DeshIw1982}) that for every cusp $\ma$ of $\Gamma$, 
\[\bigcup_{\gamma\in \Gamma_\ma\setminus \Gamma}\sigma_\ma^{-1}\gamma (\Gamma\setminus \HH)=\Gamma_\infty\setminus \HH \quad \text{ up to a zero-measure set}.\]
Then we can apply \eqref{integral e y W for inner product} to get
\begin{align}\label{inner product U 0 l and Vrj}
	\begin{split}
		&\left\langle \overline{\mathbf{U}_{0,(\ell)}(\cdot;\overline{s},-\tfrac12,1-m^{(\ell)},\overline{\mu_p},r)}, \mathbf{V}(\cdot;r_j)\right\rangle\\
		&=\overline{\rho_{j,0}^{(\ell)}(m^{(\ell)})}\int_0^\infty y^{s-1}e^{2\pi m_{+0}^{(\ell)}y}W_{-\frac 14,ir_j}\(4\pi |m_{+0}^{(\ell)}|y\)\frac{dy}{y} \\
		&=\overline{\rho_{j,0}^{(\ell)}(m^{(\ell)})}(4\pi |m_{+0}^{(\ell)}|)^{1-s}\frac{\Gamma(s-s_j)\Gamma(s+s_j-1)}{\Gamma(s+\frac 14)}. 
	\end{split}
\end{align}
The residue of $\overline{\mathbf{U}_{0,(\ell)}(z;\overline{s},-\tfrac12,1-m^{(\ell)},\overline{\mu_p},r)}$ at $s=s_j$ is then given by the following sum combining \eqref{inner product U 0 l and Vrj}: 
\begin{equation}\label{residue U 0 l from Vrj}
	\sum_{\mathbf{V}\in \mathrm{OB}(r_j)}\Res_{s=s_j} \left\langle \overline{\mathbf{U}_{0,(\ell)}(\cdot;\overline{s},-\tfrac12,1-m^{(\ell)},\overline{\mu_p},r)}, \mathbf{V}(\cdot;r_j)\right\rangle \mathbf{V}(z;r_j).
\end{equation}
Summing up for $\ell\in \condr$, we get
\begin{equation}\label{inner product U0 and Vrj}
	\left\langle \overline{\mathbf{U}_{0}(z;\overline{s},-\tfrac12,\mathbf{1-m},\overline{\mu_p},r)}, \mathbf{V}(\cdot;r_j)\right\rangle=\sum_{\ell\in \condr }\overline{\rho_{j,0}^{(\ell)}(m^{(\ell)})}(4\pi |m_{+0}^{(\ell)}|)^{1-s}\frac{\Gamma(s-s_j)\Gamma(s+s_j-1)}{\Gamma(s+\frac 14)}. 
\end{equation}

The following lemma still holds as in \cite[Lemma~1]{gs} using the the vector-valued inner product in \eqref{vec val Petersson inner prod}. Recall \eqref{U 0 def}, \eqref{U 0 l def} and \eqref{Poincare series generated from l}. 
\begin{lemma}\label{Inner product U 0 U 0}
	Let $s=\sigma+it$ with $\frac 12<\sigma\leq 2$, $|t|>1$, and $\displaystyle M=\max_{\ell\in \condr}|m_{+0}^{(\ell)}|$. We have
	\begin{align*}
		\left\langle \mathbf{U}_0(\cdot;s,k,\mathbf{m},\mu,r),\mathbf{U}_0(\cdot;s,k,\mm,\mu,r)\right\rangle&\ll_{p} \frac{M^2}{(\sigma-\frac12)^2},\\
		\left\langle \mathbf{U}_{0,(\ell)}(\cdot;s,k,m^{(\ell)},\mu,r),\mathbf{U}_{0,(\ell)}(\cdot;s,k,m^{(\ell)},\mu,r)\right\rangle&\ll_{p} \frac{|m_{+0}^{(\ell)}|^2}{(\sigma-\frac12)^2}.
	\end{align*}
\end{lemma}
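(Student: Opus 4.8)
The plan is to follow the argument of \cite[Lemma~1]{gs}, carried out in the vector-valued setting and with the cusp $0$ scaling matrix $\sigma_0$ in place of the cusp $\infty$. Indeed, the two displayed estimates are exactly the $0\infty$ counterparts of the cusp-$\infty$ bounds in Lemma~\ref{Inner product U U U ell U ell}, and the proof runs in complete parallel. Throughout I take $\re s=\sigma\in(\tfrac12,2]$ and $|t|>1$, and I treat the full series $\mathbf{U}_0(\cdot;s,k,\mathbf{m},\mu,r)$ first; the estimate for a single generator $\mathbf{U}_{0,(\ell)}(\cdot;s,k,m^{(\ell)},\mu,r)$ is the same computation restricted to a single index $\ell\in\condr$.

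First I would unfold one of the two factors. Since $\mu_p$ (resp.\ $\overline{\mu_p}$) is unitary and $j(\gamma^{-1},z)^k j(\gamma,\gamma^{-1}z)^k=1$, the manipulation that produced the inner product in the lines preceding \eqref{inner product U 0 l and Vrj} applies verbatim: using the double coset decomposition \eqref{double coset decomposition} together with the identity $\bigcup_{\gamma\in\Gamma_0\setminus\Gamma_0(p)}\sigma_0^{-1}\gamma(\Gamma_0(p)\setminus\HH)=\Gamma_\infty\setminus\HH$ up to a set of measure zero, the integral over $\Gamma_0(p)\setminus\HH$ collapses to an integral over the strip $\Gamma_\infty\setminus\HH$ of the seed $(\im z)^s e(m_{+0}^{(\ell)}z)$ against $(\mathbf{U}_0|_k\sigma_0)(z)$. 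The $x$-integral over $[0,1]$ then selects the relevant Fourier coefficient of $\mathbf{U}_0$ at the cusp $0$, which I read off from \eqref{Fourier expan Poincare series 0 infty}.

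Next I would split that Fourier coefficient into its diagonal part and its off-diagonal (Kloosterman) part. The diagonal part is the self-paired term, and after the $y$-integration it is evaluated by the Mellin transform \eqref{integral e y W for inner product}; summing the resulting gamma factor over $\ell\in\condr$ and bounding $|m_{+0}^{(\ell)}|\le M$ yields the claimed main bound $\ll_p M^2(\sigma-\tfrac12)^{-2}$, exactly as the diagonal of Lemma~\ref{Inner product U U U ell U ell} produced $m^2(\sigma-\tfrac12)^{-2}$, and $\ll_p |m_{+0}^{(\ell)}|^2(\sigma-\tfrac12)^{-2}$ in the single-index case. The off-diagonal part is governed by the transform $B(a\sqrt p,m_{\pm0}^{([a\ell])},t_{\pm\infty},y,s,k)$ of \eqref{Bcmtysk function}: using the elementary bound on $B$ employed for the remainder $R$ in Lemma~\ref{Inner product U infty U infty}, together with the meromorphic continuation of the Kloosterman--Selberg zeta functions supplied by the recurrence \eqref{meromorphic continuation cusp 0}, this part is holomorphic in $\sigma>\tfrac12$ and, for $|t|>1$, is no larger than the main term.

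The main obstacle is the uniform control of the off-diagonal contribution down to $\sigma>\tfrac12$: one cannot invoke absolute convergence of the defining Kloosterman series there, so the bound must be routed through the resolvent identity \eqref{meromorphic continuation cusp 0} and the decay of $B$ in \eqref{Bcmtysk function}, precisely as in \cite{gs}, while checking that the vector-valued bookkeeping (the scrambling of entries by $\mu(\gamma)^{-1}$ and the restriction to $\ell\in\condr$) introduces no cross terms beyond those already handled by unitarity. The hypothesis $|t|>1$ enters exactly here, keeping the gamma quotients tame via Stirling and staying away from the real spectral poles $s_j$. Collecting the diagonal main term with the off-diagonal error then gives both displayed estimates.
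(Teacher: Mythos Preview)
Your plan conflates the method of \cite[Lemma~2]{gs} (unfold, read off the Fourier expansion, split diagonal/off-diagonal) with what is needed here, which is \cite[Lemma~1]{gs}. The paper's argument is much more direct and does not go through a Fourier expansion at all: one simply applies the recurrence \eqref{meromorphic continuation cusp 0, ell} and \eqref{meromorphic continuation cusp 0},
\[
\mathbf{U}_{0,(\ell)}(\cdot;s,k,m^{(\ell)},\mu,r)=-4\pi m_{\pm0}^{(\ell)}(s-\tfrac k2)\,\mathscr{R}_\lambda\,\mathbf{U}_{0,(\ell)}(\cdot;s+1,k,m^{(\ell)},\mu,r),
\]
together with the operator bound $\|\mathscr{R}_\lambda\|\le 1/\mathrm{dist}(\lambda,\mathrm{spec}\,\Delta_k)$. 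Since the spectrum is real and $\im\lambda=(1-2\sigma)t$, for $|t|>1$ one has $\|\mathscr{R}_\lambda\|\ll ((2\sigma-1)|t|)^{-1}$; the factor $|s-\tfrac k2|\asymp|t|$ cancels the $|t|$, and $\|\mathbf{U}_{0,(\ell)}(\cdot;s+1)\|$ is uniformly bounded because $\re(s+1)>\tfrac32$ lies in the region of absolute convergence. Squaring gives $|m_{+0}^{(\ell)}|^2/(\sigma-\tfrac12)^2$, and summing over $\ell\in\condr$ (a set of size $O_p(1)$) gives the first estimate with $M$. The unitarity of $\mu_p$ and the vector-valued inner product \eqref{vec val Petersson inner prod} are all that is needed to carry this over verbatim from the scalar case.

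Your proposed route has two concrete problems. First, after unfolding you need the Fourier expansion of $(\mathbf{U}_0|_k\sigma_0)$ at the cusp $0$, not the expansion \eqref{Fourier expan Poincare series 0 infty}, which is the expansion of $\mathbf{U}_0$ at the cusp $\infty$; the former would involve Kloosterman sums for the cusp pair $(0,0)$, which the paper never computes. Second, the ``diagonal'' term you describe, evaluated by the gamma integral, yields $\Gamma(2\sigma-1)\asymp(\sigma-\tfrac12)^{-1}$, not $(\sigma-\tfrac12)^{-2}$; the quadratic blow-up in the lemma does not come from a Mellin transform of a single seed but from the resolvent norm squared. Routing the off-diagonal control through the meromorphic continuation of the Kloosterman--Selberg zeta function is also circular here, since that continuation (Lemma~\ref{Inner product U 0 U ell}) is established \emph{after} and \emph{using} the present lemma.
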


Recall \eqref{KloostermanSelbergZetaFunctionVVal} for our Kloosterman-Selberg zeta functions. We have the following lemma. 
\begin{lemma}\label{Inner product U 0 U ell}
	Let $\mathbf{m}\leq 0 $, $1\leq \ell,L\leq p-1$, and $\re s=\sigma>\frac12$. Then
	\[Z_{\substack{0\infty;r\\\mathbf{m},n,+}}^{(\ell)}(s) \quad \text{and}\quad  Z_{\substack{0\infty;r\\m^{(L)},n,+}}^{(\ell)}(s) \quad\text{are meromorphic in }\re s>\tfrac12\]
	with at most a finite number of simple poles in $(\tfrac12,1)$. Moreover, when  $n_{+\infty}>0$, we have	
	\begin{align*}
		&\left\langle \overline{\mathbf{U}_{0,(L)}(\cdot;\overline s,-\tfrac 12,1-m^{(L)},\overline{\mu_p},r)},\  \mathbf{U}_{(\ell)}(\cdot;\overline{s}+2,\tfrac 12,n,\mu_p,r) \right\rangle\\
		&\qquad =\frac{e(\tfrac 18)\Gamma(2s+1)}{4^{s+1}\pi n_{+\infty}^2\Gamma(s+\frac 14)\Gamma(s+\frac 74)}Z_{\substack{0\infty;r\\m^{(L)},n,+}}^{(\ell)}(s)+R_{\substack{0\infty;r\\m^{(L)},n,+}}^{(\ell)}(s),
	\end{align*}
	\begin{align*}
		&\left\langle \overline{\mathbf{U}_0(\cdot;\overline s,-\tfrac 12,\mathbf{1-m},\overline{\mu_p},r)},\  \mathbf{U}_{(\ell)}(\cdot;\overline{s}+2,\tfrac 12,n,\mu_p,r) \right\rangle\\
		&\qquad =\frac{e(\tfrac 18)\Gamma(2s+1)}{4^{s+1}\pi n_{+\infty}^2\Gamma(s+\frac 14)\Gamma(s+\frac 74)}Z_{\substack{0\infty;r\\\mathbf{m},n,+}}^{(\ell)}(s)+R_{\substack{0\infty;r\\\mathbf{m},n,+}}^{(\ell)}(s),
	\end{align*}
	and when $n_{+\infty}<0$, we have
	\begin{align*}
		&\left\langle \overline{\mathbf{U}_{0,(L)}(\cdot;\overline s,-\tfrac 12,1-m^{(L)},\overline{\mu_p},r)},\ \overline{\mathbf{U}_{(\ell)}(\cdot;s+2,-\tfrac 12,1-n,\overline{\mu_p},r)}\right\rangle\\
		&\qquad =\frac{e(\tfrac 18)\Gamma(2s+1)}{4^{s+1}\pi |n_{+\infty}|^2\Gamma(s-\frac 14)\Gamma(s+\frac 94)}Z_{\substack{0\infty;r\\m^{(L)},n,+}}^{(\ell)}(s)+R_{\substack{0\infty;r\\m^{(L)},n,+}}^{(\ell)}(s), 
	\end{align*}
	\begin{align*}
		&\left\langle \overline{\mathbf{U}_0(\cdot;\overline s,-\tfrac 12,\mathbf{1-m},\overline{\mu_p},r)},\ \overline{\mathbf{U}_{(\ell)}(\cdot;s+2,-\tfrac 12,1-n,\overline{\mu_p},r)}\right\rangle\\
		&\qquad =\frac{e(\tfrac 18)\Gamma(2s+1)}{4^{s+1}\pi |n_{+\infty}|^2\Gamma(s-\frac 14)\Gamma(s+\frac 94)}Z_{\substack{0\infty;r\\\mathbf{m},n,+}}^{(\ell)}(s)+R_{\substack{0\infty;r\\\mathbf{m},n,+}}^{(\ell)}(s). 
	\end{align*}
	Here both $R_{\substack{0\infty;r\\m^{(L)},n,+}}^{(\ell)}(s)$ and $R_{\substack{0\infty;r\\\mathbf{m},n,+}}^{(\ell)}(s)$ are holomorphic in $\sigma>\frac12$ and
	\[R_{\substack{0\infty;r\\m^{(L)},n,+}}^{(\ell)}(s)\ll_p \frac{|m_{+0}^{(L)}n_{+\infty}|}{\sigma-\frac 12}, \qquad R_{\substack{0\infty;r\\\mathbf{m},n,+}}^{(\ell)}(s)\ll_p \frac{|Mn_{+\infty}|}{\sigma-\frac 12}. \]
\end{lemma}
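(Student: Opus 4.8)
The plan is to transcribe the proof of Lemma~\ref{Inner product U infty U infty} (which follows \cite{gs}) to the cusp-$0$ setting, the only essential change being that the cusp-$\infty$ Fourier expansion is replaced by the cusp-$0$ one. In each of the four inner products the second entry is the cusp-$\infty$ Poincar\'e series $\mathbf{U}_{(\ell)}$ (or its conjugate), which is a sum over $\Gamma_\infty\setminus\Gamma_0(p)$. First I would move this defining sum into the Hermitian slot and unfold it: using the unitarity of $\mu_p$, the identity $\mu_p(\gamma^{-1})=\mu_p(\gamma)^{-1}$, and $j(\gamma^{-1},z)^{\frac12}j(\gamma,\gamma^{-1}z)^{\frac12}=1$, the integral over $\Gamma_0(p)\setminus\HH$ collapses to $\Gamma_\infty\setminus\HH$, leaving $\overline{\mathbf{U}_{0,(L)}}$ (or $\overline{\mathbf{U}_0}$) paired against a single term $y^{s_2}e(-n_{+\infty}x)e^{\mp 2\pi n_{+\infty}y}\,\e_\ell^{\mathrm T}/\sin(\tfrac{\pi\ell}p)$, exactly as in the $\infty\infty$ case.

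Next I would substitute the Fourier expansion \eqref{Fourier expan Poincare series 0 infty, U subscript 0, ell} of the $\ell$-th entry of $\mathbf{U}_{0,(L)}$ at the cusp $\infty$. The $x$-integration over a period is an orthogonality relation between $e(t_{\pm\infty}x)$ and $e(-n_{+\infty}x)$ which selects the mode $t=n$, collapses the $t$-sum, and leaves the Kloosterman sum $S_{0\infty}^{(\ell)}(m^{(L)},n,a,\mu_p;r)$ divided by $(a\sqrt p)^{2s_1}$ and summed over $a$ with $p\nmid a$ and $[a\ell]=L$, that is, precisely $Z_{\substack{0\infty;r\\m^{(L)},n,+}}^{(\ell)}(s_1)$. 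The remaining $y$-integral of the factor $B(a\sqrt p,m_{+0}^{(L)},n_{+\infty},y,s_1,-\tfrac12)$ from \eqref{Bcmtysk function} against $y^{s_2-s_1}e^{\mp 2\pi n_{+\infty}y}$ produces the displayed ratio of Gamma functions: I would evaluate the inner $u$-integral in $B$ via \eqref{Integral of u to get Whittaker}, recognizing a Whittaker function, and then apply the Mellin transform \eqref{integral e y W for inner product}.

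To isolate the main term I would split the exponential $e\bigl(-m_{+0}^{(L)}(u+i)/((a\sqrt p)^2 y(u^2+1))\bigr)$ inside $B$ as $1+\bigl(e(\cdots)-1\bigr)$. The constant $1$ decouples the $a$-dependence, so $Z_{\substack{0\infty;r\\m^{(L)},n,+}}^{(\ell)}$ factors out and yields the clean main term; the remainder $e(\cdots)-1=O\bigl(|m_{+0}^{(L)}|/(a^2 y)\bigr)$ produces $R_{\substack{0\infty;r\\m^{(L)},n,+}}^{(\ell)}(s)$, whose extra $a^{-2}$ secures absolute convergence of the $a$-sum for $\sigma>\tfrac12$ (via the trivial Weil-type bound on $S_{0\infty}^{(\ell)}$) and gives the stated bound $R_{\substack{0\infty;r\\m^{(L)},n,+}}^{(\ell)}(s)\ll_p|m_{+0}^{(L)}n_{+\infty}|/(\sigma-\tfrac12)$. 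Setting $s_2=s_1+2$ (resp. $\overline{s_1}+2$) for $n_{+\infty}>0$ (resp. $n_{+\infty}<0$) completes each identity, and the bold-$\mathbf{m}$ versions follow by summing over $L\in\condr$, since $Z_{\substack{0\infty;r\\\mathbf{m},n,+}}^{(\ell)}=\sum_{L\in\condr}Z_{\substack{0\infty;r\\m^{(L)},n,+}}^{(\ell)}$. The meromorphic continuation with at most finitely many simple poles in $(\tfrac12,1)$ is inherited from the continuations of $\mathbf{U}_{0,(\ell)}$ and $\mathbf{U}_{(\ell)}$ through the resolvent recursions \eqref{meromorphic continuation cusp infty} and \eqref{meromorphic continuation cusp 0, ell}, the Gamma prefactor being holomorphic and nonvanishing on $\re s>\tfrac12$.

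The main obstacle I expect is the uniform error analysis: after splitting off $e(\cdots)-1$, one must verify that the residual $a$-sum converges and produces exactly the claimed $(\sigma-\tfrac12)^{-1}$ growth uniformly in $t$, which requires tracking the $\sqrt p$ normalization in $(a\sqrt p)^{2s}$ together with the restriction $[a\ell]=L$ that aligns the $a$-sum with the definition of the zeta function. Everything else is a faithful transcription of the cusp-$\infty$ argument of Lemma~\ref{Inner product U infty U infty}, so I do not anticipate any new conceptual difficulty beyond this bookkeeping.
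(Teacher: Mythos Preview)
Your proposal is correct and follows essentially the same approach as the paper's own proof: unfold the $\mathbf{U}_{(\ell)}$-side to $\Gamma_\infty\setminus\HH$, insert the Fourier expansion \eqref{Fourier expan Poincare series 0 infty, U subscript 0, ell}, evaluate via \eqref{Integral of u to get Whittaker} and \eqref{integral e y W for inner product}, split $e(\cdots)=1+(e(\cdots)-1)$, and bound the remainder using the trivial estimate $S_{0\infty}^{(\ell)}\ll a$; the bold-$\mathbf{m}$ cases and the meromorphic continuation are obtained exactly as you describe.
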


\begin{proof}
	Set $\re s_2=\sigma_2>\re s_1=\sigma_1>\frac12$. We only prove the formulas for the inner products involving $\mathbf{U}_{0,(L)}(\cdot;\overline s,-\tfrac 12,1-m^{(L)},\overline{\mu_p},r)$, while the other two formulas for $\mathbf{U}_{0}(\cdot;\overline s,-\tfrac 12,\mathbf{1-m},\overline{\mu_p},r)$ follow by summing on $L\in \condr$ and by $\displaystyle M=\max_{L\in \condr} |m_{+0}^{(L)}|$.

	After we prove the formulas for these inner products, the meromorphic property of the Kloosterman-Selberg zeta functions follows directly from the meromorphic continuation \eqref{meromorphic continuation cusp 0, ell} and \eqref{meromorphic continuation cusp 0}. 
	
	We start with the first inner product using unfolding. Here $\overline{\mathbf{U}_{0,(L)}}$ abbreviates the first term in the inner product. Recall \eqref{U 0 l def} and \eqref{Poincare series generated from l}. We have
	\begin{align*}
		&\left\langle \overline{\mathbf{U}_{0,(L)}(\cdot;\overline{s_1},-\tfrac 12,m^{(L)},\overline{\mu_p},r)}, \mathbf{U}_{(\ell)}(\cdot;\overline{s_2},\tfrac 12,n,\mu_p,r) \right\rangle\\
		&=\int_{\Gamma_0(p)\setminus \HH} \(\sum_{\gamma\in \Gamma_\infty\setminus \Gamma_0(p)}\mu_p(\gamma)^{-1} j(\gamma,z)^{-\frac12}(\im \gamma z)^{\overline{s_2}}\,\frac{e( n_{+\infty} \gamma z) }{\sin(\frac{\pi \ell}p)} \e_{\ell}\)^{\mathrm H}\overline{\mathbf{U}_{0,(L)}}\frac{dxdy}{y^2}\\
		&=\sum_{\Gamma_\infty\setminus \Gamma_0(p)}\int_{\Gamma_0(p)\setminus \HH}  y^{s_2}e^{-2\pi n_{+\infty}y}\frac{e(-n_{+\infty}x)\e_\ell^{\mathrm T}}{\sin(\frac{\pi \ell }p)}\overline{\mathbf{U}_{0,(L)}(z;\overline{s_1},-\tfrac 12,m^{(L)},\overline{\mu_p},r)}\frac{dxdy}{y^2}. 
	\end{align*}
	Then we apply the Fourier expansion of $\mathbf{U}_{0,(L)}(\cdot;\overline{s_1},-\tfrac 12,m^{(L)},\overline{\mu_p},r)$ in \eqref{Fourier expan Poincare series 0 infty, U subscript 0, ell} with \eqref{Integral of u to get Whittaker} to continue:
	\begin{align*}
		&=e(-\tfrac 14)\sum_{\substack{a>0:\,p\nmid a,\\ [a\ell]=L}}\frac{\overline{S_{0\infty}^{(\ell)}(1-m^{(L)},1-n,a,\overline{\mu_p})}}{(a\sqrt p)^{2s_1}}\int_0^\infty y^{s_2-s_1-1}e^{-2\pi n_{+\infty}y}\\
		&\qquad\cdot \int_\R\(\frac{u+i}{|u+i|}\)^{-\frac12}e\(\frac{-m_{+0}^{(L)}(u+i)}{pa^2y(u^2+1)}-n_{+\infty}yu\)\frac{du}{(u^2+1)^{s_1}} dy\\
		&=\frac{e(-\tfrac 38)4^{1-s_2}\pi^{1+s_1-s_2}}{n_{+\infty}^{s_2-s_1}}\cdot\frac{\Gamma(s_2+s_1-1)\Gamma(s_2-s_1)}{\Gamma(s_1+\frac14)\Gamma(s_2-\frac 14)}Z_{\substack{0\infty;r\\ m^{(L)},n,+}}^{(\ell)}(s_1)+R_{\substack{0\infty;r\\m^{(L)},n,+}}^{(\ell)}(s_1,s_2). 
	\end{align*}
	Here
	\begin{align*}
		R_{\substack{0\infty;r\\m^{(L)},n,+}}^{(\ell)}(s_1,s_2)&=e(-\tfrac 38)\sum_{\substack{a>0:\,p\nmid a,\\ [a\ell]=L}}\frac{S_{0\infty}^{(\ell)}(m^{(L)},n,a,\mu_p)}{(a\sqrt p)^{2s_1}}\int_0^\infty y^{s_2-s_1-1}e^{-2\pi n_{+\infty}y} \\
		&\cdot\int_\R\(\frac{u+i}{|u+i|}\)^{-\frac12}e\(-n_{+\infty}yu\) \(e\(\frac{-m_{+0}^{(L)}(u+i)}{pa^2y(u^2+1)}\)-1\)\frac{du}{(u^2+1)^{s_1}} dy\\
		&=\sum_{\substack{a>0:\,p\nmid a,\\ [a\ell]=L}}\frac{S_{0\infty}^{(\ell)}(m^{(L)},n,a,\mu_p)}{(a\sqrt p)^{2s_1}}O_p\(\frac{|m_{+0}^{(L)}n_{+\infty}|}{pa^2(\sigma_1-\frac12)}\)=O_p\(\frac{|m_{+0}^{(L)}n_{+\infty}|}{\sigma_1-\frac12}\)
	\end{align*}
	and is holomorphic in the region $\re s_1>\frac12$ with $s_2=s_1+2$. The last step is by the trivial bound $S_{0\infty}^{(\ell)}(m^{(L)},n,a,\mu_p)\ll a$. 
	
	Then we deal with the case $n_{+\infty}<0$. As before, we have
	\begin{align*}
		&\left\langle \overline{\mathbf{U}_{0,(L)}(\cdot;\overline{s_1},-\tfrac 12,1-m^{(L)},\overline{\mu_p},r)},\  \overline{\mathbf{U}_{(\ell)}(\cdot;s_2,-\tfrac 12,1-n,\overline\mu_p,r)} \right\rangle\\
		&=\int_{\Gamma_\infty \setminus \HH}  y^{s_2}\frac{e(-n_{+\infty}z)}{\sin(\frac{\pi \ell }p)}\e_\ell^{\mathrm T}\,\overline{\mathbf{U}_{0,(L)}(\cdot;\overline{s_1},-\tfrac 12,1-m^{(L)},\overline{\mu_p},r)}\frac{dxdy}{y^2}\\
		&=\frac{e(-\tfrac 38)4^{1-s_2}\pi^{1+s_1-s_2}}{|n_{+\infty}|^{s_2-s_1}}\cdot\frac{\Gamma(s_2+s_1-1)\Gamma(s_2-s_1)}{\Gamma(s_1-\frac14)\Gamma(s_2+\frac 14)}Z_{\substack{0\infty;r\\m^{(L)},n,+}}^{(\ell)}(s_1)+R_{\substack{0\infty;r\\m^{(L)},n,+}}^{(\ell)}(s_1,s_2). 
	\end{align*}
	We still have that $R_{\substack{0\infty;r\\m^{(L)},n,+}}^{(\ell)}(s_1,s_2)=O_p\(\frac{|m_{+0}^{(L)}n_{+\infty}|}{\sigma_1-\frac12}\)$ and is holomorphic for $\sigma_1>\frac 12$ and $s_2=s_1+2$. This finishes the proof.

\end{proof}

The following proposition follows directly from Lemma~\ref{Inner product U 0 U ell}, Cauchy-Schwarz, Lemma~\ref{Inner product U 0 U 0} and Lemma~\ref{Inner product U U U ell U ell}. Note that the norms involving $s+2$ and $\overline{s}+2$ have $\sigma+2-\frac 12>2$ and do not contribute to the denominator. 
\begin{proposition}
	Let $\mathbf{m}\leq 0$, $\displaystyle M=\max_{\ell\in \condr}|m_{+0}^{(\ell)}|$ and $s=\sigma+it$ with $\re s=\sigma>\frac12$. When $|t|\rightarrow \infty$, we have the growth condition
	\[Z_{\substack{0\infty;r\\m^{(L)},n,+}}^{(\ell)}(s)\ll_{p}\frac{|m_{+0}^{(L)}n_{+\infty}|^3|t|^{\frac12}}{\sigma-\frac12}\quad \text{and} \quad Z_{\substack{0\infty;r\\\mathbf{m},n,+}}^{(\ell)}(s)\ll_{p}\frac{|Mn_{+\infty}|^3|t|^{\frac12}}{\sigma-\frac12}. \]
\end{proposition}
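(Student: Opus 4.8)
The plan is to mirror the proof of the $\infty\infty$ analogue in Proposition~\ref{gs Theorem 1}: isolate the Kloosterman--Selberg zeta function on one side of the inner-product identity in Lemma~\ref{Inner product U 0 U ell}, and control the other side by Cauchy--Schwarz together with the norm bounds of Lemma~\ref{Inner product U 0 U 0} and Lemma~\ref{Inner product U U U ell U ell}. It suffices to treat $Z_{\substack{0\infty;r\\m^{(L)},n,+}}^{(\ell)}(s)$ for a single $L$; the bound for $Z_{\substack{0\infty;r\\\mathbf{m},n,+}}^{(\ell)}(s)$ then follows by summing over the finitely many $L\in\condr$ (a $p$-dependent count) and replacing each $|m_{+0}^{(L)}|$ by $M=\max_{L\in\condr}|m_{+0}^{(L)}|$. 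Since $Z_{\substack{0\infty;r\\m^{(L)},n,+}}^{(\ell)}(s)$ converges absolutely and is $O(1)$ for $\sigma>2$, I may assume $\tfrac12<\sigma\le2$ throughout, which keeps all arguments in the admissible range of the cited lemmas.

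First I would fix the sign of $n_{+\infty}$ and invoke the matching identity from Lemma~\ref{Inner product U 0 U ell}; for $n_{+\infty}>0$ it reads
\[
c_n(s)\,Z_{\substack{0\infty;r\\m^{(L)},n,+}}^{(\ell)}(s)=\left\langle \overline{\mathbf{U}_{0,(L)}(\cdot;\overline s,-\tfrac12,1-m^{(L)},\overline{\mu_p},r)},\ \mathbf{U}_{(\ell)}(\cdot;\overline s+2,\tfrac12,n,\mu_p,r)\right\rangle-R_{\substack{0\infty;r\\m^{(L)},n,+}}^{(\ell)}(s),
\]
where $c_n(s)=\dfrac{e(\frac18)\Gamma(2s+1)}{4^{s+1}\pi n_{+\infty}^2\,\Gamma(s+\frac14)\Gamma(s+\frac74)}$ and the remainder satisfies $R\ll_p |m_{+0}^{(L)}n_{+\infty}|/(\sigma-\tfrac12)$. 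Cauchy--Schwarz with the Petersson inner product \eqref{vec val Petersson inner prod} bounds the inner product by the product of the two norms. Lemma~\ref{Inner product U 0 U 0} gives $\|\mathbf{U}_{0,(L)}(\cdot;\overline s,-\tfrac12,1-m^{(L)},\overline{\mu_p},r)\|\ll_p |m_{+0}^{(L)}|/(\sigma-\tfrac12)$, the relevant offset of the parameter $1-m^{(L)}>0$ having magnitude $|(1-m)_{-0}^{(L)}|=|m_{+0}^{(L)}|$; Lemma~\ref{Inner product U U U ell U ell}, applied at the shifted argument $\overline s+2$ whose real part $\sigma+2$ stays bounded away from $\tfrac12$, gives $\|\mathbf{U}_{(\ell)}(\cdot;\overline s+2,\tfrac12,n,\mu_p)\|\ll_p n\ll|n_{+\infty}|$, with no small denominator. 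Hence the right-hand side is $\ll_p |m_{+0}^{(L)}n_{+\infty}|/(\sigma-\tfrac12)$.

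It then remains to invert $c_n(s)$. Using Stirling in the form $|\Gamma(\sigma+it)|\sim\sqrt{2\pi}\,|t|^{\sigma-\frac12}e^{-\pi|t|/2}$, the exponential factors $e^{-\pi|t|}$ (from $\Gamma(2s+1)$) and $e^{-\pi|t|/2}\cdot e^{-\pi|t|/2}$ (from the two denominator Gammas) cancel, and the $4^{s+1}$ and $2^{2\sigma}$-type factors combine to leave
\[
|c_n(s)|\asymp \frac{|t|^{-\frac12}}{n_{+\infty}^2}\qquad(|t|\to\infty),
\]
uniformly for $\sigma$ in the compact range. Multiplying through by $c_n(s)^{-1}$ yields
\[
Z_{\substack{0\infty;r\\m^{(L)},n,+}}^{(\ell)}(s)\ll_p n_{+\infty}^2|t|^{\frac12}\cdot\frac{|m_{+0}^{(L)}n_{+\infty}|}{\sigma-\tfrac12}=\frac{|m_{+0}^{(L)}|\,|n_{+\infty}|^3|t|^{\frac12}}{\sigma-\tfrac12}.
\]
Because $m^{(L)}\le0$ forces $|m_{+0}^{(L)}|=\alpha_{+0}^{(L)}-m^{(L)}\ge\alpha_{+0}^{(L)}\gg_p1$ (the finitely many values $\alpha_{+0}^{(L)}\in(0,1)$ are bounded below for fixed $p$), the first power of $|m_{+0}^{(L)}|$ may be promoted to the third at the cost of a $p$-dependent constant, giving the claimed $|m_{+0}^{(L)}n_{+\infty}|^3$. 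The case $n_{+\infty}<0$ is carried out identically using the second pair of identities in Lemma~\ref{Inner product U 0 U ell}, whose Gamma factor is $\Gamma(2s+1)/\big(\Gamma(s-\frac14)\Gamma(s+\frac94)\big)$ and has the same $|t|^{-1/2}$ asymptotic.

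The calculations are essentially mechanical once the lemmas are in hand; the only delicate point is the Stirling estimate of the Gamma-ratio $c_n(s)$, where I must verify that the large exponential decay in $|t|$ cancels exactly between numerator and denominator so that precisely the polynomial factor $|t|^{-1/2}$ survives. This cancellation is what converts the benign $(\sigma-\tfrac12)^{-1}$ growth of the inner-product side into the stated $|t|^{1/2}(\sigma-\tfrac12)^{-1}$ bound, and hence is the crux that must be checked with care.
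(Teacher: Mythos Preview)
Your proposal is correct and follows essentially the same approach as the paper: the paper's proof is a one-line remark that the proposition ``follows directly from Lemma~\ref{Inner product U 0 U ell}, Cauchy--Schwarz, Lemma~\ref{Inner product U 0 U 0} and Lemma~\ref{Inner product U U U ell U ell},'' noting that the norms at the shifted argument $s+2$ or $\overline s+2$ have $\sigma+2-\tfrac12>2$ and hence contribute no small denominator. You have filled in precisely these steps, including the Stirling estimate for the Gamma ratio and the harmless promotion of $|m_{+0}^{(L)}|$ to $|m_{+0}^{(L)}|^3$, which the paper leaves implicit.
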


Then we can prove the remaining bounds \eqref{Kloosterman sums asymptotics 0 infty, every L} and \eqref{Kloosterman sums asymptotics 0 infty} in Theorem~\ref{Sums of Kloosterman sums}.  

\begin{proof}[Proof of \eqref{Kloosterman sums asymptotics 0 infty, every L} and \eqref{Kloosterman sums asymptotics 0 infty} in Theorem~\ref{Sums of Kloosterman sums}]
	Take any small $\ep>0$. Since $Z_{\substack{0\infty;r\\ m^{(L)},n,+}}^{(\ell)}(s)\ll_{\ep} \zeta(1+\ep)$ for $\re s=1+\ep$, by the Phragm\'en-Lindel\"of principle, we have
	\begin{equation}\label{Z0infty after phragmen lindelof}
		Z_{\substack{0\infty;r\\m^{(L)},n,+}}^{(\ell)}(\tfrac{1+s}2)\ll_{p,\ep} |m^{(L)}n_{+\infty}|^{3}|t|^{\frac12-\frac \sigma 2+\ep} \quad \text{for }0<\ep\leq \sigma\leq 1+\ep.
	\end{equation} 
	Following the similar step after \eqref{Zinftyinfty after phragmen lindelof}, we have
	\begin{align}
		\sum_{\substack{a>0:\\p\nmid a,\, [a\ell]=L}}\!\!\!\frac{S_{0\infty}^{(\ell)}(m^{(L)},n,a,\mu_p)}{a\sqrt p}=\frac1{2\pi i}\int_{1-iT}^{1+iT}Z_{\substack{0\infty;r\\m^{(L)},n,+}}^{(\ell)}(\tfrac{s+1}2)\frac{x^s}s ds+O\(\frac{|m_{+0}^{(L)}n_{+\infty}|^{3}x^{1+\ep}}T\). 
	\end{align}
	By Lemma~\ref{Inner product U 0 U ell}, the function $Z_{\substack{0\infty;r\\m^{(L)},n,+}}^{(\ell)}(\tfrac{1+s}2)$ has at most a finite number of simple poles at $2s_j-1\in (0,1)$, where $\lambda_j=s_j(1-s_j)<\frac 14$ are the discrete eigenvalues of $\Delta_{\frac 12}$ on $\Lform_{\frac 12}(\Gamma_0(p),\mu_p)$. Shifting the line of integration above to $\re s=\ep$ ($\ep-iT\rightarrow \ep+iT$) such that $2s_j-1>\ep$ for all $\lambda_j<\frac 14$, with the help of \eqref{Z0infty after phragmen lindelof} we obtain
	\begin{align*}
		\sum_{\substack{a>0:\\p\nmid a,\, [a\ell]=L}}\!\!\!\frac{S_{0\infty}^{(\ell)}(m^{(L)},n,a,\mu_p)}{a\sqrt p}=\sum_{\frac12<s_j<\frac 34}\Res_{s=2s_j-1}Z_{\substack{0\infty;r\\m^{(L)},n,+}}^{(\ell)}(\tfrac{1+s}2)\frac{x^{2s_j-1}}{2s_j-1}+O(|m_{+0}^{(L)}n_{+\infty}|^3x^{\frac 13+\ep}),
	\end{align*}
	where we have chosen $T=x^{\frac 23}$. 
	
	For the residues, by Lemma~\ref{Inner product U 0 U ell}, it suffices to compute the residue of the first and third inner products in that lemma. 
	Combining Lemma~\ref{Inner product U 0 U ell}, \eqref{residue U 0 l from Vrj}, \eqref{Inner product V and U infty n}, and \eqref{Inner product V and U infty 1-n}, the proof of \eqref{Kloosterman sums asymptotics 0 infty, every L} follows. 
	
	The proof of \eqref{Kloosterman sums asymptotics 0 infty} follows by summing up on $L\in \condr$ and by $\displaystyle M=\max_{L\in \condr}|m_{+0}^{(L)}|$. 
	
\end{proof}

\subsection{Convergence}
\label{Subsection: Convergence of sums of KL sums}
In this subsection we show the growth rates and convergence properties of sums of Kloosterman sums. 

\begin{proposition}\label{convergence in general, tail bound}
	With the same setting as Theorem~\ref{Sums of Kloosterman sums}, we have that all the following sums are convergent with bounds
	\begin{align*}
		&\sum_{\substack{c>4\pi \la m_{+\infty}n_{+\infty}\ra^{\frac12}\\p|c}}\frac{S_{\infty\infty}^{(\ell)}(m,n,c,\mu_p)}{c} \mathscr{M}\(\frac{4\pi |m_{+\infty}n_{+\infty}|^{\frac12}}c\)\ll_p |m_{+\infty}n_{+\infty}|^{4},\\
		&\sum_{\substack{a>4\pi \la m_{+0}^{(L)}n_{+\infty}\ra^{\frac12}\\p\nmid a,\,[a\ell]=L}}\left|\frac{m_{+0}^{(L)}}{n_{+\infty}}\right|^{\frac 14}\frac{S_{0\infty}^{(\ell)}(m^{(L)},n,a,\mu_p)}{a\sqrt p}\mathscr{M}\(\frac{4\pi |m_{+0}^{(L)}n_{+\infty}|^{\frac12}}c\)\ll_p |m_{+0}^{(L)}n_{+\infty}|^4,\\
		&\sum_{\substack{a>4\pi \la Mn_{+\infty}\ra ^{\frac12}\\p\nmid a,}}\sum_{\ell\in \condar}\left|\frac{m_{+0}^{([a\ell])}}{n_{+\infty}}\right|^{\frac 14}\frac{S_{0\infty}^{(\ell)}(m^{([a\ell])},n,a,\mu_p)}{a\sqrt p}\mathscr{M}\(\frac{4\pi |m_{+0}^{([a\ell])}n_{+\infty}|^{\frac12}}c\)\ll_p |Mn_{+\infty}|^4. 
	\end{align*}
	Here $\mathscr{M}$ is either the Bessel function $I_{\frac12}$ or $J_{\frac12}$. 
\end{proposition}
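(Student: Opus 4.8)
The plan is to obtain each tail bound by Abel summation, feeding in the growth estimates of Corollary~\ref{Sums of Kloosterman sums, whole growth rate for convergence} together with the elementary asymptotics of the Bessel factors. The thresholds $4\pi|\cdots|^{\frac12}$ are chosen precisely so that, beyond them, the argument $z$ of $\mathscr{M}$ lies in $(0,1]$; there we have the closed forms $I_{\frac12}(z)=(\tfrac{2}{\pi z})^{\frac12}\sinh z$ and $J_{\frac12}(z)=(\tfrac{2}{\pi z})^{\frac12}\sin z$, from which $\mathscr{M}(z)\ll z^{\frac12}$ and $\mathscr{M}'(z)\ll z^{-\frac12}$ uniformly on $(0,1]$ for either choice.

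First I would treat the cusp-pair $\infty\infty$ sum. Write $B\defeq|m_{+\infty}n_{+\infty}|^{\frac12}$, $C_0\defeq 4\pi B$, $g(t)\defeq \mathscr{M}(4\pi B/t)$, and let $A(x)\defeq\sum_{c\le x:\,p|c}S_{\infty\infty}^{(\ell)}(m,n,c,\mu_p)/c$, so that Corollary~\ref{Sums of Kloosterman sums, whole growth rate for convergence} gives $A(x)\ll_p|m_{+\infty}n_{+\infty}|^3x^{\frac12-\delta}$ for some fixed $\delta>0$. Abel summation then yields
\[
\sum_{c>C_0:\,p|c}\frac{S_{\infty\infty}^{(\ell)}(m,n,c,\mu_p)}{c}\,g(c)=-A(C_0)g(C_0)-\int_{C_0}^\infty A(t)\,g'(t)\,dt,
\]
the contribution at infinity vanishing because $A(t)g(t)\ll_p|m_{+\infty}n_{+\infty}|^3 B^{\frac12}t^{-\delta}\to 0$. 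Using $g(C_0)=\mathscr{M}(1)=O(1)$ and $g'(t)\ll B^{\frac12}t^{-\frac32}$ (chain rule plus the bound on $\mathscr{M}'$), the boundary term is $\ll_p|m_{+\infty}n_{+\infty}|^3B^{\frac12}$ and the integral is $\ll_p|m_{+\infty}n_{+\infty}|^3B^{\frac12}\int_{C_0}^\infty t^{-1-\delta}\,dt$. Both are $\ll_p|m_{+\infty}n_{+\infty}|^{13/4}\ll_p|m_{+\infty}n_{+\infty}|^{4}$, where the surplus fractional power is absorbed by the uniform lower bound $|m_{+\infty}n_{+\infty}|\geq\tfrac{1}{576}$ (since $m\le 0$ forces $|m_{+\infty}|\ge\tfrac1{24}$ and $|n_{+\infty}|\ge\tfrac1{24}$).

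The $0\infty$ sum with $[a\ell]=L$ fixed is handled identically, with $A$ replaced by the partial sums $\sum_{a\le x:\,p\nmid a,[a\ell]=L}S_{0\infty}^{(\ell)}(m^{(L)},n,a,\mu_p;r)/(a\sqrt p)$, whose growth is the second estimate of Corollary~\ref{Sums of Kloosterman sums, whole growth rate for convergence}; here the weight $|m_{+0}^{(L)}/n_{+\infty}|^{\frac14}\mathscr{M}(\cdots)$ is a fixed function of $a$, so the argument goes through verbatim (the lower bound $|m_{+0}^{(L)}n_{+\infty}|\gg_p 1$ now depends on $\min_\ell\alpha_{+0}^{(\ell)}>0$). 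For the third sum I would interchange the order of summation and invoke $\ell\in\condar\Leftrightarrow[a\ell]\in\condr$ to rewrite it as $\sum_{\ell=1}^{p-1}\sum_{L\in\condr}$ of the fixed-$L$ sums just bounded; since $\condr$ and the range of $\ell$ are finite and $|m_{+0}^{(L)}|\le M$, summing these at most $(p-1)^2$ pieces gives $\ll_p|Mn_{+\infty}|^4$ (the larger threshold $4\pi|Mn_{+\infty}|^{\frac12}$ merely shortens each individual tail).

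The main obstacle is securing integrability at infinity rather than the bookkeeping of powers. With only the trivial bounds $S\ll c$ and $\mathscr{M}(z)\ll z^{\frac12}$ the individual terms decay like $t^{-\frac12}$, so the series diverges and genuine cancellation in the Kloosterman sums is indispensable. What rescues convergence is exactly the strict saving $x^{\frac12-\delta}$ with $\delta>0$ in Corollary~\ref{Sums of Kloosterman sums, whole growth rate for convergence}, which itself rests on the discreteness of the exceptional spectrum of $\Delta_{\frac12}$ strictly below $\frac14$: paired with the Bessel decay $g'(t)\ll t^{-\frac32}$ it produces the integrable density $t^{-1-\delta}$, and everything else is routine.
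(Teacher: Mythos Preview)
Your proof is correct and follows essentially the same route as the paper: both arguments apply Abel summation (partial summation) to the tail, feeding in the $x^{\frac12-\delta}$ bound of Corollary~\ref{Sums of Kloosterman sums, whole growth rate for convergence} together with the elementary Bessel estimates $\mathscr{M}(z)\ll z^{\frac12}$ and $g'(t)\ll \phi^{\frac12}t^{-\frac32}$ for $t\geq \phi=4\pi|m_{+\infty}n_{+\infty}|^{\frac12}$, and then handle the cusp-$0$ sums by the same computation and a finite decomposition over $L\in\condr$. The only cosmetic differences are that the paper cites \cite[(10.29.1),(10.6.1)]{dlmf} for the derivative bounds where you use the closed forms of $I_{\frac12},J_{\frac12}$, and the paper records the intermediate Cauchy-criterion estimate \eqref{partial sum KL sum: from X to Y} explicitly rather than noting (as you do) that $A(t)g(t)\to 0$.
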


\begin{proof}
	The proof is similar to the scalar-valued case. We use the properties of Bessel functions: by \cite[(10.14.4)]{dlmf}, $|J_{\alpha}(x)|\leq_\alpha x^\alpha$ for $x>0$ and $\alpha\geq -\frac12$, and by \cite[(10.30.1)]{dlmf}, for $0\leq x\leq \beta$, 
	\[I_{\alpha}(x)\ll_{\alpha,\beta}x^{\alpha} \quad \text{for }\alpha>-1. \]
	Let $\phi \defeq 4\pi |m_{+\infty} n_{+\infty}|^{\frac12}$. We have $\phi\geq \frac \pi 6$ by $\alpha_{+\infty}=\frac 1{24}$ in \eqref{alpha infty l def}. When $t\geq \phi$, we have $0\leq \frac \phi t\leq 1$, hence by \cite[(10.29.1), (10.6.1)]{dlmf}, we get 
	\begin{align}
		\label{I Bessel derivative bound}
		\frac{d}{dt}I_{\frac 12}\(\frac \phi t\)&=-\frac \phi{2t^2}\(I_{-\frac12 }\(\frac \phi t\)+I_{\frac32 }\(\frac \phi t\)\)\ll \phi^{\frac12}t^{-\frac32}+ \phi^{\frac52} t^{-\frac72}\ll\phi^{\frac12}t^{-\frac32} ,\\
		\label{J Bessel derivative bound}
		\frac{d}{dt}J_{\frac 12}\(\frac \phi t\)&=-\frac \phi {2t^2}\(J_{-\frac12 }\(\frac \phi t\)-J_{\frac32 }\(\frac \phi t\)\)\ll \phi^{\frac12}t^{-\frac32}+\phi^{\frac 52}t^{-\frac72}\ll\phi^{\frac12}t^{-\frac32}.
	\end{align}
	By Corollary~\ref{Sums of Kloosterman sums, whole growth rate for convergence}, we write
	\[\mathrm{SC}(x)\defeq \sum_{p|c\leq x}\frac{S_{\infty\infty}^{(\ell)}(m,n,c,\mu_p)}{c}\ll_p |m_{+\infty}n_{+\infty}|^3 x^{\frac12-\delta}. \]
	By partial integration, for $T>\phi$ we have
	\begin{align}\label{partial sum KL sum: from phi to T}
		\begin{split}
			\sum_{\substack{\phi<c\leq T\\p|c}}&
			\frac{S_{\infty\infty}^{(\ell)}(m,n,c,\mu_p)}{c}\mathscr{M}\(\frac \phi c\)\\
			&\ll  \left|\mathscr{M}\(\frac \phi T\) \mathrm{SC}(T)\right|+\left|\mathscr{M}(1) \mathrm{SC}(\phi)\right|+\left|	\int_\phi^T \mathrm{SC}(t)\phi^{\frac 12} t^{-\frac 32}dt\right|\\
			&\ll |m_{+\infty}n_{+\infty}|^{3}\phi^{\frac 12}(1+T^{-\delta})\\ &\ll |m_{+\infty}n_{+\infty}|^{4}. 
		\end{split}
	\end{align} 
	For $Y>X>\phi$, we also have
	\begin{align}\label{partial sum KL sum: from X to Y}
		\begin{split}
			\sum_{\substack{X<c\leq Y\\p|c}}&
			\frac{S_{\infty\infty}^{(\ell)}(m,n,c,\mu_p)}{c}\mathscr{M}\(\frac \phi c\)\\
			&\ll  \left|\mathscr{M}\(\frac \phi Y\)\mathrm{SC}(Y)-\mathscr{M}\(\frac \phi X\) \mathrm{SC}(X)\right|+
			\left|	\int_X^Y \mathrm{SC}(t)\phi^{\frac 12} t^{-\frac 32}dt\right|\\ 
			&\ll |m_{+\infty}n_{+\infty}|^{3}\phi^{\frac 12}\left| Y^{-\delta}-X^{-\delta}\right| . 
		\end{split}
	\end{align} 
	The estimate \eqref{partial sum KL sum: from X to Y} proves the convergence by Cauchy's criterion, so we can let $T\rightarrow \infty$ in \eqref{partial sum KL sum: from phi to T}. We have proved the first estimate of the proposition.  
	
	Again by Corollary~\ref{Sums of Kloosterman sums, whole growth rate for convergence}, we write
	\[\mathrm{SA}(x)\defeq \sum_{\substack{a\leq x:\\p\nmid a,\,[a\ell]=L}} \frac{S_{0\infty}^{(\ell)}(m^{(L)},n,a,\mu_p)}{a\sqrt{p}}\ll_{p} |m_{+0}^{(L)}n_{+\infty}|^3x^{\frac12-\delta}.\]
	Let $\phi \defeq 4\pi |m_{+0}^{(L)} n_{+\infty}|^{\frac12}$. A similar application of the partial sum concludes the second formula. The third formula follows from summing up $L\in \condr$ and by $\displaystyle M=\max_{L\in \condr}|m_{+0}^{(L)}|$. Here we can re-order the sum because the convergence can be easily derived by separating the partial sum  into $\# \condr$ parts. 
\end{proof}
For $\beta>0$, recall $\Gamma(\alpha,\beta)= \int_{\beta}^{\infty} t^{\alpha-1}e^{-t} dt$ as the incomplete Gamma function. We have $\Gamma(\alpha,x)\sim x^{\alpha-1}e^{-x}$ when $x\rightarrow \infty$. 
\begin{proposition}\label{convergence in general, main contribution}
	With the same setting as Proposition~\ref{convergence in general, tail bound}, we have
	\begin{align*}
		&	\sum_{\substack{c>0:\,p|c}}\frac{S_{\infty\infty}^{(\ell)}(m,n,c,\mu_p)}{c} I_{\frac12}\(\frac{4\pi |m_{+\infty}n_{+\infty}|^{\frac12}}c\)\ll_p |m_{+\infty}n_{+\infty}|^{5} e^{4\pi \la m_{+\infty}n_{+\infty}\ra^{\frac12}},\\
		&	\sum_{\substack{c>0:\,p|c}}\frac{S_{\infty\infty}^{(\ell)}(m,n,c,\mu_p)}{c} J_{\frac12}\(\frac{4\pi |m_{+\infty}n_{+\infty}|^{\frac12}}c\)\ll_p |m_{+\infty}n_{+\infty}|^{5} ,\\
		&\sum_{\substack{a>0\\p\nmid a}}\sum_{\ell\in \condar}\left|\frac{m_{+0}^{([a\ell])}}{n_{+\infty}}\right|^{\frac 14}\frac{S_{0\infty}^{(\ell)}(m^{([a\ell])},n,a,\mu_p)}{a\sqrt p}I_{\frac12}\(\frac{4\pi |m_{+0}^{([a\ell])}n_{+\infty}|^{\frac12}}{a\sqrt p}\)\ll_p |Mn_{+\infty}|^5e^{4\pi \la Mn_{+\infty}\ra^{\frac12}},\\
		&\sum_{\substack{a>0\\p\nmid a}}\sum_{\ell\in \condar}\left|\frac{m_{+0}^{([a\ell])}}{n_{+\infty}}\right|^{\frac 14}\frac{S_{0\infty}^{(\ell)}(m^{([a\ell])},n,a,\mu_p)}{a\sqrt p}J_{\frac12}\(\frac{4\pi |m_{+0}^{([a\ell])}n_{+\infty}|^{\frac12}}{a\sqrt p}\)\ll_p |Mn_{+\infty}|^5. 
	\end{align*} 
	Hence, by denoting $q=e(z)$ with $y=\im z>0$, the following series converge: 
	\begin{align*}
		&\sum_{n_{+\infty}>0}\left|\frac{m_{+\infty}}{n_{+\infty}}\right|^{\frac 14}\la \sum_{\substack{c>0:\,p|c}}\frac{S_{\infty\infty}^{(\ell)}(m,n,c,\mu_p)}{c} I_{\frac12}\(\frac{4\pi |m_{+\infty}n_{+\infty}|^{\frac12}}c\) \ra  |q^{n_{+\infty}}|,\\
		&\sum_{n_{+\infty}<0}\left|\frac{m_{+\infty}}{n_{+\infty}}\right|^{\frac 14}\la \sum_{\substack{c>0:\,p|c}}\frac{S_{\infty\infty}^{(\ell)}(m,n,c,\mu_p)}{c}J_{\frac 12}\(\frac{4\pi |m_{+\infty}n_{+\infty}|^{\frac12}}c\) \ra  \la\Gamma(\tfrac12,4\pi |n_{+\infty}|y)q^{n_{+\infty}}\ra\\
		&\sum_{n_{+\infty}>0}\la \mathop{\sum\sum}_{\substack{a>0:\,p\nmid a,\\ \ell\in \condar}}\left|\frac{m_{+0}^{([a\ell])}}{n_{+\infty}}\right|^{\frac 14}\frac{S_{0\infty}^{(\ell)}(m^{([a\ell])},n,a,\mu_p)}{a\sqrt p}I_{\frac12}\(\frac{4\pi |m_{+0}^{([a\ell])}n_{+\infty}|^{\frac12}}c\)\ra |q^{n_{+\infty}}|,\\
		&\sum_{n_{+\infty}<0}\la \mathop{\sum\sum}_{\substack{a>0:\,p\nmid a,\\ \ell\in \condar}}\left|\frac{m_{+0}^{([a\ell])}}{n_{+\infty}}\right|^{\frac 14}\frac{S_{0\infty}^{(\ell)}(m^{([a\ell])},n,a,\mu_p)}{a\sqrt p}J_{\frac 12}\(\frac{4\pi |m_{+0}^{([a\ell])}n_{+\infty}|^{\frac12}}c\)\ra \la \Gamma(\tfrac12,4\pi |n_{+\infty}|y)q^{n_{+\infty}}\ra. 
	\end{align*}
\end{proposition}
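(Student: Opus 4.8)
The plan is to prove each of the four ``inner'' bounds by splitting the full sum over the modulus into a head and a tail, and then to deduce the convergence of the four Fourier series by summing these inner bounds against the appropriate exponential weights. Write $\phi\defeq 4\pi|m_{+\infty}n_{+\infty}|^{\frac12}$ in the $\infty\infty$ case and $\phi\defeq 4\pi|m_{+0}^{([a\ell])}n_{+\infty}|^{\frac12}$, bounded by $4\pi|Mn_{+\infty}|^{\frac12}$, in the $0\infty$ case. The tail, consisting of the terms whose modulus exceeds $\phi$, is exactly what Proposition~\ref{convergence in general, tail bound} controls, giving a contribution $\ll_p|m_{+\infty}n_{+\infty}|^4$ (respectively $\ll_p|Mn_{+\infty}|^4$). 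Thus the only new work is to estimate the head, where the modulus is at most $\phi$. There I would discard the oscillation of the Kloosterman sums entirely, using the trivial bounds $S_{\infty\infty}^{(\ell)}(m,n,c,\mu_p)\ll_p c$ and $S_{0\infty}^{(\ell)}(m^{(L)},n,a,\mu_p)\ll a$ (the latter was already used in the proof of Lemma~\ref{Inner product U 0 U ell}); after cancelling the $1/c$ (resp.\ $1/a$) this reduces the head to $\sum_{\mathrm{modulus}\leq\phi}\mathscr{M}(\phi/\mathrm{modulus})$ times a harmless polynomial prefactor. Since $\#\condr\leq p-1$ is finite, the double sums over $a$ and $\ell$ in the third and fourth estimates are finite combinations of single sums, so it suffices to treat one $\ell$ at a time and to re-order freely.

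The head estimate then branches according to the Bessel function. For $\mathscr{M}=J_{\frac12}$ I would use $|J_{\frac12}(x)|\ll\min(x^{\frac12},x^{-\frac12})$ from \cite[(10.14.4)]{dlmf}, so that $\sum_{\mathrm{modulus}\leq\phi}J_{\frac12}(\phi/\mathrm{modulus})\ll_p\phi$, a quantity polynomial in $|m_{+\infty}n_{+\infty}|$ (resp.\ $|Mn_{+\infty}|$); this yields the two purely polynomial bounds. For $\mathscr{M}=I_{\frac12}$ I would use that $I_{\frac12}$ is increasing, together with $\phi\leq 4\pi|Mn_{+\infty}|^{\frac12}$ in the $0\infty$ case, so that on the head every term is at most $I_{\frac12}(\phi/p)\ll_p e^{\phi/p}\leq e^{\phi}$ in the $\infty\infty$ sum (smallest modulus $c=p$) and at most $I_{\frac12}(4\pi|Mn_{+\infty}|^{\frac12})\ll e^{4\pi|Mn_{+\infty}|^{\frac12}}$ in the $0\infty$ sum (smallest modulus $a=1$). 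Multiplying by the number of head terms, which is $O(\phi)$, and by the polynomial prefactor produces the factor $e^{4\pi|m_{+\infty}n_{+\infty}|^{\frac12}}$ (resp.\ $e^{4\pi|Mn_{+\infty}|^{\frac12}}$) times a polynomial in the relevant parameters, which after enlarging the polynomial exponent gives the stated inner bounds with prefactor $|m_{+\infty}n_{+\infty}|^5$ (resp.\ $|Mn_{+\infty}|^5$).

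Finally, to obtain the convergence of the four Fourier series I would feed these inner bounds into the expansions of Propositions~\ref{Fourier exp of Maass Poincare at infty} and \ref{Fourier exp of Maass Poincare at zero} and sum over $n$. For the series indexed by $n_{+\infty}>0$ (the ones carrying $I_{\frac12}$), the $n$-th coefficient grows at most like $n^{5}e^{C\sqrt n}$, which is sub-exponential and hence dominated by $|q^{n_{+\infty}}|=e^{-2\pi n_{+\infty}y}$ for any fixed $y>0$; comparison with $\sum_n n^5 e^{C\sqrt n-2\pi ny}$ gives absolute convergence. For the series indexed by $n_{+\infty}<0$ (the ones carrying $J_{\frac12}$), the coefficient is only polynomial, while the weight $\Gamma(\tfrac12,4\pi|n_{+\infty}|y)|q^{n_{+\infty}}|$ decays like $e^{-2\pi|n_{+\infty}|y}$, because $\Gamma(\tfrac12,x)\sim x^{-\frac12}e^{-x}$ cancels the growth $|q^{n_{+\infty}}|=e^{2\pi|n_{+\infty}|y}$; this geometric decay again forces convergence. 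The main obstacle is the head estimate for $I_{\frac12}$: one must verify that the exponential blow-up of the Bessel function at the smallest modulus is captured by precisely $e^{\phi}$ up to polynomial factors, and — more importantly — that the ensuing $e^{C\sqrt n}$ growth of the coefficients is genuinely sub-exponential in $n$, so that it is absorbed by the decay of $q^{n_{+\infty}}$. Everything else is the routine combination of head and tail together with the finiteness of $\condr$.
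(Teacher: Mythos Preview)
Your proposal is correct and follows the same overall architecture as the paper: split each inner sum at $\phi$, invoke Proposition~\ref{convergence in general, tail bound} for the tail, estimate the head directly, and then observe that the resulting sub-exponential growth $e^{O(\sqrt{n})}$ is dominated by the $e^{-2\pi|n_{+\infty}|y}$ decay coming from $q^{n_{+\infty}}$ (respectively from $\Gamma(\tfrac12,4\pi|n_{+\infty}|y)q^{n_{+\infty}}$).

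The only genuine difference is in the treatment of the head. You discard the Kloosterman sums via the trivial bound $|S|\ll_p c$ (or $\ll a$) and then control the remaining Bessel sum termwise, using monotonicity of $I_{\frac12}$ and the boundedness of $J_{\frac12}$. The paper instead keeps the Kloosterman sums and applies partial summation together with the nontrivial estimate of Corollary~\ref{Sums of Kloosterman sums, whole growth rate for convergence}, combined with the uniform bound $I_{\beta}(\phi/t)\ll_\beta e^{\phi}(t/\phi)^{1/2}$ for $t\le\phi$ and the derivative recurrence \cite[(10.29.1)]{dlmf}. Your route is more elementary and in fact yields a slightly smaller polynomial prefactor on the head; the paper's route is more uniform with the machinery already developed in the section. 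Since the stated bound $|m_{+\infty}n_{+\infty}|^{5}$ (resp.\ $|Mn_{+\infty}|^{5}$) is generous, either approach suffices.
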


\begin{proof}
	We first prove the formulas involving $I_{\frac 12}$. By \cite[(10.30.4)]{dlmf}, we have $I_{\beta}(\frac \phi t)\ll_\beta e^{\phi}(t/\phi)^{\frac 12}$ when $t\leq \phi$. For the first formula, we let $\phi=4\pi|m_{+\infty}n_{+\infty}|^{\frac12}$ and use partial summation with \cite[(10.29.1)]{dlmf} to get
	\begin{align*}
		\sum_{1\leq c\leq \phi:\,p|c }\!\!\!\frac{S_{\infty\infty}^{(\ell)}(m,n,c,\mu_p)}{c} I_{\frac12}\(\frac{4\pi |m_{+\infty}n_{+\infty}|^{\frac12}}c\)\ll  |m_{+\infty}n_{+\infty}|^{3} e^\phi \phi^{\frac 32-\delta }\ll |m_{+\infty}n_{+\infty}|^{5} e^\phi. 
	\end{align*}
	We combine this with Proposition~\ref{convergence in general, tail bound} to get the desired bound. Since $|e(n_{+\infty} z)|=e^{-2\pi n_{+\infty} y}$, and considering that the inner sum grows at a rate of $e^{O(n_{+\infty}^{1/2})}$, the fifth formula is clear. 
	
	To prove the third formula, we start by choosing a fixed $L\in \condr$. Then we set $\phi=4\pi|m_{+0}^{(L)}n_{+\infty}|^{\frac12}$ and apply the second formula from Corollary~\ref{Sums of Kloosterman sums, whole growth rate for convergence}. Summing over $L\in \condr$, we get the desired bounds here. The seventh formula follows from the same reason as the fifth one.  
	
	For the formulas involving $J_{\frac12}$ we apply \eqref{J Bessel derivative bound} and get the similar conclusion without the exponential growth term $e^\phi$. Since $n_{+\infty}<0$, we have
	\[\la\Gamma(\tfrac12,4\pi |n_{+\infty}|y)e(n_{+\infty} z)\ra \ll e^{-2\pi |n_{+\infty}| y}\]
	and still get the convergence as $y>0$. 
\end{proof}

\begin{remark}
	The step of repeating the selection of $L$ and then summing up on $L\in \condr$ is necessary. This is because $m_{+0}^{([a\ell])}$ changes when $a$ varies, but it remains constant when we specify $[a\ell]=L$. 
\end{remark}

\section*{Acknowledgements}
The author thanks the referee for their careful reading and elucidate comments. The author thanks Professor Scott Ahlgren for his careful reading of a previous version of this paper and for plenty of insightful suggestions. The author also thanks Nick Andersen and Alexander Dunn for helpful comments.

\bibliographystyle{alpha}
\bibliography{allrefs}

\end{document}